 \renewcommand{\includegraphics}[1]{[#1]}
\newcommand{\alias}[2]{
 \providecommand{#1}{}
 \renewcommand{#1}{#2}
}
\newcommand{\macro}[3]{
 \providecommand{#1}{}
 \renewcommand{#1}[#2]{#3}
}
\alias{\P}{\mathcal{P}}
\alias{\M}{\mathcal{M}}
\alias{\W}{\mathcal{W}}
\alias{\O}{\mathcal{O}}
\alias{\N}{\mathbb{N}}
\alias{\Z}{\mathbb{Z}}
\alias{\Q}{\mathbb{Q}}
\alias{\R}{\mathbb{R}}
\alias{\C}{\mathbb{C}}
\alias{\S}{\mathbb{S}}
\alias{\H}{\mathbb{H}}
\macro{\fn}{3}{#1:#2\to#3}
\macro{\fun}{3}{#1:#2\mapsto#3}
\macro{\func}{5}{#1:\begin{array}[t]{rcl}
#2 & \rightarrow & #3\\
#4 & \mapsto     & #5\end{array}}
\macro{\name}{1}{{\it #1}}
\macro{\sing}{1}{\left\{ #1 \right\}}
\macro{\set}{2}{\left\{ #1 \,\middle|\, #2 \right\}}
\macro{\wt}{1}{\widetilde{#1}}
\macro{\eqpunct}{1}{\, #1}
\alias{\Pr}{\textbf{P}_\Gamma}
\alias{\id}{\text{id}}
\alias{\Conv}{\text{Conv}}
\alias{\card}{\#}
\alias{\diam}{\text{diam}\,}
\alias{\supp}{\text{supp}\,}
\alias{\SO}{\text{SO}}
\alias{\Diag}{\sing{\text{diagonal}}}
\macro{\mtext}{1}{\;\text{#1}\;}
\alias{\eqand}{\mtext{and}}
\alias{\eqor}{\mtext{or}}
\newcommand{\itv}[3][cc]{
 \def\itvcc{cc}%
 \def\itvco{co}%
 \def\itvoc{oc}%
 \def\itvoo{oo}%
 \def\tmparg{#1}%
 \ifx\tmparg\itvcc
  \left[ #2 ; #3 \right]
 \else
  \ifx\tmparg\itvco
   \left[ #2 ; #3 \right[
  \else
   \ifx\tmparg\itvoc
    \left] #2 ; #3 \right]
   \else
    \ifx\tmparg\itvoo
     \left] #2 ; #3 \right[
    \fi
   \fi
  \fi
 \fi
}
\theoremstyle{plain}
\newtheorem{theo}{Theorem}
\newtheorem*{theo*}{Theorem}
\newtheorem*{theo.recite1}{Theorem \ref{thm:erg}}
\newtheorem{lemm}{Lemma}[section]
\newtheorem{prop}[lemm]{Proposition}
\theoremstyle{definition}
\newtheorem{defi}{Definition}
\title[Finiteness of Gibbs measures]{Finiteness of Gibbs measures on noncompact manifolds with pinched negative curvature}
\author{Vincent Pit}
\address[Vincent Pit]{UFRJ, Instituto de Matem\'atica, Av. Athos da Silveira Ramos 149 \\ Centro de Tecnologia - Bloco C - Cidade Universit\'aria - Ilha do Fund\~ao \\ Rio de Janeiro, RJ, Brasil}
\email{pit@im.ufrj.br}
\author{Barbara Schapira}
\address[Barbara Schapira]{I.R.M.A.R. UMR CNRS 7352, 
UFR de math\'ematiques, Universit\'e Rennes 1
Campus de Beaulieu,  
263 avenue du G\'en\'eral Leclerc, CS 74205
35042 RENNES Cedex}
\email{barbara.schapira@univ-rennes1.fr}
\begin{document}

\begin{abstract}
 We characterize the finiteness of Gibbs measures for geodesic flows on negatively curved manifolds by several criteria, analogous to those proposed by Sarig for symbolic dynamical systems over an infinite alphabet.
 As an application, we recover Dal'bo-Otal-Peign\'e criterion of finiteness for the Bowen-Margulis measure on geometrically finite hyperbolic manifolds, as well as Peign\'e' examples of gemetrically infinite manifolds having a finite Bowen-Margulis measure.
 These criteria should be useful in the future to find more examples with finite Gibbs measures.
\end{abstract}

\maketitle

\footnote{Keywords: Gibbs measures, Thermodynamic formalism, geometrically infinite manifolds, Kac lemma}


\section*{Introduction}

Hyperbolic dynamical systems are, heuristically, so chaotic that all behaviours that one can imagine indeed happen for some orbits.
From the point of view of ergodic theory, this can be expressed by the existence of Gibbs measures.
When the space is compact, choosing any regular enough "weight function", called a \name{potential}, i.e. usually a H\"older continuous map on the space, we can find an invariant ergodic probability measure which gives, roughly speaking, large measure to the sets where the potential is large, and small measure to those where the potential is small.
This is a quantified way of saying that all behaviours that one can imagine, represented by the choice of a potential, indeed happen for a  hyperbolic dynamical system.

Such measures are called \name{Gibbs measures}, and their existence for H\"older potentials and uniformly hyperbolic flows has been proved by Bowen-Ruelle \cite{BR}.
Thermodynamical formalism, i.e. the study of the existence and properties of these measures, has been extended to noncompact situations in two main cases.
In symbolic dynamics, Sarig (\cite{Sa99}, \cite{Sa01}) studied thermodynamical formalism of shifts over a countable alphabet.
In the context of geodesic flows of noncompact negatively curved manifolds, the measure of maximal entropy, which is associated with the constant potential, now commonly called the \name{Bowen-Margulis measure}, has been extensively studied, first by Sullivan on geometrically finite hyperbolic manifolds \cite{Su84}, and later with his ideas by many others.
Among them, let us cite Otal-Peign\'e who obtained an optimal variational principle in \cite{OP}, and Roblin \cite{Roblin} who established an equidistribution result of measures supported by periodic orbits towards the Bowen-Margulis measure.
Generalizations of their results to Gibbs measures with general H\"older potentials have been proved in \cite{PPS}.

All these results hold if and only if the Gibbs measure associated with the potential is finite.
It is therefore very important to be able to characterize, or at least to give sufficient conditions for the finiteness of Gibbs measures.
Some partial results and examples have already been proved in the past.
In \cite{Sa99}, \cite{Sa01}, in a symbolic context, Sarig established two finiteness criteria for the measure associated with a H\"older potential $F$.
Iommi and its collaborators  extended this study to suspension flows over such shifts, in \cite{Barreira-Iommi}, \cite{Iommi-Jordan-Todd}. In the case of geodesic flows on the unit tangent bundle of noncompact manifolds, the first criterion appeared in \cite{DOP}.
In the particular case of geometrically finite manifolds, Dal'bo-Otal-Peign\'e showed that the Bowen-Margulis measure is finite if and only if a series involving the parabolic elements of the fundamental group converges.
This criterion has been extended later by Coud\`ene \cite{Cou} to Gibbs measures of geometrically finite manifolds.
Finally, Peign\'e constructed in \cite{Pei} the first examples of geometrically infinite hyperbolic manifolds whose Bowen-Margulis measure is finite.
His proof, once again, involved the convergence of a certain series.
Ancona \cite{Ancona} also obtained such examples, but through harmonic analysis.

Our main motivation is to give a  unified way to check whether a Gibbs measure is finite, not specific to a certain class of manifolds, and allowing to recover all results mentioned above, the geometric ones as well as Sarig' symbolic criteria.
We will provide three equivalent criteria for the finiteness of Gibbs measures, all involving the convergence of some series, two of them being the geometric analogues of Sarig' criteria in terms of lengths of periodic orbits, the other one being a reformulation in terms of the action of the fundamental group $\Gamma$ of $M$ on its universal cover, which is more convenient in our geometric context.

To this end, we will not rely on having a symbolic coding for the geodesic flow, which cannot be ensured in the general case, but on geometrical estimates and \name{Kac's recurrence lemma}.
Recall that this lemma asserts that if a measure $\mu$ is conservative and $A$ is a measurable set with positive finite measure, then the measure of the whole space equals $\sum_{n \ge 1} n \mu(A_n)$, where $A_n$ denotes the subset of points of $A$ that return in $A$ after exactly $n$ iterations of the dynamics.
Therefore, the finiteness of $\mu$ is equivalent to the convergence of a certain series.
However, Kac Lemma is in general more an abstract result than an useful criterion, due to the difficulty to estimate the measure of the sets $A_n$, but the geometry of negatively curved manfolds will allow us to convert it into an explicit   efficient criterion.

Let us first give some notations in order to state our results.
We are interested in the geodesic flow $(g^t)$ on the unit tangent bundle $T^1 M$ of a complete manifold with pinched negative curvature.
Our results also hold when $M$ is a negatively curved orbifold, that is the quotient of a complete simply connected negatively curved manifold by a discrete nonelementary group $\Gamma$ which can contain torsion elements.
We study  this flow  in restriction to its \name{nonwandering set} $\Omega$.
We denote by $\P$ the set of periodic orbits, by $\P_\W$ the set of periodic orbits which intersect some set $\W \subset T^1 M$, and by $\P_\W'$ the set of primitive periodic orbits.
For a given periodic orbit $p \in \P_\W$, we denote by $l(p)$ its length, and by $n_\W(p)$ the "number of times that the geodesic $p$ crosses $\W$" (see section \ref{sec:numret} for a more precise definition).
We consider a potential $\fn{F}{T^1 M}{\R}$, i.e. a H\"older continuous map, and denote by $P(F)$ its pressure.
It is shown in \cite{PPS} how one can build a Gibbs measure $m_F$ associated with the potential $F$.

The Hopf-Tsuji-Sullivan theorem (see \cite[Theorem 5.4]{PPS}) asserts that a Gibbs measure $m_F$ is either ergodic and conservative (possibly finite or infinite) 
or totally dissipative, depending on whether the \name{Poincar\'e series} of $(\Gamma, F)$ is respectively divergent or convergent.
Therefore, before investigating the finiteness of a Gibbs measure, we investigate when it is ergodic and conservative.

\begin{defi}[Recurrence]
 \label{def:recurrent-potential}
 A potential $\fn{F}{T^1 M}{\R}$ is said to be \name{recurrent} when there exists an open relatively compact subset $\W$ of $T^1 M$, which intersects the nonwandering set $\Omega$, such that
 \[ \sum_{p \in \P} n_\W(p) e^{\int_p (F - P(F))} = +\infty \eqpunct{.} \]
\end{defi}

By analogy with the recurrence property for potentials on infinite subshifts developed in \cite{Sa01}, where a periodic orbit may be made of several periodic points beginning with the same letter, this integer $n_\W(p)$ can be interpreted as how many changes of origins are possible along the geodesic orbit $p$ so that the parameterization starts in $\W$.

Our first result is a reformulation of the divergence of the Poincar\'e series in terms of periodic orbits.

\begin{theo}[Ergodicity criterion]
 \label{thm:erg}
 Let $M$ be a negatively curved orbifold, and $\fn{F}{T^1 M}{\R}$ a H\"older continuous potential with $P(F) < +\infty$.
 Then the Gibbs measure $m_F$ is ergodic and conservative if and only if $F$ is recurrent.
\end{theo}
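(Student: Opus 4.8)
The plan is to feed the Hopf--Tsuji--Sullivan dichotomy quoted above (\cite[Theorem 5.4]{PPS}) into a purely geometric comparison between the Poincar\'e series of $(\Gamma,F)$ and the series of Definition~\ref{def:recurrent-potential}. By that dichotomy, $m_F$ is ergodic and conservative exactly when the Poincar\'e series $\sum_{\gamma\in\Gamma}e^{\int_o^{\gamma o}(\wt F-P(F))}$ diverges, so it is enough to show that, for one (equivalently any) relatively compact open set $\W$ meeting $\Omega$, this divergence is equivalent to $\sum_{p\in\P}n_\W(p)\,e^{\int_p(F-P(F))}=+\infty$. Fix such a $\W$, small enough that relative compactness and pinched curvature give uniform geometric control, choose a lift $\wt\W\subset T^1\wt M$ and a base point $o$ over $\W$. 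The first step is to rewrite the periodic-orbit series as a sum over hyperbolic elements: periodic orbits in $\Omega$ correspond to conjugacy classes of primitive hyperbolic elements of $\Gamma$, the iterate $p^k$ of a primitive $p$ corresponding to $\gamma_p^k$, with $l(p^k)=k\,l(p)$, $\int_{p^k}(F-P(F))=k\int_p(F-P(F))$ and $n_\W(p^k)=k\,n_\W(p)$; moreover $n_\W(p)$ counts the $\Gamma$-translates of $\wt\W$ met by the axis of $\gamma_p$ within one period, i.e.\ modulo $\langle\gamma_p\rangle$. Using the $\Gamma$-action to carry each such crossing translate onto $\wt\W$ itself identifies pairs (periodic orbit, marked crossing of $\W$) with hyperbolic elements of $\Gamma$ whose axis meets $\wt\W$, the $F$-period becoming $\int_x^{\gamma x}(\wt F-P(F))$ for $x\in\wt\W$ on that axis. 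After this (harmless) primitive/power bookkeeping the recurrence series becomes $\sum_\gamma e^{\int_x^{\gamma x}(\wt F-P(F))}$, the sum running over hyperbolic $\gamma\in\Gamma$ with axis through $\wt\W$.

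One inequality between this localized axis series and the full Poincar\'e series is soft. Every hyperbolic $\gamma$ with axis meeting $\wt\W$ already occurs in the Poincar\'e series, and when $o$ and a point $x$ of the axis both lie over $\W$ the segment $[o,\gamma o]$ fellow-travels the axis at bounded distance, so $\int_x^{\gamma x}\wt F$ and $\int_o^{\gamma o}\wt F$ differ by a bounded amount thanks to H\"older continuity of $F$ (the Gibbs cocycle estimates of \cite{PPS}). Hence divergence of the localized series forces divergence of the Poincar\'e series, which gives one implication of the theorem.

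For the reverse comparison --- which is the crux --- I would dominate the Poincar\'e series by the localized one. Any $\gamma\in\Gamma$ with $d(o,\gamma o)$ large gives a long geodesic segment from $o$ to $\gamma o$; since $\W$ meets the nonwandering set $\Omega$, a shadow/orbital-counting argument (in the spirit of \cite{Roblin}) lets one cut this segment at its successive passages through $\Gamma$-translates of $\wt\W$ and express the weight of $\gamma$ through a geometric-type series built from elementary returns from $\wt\W$ to $\wt\W$. Each return segment that comes back close to itself in $T^1M$ then shadows, by the Anosov closing lemma in pinched negative curvature, a closed geodesic of comparable length and --- again by the H\"older estimates --- comparable $F$-integral, which feeds back into the periodic-orbit series. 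Carrying out this decomposition with constants uniform in the returns --- using the bounded geometry coming from pinched curvature and relative compactness of $\W$, and the H\"older control of $\int\wt F$ along shadowing geodesics --- is the main technical obstacle. Once it is in place, divergence of the Poincar\'e series forces divergence of the localized series and hence of the recurrence series for $\W$; combined with the previous paragraph this proves both implications, and in particular shows that the property in Definition~\ref{def:recurrent-potential} does not depend on the choice of $\W$.
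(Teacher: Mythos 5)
Your reduction to the Hopf--Tsuji--Sullivan dichotomy and your ``soft'' direction (localized axis series $\le$ Poincar\'e series, via fellow-travelling of $[o,\gamma o]$ with the axis and the H\"older/Gibbs-cocycle bound on $\bigl|\int_x^{\gamma x}\wt F-\int_o^{\gamma o}\wt F\bigr|$) match the paper's first lemma. But the hard direction, which you acknowledge is the crux, has a genuine gap. Your plan is to cut the segment $[o,\gamma o]$ at its ``successive passages through $\Gamma$-translates of $\wt\W$'' and then close up each return segment by the Anosov closing lemma. Two things go wrong. First, for a general $\gamma\in\Gamma$ the segment $[o,\gamma o]$ need not meet $\Gamma\wt\W$ anywhere except near its endpoints (it can travel far from the orbit of $\W$), so there is no decomposition into returns to begin with. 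Second, even when such passages exist, a segment going from $\wt\W$ to some $g\wt\W$ only returns close to itself in $M$, not in $T^1M$: the closing lemma requires proximity of the \emph{vectors}, and nothing in your decomposition controls the directions at the two visits. The paper avoids both problems by not decomposing at all: Lemma \ref{lem:forcehyper} uses minimality of the $\Gamma$-action on $\Lambda(\Gamma)$ to produce a fixed finite list $g_1,\dots,g_k$ such that, for every $\gamma$ outside a finite set, some $\gamma'=g_j^{-1}\gamma g_i$ has $\gamma'^{-1}x$ and $\gamma' x$ in two prescribed cones whose connecting geodesics all cross $\wt\W$; the closing-lemma variant (Lemma \ref{lem:hyper}) then applies to the \emph{whole} element $\gamma'$, making it hyperbolic with axis through $\wt\W$, and the map $\gamma\mapsto\gamma'$ is at most $k^2$-to-one, which gives the domination of the Poincar\'e series by the localized one directly. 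Your sketch does not contain this idea, and as written the decomposition-plus-closing strategy does not go through.

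A secondary but genuine error: you assert $n_\W(p^k)=k\,n_\W(p)$. By the paper's definition $n_{\wt\W}(\gamma)$ counts conjugates of $\gamma$ whose axis meets $\wt\W$, and since $A_{\gamma^k}=A_\gamma$ and conjugation commutes with powers, one has $n_{\wt\W}(\gamma^k)=n_{\wt\W}(\gamma)$, i.e.\ $n_\W(p^k)=n_\W(p)$ --- the number of returns is \emph{invariant} under taking powers, not multiplicative. With the correct convention, the identification of pairs (periodic orbit with multiplicity, marked crossing) with hyperbolic elements whose axis meets $\wt\W$ is exactly the reindexing
\[ \sum_{\substack{\gamma\in\Gamma_h\\ A_\gamma\cap\wt\W\neq\emptyset}} e^{\int_x^{\gamma x}\wt F}=\sum_{p\in\P}\sum_{i=1}^{n_p}e^{\int_x^{\gamma_{p,i}x}\wt F} \]
used in the paper; with your convention the bookkeeping would introduce a spurious factor of the multiplicity. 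You should fix this even though it does not affect the overall architecture of the argument.
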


Unfortunately, this equivalence is unlikely to be very useful in practice.
The main interest of Theorem \ref{thm:erg} is to enlighten the very strong analogy between our results on geodesic flows on noncompact manifolds and Sarig's work in symbolic dynamics over a countable alphabet, despite the fact that no general coding result of the geodesic flow by a symbolic dynamical system is known in this context.

\begin{defi}[Positive recurrence for the geodesic flow]
 \label{def:positive-recurrent}
 Let $M$ be a negatively curved orbifold with pinched negative curvature.
 A H\"older continuous potential $\fn{F}{T^1 M}{\R}$ with $P(F) < +\infty$ is said \name{positive recurrent} relatively to a set $\W \subset T^1 M$ intersecting $\Omega$ and the integer $N \ge 1$ if
it is recurrent and
 \[ \sum_{\substack{p \in \P_\W' \\ n_\W(p) \le N}} l(p) e^{\int_p (F - P(F))} < +\infty \eqpunct{.} \]
\end{defi}

Our main finiteness criterion is the following result, which is the geometric analogue of the symbolic criterion of Sarig \cite{Sa01}.

\begin{theo}[First finiteness criterion]
 \label{thm:crit3}
 Let $M$ be a negatively curved orbifold, and $\fn{F}{T^1 M}{\R}$ a H\"older continuous potential with $P(F) < +\infty$.
 Denote by $m_F$ its associated Gibbs measure.
 \begin{enumerate}
  \item[$(i)$\:]  If $F$ is recurrent, and there exist an open relatively compact set $W \subset M$ meeting $\pi(\Omega)$ such that $F$ is positive recurrent with respect to $\W = T^1 W$ and some $N \ge K_\W$ (where $K_\W$ depends only on $\diam W$ and on the geometry of $M$), then $m_F$ is finite.
  \item[$(ii)$\:] If $m_F$ is finite, then $F$ is recurrent, and positive recurrent with respect to $\W = T^1 W$ and any $N \ge 1$ for every open relatively compact set $W \subset M$ meeting $\pi(\Omega)$.
 \end{enumerate}
\end{theo}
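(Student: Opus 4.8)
The plan is to combine Kac's recurrence lemma with the Gibbs property of $m_F$ and the Anosov closing lemma in pinched negative curvature. First I would reduce both statements to the conservative case: in $(ii)$ a finite invariant measure is conservative by Poincaré recurrence, so the Hopf--Tsuji--Sullivan theorem together with Theorem~\ref{thm:erg} already force $F$ to be recurrent, while in $(i)$ recurrence is part of the hypothesis; thus $m_F$ is ergodic and conservative throughout. Fix $W$ open relatively compact meeting $\pi(\Omega)$ — in $(i)$ we are free to take $W$ a metric ball, which is geodesically convex — and set $\W=T^1W$; since Gibbs measures have full support on $\Omega$ and $\W$ is relatively compact, $0<m_F(\W)<+\infty$.

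Next I would discretize the flow by its returns to $\W$, obtaining a cross-section $\Sigma$ of $\W$, a first-return map $\Phi$ and a return-time function $\tau$, for which Kac's lemma gives $m_F(\Omega)=\int_\Sigma\tau\,d\nu$, $\nu$ being the induced measure. Decomposing $\Sigma$ modulo $\nu$-null sets according to the shape, at a fixed scale $\epsilon$, of the first-return arc $\gamma$ it carries, say $\Sigma=\bigsqcup_\gamma\Sigma(\gamma)$, the defining Gibbs inequalities give $\nu(\Sigma(\gamma))\asymp e^{\int_\gamma(F-P(F))}$ with constants depending only on $\epsilon$, $\diam W$ and the curvature bounds — uniform in $\gamma$ because the estimate only sees the endpoints of $\gamma$, which stay in the fixed compact set $\overline\W$ — while $\tau\asymp l(\gamma)$ on $\Sigma(\gamma)$. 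Hence
\[ m_F(\Omega)\;\asymp\;\sum_\gamma l(\gamma)\,e^{\int_\gamma(F-P(F))}\eqpunct{,}\qquad m_F(\W)\;\asymp\;\sum_\gamma e^{\int_\gamma(F-P(F))}\eqpunct{,} \]
so in particular $\sum_\gamma e^{\int_\gamma(F-P(F))}<+\infty$ in all cases, which will drive the rest of the argument.

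Then I would pass from arcs to periodic orbits. A first-return arc $\gamma$ enters $\W$, makes one long excursion away from $W$, and re-enters; joining its two endpoints inside $W$ by a path of length $O(\diam W)$ and applying the closing lemma in pinched negative curvature yields a primitive periodic orbit $p(\gamma)$ with $l(p)=l(\gamma)+O(\diam W)$, with $\int_p(F-P(F))=\int_\gamma(F-P(F))+O(\diam W)$ by Hölder continuity of $F$ and shadowing, and crossing $\W$ at most $K_\W$ times, $K_\W$ depending only on $\diam W$ and the geometry (convexity of balls plus properness of $\Gamma$) — this is exactly the source of the threshold in the statement. Splitting $\sum_\gamma l(\gamma)e^{\int_\gamma(F-P(F))}$ into short arcs (contributing $\lesssim\sum_\gamma e^{\int_\gamma(F-P(F))}<+\infty$, a constant) and long arcs (each essentially in bijection with a primitive $p$ with $n_\W(p)\le K_\W$ and $l(\gamma)e^{\int_\gamma(F-P(F))}\asymp l(p)e^{\int_p(F-P(F))}$) bounds $m_F(\Omega)$, up to a constant, by $\sum_{p\in\P_\W',\,n_\W(p)\le K_\W}l(p)e^{\int_p(F-P(F))}$; since the hypothesis of $(i)$ for any $N\ge K_\W$ contains this sum, $m_F$ is then finite. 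Conversely, if $m_F$ is finite, every primitive $p$ with $n_\W(p)=k$ decomposes into $k$ first-return arcs that concatenate cyclically, and writing $l(p)e^{\int_p(F-P(F))}=\sum_i l(\gamma_i)\prod_j e^{\int_{\gamma_j}(F-P(F))}$ a cyclic-symmetry computation bounds $\sum_{n_\W(p)=k}l(p)e^{\int_p(F-P(F))}$ by $\sum_\gamma l(\gamma)e^{\int_\gamma(F-P(F))}$ times a power of $\sup_x\sum_{\gamma\text{ from }x}e^{\int_\gamma(F-P(F))}$ — a finite constant, indeed $\asymp1$ by conservativity — so summing over the finitely many $k\le N$ proves positive recurrence for every $N$, which is $(ii)$.

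I expect the closing step, with its bookkeeping, to be the main obstacle: one needs a form of the closing lemma controlling simultaneously the length, the integral of $F-P(F)$, and the number of $\W$-crossings, and the arc-to-orbit correspondence must be organized so that orbits making several long excursions — which never arise from closing up a single first-return arc, and hence show up only on the $(ii)$ side, handled by the transfer-operator bound above — are counted with the right weight. A secondary, purely measure-theoretic, difficulty is to make the cross-section and first-return construction rigorous for a continuous-time flow: a.e.\ well-definedness of $\Phi$ and $\tau$, a positive lower bound on $\tau$, and negligibility of orbits tangent to $\partial\W$ or spending a set of times of positive measure there, all of which rely on non-atomicity and regularity of $m_F$, together with a check that the Gibbs constants do not degrade as the arcs become long.
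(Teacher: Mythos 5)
Your architecture --- Kac's lemma, a Gibbs-type estimate for the measure of the set of vectors with a prescribed first-return excursion, a closing lemma that controls simultaneously length, weight and number of $\W$-crossings, and a convolution bound for orbits making several excursions --- is exactly the paper's, except that the paper routes everything through the universal cover: first-return excursions are encoded as elements of $\Gamma_{\wt{W}}$ (Theorem \ref{thm:crit1}), orbits with at most $N$ crossings as $\Gamma_{\wt{W}}^\star(N)$ (Theorem \ref{thm:crit2}), and Theorem \ref{thm:crit3} is then a translation back to periodic orbits. One genuine simplification you should borrow: the paper never constructs a cross-section, but instead proves an $\varepsilon$-hitting-time version of Kac's lemma for flows (Proposition \ref{lem:kac}), which makes all the boundary/tangency issues you list in your last paragraph evaporate.

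The one step you should not wave through is $\nu(\Sigma(\gamma))\asymp e^{\int_\gamma(F-P(F))}$. The Gibbs property \eqref{Gibbs} controls dynamical balls $B(v,T,\varepsilon)$, i.e.\ tubes around a \emph{single} orbit segment, whereas $\Sigma(\gamma)$ is the full set of vectors whose excursion has a prescribed combinatorial type; "the estimate only sees the endpoints" does not by itself give a two-sided bound. The paper gets the upper bound by writing $\wt{m}_F$ in Hopf coordinates and applying Mohsen's Shadow Lemma to $\mathcal{U}_{x,R,\gamma}\subset \O_{\gamma x}B(x,R+1)\times\O_xB(\gamma x,R+1)\times I_{2R}$ (Lemma \ref{lem:masstubemaj}); the lower bound (Lemma \ref{lem:masstubemin}) is strictly harder and uses the shadow product lemma \ref{lem:shadowprod}, which only holds after replacing $\gamma$ by $h\gamma g$ with $g,h$ ranging over a finite set and requires the ball to meet the nonwandering set. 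This is where most of the work in the paper lives. Your other two "obstacles" are resolved exactly as you guess: the bound $K_\W$ on the crossings of the closed-up orbit is Lemma \ref{lem:lrsa} (a perturbed geodesic can only pick up orbit copies of $W$ within distance $\rho-\log\varepsilon$ of its endpoints, and properness makes that set finite), and your cyclic/convolution bound for orbits with $n_\W(p)=k$ is the induction on $N$ in the first lemma of Section \ref{sec:fin2}, where $\gamma\in\Gamma_{\wt W}^\star(N+1)$ is split as $\gamma=gh$ with $g,h\in\Gamma_{\wt W}^\star(N)$.
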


In particular, when $F$ is recurrent, then $F$ is positive recurrent with respect to $\W = T^1 W$ for some open relatively compact set $\wt{W}$ which intersects $\pi(\wt{\Omega})$ and for some $N \ge K_\W$ if and only if it is positive recurrent relatively to any such set.

The proof of Theorem \ref{thm:crit3} follows from Theorem \ref{thm:crit1} below, which expresses the finiteness of $m_F$ in terms of the action of $\Gamma$ on $\wt{M}$ instead of periodic orbits on $T^1 M$.
This criterion does not appear in Sarig's work because it has no meaning in a purely symbolic setting.
However, it is very useful in our geometrical context.

We start by introducing a notation.
Given a subset $\wt{W} \subset \wt{M}$, denote by
\[ \Gamma_{\wt{W}} = \set{\gamma \in \Gamma}{\exists y, y' \in \wt{W}, \itv[cc]{y}{\gamma y'} \cap g \wt{W} \neq \emptyset \Rightarrow \overline{\wt{W}} \cap g \overline{\wt{W}} \neq \emptyset \eqor \gamma \overline{\wt{W}} \cap g \overline{\wt{W}} \neq \emptyset} \]
the set of elements $\gamma$ such that there exists a geodesic starting from $\wt{W}$ and finishing in $\gamma \wt{W}$ that meets the orbit $\Gamma \wt{W}$ only at the beginning or at the end.

\begin{defi}[Positive recurrence in the universal cover]
 \label{def:pos-rec-bis}
 The pair $(\Gamma, \wt{F})$ is said to be \name{positive recurrent} with respect to a set $W \subset \wt{M}$ such that $\W = T^1 W$ intersects $\Omega$ if $F$ is recurrent and
 \[ \exists x \in \wt{M}, \sum_{\gamma \in \Gamma_{\wt{W}}} d(x, \gamma x) e^{\int_x^{\gamma x} (\wt{F} - P(F))} < +\infty \eqpunct{.} \]
\end{defi}

\begin{theo}[Second finiteness criterion]
 \label{thm:crit1}
 Let $M$ be a negatively curved orbifold with pinched negative sectional curvature.
 Let $\fn{F}{T^1 M}{\R}$ be a H\"older continuous potential with $P(F) < +\infty$, and denote by $m_F$ the associated Gibbs measure on $T^1 M$.
 \begin{enumerate}
  \item[$(i)$\:]  If $F$ is recurrent, and if $(\Gamma, \wt{F})$ is positive recurrent with respect to some open relatively compact set $\wt{W} \subset \wt{M}$ meeting $\pi(\wt{\Omega})$, then $m_F$ is finite.
  \item[$(ii)$\:] If $m_F$ is finite, then $F$ is recurrent, and $(\Gamma, \wt{F})$ is positive recurrent with respect to any open relatively compact set $\wt{W} \subset \wt{M}$ meeting $\pi(\wt{\Omega})$.
 \end{enumerate}
\end{theo}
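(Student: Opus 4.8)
The whole proof will be an application of Kac's recurrence lemma, the geometry of $\wt{M}$ being used to identify the first--return excursions to $\W$ with the elements of $\Gamma_{\wt{W}}$. First I would dispose of the recurrence hypotheses. In case $(i)$, $F$ is assumed recurrent, so by Theorem \ref{thm:erg} the measure $m_F$ is conservative and ergodic. In case $(ii)$, $m_F$ is finite, hence conservative by the Poincar\'e recurrence theorem; the Hopf--Tsuji--Sullivan theorem then rules out total dissipativity, so $m_F$ is again conservative and ergodic, and Theorem \ref{thm:erg} shows that $F$ is recurrent --- which is already the first half of the conclusion of $(ii)$. Thus in both cases it remains only to prove that $m_F$ is finite if and only if the series of Definition \ref{def:pos-rec-bis} converges, and in both cases $m_F$ is a conservative, ergodic, Radon measure with $0 < m_F(\W) < +\infty$ for the image $\W$ in $T^1 M$ of $T^1\wt{W}$, where $\wt{W} \subset \wt{M}$ is the given, or an arbitrary, open relatively compact set meeting $\pi(\wt{\Omega})$. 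Fix such a $\wt{W}$ and a base point $x \in \wt{W}$.

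Next I would apply Kac's lemma to the first return of the geodesic flow to $\W$; some care is needed because $\W$ is an open flow box and not a genuine cross section, so one first passes to an honest transverse section (or discretizes the flow into steps of a fixed size), which makes the return time well defined and comparable, up to a bounded additive error, to the length of the excursion outside $\Gamma\W$. Lifting such an excursion --- a maximal piece of orbit between two consecutive visits to $\W$ --- to $\wt{M}$ and normalizing its starting point to lie in $\wt{W}$, it becomes a geodesic segment from $\wt{W}$ to $\gamma\wt{W}$ which meets the orbit $\Gamma\wt{W}$ only at its two endpoints; by the very definition of $\Gamma_{\wt{W}}$ this forces $\gamma \in \Gamma_{\wt{W}}$, and conversely every such $\gamma$ is realised. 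Decomposing $\W$ (up to an $m_F$--null set) as $\bigsqcup_{\gamma \in \Gamma_{\wt{W}}} \W_\gamma$ according to the element realised by the first excursion, and writing $\ell(\gamma)$ for the corresponding excursion length, Kac's formula gives $m_F(T^1 M) = \sum_{\gamma \in \Gamma_{\wt{W}}} \ell(\gamma)\, m_F(\W_\gamma)$ up to a bounded multiplicative error on each term.

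It then remains to estimate the two factors. The excursion segment joins a point of $\wt{W}$ to a point of $\gamma\wt{W}$, hence $\ell(\gamma) = d(x,\gamma x) + O(1)$ with error at most $2\,\diam\wt{W}$; and since $F$ is H\"older and geodesic segments with endpoints at bounded distance fellow--travel, the integral of $\wt{F} - P(F)$ along the excursion equals $\int_x^{\gamma x}(\wt{F} - P(F)) + O(1)$. For the mass of $\W_\gamma$, the local product structure of $m_F$ in Hopf coordinates --- locally it is equivalent to the product of the two Patterson--Sullivan--Gibbs densities $\mu_F^{\pm}$ with arclength along the flow --- together with the shadow lemma for these densities, both established in \cite{PPS}, give
\[ m_F(\W_\gamma) \;\asymp\; e^{\int_x^{\gamma x}(\wt{F} - P(F))} \]
with multiplicative constants depending only on $\wt{W}$ and on the curvature pinching of $M$. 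Substituting into Kac's formula and absorbing the additive errors --- harmless since $d(x,\gamma x)$ is bounded away from $0$ outside a finite subset of $\Gamma_{\wt{W}}$, by discreteness of $\Gamma$ --- yields
\[ m_F(T^1 M) \;\asymp\; \sum_{\gamma \in \Gamma_{\wt{W}}} d(x,\gamma x)\, e^{\int_x^{\gamma x}(\wt{F} - P(F))}, \]
where $\asymp$ means in particular that one side is finite exactly when the other is. This is precisely the equivalence between finiteness of $m_F$ and positive recurrence of $(\Gamma,\wt{F})$ relative to $\wt{W}$; and since it holds with uniform constants for every admissible $\wt{W}$, finiteness of $m_F$ forces positive recurrence relative to all of them (proving $(ii)$), while positive recurrence relative to a single one forces $m_F(T^1 M) < +\infty$ (proving $(i)$).

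The hard part will be the measure estimate: bounding the $m_F$--mass of $\W_\gamma$ --- morally the $\mu_F^{\pm}$--mass of the geodesics that enter $\wt{W}$ and exit through $\gamma\wt{W}$ without meeting $\Gamma\wt{W}$ in between --- above and below by $e^{\int_x^{\gamma x}(\wt{F} - P(F))}$ with constants uniform in $\gamma$, which requires combining the precise form of the shadow lemma with the non--meeting condition that defines $\Gamma_{\wt{W}}$. Setting up Kac's lemma cleanly for an open flow box rather than a section, and checking that the discrepancies in the excursion lengths and potential integrals remain additive so that they disappear in the final comparison, are the remaining, more routine, technical points.
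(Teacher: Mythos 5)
Your overall architecture is exactly the one the paper uses: Kac's lemma for the flow applied to the flow box $\W_\varepsilon$, identification of excursions with elements of $\Gamma_{\wt{W}}$, and shadow estimates for the mass of the ``tubes'' $\mathcal{U}_{x,R,\gamma}$ of vectors travelling from $\wt{W}$ to $\gamma\wt{W}$. But two of the steps you wave at do not hold in the form you state them, and one of them is the crux of direction $(ii)$. First, the two-sided uniform estimate $m_F(\W_\gamma)\asymp e^{\int_x^{\gamma x}(\wt{F}-P(F))}$ is not available: Mohsen's shadow lemma gives the upper bound for any radius, but its lower bound requires $R\ge R_0$, and for a small set $\wt{W}$ the shadows $\O_{\gamma x}B(x,R)$ may carry very little $\mu^{F\circ\iota}_x$-mass for particular $\gamma$. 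One must instead pre- and post-compose $\gamma$ by elements of a finite set $G$ so that the two shadows land in fixed neighbourhoods of two limit points (the shadow product lemma, Lemma \ref{lem:shadowprod}); the lower bound then holds for $\mathcal{U}_{gx,R,\gamma h g^{-1}}$ for \emph{some} $g,h\in G$, not for $\mathcal{U}_{x,R,\gamma}$ itself. This same finite-set bookkeeping also enters your identification of excursions with $\Gamma_{\wt{W}}$: a first return can re-enter copies $g\wt{W}$ overlapping $\overline{\wt{W}}$ or $\gamma\overline{\wt{W}}$ near its endpoints, so the excursion only produces an element of $\Gamma_{\wt{W}}$ after conjugating by elements of a finite set (Lemma \ref{lem:recorbitlift}). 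These are repairable, but they destroy the clean disjoint decomposition $\W=\bigsqcup_\gamma\W_\gamma$; the paper replaces it by a bounded-multiplicity covering (Lemma \ref{lem:bigufiniteint}).

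The genuine gap is in direction $(ii)$, in the inclusion you need of the tube over $\gamma\in\Gamma_{\wt{W}}$ into the set of vectors with return time comparable to $d(x,\gamma x)$. Membership of $\gamma$ in $\Gamma_{\wt{W}}$ only says that \emph{one} geodesic segment $\itv[cc]{y}{\gamma y'}$ avoids the middle copies of $\Gamma\wt{W}$; a nearby vector of the tube $\mathcal{U}_{\wt{\mathcal{W}},\gamma}$ may perfectly well pass through some $g\wt{W}$ halfway along, in which case its $\varepsilon$-return time to $\W$ is much shorter than $d(x,\gamma x)$ and it contributes to a Kac term with small $k$ --- your final comparison then fails, because a single small-$k$ Kac term would have to dominate infinitely many terms $d(x,\gamma x)e^{\int_x^{\gamma x}(\wt{F}-P(F))}$ with $d(x,\gamma x)\to\infty$. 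The missing idea is the long-range avoidance Lemma \ref{lem:lrsa}: geodesics whose endpoints fellow-travel the witnessing segment can only meet the $\varepsilon$-\emph{shrunken} balls $gB(x,R-\varepsilon)$ within a bounded distance of the two endpoints. This forces one to work with a nested pair of sets ($B(x,R_1)\subset B(x,R_2)\subset\wt{W}$ in the paper, or $\wt{W}'\Subset\wt{W}$), take the cross-section for Kac's lemma to be the smaller one, and cover the tube by the sets $\mathcal{E}_{gx,R-\varepsilon,\gamma',\varepsilon}$ whose projections do have the right return time (Lemmas \ref{lem:tubesplit} and \ref{lem:bigeprojmeas}). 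You correctly flag ``combining the shadow lemma with the non-meeting condition'' as the hard part, but without the fellow-travelling/shrinking mechanism the non-meeting condition simply does not propagate from the witnessing geodesic to the whole tube, and the proof of $(ii)$ does not close.
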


In particular, when $F$ is recurrent, then $(\Gamma, \wt{F})$ is positive recurrent relatively to some open relatively compact set $\wt{W}$ which intersects $\pi(\wt{\Omega})$ if and only if it is positive recurrent relatively to any such set.

In \cite{Sa99}, Sarig proved a finiteness criterion, established earlier than the symbolic analogue of Theorem \ref{thm:crit3}.
This criterion seems less practical than his later work.
However, we wanted to show a complete analogy between the symbolic and our geometric settings, so we established the same criterion in our situation.
The proof is different from the previous criteria and relies on equidistribution of weighted periodic orbits.
This criterion requires the assumption that the geodesic flow is topologically mixing on $\Omega$.
This extremely classical assumption is satisfied in most interesting situations, even if its validity is open in general.

\begin{defi}
 The potential $F$ is said to be \name{positive recurrent in the first sense of Sarig \cite{Sa99}} if there exists an open relatively compact subset $\W$
 of $T^1 M$ meeting $\Omega$, and constants $c > 0$,   $t_0 \ge 0$ and   $C > 0$ such that
 \[ \forall t \ge t_0, \frac{1}{C} \le \sum_{\substack{p \in \P \\ t - c < l(p) \le t}} n_\W(p) e^{\int_p (F - P(F))} \le C \eqpunct{.} \]
\end{defi}

\begin{theo}[Third finiteness criterion]
 \label{thm:crit4}
 Let $M$ be a negatively curved complete orbifold, with pinched negative curvature.
 Assume that its geodesic flow is topologically mixing.
 Let $\fn{F}{T^1 M}{\R}$ be a H\"older continuous potential with $P(F) < +\infty$, and denote by $m_F$ its associated Gibbs measure on $T^1 M$.
 Then $m_F$ is finite if and only if $F$ is positive recurrent in the first sense of Sarig with respect to some open relatively compact set $\W$ meeting $\Omega$.
\end{theo}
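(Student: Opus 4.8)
\textit{Proof proposal for Theorem \ref{thm:crit4}.}

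The plan is to recast the finiteness of $m_F$ as a renewal-type statement about the excursions of geodesics out of $\W$, and to deduce both implications from it; the two engines are the equidistribution of weighted periodic orbits under topological mixing and Kac's lemma. First I would reduce to the case where $F$ is recurrent: if $F$ is positive recurrent in the first sense of Sarig, then summing the lower bounds over consecutive length windows gives $\sum_{p\in\P} n_\W(p)\,e^{\int_p(F-P(F))}=+\infty$, so $F$ is recurrent; and if $m_F$ is finite it is conservative (Poincar\'e recurrence), hence recurrent by Theorem \ref{thm:erg} and the Hopf--Tsuji--Sullivan dichotomy. So I may assume $F$ recurrent, whence by Theorem \ref{thm:erg} $m_F$ is ergodic and conservative and only its finiteness is in question. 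Fix a relatively compact open set $\W$ meeting $\Omega$ (shrinking the given one if needed), a cross-section $\Sigma\subset\W$ of the geodesic flow with first-return time $\tau$ bounded below, and the transverse measure $m_\Sigma$ on $\Sigma$ induced by $m_F$; since $\Sigma$ is relatively compact $m_\Sigma$ has finite total mass, and Kac's lemma reads $\|m_F\|=\int_\Sigma \tau\,dm_\Sigma$.

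The core of the argument is the identity
\[ \lim_{t\to+\infty}\ \sum_{\substack{p\in\P\\ t-c < l(p) \le t}} n_\W(p)\,e^{\int_p(F-P(F))} \;=\; \ell(\W,c), \]
where $\ell(\W,c)=0$ when $\|m_F\|=+\infty$ and $0<\ell(\W,c)<+\infty$ when $\|m_F\|<+\infty$. The Gibbs property of $m_F$ controls, up to bounded multiplicative error, the $m_\Sigma$-mass of vectors that return to $\Sigma$ after a time in $(t-c,t]$ by a weighted count of the corresponding orbit segments, with weight $e^{\int(F-P(F))}$ along each segment; the shadowing/closing lemma in pinched negative curvature turns each such almost-closed segment into a genuine periodic orbit of length $\approx t$ with essentially the same weight (the error staying bounded because $F$ is H\"older and the orbits exponentially close); and topological mixing, through the equidistribution of weighted periodic orbits of \cite{Roblin} (extended to H\"older potentials in \cite{PPS}), makes this correspondence asymptotically a bijection, each periodic orbit being recovered once for every crossing of $\W$ (whence the factor $n_\W(p)$, the comparison between $n_\W(p)$ and the number of returns to $\Sigma$ being the content of section \ref{sec:numret}). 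The same circle of ideas identifies $\ell(\W,c)$, up to a fixed positive constant depending only on $\W$, $c$ and the geometry, with the limit of the expected number of returns to $\Sigma$ during a time window of length $c$; by Blackwell's renewal theorem applied to the successive returns to $\Sigma$ --- whose return time $\tau$ is non-lattice, again because the geodesic flow is topologically mixing, with mean $\|m_F\|/\|m_\Sigma\|$ under $m_\Sigma/\|m_\Sigma\|$ by Kac's lemma --- this limit equals a fixed positive multiple of $1/\|m_F\|$, with the convention $1/(+\infty)=0$.

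Granting the displayed limit, the theorem is immediate. If $m_F$ is finite then $0<\ell(\W,c)<+\infty$, so for all large $t$ the window sum lies between two fixed positive numbers, i.e. $F$ is positive recurrent in the first sense of Sarig. Conversely, if the window sums satisfy $\sum_{t-c < l(p)\le t} n_\W(p)\,e^{\int_p(F-P(F))}\ge 1/C$ for all $t\ge t_0$, then $\ell(\W,c)\ge 1/C>0$, which forces $\|m_F\|<+\infty$.

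The step I expect to be the main obstacle is making these comparisons quantitatively honest \emph{without} knowing in advance that $m_F$ is finite, since the equidistribution theorems of \cite{Roblin,PPS} are stated for finite Gibbs measures: one must control the closing-lemma error and the Gibbs error uniformly over an unbounded number of consecutive excursions out of $\W$, so that the renewal bookkeeping survives in the infinite-measure regime --- equivalently, one needs a version of the equidistribution and escape-of-mass statements localized to length windows $(t-c,t]$ rather than to cumulative counts $l(p)\le t$. The remaining points --- the precise comparison of $n_\W(p)$ with the number of returns to $\Sigma$ (section \ref{sec:numret}), the bookkeeping of iterated periodic orbits inside the renewal count, and the sign of $P(F)$ in the normalizations --- are routine given the machinery developed earlier in the paper.
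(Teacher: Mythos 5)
Your overall shape is right---compare the windowed sums $Z_{c,t}(F,\W)$ with the mass of $m_F$ via weighted periodic-orbit counts, and use topological mixing to make the comparison asymptotic---and you have correctly located the hard point: one must control the window sums \emph{without} assuming $m_F$ finite. But the tool you propose to close that hole does not work, and the tool that actually closes it is absent from your write-up. Blackwell's renewal theorem requires the successive increments (here, the return times to your cross-section $\Sigma$) to be independent and identically distributed; the returns of the geodesic flow to a cross-section are stationary for the induced measure but are not independent, so there is no renewal process to which the theorem applies. Making the returns ``asymptotically independent'' in the needed quantitative sense is tantamount to a mixing local limit theorem for the special flow over $\Sigma$, i.e.\ to the very equidistribution-in-windows statement you are trying to establish; the argument is circular. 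In addition, your claimed ``core identity'' (an actual limit $\ell(\W,c)$ for $Z_{c,t}$) is stronger than what is true or needed: since $n_\W(p)$ is only comparable, up to multiplicative constants, to $\frac{1}{m(p)}\int\varphi\,d\mathcal{L}_p$ for a test function $\varphi$, one only gets two-sided bounds on $Z_{c,t}$, which suffice for Sarig's condition.

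The paper's route avoids cross-sections, Kac's lemma and renewal theory entirely. The key input is that the weighted periodic-orbit measures $\nu_{F,t}$ (counting non-primitive orbits with weight $e^{\int_p F}\frac{1}{m(p)}\mathcal{L}_p$) converge weak-star to $m_F/\|m_F\|$ when $m_F$ is finite \emph{and to $0$ when $m_F$ is infinite}; both cases are contained in the first step of the proof of \cite[Theorem 9.11]{PPS}, so your worry that the equidistribution results are only available for finite Gibbs measures is unfounded---the vanishing statement in the infinite case is exactly what replaces your renewal argument. From this one deduces the windowed version $\zeta_{F,c,t}\to m_F/\|m_F\|$ or $0$ (via \cite[Lemma 9.5]{PPS} in the finite case, and by differencing $\nu_{F+\kappa,t}-\nu_{F+\kappa,t-c}$ in the infinite case), and then the theorem follows by testing against two functions: one supported in a small ball $\W_i=T^1B(\overline{x}_i,R_i)$ to get $\int\varphi\,d\mathcal{L}_p\le 2R\,n_\W(p)m(p)$ (hence the upper bound on $Z_{c,t}$ and, in the infinite case, $Z_{c,t}\to 0$, contradicting Sarig positive recurrence), and one equal to $1$ on a slightly enlarged ball to get $\int\varphi\,d\mathcal{L}_p\ge \frac{2\varepsilon}{S}n_\W(p)m(p)$ (hence the lower bound). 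To repair your proof you would need to replace the Blackwell step by this vanishing-in-the-infinite-case equidistribution, at which point the cross-section and Kac's lemma become superfluous.
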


When this theorem holds, $F$ is actually positive recurrent in the first sense of Sarig with respect to any open relatively compact set $\W$ meeting $\Omega$.

The structure of this paper goes at follows.
Section \ref{sec:prelims} introduces the geometric and thermodynamic formalism background, including some elementary lemmas of hyperbolic geometry stated in a convenient way for our purposes.
In particular, Lemma \ref{lem:lrsa} plays a crucial role in the proof.
Section \ref{sec:numret} introduces the notion of \name{number of returns of a periodic orbit}, which is used to state Theorem \ref{thm:erg}, \ref{thm:crit3} and \ref{thm:crit4}.
The proof of Theorem \ref{thm:erg} is given in section \ref{sec:erg}.
Theorem \ref{thm:crit1} is proved in section \ref{sec:fin1}, from which an intermediate Theorem \ref{thm:crit2} is derived in section \ref{sec:fin2}, and Theorem \ref{thm:crit3} is itself derived in section \ref{sec:fin3}.
In section \ref{sec:fin4}, we state and prove a couple of equidistribution results for nonprimitive periodic orbits, and derive from it the proof of Theorem \ref{thm:crit4}.
Finally, we show in section \ref{sec:examples} how to retrieve previous finiteness results from ours.

We believe that these criteria will lead to new examples of interesting manifolds with finite Gibbs measure.
This will be done in the future.


\section{Preliminaries}

\label{sec:prelims}


\subsection{Geodesic flow in negative curvature}

In the following, $\wt{M}$ is a Hadamard manifold with pinched negative sectional curvature $-b^2 \le k \le -a^2 < 0$, $\Gamma$ is a discrete group of isometries preserving orientation of $\wt{M}$, $M = \Gamma \backslash \wt{M}$ is the quotient orbifold (manifold whenever $\Gamma$ has no torsion elements), and $T^1 M = \Gamma \backslash T^1 \wt{M}$ is its unit tangent bundle.
Observe once and for all that Riemannian/differential concepts are still well defined on $M$ or $T^1 M$ by defining objects first on the universal cover, and then going down to $M$.
With a slight abuse of notation, we denote by $\fn{\pi}{T^1 M}{M}$ or $\fn{\pi}{T^1 \wt{M}}{\wt{M}}$ the canonical projection, and by $\fn{\Pr}{T^1 \wt{M}}{T^1 M}$ or $\fn{\Pr}{\wt{M}}{M}$ the quotient maps.

The geodesic flow of $T^1 \wt{M}$ and of $T^1 M$ is denoted by $(g^t)_{t \in \R}$.
The boundary at infinity $\partial_\infty \wt{M}$ is the set of equivalence classes of geodesic rays staying at bounded distance from each other.
If $v \in \wt{M}$, we denote by $v_\pm$ the positive and negative endpoints of the geodesic it defines.

The limit set $\Lambda(\Gamma)$ is the closure of any $\Gamma$ orbit of $\wt{M}$ in $\partial_\infty \wt{M}$.
We will only consider the nontrivial case where $\Gamma$ is nonelementary, that is $\Lambda(\Gamma)$ is infinite.
Eberlein proved that the nonwandering set $\Omega$ of the geodesic flow coincides with the set of vectors $v \in T^1 \wt{M}$ such that $v_\pm \in \Lambda(\Gamma)$.

The \name{Hopf coordinates} relatively to any base point $x_0 \in \wt{M}$ are given by
\[ v \in T^1 \wt{M} \mapsto (v_-, v_+, \tau_{x_0}(v)) \eqpunct{.} \]
where $\tau_{x_0}(v)$ is the algebraic distance on the geodesic $(v_- v_+)$ from the projection $p_{x_0}(v)$ of $x_0$ on this geodesic to the base point $\pi(v)$.
They induce an extremely useful homeomorphism between $T^1 \wt{M}$ and $\partial_\infty \wt{M} \times \partial_\infty \wt{M} \times \R$.
In these coordinates, the geodesic flow acts by translation on the real coordinate, so that all dynamically relevant sets can be expressed nicely in terms of these coordinates.

The set of periodic orbits (respectively primitive periodic orbits) of the geodesic flow is denoted by $\P$ (respectively $\P'$).
Recall that periodic orbits of the geodesic flow are in $1-1$ correspondance with conjugacy classes of hyperbolic elements of $\Gamma$.
More precisely, let $\Gamma_h$ be the set of hyperbolic (or loxodromic) isometries of $\Gamma$, and $\Gamma_h'$ those which are primitive.
By definition, such a $\gamma$ has two fixed points in $\partial_\infty \wt{M}$, one repulsive and the other attractive.
It acts by translation on the geodesic line of $\wt{M}$ joining them so that the geodesic orbit of $T^1 \wt{M}$ from the repulsive to the attractive endpoint induces on $T^1 M$ a periodic orbit of the geodesic flow.
We will denote by $l(\gamma)$ for $\gamma \in \Gamma_h$, or equivalently $l(p)$ for $p \in \P$, the period of this orbit.


\subsection{Thermodynamical formalism}

In this section, we recall briefly some facts about thermodynamical formalism on negatively curved manifolds, which are either classical or can be found in \cite{PPS}.

Let $\fn{F}{T^1 M}{\R}$ be a H\"older continuous map (or \name{potential}), and $\wt{F}$ be its $\Gamma$-invariant lift to $T^1 \wt{M}$.
Its \name{topological pressure} is defined as the supremum
\[ P(F) = \sup_{\mu \in \M} \( h(\mu) + \int F d\mu \) \eqpunct{,} \]
the supremum being taken over the set $\M$ of all invariant probability measures, and $h(\mu)$ being the Kolmogorov-Sinai entropy of $\mu$.

A \name{dynamical ball} $B(v, T, \varepsilon)$ is the set
\[ B(v, T, \varepsilon) = \set{w \in T^1 M}{d(\pi(g^t(v)), \pi(g^t(w))) \le \varepsilon \mtext{for all} 0 \le t \le T} \eqpunct{.} \]
Here, there is a slight abuse of notation : we use the distance $d$ on $M$ instead of a distance on $T^1 M$, for example the Sasaki metric.
However, standard results about geodesic flows in negative curvature show that these two points of views are equivalent.
We refer to \cite{PPS} for details.
An invariant Radon measure $\mu$ satisfies the \name{Gibbs property} (see \cite[Section 3.8]{PPS}) if for all compact subsets $K \subset T^1 M$, there exists a constant $C_K > 0$, such that for all $v \in K$ and $T > 0$ such that $g^T(v) \in K$, one has
\begin{equation} \label{Gibbs}
 \frac{1}{C_K} e^{\int_0^T F(g^t(v)) dt - T P(F)} \le \mu(B(v, T, \varepsilon) \le C_K e^{\int_0^T F(g^t(v)) dt - T P(F)} \eqpunct{.}
\end{equation}
The careful reader will observe that this definition is slightly simplified compared to \cite{PPS}, but describes the same measures.

When $P(F)$ is finite, the Patterson-Sullivan-Gibbs construction, detailed in \cite{PPS}, allows to build a measure $m_F$, which satisfies the above Gibbs property, whose lift $\tilde{m}_F$ on $T^1 \wt{M}$ has the following nice expression in the Hopf coordinates
\begin{equation}
 \label{eqn:gibbsdef}
 d\tilde{m}_F(v) = \frac{1}{D_{F-P(F),x_0}(v_-, v_+)^2} d\mu_{x_0}^{F \circ \iota}(v_-) d\mu_{x_0}^F(v_+) dt \eqpunct{,}
\end{equation}
where $\iota$ is the flip map $\fn{\iota}{v}{-v}$, $\mu_{x_0}^F$ is the so-called \name{Patterson-Sullivan-Gibbs conformal density} on the boundary, and $D_F$ is the \name{$F$-gap map from $x$} defined as
\[ D_{F,x}(\xi, \eta) = \exp \frac{1}{2} \( \lim_{t \to +\infty} \int_x^{\eta_t} \wt{F} - \int_{\xi_t}^{\eta_t} \wt{F} + \int_{\xi_t}^x \wt{F} \) \eqpunct{.} \]
This map is continuous and positive on $\partial_\infty \wt{M} \times \partial_\infty \wt{M} \setminus \Diag$, and therefore bounded away from $0$ and $+\infty$ on all compact sets of $\partial_\infty \wt{M} \times \partial_\infty \wt{M} \setminus \Diag$.
Moreover, the point $x_0$ being arbitrary, the Patterson-Sullivan-Gibbs conformal densities $(\mu_x^F)_{x \in \wt{M}}$ form a family of measures that have full support in $\Lambda(\Gamma)$ and are $\Gamma$-quasi-invariant.

The Gibbs measure $m_F$ satisfies the following alternative, known as the Hopf-Tsuji-Sullivan Theorem, proved by Roblin \cite{Roblin} in full generality when $F\equiv 0$, and
whose proof has been adapted to Gibbs measures in \cite{PPS}.
First, recall that the pressure $P(F)$ is also the critical exponent of the following \name{Poincar\'e series}
\[ P_{\Gamma,x,F}(s) = \sum_{\gamma \in \Gamma} e^{\int_x^{\gamma x} (\wt{F} - s)} \eqpunct{.} \]
This Hopf-Tsuji-Sullivan-Roblin theorem for Gibbs measures 
enlights how the convergence or divergence of the above series for $s = P(F)$ is a crucial point for the ergodicity of the Gibbs measure $m_F$.

\begin{theo*} 
 Let $M$ be a negatively curved orbifold with pinched negative curvature, and $\fn{F}{T^1 M}{\R}$ a H\"older continuous map with finite pressure.
 Then the measure $m_F$ is ergodic and conservative if and only if the Poincar\'e series $P_{\Gamma,x,F}(s)$ diverges at $s = P(F)$, i.e.
 \[ \sum_{\gamma \in \Gamma} e^{\int_{x_0}^{\gamma x_0} (\wt{F} - P(F))} = +\infty \eqpunct{,} \]
 and the measure $m_F$ is totally dissipative otherwise.
\end{theo*}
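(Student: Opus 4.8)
The plan is to reduce the dichotomy to a statement about the $\Gamma$-action on the boundary $\partial_\infty\wt{M}$ equipped with the Patterson--Sullivan--Gibbs density $\mu_{x_0}^F$, and then to run the classical Hopf--Sullivan--Roblin scheme. The first step is to establish a \name{shadow lemma} for $\mu_{x_0}^F$: for $x_0 \in \wt{M}$ fixed and $R$ large enough, the $\mu_{x_0}^F$-mass of the shadow seen from $x_0$ of a ball $B(\gamma x_0, R)$ is, up to a multiplicative constant depending only on $R$, comparable to $e^{\int_{x_0}^{\gamma x_0}(\wt{F} - P(F))}$. Combined with the product expression \eqref{eqn:gibbsdef} of $\tilde{m}_F$ in Hopf coordinates — where the $\R$-factor is only a $\Gamma$-invariant Lebesgue factor — this translates conservativity (resp. ergodicity) of $m_F$ into conservativity of the $\Gamma$-action on $(\partial_\infty\wt{M},\mu_{x_0}^F)$, resp. ergodicity of the diagonal $\Gamma$-action on $(\partial_\infty\wt{M}\times\partial_\infty\wt{M},\mu_{x_0}^{F\circ\iota}\otimes\mu_{x_0}^F)$.

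Next I would treat the easy half. If the Poincar\'e series converges at $s = P(F)$, the shadow lemma and the Borel--Cantelli lemma show that $\mu_{x_0}^F$-almost every $\xi$ lies in only finitely many shadows of balls $B(\gamma x_0, R)$, i.e. $\xi$ is not a radial limit point, so the geodesic ray towards $\xi$ eventually leaves every compact subset of $M$. This produces a $g^t$-wandering set of full $\tilde{m}_F$-measure, hence $m_F$ is totally dissipative.

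For the harder half, assume divergence at $s = P(F)$. Using the divergence part of the Borel--Cantelli lemma together with a quasi-independence estimate for shadows — itself a consequence of the $\Gamma$-quasi-invariance of the family $(\mu_x^F)_x$ with the correct cocycle — I would show that the radial limit set has full $\mu_{x_0}^F$-measure, so the $\Gamma$-action on the boundary, hence the geodesic flow, is conservative. To pass from conservativity to ergodicity I would then run the Hopf argument in Hopf coordinates: Hopf's ratio ergodic theorem gives that the Birkhoff ratios of continuous compactly supported functions are almost everywhere invariant along the flow, along the strong stable foliation and along the strong unstable foliation; since the conditional measures of $\tilde{m}_F$ on stable and unstable leaves transform under holonomy by exactly the cocycle built from $D_{F,x}$ appearing in \eqref{eqn:gibbsdef}, those holonomies are nonsingular, and the three invariances combine through a Hopf chain to force the ratios to be $\tilde{m}_F$-a.e. constant, which is ergodicity.

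I expect the main obstacle to be the shadow lemma in the presence of a nonzero potential: one must control, uniformly over all geodesics landing in a fixed shadow, the difference between $\int_{x_0}^{\gamma x_0}\wt{F}$ and the corresponding integral along such a geodesic, which is precisely the type of estimate encoded by the $F$-gap map $D_{F,x}$ and which relies on the H\"older regularity of $F$ and the hyperbolicity of $\wt{M}$. A secondary difficulty is running the Hopf chain with the non-invariant measures $\mu_x^F$, together with the bookkeeping needed to go back and forth between recurrence of $(g^t)$ on $T^1M$ and recurrence of the $\Gamma$-action on $\partial_\infty\wt{M}$ through the $\R$-coordinate. All of this is carried out in detail in \cite{PPS}, adapting Roblin's treatment \cite{Roblin} of the case $F \equiv 0$.
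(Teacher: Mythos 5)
Your outline is correct and follows exactly the standard Hopf--Tsuji--Sullivan--Roblin scheme (shadow lemma for the Gibbs conformal density, Borel--Cantelli in both directions via the radial limit set, then the Hopf argument in Hopf coordinates), which is precisely the proof the paper relies on: the paper does not prove this statement itself but quotes it as background, citing its adaptation to Gibbs measures in \cite{PPS} (Theorem 5.4), as you yourself note at the end.
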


In fact, one can show that "the" measure $m_F$ built in \cite{PPS} is well defined if and only if the above series diverges.
As this is the only interesting case for us, we do not care about this problem of terminology.

When a Gibbs measure $m_F$ is finite, it is automatically ergodic and conservative.
However, of course, the converse is not true, and it is precisely the purpose of this paper to propose criteria of finiteness.


\subsection{Some exercises in hyperbolic geometry}

This section gathers some well-known lemmas about the geometry of manifolds with pinched negative sectional curvature.

We start by recalling a very classical comparison lemma, as stated for example in \cite[Lemma 2.1]{PaP}, from which we will derive the next lemmas.

\begin{lemm}
 \label{lem:triangleestimate}
 Let $(X, d)$ be a $\text{CAT}(-1)$-space.
 For all points $x, y$ in $X$ and $z$ in $X \cup \partial_\infty X$, and every $t \in \itv[cc]{0}{d(x, z)}$ (finite if $z \in \partial_\infty X$), if $x_t$ is the point on $\itv[cc]{x}{z}$ at distance $t$ from $x$, then
 \[ d(x_t, \itv[cc]{y}{z}) \le e^{-t} \sinh(d(x, y)) \eqpunct{.} \]
\end{lemm}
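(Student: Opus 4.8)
The plan is to reduce the statement to the model space $\H^2$ (the real hyperbolic plane of curvature $-1$) via the defining property of $\mathrm{CAT}(-1)$ spaces, and then to argue there by hyperbolic trigonometry. For $z\in X$, take a comparison triangle $\bar x,\bar y,\bar z$ in $\H^2$ for the geodesic triangle $x,y,z$; writing $\bar x_t$ for the point of $\itv[cc]{\bar x}{\bar z}$ at distance $t$ from $\bar x$, the $\mathrm{CAT}(-1)$ inequality gives $d(x_t,q)\le d_{\H^2}(\bar x_t,\bar q)$ for every $q\in\itv[cc]{y}{z}$ with comparison point $\bar q\in\itv[cc]{\bar y}{\bar z}$, hence $d(x_t,\itv[cc]{y}{z})\le d_{\H^2}(\bar x_t,\itv[cc]{\bar y}{\bar z})$, while $d(\bar x,\bar y)=d(x,y)$. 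For $z\in\partial_\infty X$ one first applies the finite case to the triangles $x,y,z_n$ with $z_n\to z$ along $\itv[cc]{y}{z}$ and passes to the limit, using $\itv[cc]{y}{z_n}\subset\itv[cc]{y}{z}$ and that the point at distance $t$ from $x$ on $\itv[cc]{x}{z_n}$ converges to $x_t$. So it suffices to prove the inequality in $\H^2$.

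In $\H^2$, let $\ell$ be the complete geodesic through $y$ and $z$, so $\itv[cc]{y}{z}\subset\ell$, and let $f_t$ be the orthogonal projection of $x_t$ onto $\ell$. Since $z\in\ell$, the projection of the geodesic $\itv[cc]{x}{z}$ onto $\ell$ is the subarc from $f_0$ to $z$, traversed monotonically, so $f_t\in\itv[cc]{f_0}{z}$. Comparing the right triangles with vertices $x_t,f_t,z$ and $x,f_0,z$, whose angle at $z$ is (up to supplement) the angle $\theta$ of the triangle $x,y,z$ at $z$, gives
\[ \sinh d(x_t,\ell)=\sinh d(x,\ell)\cdot\frac{\sinh d(x_t,z)}{\sinh d(x,z)}\le e^{-t}\sinh d(x,\ell)\le e^{-t}\sinh d(x,y), \]
using $\sinh(D-t)\le e^{-t}\sinh D$ for $0\le t\le D$ (in the ideal case the middle term is replaced by the identity $\sinh d(x_t,\ell)=e^{-t}\sinh d(x,\ell)$, computed directly in the upper half-plane), together with $d(x,\ell)\le d(x,y)$. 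If $f_0\in\itv[cc]{y}{z}$ — equivalently, the triangle's angle at $y$ is at most $\pi/2$ — then $f_t\in\itv[cc]{f_0}{z}\subset\itv[cc]{y}{z}$, so $d(x_t,\itv[cc]{y}{z})\le d(x_t,f_t)=d(x_t,\ell)\le e^{-t}\sinh d(x,y)$ and we are done.

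There remain two boundary cases. If $f_0$ lies beyond $z$ on $\ell$ (possible only for $z\in X$), the angle at $z$ exceeds $\pi/2$, so $\itv[cc]{x}{y}$ is the longest side and $d(x,y)\ge d(x,z)$; moreover the nearest point of $\itv[cc]{y}{z}$ to $x_t$ is $z$, whence $d(x_t,\itv[cc]{y}{z})=d(x,z)-t\le\sinh(d(x,z)-t)\le e^{-t}\sinh d(x,z)\le e^{-t}\sinh d(x,y)$, using $r\le\sinh r$. The genuinely computational case is when $f_0$, hence possibly $f_t$, lies beyond $y$: then the nearest point of $\itv[cc]{y}{z}$ to $x_t$ is $y$, and one must show $d(x_t,y)\le e^{-t}\sinh d(x,y)$. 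The plan here is to normalise by an isometry of $\H^2$ (in the ideal case, sending $z$ to $\infty$ in the upper half-plane), where one finds $\cosh d(x_t,y)=Pe^{-t}+Qe^{t}$ with $\cosh d(x,y)=P+Q$ for suitable $P,Q>0$ depending only on $x,y$; the condition that the foot is beyond $y$ becomes $2PQ+2Q^2e^{2t}<1$, hence $2PQ+Q^2e^{2t}+Q^2<1$, which is exactly equivalent to $\bigl(Pe^{-t}+Qe^{t}\bigr)^2-1\le e^{-2t}\bigl((P+Q)^2-1\bigr)$; then $d(x_t,y)\le\sqrt{\cosh^2 d(x_t,y)-1}\le e^{-t}\sinh d(x,y)$ by $\mathrm{arccosh}\,c\le\sqrt{c^2-1}$. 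The case $z\in X$ is handled by the analogous computation (or by the continuity argument of the first paragraph).

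The main obstacle is precisely this last case: whereas the distance to the full geodesic $\ell$ decays exactly like $e^{-t}$, the distance to the sub-segment $\itv[cc]{y}{z}$ must still be controlled when its nearest point is the endpoint $y$, and the crude estimate $d(x_t,y)\le t+d(x,y)$ is useless while the naive comparison $\sinh d(x_t,y)\le e^{-t}\sinh d(x,y)$ is simply false; the identity $\cosh d(x_t,y)=Pe^{-t}+Qe^{t}$ (or, equivalently, $\cosh d(x_t,y)=\cosh d(x_t,\ell)\cosh d(f_t,y)$ combined with the monotonicity $d(f_t,y)\le d(f_0,y)$) is what makes the exponential gain visible. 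Alternatively, one may simply invoke the statement as \cite[Lemma 2.1]{PaP}.
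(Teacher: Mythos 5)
The paper offers no proof of this lemma: it is quoted verbatim from Parkkonen--Paulin (\cite[Lemma 2.1]{PaP}) as ``a very classical comparison lemma''. Your proposal is therefore doing strictly more work than the paper, and its architecture is sound: the CAT$(-1)$ reduction to $\H^2$ is correct (the comparison map sends $[y,z]$ onto $[\bar y,\bar z]$, so the infimum transfers), the limit argument for ideal $z$ is the standard one, the ratio identity $\sinh d(x_t,\ell)=\sinh d(x,\ell)\,\sinh d(x_t,z)/\sinh d(x,z)$ and the inequality $\sinh(D-t)\le e^{-t}\sinh D$ handle the generic case, the ``foot beyond $z$'' case via $d(x,y)\ge d(x,z)$ and $r\le\sinh r$ is fine, and I checked your ideal-case computation: with $y=i$, $x=(a,b)$, $x_t=(a,be^t)$ one indeed gets $P=\tfrac{a^2+1}{2b}$, $Q=\tfrac b2$, the foot condition $a^2+b^2e^{2t}<1$ is exactly $2PQ+2Q^2e^{2t}<1$, and $2PQ+Q^2(e^{2t}+1)\le 1$ is equivalent to $(Pe^{-t}+Qe^t)^2-1\le e^{-2t}((P+Q)^2-1)$.

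Two caveats on the last, hard sub-case. First, the fallback ``or by the continuity argument of the first paragraph'' is backwards: that argument derives the ideal case from the finite case, so it cannot be used to settle finite $z$. Second, ``the analogous computation'' for finite $z$ is real but not a one-line analogue. The law of cosines in the triangle $x_t,y,z$ does give the same shape $\cosh d(x_t,y)=Pe^{-t}+Qe^t$ with $P=\tfrac12e^{D}(\cosh m-\sinh m\cos\gamma)$, $Q=\tfrac12e^{-D}(\cosh m+\sinh m\cos\gamma)$ (where $D=d(x,z)$, $m=d(y,z)$, $\gamma$ the angle at $z$), but the ``foot beyond $y$'' condition becomes $\tanh(D-t)\cos\gamma\ge\tanh m$, and deducing $2PQ+Q^2(e^{2t}+1)\le1$ from it requires maximizing a concave quadratic in $u=\sinh m\cos\gamma$ over the admissible interval and checking the vertex value $\cosh^2m/(1-e^{-2(D-t)})\le2$ under the feasibility constraints --- it works, but it is a genuine extra page, not a substitution of variables. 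Relatedly, your parenthetical alternative via $\cosh d(x_t,y)=\cosh d(x_t,\ell)\cosh d(f_t,y)$ together with $d(f_t,y)\le d(f_0,y)$ is \emph{not} sufficient term by term: the factor $\sinh^2 d(f_t,y)$ need not decay like $e^{-2t}$ (take $a^2$ close to $1$ and $b$ exponentially small), so only the combined identity $\cosh d(x_t,y)=Pe^{-t}+Qe^t$ yields the exponential gain. With the finite-$z$ computation actually carried out (or with the lemma simply cited from \cite{PaP}, as the paper does), the argument is complete.
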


In particular, applying this lemma twice leads to the following lemma.

\begin{lemm}
 \label{lem:parallelestimate}
 Let $r, r' > 0$.
 Let $x, x', y, y' \in \wt{M}$ such that $d(x, y) \le r$ and $d(x', y') \le r'$.
 For every $t \in \itv[cc]{0}{d(x, x')}$, denote by $x_t$ the point on $\itv[cc]{x}{x'}$ at distance $t$ from $x$.
 Then
 \[ d(x_t, \itv[cc]{y}{y'}) \le \sinh(r) e^{-t} + \sinh(r') e^{t-d(x, x')+\sinh(r)} \eqpunct{.} \]
\end{lemm}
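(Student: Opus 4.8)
The plan is to obtain the estimate by applying Lemma \ref{lem:triangleestimate} twice in succession: first to slide the point $x_t$ onto the segment $\itv[cc]{y}{x'}$, and then to slide the resulting intermediate point onto $\itv[cc]{y}{y'}$. The one step that needs genuine attention is controlling \emph{where} along $\itv[cc]{y}{x'}$ that intermediate point sits, so that the second application contributes the factor $e^{t-d(x,x')}$ rather than something weaker.

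First I would apply Lemma \ref{lem:triangleestimate} to the points $x$, $y$ and $z := x' \in \wt{M}$, noting that the hypothesis $t \in \itv[cc]{0}{d(x,x')}$ is exactly what is assumed here. This yields a point $w \in \itv[cc]{y}{x'}$ with $d(x_t, w) \le e^{-t}\sinh(d(x,y)) \le e^{-t}\sinh(r) \le \sinh(r)$. Since $x_t$ lies on $\itv[cc]{x}{x'}$ at distance $t$ from $x$, one has $d(x_t, x') = d(x,x') - t$, whence, by the triangle inequality,
\[ d(x', w) \ge d(x', x_t) - d(x_t, w) \ge d(x,x') - t - \sinh(r) \eqpunct{.} \]

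Then I would apply Lemma \ref{lem:triangleestimate} a second time, now to the points $x'$, $y'$ and $z := y$: the point of the segment $\itv[cc]{x'}{y} = \itv[cc]{y}{x'}$ at distance $u$ from $x'$ lies within $e^{-u}\sinh(d(x',y')) \le e^{-u}\sinh(r')$ of $\itv[cc]{y'}{y} = \itv[cc]{y}{y'}$. Choosing $u := d(x', w)$, which is admissible since $0 \le d(x', w) \le d(x', y)$, and feeding in the lower bound from the first step gives
\[ d(w, \itv[cc]{y}{y'}) \le e^{-d(x',w)}\sinh(r') \le \sinh(r')\, e^{\,t - d(x,x') + \sinh(r)} \eqpunct{.} \]
Adding this to $d(x_t, w) \le e^{-t}\sinh(r)$ and using $d(x_t, \itv[cc]{y}{y'}) \le d(x_t, w) + d(w, \itv[cc]{y}{y'})$ yields precisely the asserted inequality.

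What little there is to watch out for is bookkeeping: carrying the position estimate $d(x',w) \ge d(x,x') - t - \sinh(r)$ through the second application. Two harmless points: if $d(x,x') - t - \sinh(r) < 0$ the inequality $e^{-d(x',w)} \le e^{t-d(x,x')+\sinh(r)}$ is still valid because its right-hand side then exceeds $1$; and the nearest point on $\itv[cc]{y}{x'}$ defining $w$ exists because that segment is compact (alternatively, take $w$ realizing the distance up to an $\varepsilon$ and let $\varepsilon \to 0$).
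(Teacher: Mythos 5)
Your proof is correct and is exactly the argument the paper intends: the paper gives no written proof, saying only that the lemma follows by ``applying Lemma \ref{lem:triangleestimate} twice'', and your two applications (first sliding $x_t$ onto $\itv[cc]{y}{x'}$, then onto $\itv[cc]{y}{y'}$, with the position estimate $d(x',w)\ge d(x,x')-t-\sinh(r)$ accounting for the $+\sinh(r)$ in the exponent) supply precisely the missing details.
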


Recall also the following.

\begin{lemm}[\cite{PPS}, remark 2 following lemma 3.2]
 \label{lem:potcontrol}
 For every $R \ge 0$, there is a constant $C$ that only depends on $\wt{F}$, $R$ and the bounds on the sectional curvature of $\wt{M}$ such that for every $x, x', y, y'$ in $\wt{M}$ satisfying $d(x, x'), d(y, y') \le R$
 \[ \left| \int_x^y \wt{F} - \int_{x'}^{y'} \wt{F} \right| \le C \eqpunct{.} \]
\end{lemm}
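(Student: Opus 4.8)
Throughout, $\int_x^y \wt{F}$ denotes the integral of $\wt{F}$ along the velocity field of the unit-speed geodesic segment from $x$ to $y$. The plan is to reduce to the case of two segments sharing an endpoint, and then exploit the exponential convergence of fellow-travelling geodesics together with the H\"older regularity of $\wt{F}$.

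\emph{Reduction to a common endpoint.} From
\[ \int_x^y \wt{F} - \int_{x'}^{y'} \wt{F} = \left( \int_x^y \wt{F} - \int_{x'}^y \wt{F} \right) + \left( \int_{x'}^y \wt{F} - \int_{x'}^{y'} \wt{F} \right) \eqpunct{,} \]
it suffices to bound each bracket, that is, to handle two geodesic segments $[p, q]$ and $[p, q']$ issued from a common point $p$ with $d(q, q') \le R$. Exchanging $q$ and $q'$ if necessary, set $\ell = d(p, q) \le \ell' = d(p, q')$, so that $\ell' - \ell \le R$, and parameterise the two segments by arc length from $p$, say $\beta : [0, \ell] \to \wt{M}$ and $\beta' : [0, \ell'] \to \wt{M}$. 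Then
\[ \int_p^q \wt{F} - \int_p^{q'} \wt{F} = \int_0^{\ell} \bigl( \wt{F}(\dot\beta(t)) - \wt{F}(\dot\beta'(t)) \bigr) \, dt - \int_{\ell}^{\ell'} \wt{F}(\dot\beta'(t)) \, dt \eqpunct{.} \]
The last integral runs over a set of length $\le R$, hence is bounded by $R \sup |\wt{F}|$; this is the point at which the constant's dependence on $\wt{F}$ enters.

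\emph{Exponential fellow-travelling.} For the first integral, the key geometric input is Lemma \ref{lem:triangleestimate}, applied along the side $[q, p]$ of the triangle $p q q'$ (that is, with $x = q$, $z = p$ and $y = q'$ in its notation): it gives $d(\beta(t), [p, q']) \le \sinh(R) \, e^{t - \ell}$ for every $t \in [0, \ell]$. Moreover, if $\beta'(v)$ is the point of $[p, q']$ nearest to $\beta(t)$, then $|v - t| = |d(p, \beta'(v)) - d(p, \beta(t))| \le d(\beta(t), \beta'(v)) \le \sinh(R) \, e^{t - \ell}$, so that $d(\beta(t), \beta'(t)) \le 2 \sinh(R) \, e^{t - \ell}$ for all $t \in [0, \ell]$: the two segments stay within a bounded distance of each other and become exponentially close as $t$ recedes from the diverging endpoints. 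Inserting this into the H\"older inequality for $\wt{F}$ bounds the integrand by $C_{\wt{F}} \, (2 \sinh R)^{\alpha} e^{\alpha(t - \ell)}$, whose integral over $[0, \ell]$ is at most $C_{\wt{F}} (2 \sinh R)^{\alpha}/\alpha$, independently of $\ell$. Combining this, the bound $R \sup|\wt{F}|$ on the leftover term, and the two brackets from the reduction step yields a constant depending only on $\wt{F}$, $R$ and the curvature bounds.

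\emph{Main obstacle.} The one delicate point is that the H\"older norm of $\wt{F}$ is measured with respect to a distance on $T^1 \wt{M}$ (for instance the Sasaki metric), whereas Lemma \ref{lem:triangleestimate} only controls the distance between the footpoints $\beta(t)$ and $\beta'(t)$ in $\wt{M}$. One must therefore upgrade the footpoint estimate to a bound $d_{T^1 \wt{M}}(\dot\beta(t), \dot\beta'(t)) \le C(R) \, e^{\alpha(t - \ell)}$ on the velocity vectors, up to shrinking $\alpha$; this follows by applying the same comparison lemmas to the geodesic lines carrying $[p, q]$ and $[p, q']$ — the backward directions (towards $p$) and forward directions (towards $q$, resp. $q'$) seen from $\beta(t)$ and $\beta'(t)$ also converge exponentially — together with the elementary fact that proximity of footpoints and of directions controls the Sasaki distance. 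This step is routine but not quite immediate; alternatively one may invoke the equivalence between H\"older regularity for the Sasaki metric and for a dynamically adapted distance established in \cite{PPS}, which absorbs it.
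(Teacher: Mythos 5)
The paper does not prove this lemma; it is imported verbatim from \cite{PPS} (remark 2 after Lemma 3.2), and your argument is essentially the standard proof given there: split off the two endpoint perturbations, use the $\mathrm{CAT}(-a^2)$ comparison estimate to get exponential fellow-travelling of the two segments away from the perturbed endpoint, and integrate the H\"older modulus of $\wt{F}$ along the resulting geometric series. Your identification of the Sasaki-versus-footpoint issue is the right one, and your fix (exponential convergence of the directions plus the equivalence of metrics on $T^1\wt{M}$, or the dynamically adapted distance of \cite{PPS}) is exactly how it is absorbed in the source. Two small points deserve a line each. First, in your reduction the bracket $\int_x^y \wt{F} - \int_{x'}^y \wt{F}$ involves segments sharing their \emph{terminal} point, so to literally invoke your common-initial-point analysis you should write $\int_x^y \wt{F} = \int_y^x (\wt{F}\circ\iota)$ and note that $\wt{F}\circ\iota$ has the same H\"older and sup norms ($\iota$ being a Sasaki isometry). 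Second, your bound $R\sup|\wt{F}|$ on the leftover arc tacitly assumes $\wt{F}$ bounded; the statement with a uniform constant is in fact false for unbounded potentials, and the precise form in \cite{PPS} replaces $\sup|\wt{F}|$ by the maximum of $|\wt{F}|$ over balls around the endpoints (which is all the present paper needs, since it only applies the lemma to points on finitely many $\Gamma$-orbits, where $\Gamma$-invariance makes these local maxima uniform). With those caveats your proof is correct.
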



\subsection{Parallel geodesic segments avoiding images of a compact set}

The following geometrical lemma is the key ingredient of the proof of Theorem \ref{thm:crit1}, from which Theorem \ref{thm:crit3} is derived.
It asserts that, if a geodesic segment $\itv[cc]{y}{y'}$ is known to avoid the $\Gamma$-orbit of balls $B(x, R)$ except maybe at its beginning or at its end, then every other geodesic segment whose endpoints are close from $y$ and $y'$ will also essentially avoid the $\Gamma$-orbit of $\varepsilon$-shrinked balls $B(x, R - \varepsilon)$, provided that the guiding segment $\itv[cc]{y}{y'}$ is long enough.

\begin{lemm}[Long range subset avoidance]
 \label{lem:lrsa}
 Let $\wt{W}$ be an open relatively compact subset of $\wt{M}$.
 For every $R \ge 0$, there exists $\rho = \rho(R)$ such that for all $\varepsilon > 0$, all $\wt{W}' \subset \wt{W}$ open relatively compact subsets with $d(\wt{W}', \wt{M} \setminus \wt{W}) \ge \varepsilon$, all $y, y', z, z' \in \wt{M}$ with $d(y, z), d(y', z') \le R$, and all $\gamma \in \Gamma$, if
 \[ \itv[cc]{y}{y'} \cap \gamma \wt{W} = \emptyset \eqand \itv[cc]{z}{z'} \cap \gamma \wt{W}' \neq \emptyset \eqpunct{.} \]
 then we have
 \[ \min \( d(y, \gamma \wt{W}), d(y', \gamma \wt{W}) \) \le \rho - \log \varepsilon \eqpunct{.} \]
\end{lemm}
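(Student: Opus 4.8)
The plan is to use Lemma~\ref{lem:parallelestimate} to compare the two parallel geodesic segments $\itv[cc]{y}{y'}$ and $\itv[cc]{z}{z'}$, and then to argue that if the intersection point with $\gamma\wt{W}'$ is too far from both endpoints of $\itv[cc]{y}{y'}$, then the corresponding nearby point on $\itv[cc]{y}{y'}$ would land inside $\gamma\wt{W}$, contradicting the hypothesis $\itv[cc]{y}{y'}\cap\gamma\wt{W}=\emptyset$. Concretely, suppose $w\in\itv[cc]{z}{z'}\cap\gamma\wt{W}'$ and let $t=d(z,w)$, so that $w=z_t$ in the notation of Lemma~\ref{lem:parallelestimate} (with the roles of the two segments swapped: the ``guiding'' segment for the estimate is $\itv[cc]{z}{z'}$). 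Applying Lemma~\ref{lem:parallelestimate} with $r=r'=R$ gives a point $w'\in\itv[cc]{y}{y'}$ with
\[
 d(w, w') \le \sinh(R)\, e^{-t} + \sinh(R)\, e^{\,t - d(z,z') + \sinh(R)} \eqpunct{.}
\]

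First I would handle the two ``ends'' of the guiding segment. Set $\rho_0 = \rho_0(R)$ to be chosen so that $\sinh(R)e^{-s}\le \tfrac12$ whenever $s\ge\rho_0$, and likewise for the symmetric term. If $t \le \rho_0$, then $w=z_t$ is within distance $t + \min(d(y,z),\dots)$ of $y$; more precisely $d(y, \gamma\wt{W}') \le d(y,z) + d(z,w) \le R + t \le R + \rho_0$, and since $\gamma\wt{W}'\subset\gamma\wt{W}$ this bounds $\min(d(y,\gamma\wt{W}),d(y',\gamma\wt{W}))$ by $R+\rho_0$, which is absorbed into $\rho(R)$. Symmetrically, if $d(z,z') - t \le \rho_0$, the same conclusion holds with $y'$ in place of $y$. (If $d(z,z')$ is itself bounded by, say, $2\rho_0$, the conclusion is immediate by the triangle inequality, absorbing everything into $\rho$.)

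The remaining case is $\rho_0 \le t \le d(z,z') - \rho_0$, where both exponential terms in the estimate above are at most $\tfrac12$, hence $d(w,w')\le 1$. Now $w\in\gamma\wt{W}'$ and $d(\gamma\wt{W}',\wt{M}\setminus\gamma\wt{W}) = d(\wt{W}',\wt{M}\setminus\wt{W}) \ge \varepsilon$ by $\Gamma$-invariance of the metric. So if $1 < \varepsilon$ we would get $w'\in\gamma\wt{W}$, contradicting $\itv[cc]{y}{y'}\cap\gamma\wt{W}=\emptyset$ since $w'\in\itv[cc]{y}{y'}$; hence in this case we must have $\varepsilon \le 1$. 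This is not yet the contradiction we want for all $\varepsilon$, so the argument must be refined: instead of asking $d(w,w') < \varepsilon$ outright, I would choose the threshold so that $d(w,w')\le \varepsilon$ forces the contradiction, i.e. enlarge $\rho_0$ to a value $\rho_1$ depending on $\varepsilon$ for which $\sinh(R)e^{-s} \le \tfrac{\varepsilon}{2}$ when $s\ge\rho_1$. Solving, $\rho_1 = \rho_1(R,\varepsilon) = -\log\varepsilon + C(R)$ for a constant $C(R)$ depending only on $R$ and the curvature bounds (one checks $C(R) = \log(4\sinh R) + \sinh R$ or similar works, the precise value being irrelevant). With this choice, in the ``middle'' range $\rho_1 \le t \le d(z,z')-\rho_1$ we get $d(w,w')\le\varepsilon$, hence $w'\in\gamma\overline{\wt{W}}$ — actually $w'\in\gamma\wt{W}$ since $w$ is $\varepsilon$-deep — contradicting the hypothesis. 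Therefore the middle range is empty, meaning $t \le \rho_1$ or $d(z,z')-t\le\rho_1$, and by the ``ends'' analysis above (now with $\rho_1$ in place of $\rho_0$) we conclude
\[
 \min\bigl(d(y,\gamma\wt{W}), d(y',\gamma\wt{W})\bigr) \le R + \rho_1 = \rho(R) - \log\varepsilon \eqpunct{,}
\]
with $\rho(R) := R + C(R)$ depending only on $R$ and the geometry of $\wt{M}$, as required.

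**Main obstacle.** The delicate point is tracking how the needed ``buffer length'' $\rho_1$ depends on $\varepsilon$: the exponential decay in Lemma~\ref{lem:parallelestimate} means that to push $d(w,w')$ below $\varepsilon$ one needs $t$ (and $d(z,z')-t$) to exceed a quantity growing like $-\log\varepsilon$, which is exactly the logarithmic term in the statement. One must also be careful that the comparison point $w'$ genuinely lies on the \emph{closed} segment $\itv[cc]{y}{y'}$ — Lemma~\ref{lem:parallelestimate} as stated produces a nearest point on $\itv[cc]{y}{y'}$, so this is automatic — and that the $\sinh(R)$ inside the exponent of the second term of Lemma~\ref{lem:parallelestimate} is harmless, being a fixed additive constant absorbed into $C(R)$. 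Finally, one should note that $\wt{W}$ being relatively compact is used only to make $R+\rho_1$ finite and the constant legitimately geometric; no compactness of $\wt{W}'$ beyond $d(\wt{W}',\wt{M}\setminus\wt{W})\ge\varepsilon$ is needed.
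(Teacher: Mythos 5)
Your proposal is correct and follows essentially the same route as the paper: apply Lemma \ref{lem:parallelestimate} to the point of $\itv[cc]{z}{z'}$ lying in $\gamma\wt{W}'$, observe that its distance to $\itv[cc]{y}{y'}$ must be at least $\varepsilon$ because $\gamma\wt{W}'$ is $\varepsilon$-deep inside $\gamma\wt{W}$ while $\itv[cc]{y}{y'}$ avoids $\gamma\wt{W}$, and invert the exponential bound to force the intersection point within $O(1)-\log\varepsilon$ of one endpoint. The paper merely streamlines your two-ended case analysis by assuming $t\le d(z,z')/2$ without loss of generality and absorbing both exponential terms into a single $2Ce^{-t}$.
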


\begin{proof}
 Denote by $l = d(z, z')$.
 Let $z_t$ be a point of $\itv[cc]{z}{z'}$ inside $\gamma \wt{W}'$, with $t = d(z, z_t)$.
 Without loss of generality, we can assume that $t \le \frac{l}{2}$.
 By Lemma \ref{lem:parallelestimate}, we have
 \[ d(z_t, \itv[cc]{y}{y'}) \le \sinh(R) \( e^{-t} + e^{t - l + \sinh(R)} \) \le C \( e^{-t} + e^{t - l} \) \le 2 C e^{-t} \]
 with $C = \sinh(R) e^{\sinh(R)}$.
 The assumption $d(z_t, \itv[cc]{y}{y'}) \ge \varepsilon$ ensures that $t \le \log(\frac{2 C}{\varepsilon})$ and henceforth that $d(y, z_t) \le \rho - \log \varepsilon$ with $\rho = \log(2 C) + \diam \wt{W}$.
\end{proof}


\subsection{About finding hyperbolic isometries}

In the proofs of Theorems \ref{thm:erg} and \ref{thm:crit3}, we will need to compare sums indexed on periodic orbits of the geodesic flows, i.e. on conjugacy classes of hyperbolic elements of $\Gamma$, with sums indexed on the whole group $\Gamma$.
To this end, we need some technical tools to go from the former to the latter.
We start by recalling a variant of Anosov closing lemma, which is easily obtained by combining \cite[cor. 8.22]{GdlH} with Lemma \ref{lem:parallelestimate}.

\begin{lemm}
 \label{lem:hyper}
 For every $l, \varepsilon > 0$, there exists $\varepsilon' \in \itv[oc]{0}{1}$ with $\lim_{\varepsilon \to 0} \varepsilon' = 0$ such that for every isometry $\gamma$ of any proper geodesic $\text{CAT}(-1)$-space $X$, for every $x_0$ in $X$, if $d(x_0, \gamma x_0) \ge l$ and $d(x_0, \itv[cc]{\gamma^{-1} x_0}{\gamma x_0}) \le \varepsilon'$, then $\gamma$ is hyperbolic and $d(x_0, A_\gamma) \le \varepsilon$, where $A_\gamma$ is the translation axis of $\gamma$ in $X$.
\end{lemm}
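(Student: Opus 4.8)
The plan is to invoke the quoted statement \cite[cor.~8.22]{GdlH}, which in a proper $\text{CAT}(-1)$-space (or more generally a Gromov-hyperbolic space) provides, for suitable quantitative hypotheses, a bound on the distance from a point $x_0$ to the translation axis of an isometry in terms of how well $\gamma$ ``almost translates'' $x_0$ along a geodesic. First I would set up the right notion of ``almost translating'': the hypothesis $d(x_0, \itv[cc]{\gamma^{-1} x_0}{\gamma x_0}) \le \varepsilon'$ says that $x_0$ lies within $\varepsilon'$ of the geodesic segment joining $\gamma^{-1} x_0$ to $\gamma x_0$, i.e.\ the broken path $\gamma^{-1} x_0 \to x_0 \to \gamma x_0$ is an $\varepsilon'$-almost-geodesic. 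Applying $\gamma$ repeatedly, the concatenation $\cdots \to \gamma^{-1} x_0 \to x_0 \to \gamma x_0 \to \gamma^2 x_0 \to \cdots$ is then a bi-infinite $\varepsilon'$-local quasi-geodesic, which by the local-to-global Morse stability in hyperbolic spaces (this is exactly the content that \cite[cor.~8.22]{GdlH} packages) fellow-travels a genuine geodesic line when $\varepsilon'$ is small relative to the hyperbolicity constant and the ``step size'' $d(x_0,\gamma x_0)$ is bounded below by $l$; that geodesic line is $\gamma$-invariant, hence is $A_\gamma$, so $\gamma$ is hyperbolic. This already yields $d(x_0, A_\gamma) \le \varepsilon'' $ for some $\varepsilon''$ depending on $\varepsilon'$ and $l$ with $\varepsilon'' \to 0$ as $\varepsilon' \to 0$.

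Next I would upgrade the raw Morse-lemma bound $\varepsilon''$ to the desired $\varepsilon$ using Lemma \ref{lem:parallelestimate}, which is where the second cited ingredient enters. The point is that the crude fellow-traveling constant coming from stability of quasi-geodesics is typically much larger than one wants; but once we know $x_0$ is within $\varepsilon''$ of $A_\gamma$, we can apply the parallel-geodesic estimate to the two segments $\itv[cc]{\gamma^{-1}x_0}{\gamma x_0}$ and a long sub-segment of $A_\gamma$ (whose endpoints are within $\varepsilon''$ of $\gamma^{-1}x_0$ and $\gamma x_0$, taking $r = r' = \varepsilon''$) to see that the \emph{midpoint region} — which contains $x_0$ up to the error $\varepsilon'$ — is in fact within $\sinh(\varepsilon'')\bigl(e^{-t} + e^{t - d(x_0,\gamma x_0) + \sinh(\varepsilon'')}\bigr) \le \sinh(\varepsilon'')\bigl(e^{-t}+e^{t-l+\sinh(\varepsilon'')}\bigr)$ of $A_\gamma$, where $t \approx \tfrac12 d(x_0,\gamma x_0)$. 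Combining this with the $\varepsilon'$-proximity of $x_0$ to that segment gives $d(x_0, A_\gamma) \le \varepsilon' + \sinh(\varepsilon'')\bigl(e^{-l/2} + e^{-l/2+\sinh(\varepsilon'')}\bigr)$, which can be made $\le \varepsilon$ by choosing $\varepsilon'$ small enough depending only on $l$ and $\varepsilon$. One then records $\varepsilon' = \varepsilon'(l,\varepsilon) \in \itv[oc]{0}{1}$ defined so as to make these two smallness requirements hold, and notes $\lim_{\varepsilon \to 0}\varepsilon' = 0$ (here one reads ``$\varepsilon \to 0$'' as shorthand for the requirement that $\varepsilon'$ tends to $0$ with $\varepsilon$, $l$ being fixed), since both $\varepsilon''(\varepsilon',l) \to 0$ and the sinh-terms vanish as $\varepsilon' \to 0$.

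The main obstacle, and the step that needs care, is verifying that \cite[cor.~8.22]{GdlH} applies with uniform constants: the $\text{CAT}(-1)$ hypothesis gives a uniform hyperbolicity constant $\delta$ independent of $X$, so the local-to-global stability threshold for $\varepsilon'$-local quasi-geodesics can indeed be chosen uniformly, but one must check that the lower bound $d(x_0,\gamma x_0)\ge l$ is what guarantees the local segments are long enough for the local-to-global principle to bite (a quasi-geodesic made of very short segments need not fellow-travel a geodesic). This is precisely why the hypothesis involves $l$ and why $\varepsilon'$ is allowed to depend on $l$. A secondary, purely bookkeeping point is that \cite[cor.~8.22]{GdlH} may be stated for group elements acting on their Cayley graph or for geodesic segments rather than for an abstract isometry of an abstract space; one should phrase the deduction so that only the metric geometry of $X$ and the orbit $\{\gamma^n x_0\}_{n\in\Z}$ is used, which is legitimate since all the estimates above are intrinsic to $X$. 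No deeper difficulty is expected; the lemma is a standard quantitative closing lemma and the two cited results do essentially all the work.
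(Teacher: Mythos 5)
Your plan is correct and is essentially the paper's own argument: the paper gives no written proof of Lemma \ref{lem:hyper} beyond the remark that it "is easily obtained by combining \cite[cor.~8.22]{GdlH} with Lemma \ref{lem:parallelestimate}", and your two-step scheme (stability of the $\gamma$-orbit quasi-geodesic via the cited corollary to get hyperbolicity and a crude bound $\varepsilon''$ on $d(x_0,A_\gamma)$, then the parallel-geodesic estimate of Lemma \ref{lem:parallelestimate} to bootstrap $\varepsilon''$ down to $\varepsilon$) is exactly that intended combination. You also correctly identify the one genuine point of care, namely that the local-to-global step needs the step length $l$ to play against the uniform $\mathrm{CAT}(-1)$ hyperbolicity constant, which is precisely why $\varepsilon'$ is allowed to depend on $l$.
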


Let $\Gamma_h$ be the set of hyperbolic elements of $\Gamma$.
If $\gamma \in \Gamma_h$, we denote by $A_\gamma \subset T^1 \wt{M}$ its \name{axis}, i.e. the set of vectors $v \in T^1 \wt{M}$ such that $g^{l(\gamma)}v = \gamma v$, where $l(\gamma)$ is the minimal displacement of a point by $\gamma$.
In other words, $A_\gamma$ is the set of unit vectors on the geodesic joining the repulsive fixed point to the attractive fixed point, oriented towards the latter.

If $x \in \wt{M}$, $l \ge 0$ and $U \subset \partial_\infty \wt{M}$ is open, then the \name{angular sector at distance $l$ based at $x$ and supported by $U$} is the open set
\[ \mathcal{C}_{x,l}(U) = \set{z \in \wt{M}}{d(z, x) > l \eqand \exists \xi \in U, z \in \itv[oo]{x}{\xi}} \eqpunct{.} \]

\begin{lemm}
 \label{lem:forcehyper}
 Let $\wt{\W} \subset T^1 \wt{M}$ be an open relatively compact set intersecting $\wt{\Omega}$, $\varepsilon > 0$ and $x \in \pi(\wt{\W}) \cap \Conv(\Lambda(\Gamma))$.
 There exist $g_1, \hdots, g_k \in \Gamma$ and a finite set $S \subset \Gamma$ such that for every $\gamma \in \Gamma \setminus S$, there exist $i, j$ such that $\gamma' = g_j^{-1} \gamma g_i$ is hyperbolic and its axis satisfies $A_{\gamma'} \cap \wt{\W} \cap T^1 B(x, \varepsilon) \neq \emptyset$.
\end{lemm}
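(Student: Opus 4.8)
The plan is to realise an arbitrary $\gamma\in\Gamma$, after modifying it on the left and on the right by elements of a fixed finite family, as a hyperbolic isometry whose repulsive and attractive fixed points are \emph{forced} into prescribed small neighbourhoods $U^{-}$ and $U^{+}$ of the two endpoints of a reference vector $v_0\in\wt{\W}\cap\wt{\Omega}$ with $\pi(v_0)$ close to $x$; hyperbolicity and the localisation of the axis near $x$ will then come from the Anosov closing Lemma \ref{lem:hyper}.

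The first step is to fix $v_0\in\wt{\W}\cap\wt{\Omega}$ with $d(\pi(v_0),x)<\varepsilon/4$ — this is where the assumptions $x\in\pi(\wt{\W})\cap\Conv(\Lambda(\Gamma))$ and the openness of $\wt{\W}$ are used — and to set $\xi^{\pm}=(v_0)_{\pm}\in\Lambda(\Gamma)$, so that $\xi^{-}\neq\xi^{+}$. Let $\varepsilon'$ be the constant given by Lemma \ref{lem:hyper} for displacement at least $1$ and axis within $\varepsilon/4$ of the base point. Using Lemma \ref{lem:parallelestimate} to control thin triangles, I would then choose a length $L$ and small open neighbourhoods $U^{-}\ni\xi^{-}$, $U^{+}\ni\xi^{+}$ in $\partial_\infty\wt{M}$ with $\overline{U^{-}}\cap\overline{U^{+}}=\emptyset$, so that for any isometry $h$ of $\wt{M}$ with $hx\in\mathcal{C}_{x,L}(U^{+})$ and $h^{-1}x\in\mathcal{C}_{x,L}(U^{-})$ the segment $\itv[cc]{h^{-1}x}{hx}$ passes within $\varepsilon'$ of $x$ and has length at least $1$. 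By Lemma \ref{lem:hyper}, such an $h$ is hyperbolic with $d(x,A_h)\le\varepsilon/4$; a short additional contraction argument (using $\overline{U^{-}}\cap\overline{U^{+}}=\emptyset$) shows that its fixed points $\lim h^{\pm n}x$ lie in $\overline{U^{\pm}}$, so $A_h$ is a geodesic from $\overline{U^{-}}$ to $\overline{U^{+}}$ passing within $\varepsilon/4$ of $x$; for $U^{\pm}$ small enough and $L$ large enough it is then so close to $v_0$ near $x$ that the unit vector it carries over its point closest to $x$ lies in $\wt{\W}\cap T^1B(x,\varepsilon)$. This reduces the lemma to producing, for all but finitely many $\gamma$, indices $i,j$ such that $\gamma'=g_j^{-1}\gamma g_i$ satisfies $\gamma'x\in\mathcal{C}_{x,L}(U^{+})$ and $(\gamma')^{-1}x\in\mathcal{C}_{x,L}(U^{-})$.

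The combinatorial core rests on minimality of the $\Gamma$-action on $\Lambda(\Gamma)$: for every $\eta\in\Lambda(\Gamma)$ the orbit $\Gamma\eta$ is dense in $\Lambda(\Gamma)$, hence the $\Gamma$-translates of a slightly smaller neighbourhood $U_1^{+}$ of $\xi^{+}$ (with $\overline{U_1^{+}}\subset U^{+}$) form an open cover of the compact set $\Lambda(\Gamma)$, and likewise those of some $U_1^{-}\subset U^{-}$; extracting finite subcovers and taking their union yields the finite list $g_1,\dots,g_k$. Now let $\eta^{+}_{\gamma}$ and $\eta^{-}_{\gamma}$ be the endpoints of the geodesic rays from $x$ through $\gamma x$ and through $\gamma^{-1}x$. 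Since $\Gamma$ is discrete, $d(x,\gamma x)\to\infty$ as $\gamma$ leaves every finite subset, and since $\gamma x$ and $\gamma^{-1}x$ accumulate only on $\Lambda(\Gamma)$, the points $\eta^{\pm}_{\gamma}$ get arbitrarily close to $\Lambda(\Gamma)$; so outside a finite set $S$ one can pick $j$ with $\eta^{+}_{\gamma}\in g_jU^{+}$ and $i$ with $\eta^{-}_{\gamma}\in g_iU^{-}$. As $g_i,g_j$ range over the fixed finite list, $g_ix$ and $g_jx$ stay at bounded distance from $x$, so for $\gamma$ far enough the geodesic $\itv[cc]{g_jx}{\gamma g_ix}$ fellow-travels $\itv[cc]{x}{\gamma x}$ near infinity and $\itv[cc]{g_ix}{\gamma^{-1}g_jx}$ fellow-travels $\itv[cc]{x}{\gamma^{-1}x}$; pulling back by $g_j^{-1}$, resp. $g_i^{-1}$, this forces $\gamma'x=g_j^{-1}\gamma g_ix\in\mathcal{C}_{x,L}(U^{+})$ and $(\gamma')^{-1}x=g_i^{-1}\gamma^{-1}g_jx\in\mathcal{C}_{x,L}(U^{-})$, as required. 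The choice of $j$ depends only on $\eta^{+}_{\gamma}$ and that of $i$ only on $\eta^{-}_{\gamma}$, so there is no circularity, and $S$ is finite since each displacement bound and each requirement that $\gamma$ be far enough discards only finitely many elements.

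The main obstacle I foresee is the bookkeeping of the change of base point from $x$ to the points $g_mx$: these lie at bounded but not small distance, so one must carefully check that the cone-membership conditions survive that shift, which is precisely why the argument only runs once $d(x,\gamma x)$ is large and why the parameters have to be frozen in the order $v_0$; then $(\varepsilon',1)$ from Lemma \ref{lem:hyper}; then $(U^{\pm},L)$; then $(U_1^{\pm})$ together with the closeness margin used to pass from $\eta^{\pm}_{\gamma}$ to the cover; then $g_1,\dots,g_k$; then $S$. A secondary delicate point is the very first step, extracting $v_0\in\wt{\W}\cap\wt{\Omega}$ with $\pi(v_0)$ within $\varepsilon/4$ of $x$, which is where the hypothesis $x\in\Conv(\Lambda(\Gamma))$ genuinely enters.
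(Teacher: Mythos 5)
Your proposal is correct and follows essentially the same strategy as the paper's proof: the Anosov-type closing Lemma \ref{lem:hyper} applied to a segment $\itv[cc]{{\gamma'}^{-1}x}{\gamma' x}$ forced near $x$, angular sectors over boundary neighbourhoods of the endpoints of a reference nonwandering vector, minimality of the $\Gamma$-action on $\Lambda(\Gamma)$ to extract the finite family $g_1,\hdots,g_k$, and properness to make $S$ finite. The only differences are in bookkeeping (the paper absorbs the base-point shift by shrinking the cones to the sets $U''$, $V''$ rather than by a fellow-travelling estimate, and gets the intersection with $\wt{\W}$ directly from the fact that $A_{\gamma'}$ is the geodesic through ${\gamma'}^{\pm 1}z$ rather than by locating the fixed points), and the implicit need for a nonwandering vector of $\wt{\W}$ based near $x$, which you flag, is present in the paper's argument as well.
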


\begin{proof}
 Take $\varepsilon' = \varepsilon'(\varepsilon, 1)$ given by Lemma \ref{lem:hyper}.
 Let $U$ and $V$ be two non empty open sets of $\partial_\infty \wt{M}$ with disjoint closures, both meeting $\Lambda(\Gamma)$, such that any geodesic orbit of $T^1 \wt{M}$ from $U$ to $V$ meets $\wt{\W} \cap T^1 B(x, \varepsilon')$.
 Fix two non empty open sets $U_0, V_0 \subset \partial_\infty \wt{M}$ meeting $\Lambda(\Gamma)$ such that $\overline{U_0} \subset U$ and $\overline{V_0} \subset V$.
 There exists $l' \ge l$ such that for every $y \in U' = \mathcal{C}_{x,l'}(U_0)$ and $z \in V' = \mathcal{C}_{x,l'}(V_0)$, the geodesic orbit of $T^1 \wt{M}$ from $y$ to $z$ meets $\wt{\W} \cap T^1 B(x, \varepsilon')$.

 As $\Gamma$ acts minimally on $\Lambda(\Gamma)$, there exist $g_1, \hdots, g_p, \hdots, g_k \in \Gamma$ such that
 \[ \Lambda(\Gamma) \subset \bigcup_{i=1}^p g_i U_0 \eqand \Lambda(\Gamma) \subset \bigcup_{i=p+1}^k g_i V_0 \eqpunct{.} \]
 Let $R_0 = \sup \set{d(x, g_i^{-1} x)}{i = 1, \hdots, k}$, $R_1 = R_0 + 2 \varepsilon$, and define
 \[ U'' = \set{y \in \wt{M}}{B(y, R_1) \subset U'} \eqand V'' = \set{z \in \wt{M}}{B(z, R_1) \subset V'} \eqpunct{.} \]
 Observe that $\partial_\infty U'' = U_0$ and $\partial_\infty V'' = V_0$, so that we still have in $\wt{M} \cup \partial_\infty \wt{M}$
 \[ \Lambda(\Gamma) \subset \bigcup_{i=1}^p g_i U'' \eqand \Lambda(\Gamma) \subset \bigcup_{i=p+1}^k g_i V'' \eqpunct{.} \]
 Therefore, both sets
 \[ K = \Conv(\Lambda(\Gamma)) \setminus \bigcup_{i=1}^p g_i U'' \eqand L = \Conv(\Lambda(\Gamma)) \setminus \bigcup_{i=p+1}^k g_i V'' \]
 are closed and do not meet $\partial_\infty \wt{M}$, hence they are compact in $\wt{M}$.
 Therefore, $S = \set{\gamma \in \Gamma}{\gamma^{-1} x \in K \eqor \gamma x \in L}$ is finite.

 Now if $\gamma \in \Gamma \setminus S$, by construction, there exist $i, j$ such that $\gamma^{-1} x \in g_i U''$ and $\gamma x \in g_j V''$.
 If $\gamma' = g_j^{-1} \gamma g_i$, then $u = {\gamma'}^{-1} g_j^{-1} x \in U''$ and $v = \gamma' g_i^{-1} x \in V''$, which means that $\gamma' x$ satisfies
 \[ d(\gamma' x, v) = d(x, g_i^{-1} x) \le R_0 < R_1 \eqpunct{,} \]
 i.e. $\gamma' x \in V'$ and similarly ${\gamma'}^{-1} x \in U'$.
 By our choice of $U'$ and $V'$, this ensures that $d(x, \gamma' x) \ge l$ and $\itv[cc]{{\gamma'}^{-1} x}{\gamma' x}$ meets $B(x, \varepsilon')$, so that by Lemma \ref{lem:hyper}, $\gamma'$ is hyperbolic and its axis $A_{\gamma'}$ meets $T^1 B(x, \varepsilon)$.

 Finally, let $z \in \wt{\pi}(A_{\gamma'}) \cap B(x, \varepsilon)$.
 Since
 \[ d(\gamma' z, v) \le d(\gamma' z, \gamma' x) + d(\gamma' x, v) \le \varepsilon + R_0 < R_1 \eqpunct{,} \]
 we deduce that $\gamma' z \in V'$ and likewise ${\gamma'}^{-1} z \in U'$.
 This implies that the geodesic orbit of $T^1 \wt{M}$ from ${\gamma'}^{-1} z$ to $\gamma' z$, i.e. $A_{\gamma'}$, also meets $\wt{\W}$.
\end{proof}


\subsection{Shadows}

If $y, y'$ are two distinct points of $\wt{M} \cup \partial_\infty \wt{M}$, let $v_-(y, y') \in \partial_\infty \wt{M}$ (respectively $v_+(y, y') \in \partial_\infty \wt{M}$) be the endpoint of the one-sided infinite geodesic ray going from $y'$ to $y$ (respectively from $y$ to $y'$).
The maps $v_+$ and $v_-$ are continuous for the usual topology on $(\wt{M} \cup \partial_\infty \wt{M})^2 \setminus \Diag$.

With the above notations, if $x \in \wt{M} \cup \partial_\infty \wt{M}$ and $W$ is an open, relatively compact, geodesically convex subset of $\wt{M}$, the \name{shadow of $W$ viewed from $x$} is the set
\[ \O_x W = \set{v_+(x, y)}{y \in W} = \set{\xi \in \partial_\infty \wt{M}}{\itv[oo]{x}{\xi} \cap W \neq \emptyset} \eqpunct{.} \]

We start by stating a classical lemma that asserts that, if the base point is far enough from the set that casts the shadow, then it can be moved around by a bounded amount almost without changing the shadow.

\begin{lemm}
 \label{lem:shadowcontrol}
 For every $r > 0$, every $0 < \varepsilon < r$ and every $\delta > 0$, there exists $l_0 \ge 0$ such that for all $x, y \in \wt{M}$ satisfying $d(x, y) \ge l_0$ and for all $z \in B(y, \delta)$ we have
 \[ \O_y B(x, r - \varepsilon) \subset \O_z B(x, r) \subset \O_y B(x, r + \varepsilon) \eqpunct{.} \]
\end{lemm}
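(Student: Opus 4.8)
The plan is to reduce the statement to the exponential convergence of asymptotic geodesic rays furnished by Lemma \ref{lem:triangleestimate}: two geodesic rays ending at the same point $\xi \in \partial_\infty \wt{M}$, whose origins lie at distance $\le \delta$ from one another, become $\varepsilon$-close as soon as one has travelled a distance of order $\log(\sinh\delta/\varepsilon)$ along them. Concretely, fix $r, \varepsilon, \delta$ with $0 < \varepsilon < r$ and choose $l_0 \ge 0$ large enough that $l_0 > r + \delta$ and $e^{-(l_0 - r - \delta)}\sinh\delta \le \varepsilon$ (hence also $e^{-(l_0 - r)}\sinh\delta \le \varepsilon$). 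Recall that for every $x \in \wt{M}$ and $s > 0$ the ball $B(x,s)$ is open, relatively compact and geodesically convex in the Hadamard manifold $\wt{M}$, so that all the shadows appearing below are well defined. Now fix $x, y \in \wt{M}$ with $d(x,y) \ge l_0$ and $z \in B(y,\delta)$; the two inclusions are proved by applying Lemma \ref{lem:triangleestimate} to the geodesic triangle with vertices $y$, $z$ and $\xi$, once based at $y$ and once based at $z$.

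For the inclusion $\O_y B(x, r - \varepsilon) \subset \O_z B(x, r)$, take $\xi \in \O_y B(x, r - \varepsilon)$, so there is a point $p \in \itv[oo]{y}{\xi}$ with $d(p,x) < r - \varepsilon$; then $t := d(y,p) \ge d(x,y) - d(x,p) > l_0 - r$. Since $p$ is the point of $\itv[cc]{y}{\xi}$ at distance $t$ from $y$, Lemma \ref{lem:triangleestimate}, applied with the roles of $(x,y,z)$ played by $(y, z, \xi)$, yields
\[ d\(p, \itv[cc]{z}{\xi}\) \le e^{-t}\sinh(d(y,z)) \le e^{-(l_0 - r)}\sinh\delta \le \varepsilon \eqpunct{.} \]
Choosing $q \in \itv[cc]{z}{\xi}$ with $d(p,q) \le \varepsilon$, we get $d(q,x) \le d(q,p) + d(p,x) < r$, while $d(z,q) \ge d(x,z) - d(x,q) > l_0 - \delta - r > 0$ forces $q \ne z$; hence $q \in \itv[oo]{z}{\xi} \cap B(x,r)$ and $\xi \in \O_z B(x,r)$.

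The inclusion $\O_z B(x, r) \subset \O_y B(x, r + \varepsilon)$ is symmetric. Given $\xi \in \O_z B(x,r)$, pick $q \in \itv[oo]{z}{\xi}$ with $d(q,x) < r$; then $t := d(z,q) \ge d(x,z) - d(x,q) \ge d(x,y) - d(y,z) - r > l_0 - \delta - r$, and Lemma \ref{lem:triangleestimate}, now applied with the roles of $(x,y,z)$ played by $(z, y, \xi)$, gives
\[ d\(q, \itv[cc]{y}{\xi}\) \le e^{-t}\sinh(d(z,y)) \le e^{-(l_0 - r - \delta)}\sinh\delta \le \varepsilon \eqpunct{.} \]
Choosing $p \in \itv[cc]{y}{\xi}$ with $d(p,q) \le \varepsilon$ gives $d(p,x) \le d(p,q) + d(q,x) < r + \varepsilon$ and $d(y,p) \ge d(x,y) - d(x,p) > l_0 - r - \varepsilon > 0$, so $p \in \itv[oo]{y}{\xi} \cap B(x, r + \varepsilon)$ and $\xi \in \O_y B(x, r + \varepsilon)$. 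The argument is essentially routine once Lemma \ref{lem:triangleestimate} is in hand; the only point requiring attention is to check that the distance $t$ travelled along the relevant ray before invoking the comparison estimate is at least $l_0 - r - \delta$ — which is precisely where the hypothesis $d(x,y) \ge l_0$ enters — and to keep track of the fact that the two base points exchange roles between the two inclusions.
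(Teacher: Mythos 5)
Your proof is correct, and it is the standard argument: the paper states Lemma \ref{lem:shadowcontrol} as classical and gives no proof, so there is nothing to compare against; deriving it from the exponential contraction of asymptotic rays in Lemma \ref{lem:triangleestimate}, applied once in each direction with the two base points exchanging roles, is exactly how one would expect it to be done. The only cosmetic slip is that your two stated constraints on $l_0$ (namely $l_0 > r+\delta$ and $e^{-(l_0-r-\delta)}\sinh\delta \le \varepsilon$) do not by themselves guarantee $l_0 - r - \varepsilon > 0$ when $\delta < \varepsilon$, which you use to conclude $p \ne y$ in the second inclusion; since $\varepsilon < r$, simply adding the requirement $l_0 > 2r + \delta$ (or observing that $d(x,p) < r+\varepsilon < 2r \le l_0 \le d(x,y)$ already forces $p \ne y$) closes this.
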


We will also need the two following lemmas about products of shadows.
Their proofs are very similar to \cite[Lemma 3.17]{PPS} and therefore ommitted.

\begin{lemm}
 \label{lem:shadowup}
 For all $r > 0$, $r' > 0$ and $\varepsilon > 0$, there exists $l_0 \ge 0$ such that for all $x, x' \in \wt{M}$ satisfying $d(x, x') \ge l_0$, for all $y \in B(x, r)$ and $y' \in B(x', r')$, we have
 \[ (v_-(y, y'), v_+(y, y')) \in \O_{x'} B(x, r + \varepsilon) \times \O_x B(x', r' + \varepsilon) \eqpunct{.} \]
\end{lemm}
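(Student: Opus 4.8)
The plan is to deduce both coordinates from a single observation together with one application of the comparison Lemma \ref{lem:triangleestimate}. Set $\xi = v_+(y, y')$; by definition $\xi$ is the endpoint of the geodesic ray issued from $y$ and passing through $y'$, so $y'$ lies on $\itv[cc]{y}{\xi}$ at arclength $d(y, y')$ from $y$. The point to notice is that it suffices to bring the ray $\itv[cc]{x}{\xi}$ within distance $\varepsilon$ of $y'$ itself: since $d(x', y') \le r'$, this automatically makes $\itv[oo]{x}{\xi}$ meet $B(x', r' + \varepsilon)$, hence puts $\xi$ in $\O_x B(x', r' + \varepsilon)$.

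First I would apply Lemma \ref{lem:triangleestimate} with its three points taken to be $y$, $x$ and $\xi$, in that order: the point of $\itv[cc]{y}{\xi}$ at arclength $t$ from $y$ is at distance at most $e^{-t}\sinh(d(x, y)) \le e^{-t}\sinh(r)$ from $\itv[cc]{x}{\xi}$. Taking $t = d(y, y')$ and using the crude bound $d(y, y') \ge d(x, x') - d(x, y) - d(x', y') \ge l_0 - r - r'$, I obtain $d\bigl(y', \itv[cc]{x}{\xi}\bigr) \le e^{\,r + r' - l_0}\sinh(r)$.

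Now choose $l_0$, depending only on $r$, $r'$ and $\varepsilon$, large enough that both $e^{\,r+r'-l_0}\sinh(r)$ and $e^{\,r+r'-l_0}\sinh(r')$ are $< \varepsilon$ (in particular $l_0 > r + r'$). Let $p$ be a point of $\itv[cc]{x}{\xi}$ realizing the distance from $y'$ to that ray, the infimum being attained because $\wt{M}$ is proper and the ray is closed. Then $d(x, p) \ge d(x, y') - d(y', p) \ge (l_0 - r') - \varepsilon > 0$, so $p$ in fact lies on the \emph{open} ray $\itv[oo]{x}{\xi}$, and $d(p, x') \le d(p, y') + d(y', x') < \varepsilon + r'$. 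Thus $\itv[oo]{x}{\xi} \cap B(x', r' + \varepsilon) \ne \emptyset$, i.e. $v_+(y, y') \in \O_x B(x', r' + \varepsilon)$ (the ball is open, relatively compact and geodesically convex, so the shadow is legitimate). Running the same argument with $(x, y, r)$ and $(x', y', r')$ interchanged, and using $v_-(y, y') = v_+(y', y)$, gives $v_-(y, y') \in \O_{x'} B(x, r + \varepsilon)$; this is why both $\sinh(r)$ and $\sinh(r')$ were made small in the choice of $l_0$.

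I do not expect a genuine obstacle; the argument is short. The two spots needing a line of care are checking that the distance-minimizing point $p$ lands on the open ray (so that it really witnesses membership in the shadow) and tracking that $l_0$ depends on $r, r', \varepsilon$ only — both are immediate from the rough triangle-inequality estimates above. The one idea worth isolating is that the comparison must be made against the \emph{point} $y'$ rather than against $x'$: the exponential gain in Lemma \ref{lem:triangleestimate} is only available along a long geodesic, and $d(y, y')$, which is comparable to $d(x, x') \ge l_0$, is exactly the length at our disposal.
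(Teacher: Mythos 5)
Your proof is correct; the paper omits this proof entirely (deferring to \cite[Lemma 3.17]{PPS}), and your argument --- applying Lemma \ref{lem:triangleestimate} to compare the ray from $x$ to $\xi=v_+(y,y')$ against the point $y'$ itself, then symmetrizing via $v_-(y,y')=v_+(y',y)$ --- is exactly the intended standard one. The only microscopic adjustment is to require in addition $l_0 > r+r'+\varepsilon$ (your parenthetical $l_0>r+r'$ is not literally forced by the exponential condition when $\varepsilon$ is large), so that your bound $d(x,p)\ge l_0-r'-\varepsilon>0$ genuinely places the minimizing point on the open ray.
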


\begin{lemm}
 \label{lem:shadowdown}
 For every $r > 0$, $r' > 0$, and $\varepsilon > 0$, there exists $l_0 \ge 0$ such that the following holds : for every $x, x' \in \wt{M}$ satisfying $d(x, x') \ge l_0$, for every $v_- \in \O_{x'} B(x, r)$ and every $v_+ \in \O_x B(x', r')$, there exist $y \in B(x, r + \varepsilon)$ and $y' \in B(x', r' + \varepsilon)$ such that
 \[ (v_-, v_+) = (v_-(y, y'), v_+(y, y')) \eqpunct{.} \]
\end{lemm}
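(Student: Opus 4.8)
The plan is to take $y,y'$ directly on the bi-infinite geodesic $(v_- v_+)$, and to show that, once $d(x,x')$ is large, this geodesic is forced to pass within $\varepsilon$ of the two points witnessing the two shadow conditions. First unwind the hypotheses: since $v_- \in \O_{x'} B(x,r)$ there is a point $a \in B(x,r)$ lying on the geodesic ray $\itv[oo]{x'}{v_-}$, and since $v_+ \in \O_x B(x',r')$ there is a point $b \in B(x',r')$ lying on $\itv[oo]{x}{v_+}$. Then $d(a,b) \ge d(x,x') - r - r'$, which we may take as large as we please by enlarging $l_0$. Applying Lemma~\ref{lem:triangleestimate} to the triangle $x,a,v_+$ (so that $b$, which lies on $\itv[oo]{x}{v_+}$ at distance $d(x,b) \ge d(x,x') - r'$ from $x$, plays the role of $x_t$) shows that the ray $\itv[oo]{a}{v_+}$ passes within $\delta_1 := e^{r'}\sinh(r)\,e^{-l_0}$ of $b$; symmetrically, the ray $\itv[oo]{b}{v_-}$ passes within $\delta_2 := e^{r}\sinh(r')\,e^{-l_0}$ of $a$.

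The core step is to prove that $d\bigl(a,(v_- v_+)\bigr) \le \varepsilon/2$ and $d\bigl(b,(v_- v_+)\bigr) \le \varepsilon/2$ for $l_0$ large, by showing that the broken path $v_- \to a \to b \to v_+$ is nearly straight at $a$ and at $b$. At $a$: the Alexandrov angle $\angle_a(v_-,x')$ equals $\pi$ since $a$ is interior to $\itv[oo]{x'}{v_-}$; the angle $\angle_a(x',b)$ is small because in the triangle $a,x',b$ the side $\itv[cc]{x'}{b}$ has length $\le r'$ whereas both other sides have length $\ge d(x,x')-r-r'$ (CAT$(-1)$ comparison); and $\angle_a(b,v_+)$ is small because $\itv[oo]{a}{v_+}$ comes within $\delta_1$ of $b$ while $d(a,b)$ is large. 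By the triangle inequality for angles, $\angle_a(v_-,v_+) \ge \pi - \theta$ with $\theta = \theta(l_0) \to 0$ as $l_0\to\infty$. Now approximate $(v_- v_+)$ by segments $\itv[cc]{p}{q}$ with $p \in \itv[oo]{a}{v_-}$, $q \in \itv[oo]{a}{v_+}$, both at distance $D$ from $a$: the isosceles $\mathbb{H}^2$-comparison triangle of $\{a,p,q\}$ has apex angle $\ge \pi - \theta$, hence its apex lies within $g(\theta) := \operatorname{arctanh}\bigl(\sin(\theta/2)\bigr)$ of the opposite side, a bound \emph{independent of} $D$; by the CAT$(-1)$ inequality $d\bigl(a,\itv[cc]{p}{q}\bigr) \le g(\theta)$, and letting $D\to\infty$ (so that $\itv[cc]{p}{q} \to (v_- v_+)$) gives $d\bigl(a,(v_- v_+)\bigr) \le g(\theta) \le \varepsilon/2$ once $l_0$ is large. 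The same argument at $b$, using $\delta_2$ and $\angle_b(v_+,x) = \pi$, handles $d\bigl(b,(v_- v_+)\bigr)$.

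To conclude, let $y$ (resp. $y'$) be the point of $(v_- v_+)$ closest to $a$ (resp. to $b$). Then $d(y,x) < r+\varepsilon$, $d(y',x') < r'+\varepsilon$, and $d(y,y') \ge d(x,x') - r - r' - \varepsilon > 0$ for $l_0$ large, so $y \neq y'$. Orienting $(v_- v_+)$ from $v_-$ to $v_+$, the orientation of the pair is the right one: near $y$ (hence near $a$) the direction along $(v_- v_+)$ pointing to $v_-$ differs, up to the small error above, from the direction at $a$ of the ray $\itv[oo]{x'}{v_-}$, which points away from $x'$ and hence away from $y' \approx b \approx x'$; thus $y$ precedes $y'$ and $\bigl(v_-(y,y'),v_+(y,y')\bigr) = (v_-,v_+)$, as required.

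The only genuinely delicate point is the CAT$(-1)$ estimate used in the core step to pass from ``angle close to $\pi$ at a vertex'' to ``that vertex is close to the true geodesic'': one needs the resulting distance bound to be uniform in the length $D$ of the approximating segments — a mere bounded-Hausdorff-distance statement would not suffice — which is exactly why the isosceles hyperbolic-triangle computation is arranged to produce a bound free of $D$. Everything else is routine bookkeeping with the comparison Lemma~\ref{lem:triangleestimate}, in the spirit of \cite[Lemma 3.17]{PPS}.
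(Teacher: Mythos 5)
Your argument is correct. Note that the paper gives no proof of this lemma at all: it is dismissed as ``very similar to [PPS, Lemma 3.17]'', where the standard route is to run the thin-triangle comparison estimates (in the spirit of Lemma~\ref{lem:triangleestimate} applied to the ideal triangles with vertices $x,x',v_\pm$) directly on the geodesic $(v_-v_+)$ to show it passes near the witnesses $a\in B(x,r)\cap\itv[oo]{x'}{v_-}$ and $b\in B(x',r')\cap\itv[oo]{x}{v_+}$. You reach the same conclusion by a different mechanism: you show the concatenation $v_-\to a\to b\to v_+$ is almost geodesic at $a$ and $b$ in the sense of Alexandrov angles, and then convert ``angle $\ge\pi-\theta$ at a vertex'' into ``vertex within $\operatorname{arctanh}(\sin(\theta/2))$ of the opposite side'' via the isosceles comparison triangle in $\H^2$. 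This is self-contained, and you correctly isolate the one genuine subtlety, namely that the distance bound must be uniform in the length $D$ of the approximating chords $\itv[cc]{p}{q}$ for the limit $\itv[cc]{p}{q}\to(v_-v_+)$ to yield anything; the explicit hyperbolic-trigonometry computation ($\tanh h=\cos(\alpha/2)\tanh D\le\sin(\theta/2)$) does exactly that. What your approach buys is independence from any quantitative statement about ideal triangles, at the price of invoking the angle-comparison axioms of $\text{CAT}(-1)$ spaces rather than only Lemma~\ref{lem:triangleestimate}. The only step I would ask you to tighten is the final orientation check, which you state qualitatively: the clean version is that, by Lemma~\ref{lem:triangleestimate} applied at the vertex $v_-$ of the triangle $(y,a,v_-)$, every point of the ray $\itv[oo]{y}{v_-}$ lies within $\sinh(\varepsilon/2)$ of the subray $\itv[oo]{a}{v_-}\subset\itv[oo]{x'}{v_-}$, hence at distance at least $d(x,x')-r-\sinh(\varepsilon/2)$ from $x'$; since $d(y',x')\le r'+\varepsilon$, the point $y'$ cannot lie on that ray once $l_0$ is large, so it lies on $\itv[oo]{y}{v_+}$ and the pair $(y,y')$ is correctly oriented.
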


The next lemma states that when two balls are far enough one from each other, their shadows relative to each other's center cannot intersect.

\begin{lemm}
 \label{lem:shadowdisjoint}
 For every $R \ge 0$, there exists a $l_0 \ge 0$ such that, for every $x, y \in \wt{M}$ satisfying $d(x, y) \ge l_0$, one has
 \[ \O_x B(y, R) \cap \O_y B(x, R) = \emptyset \eqpunct{.} \]
\end{lemm}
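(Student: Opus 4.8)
The statement says: given $R\ge 0$, there is $l_0\ge 0$ so that whenever $d(x,y)\ge l_0$, the shadows $\O_x B(y,R)$ and $\O_y B(x,R)$ are disjoint subsets of $\partial_\infty\wt M$. The plan is to argue by contradiction: suppose there is a boundary point $\xi$ lying in both shadows. By definition of the shadow, $\xi\in\O_x B(y,R)$ means the geodesic ray $\itv[oo]{x}{\xi}$ passes through $B(y,R)$, i.e. there is a point on this ray within distance $R$ of $y$; similarly $\xi\in\O_y B(x,R)$ means the ray $\itv[oo]{y}{\xi}$ passes within distance $R$ of $x$. The key point is that these two rays, $\itv[oo]{x}{\xi}$ and $\itv[oo]{y}{\xi}$, share the same endpoint $\xi$, so they are asymptotic and by Lemma \ref{lem:triangleestimate} (with the endpoint playing the role of $z$) they converge to each other exponentially fast; in fact, after they have "merged", both rays essentially trace out the same geodesic to $\xi$.

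First I would set up the geometry: on the ray from $x$ to $\xi$, let $p$ be a point with $d(p,y)\le R$; on the ray from $y$ to $\xi$, let $q$ be a point with $d(q,x)\le R$. Now consider how far $p$ and $q$ are from $x$ and $y$ along their respective rays. Since $d(p,y)\le R$, the triangle inequality gives $d(x,p)\le d(x,y)+R$ and $d(x,p)\ge d(x,y)-R$; symmetrically for $d(y,q)$. So both $p$ and $q$ sit at distance roughly $d(x,y)$ from the base of their ray. The contradiction should come from looking at the point $q$: it is on the ray $\itv[oo]{y}{\xi}$ at distance about $d(x,y)$ from $y$, hence (since the two rays to $\xi$ are exponentially asymptotic) it is within $e^{-\,\Theta(d(x,y))}$ of the ray $\itv[oo]{x}{\xi}$ — in particular within distance, say, $1$ of that ray once $d(x,y)$ is large. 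But $q$ is also within $R$ of $x$, which forces a point of the ray $\itv[oo]{x}{\xi}$ to be within $R+1$ of $x$. Combined with the fact that $p$ — the point of $\itv[oo]{x}{\xi}$ near $y$ — is at distance $\ge d(x,y)-R$ from $x$ along the same ray, and that the ray leaves every ball around $x$ monotonically, one gets that $q$ being near $x$ is incompatible with $q$ being near the part of the $x$-ray that is far out near $y$; tracking the two competing distance estimates yields $d(x,y)\le (\text{const depending only on }R)$, which is the desired contradiction once $l_0$ is chosen larger than that constant.

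The cleanest way to extract the quantitative bound is to apply Lemma \ref{lem:parallelestimate} (or Lemma \ref{lem:triangleestimate} twice) to the segment $\itv[cc]{x}{q}$ and the geodesic $\itv[cc]{y}{\xi}$: since $d(x,y)\le R$ is replaced here by $d(q,x)\le R$ and the shared endpoint is $\xi$, a point of $\itv[cc]{x}{q}$ at parameter $t$ is within $\sinh(R)e^{-t}+\text{(small)}$ of the $x$-to-$\xi$ ray, and then one exploits that $q$ lies far along the $y$-ray while being close to $x$. I expect the main obstacle to be bookkeeping the orientation of the rays and making sure the exponentially-small gap is compared against the correct "distance travelled" parameter — the naive application of the comparison lemmas controls the distance from an interior point of one segment to the other segment, but here I need to certify that the near-$x$ point $q$ cannot simultaneously be near a far-out portion of the $x$-ray, which is really a statement that a geodesic ray is a quasi-isometric embedding of $\itv[co]{0}{+\infty}$ with the sharp constant $1$ in $\text{CAT}(-1)$. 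Once that monotonicity is invoked cleanly, the estimate closes and $l_0$ can be taken, e.g., of the order $2R+\sinh(R)+\text{const}$.
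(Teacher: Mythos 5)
Your setup — argue by contradiction, pick $\xi$ in both shadows, take $p$ on $\itv[oo]{x}{\xi}$ with $d(p,y)\le R$ and $q$ on $\itv[oo]{y}{\xi}$ with $d(q,x)\le R$ — is fine, but the step that is supposed to produce the contradiction is not actually carried out, and the mechanism you sketch for it does not work. Two problems. First, the quantitative claim is wrong: Lemma \ref{lem:triangleestimate} gives $d(q,\itv[oo]{x}{\xi})\le e^{-t}\sinh(d(x,y))$ with $t=d(y,q)\ge d(x,y)-R$, and since $\sinh(d(x,y))\le \tfrac12 e^{d(x,y)}$ this bound is only $\le \tfrac12 e^{R}$, a constant depending on $R$, not $e^{-\Theta(d(x,y))}$. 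Second, and more seriously, knowing that $q$ is within $\tfrac12 e^{R}$ of the ray $\itv[oo]{x}{\xi}$ and within $R$ of $x$ yields no contradiction by itself: the nearest point of $\itv[oo]{x}{\xi}$ to $q$ could perfectly well lie in the initial segment near $x$ (this happens, for instance, when $x$ sits almost on the ray from $y$ to $\xi$), so there is no tension with the monotone escape of the ray from balls centered at $x$. Nothing in your write-up certifies that $q$ is close to the \emph{far-out} portion of the $x$-ray; that is exactly the missing content, and ``a geodesic ray is an isometric embedding of $\itv[co]{0}{+\infty}$'' does not supply it. The contradiction only appears once the hypotheses on $p$ and on $q$ are combined, e.g.\ via the Busemann function $b_\xi$: it is $1$-Lipschitz and decreases at unit speed along rays toward $\xi$, so $d(p,y)\le R$ gives $b_\xi(y)\le b_\xi(x)-s+R$ with $s=d(x,p)\ge d(x,y)-R$, while $d(q,x)\le R$ gives $b_\xi(x)\le b_\xi(y)-t+R$ with $t\ge d(x,y)-R$; adding the two inequalities yields $s+t\le 2R$, hence $d(x,y)\le 2R$, and $l_0=2R+1$ works.

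For comparison, the paper's proof is a one-liner of a different flavour: the two conditions $d(x,\itv[co]{y}{\xi})\le R$ and $d(y,\itv[co]{x}{\xi})\le R$ together force $d(x,y)\le 2R+2\delta$ by $\delta$-thinness of (ideal) triangles — essentially the identity $(y|\xi)_x+(x|\xi)_y=d(x,y)$ for Gromov products combined with the comparison between a Gromov product and the distance to the opposite side. Either that observation or the Busemann computation above closes your argument; as written, the proposal stops just short of the point where the two closeness conditions have to interact.
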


\begin{proof}
 Suppose not, and take $\xi$ in the intersection.
 In the triangle $(x, y, \xi)$, we would have $d(x, \itv[co]{y}{\xi}) \le R$ and $d(y, \itv[co]{x}{\xi}) \le R$.
 Triangles of $\wt{M}$ are $\delta$-hyperbolic for some positive constant $\delta$ depending only on the upper bound of the sectional curvature.
 Therefore, this situation is possible only if $d(x, y) \le 2 R + 2 \delta$.
 Thus, the lemma is proved with $l_0 = 2 R + 2 \delta$.
\end{proof}

\subsubsection*{Measure of shadows}

The Shadow Lemma, initially due to Sullivan, estimates the measure given by Patterson-Sullivan-Gibbs densities to shadows of balls in terms of integrals of the normalized potential.
It has been proven by Mohsen in our setting, and asserts the following.

\begin{lemm}[Mohsen's Shadow Lemma, Lemma 3.10 in \cite{PPS}]
 \label{lem:shadowmohsen}
 Let $(\mu_x^F)_{x \in \wt{M}}$ be the Patterson-Sullivan-Gibbs conformal density associated with $F$, and $K$ be a compact subset of $\wt{M}$.
 There exists $R_0 > 0$ such that, for all $R \ge R_0$, there exists $C > 0$ such that for all $\gamma \in \Gamma$ and $x, y \in K$
 \[ \frac{1}{C} e^{\int_x^{\gamma y} (\wt{F} - P(F))} \le \mu_x^F\( \O_x B(\gamma y, R) \) \le C e^{\int_x^{\gamma y} (\wt{F} - P(F))} \eqpunct{.} \]
\end{lemm}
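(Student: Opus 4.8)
The plan is to run Sullivan's classical shadow‑lemma argument, the Gibbs cocycle of the normalized potential $\wt F - P(F)$ taking the place of the Busemann cocycle. Write $z=\gamma y$. First I would peel off the exponential factor using the conformality ($\Gamma$‑quasi‑invariance) of the family $(\mu_x^F)$: changing the base point from $x$ to $z$,
\[ \mu_x^F\bigl(\O_x B(z,R)\bigr)=\int_{\O_x B(z,R)}\frac{d\mu_x^F}{d\mu_z^F}(\xi)\,d\mu_z^F(\xi)\eqpunct{,} \]
where the Radon--Nikodym derivative is the one prescribed by the cocycle relation for the density at the pair $(x,z)$. The key observation is that for $\xi\in\O_x B(z,R)$ the point $z$ lies within distance $R$ of the ray $\itv[cc]{x}{\xi}$, so the broken path $x\to z\to\xi$ fellow‑travels $\itv[cc]{x}{\xi}$ with constants depending only on $R$; feeding the relevant endpoint pairs into Lemma~\ref{lem:potcontrol} shows that, on the shadow, $\frac{d\mu_x^F}{d\mu_z^F}(\xi)$ is comparable, up to a multiplicative constant $C_0(R)$, to $e^{\int_x^{z}(\wt F-P(F))}$, uniformly in $\xi$. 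Hence
\[ \mu_x^F\bigl(\O_x B(\gamma y,R)\bigr)\;\asymp\;e^{\int_x^{\gamma y}(\wt F-P(F))}\,\mu_{\gamma y}^F\bigl(\O_x B(\gamma y,R)\bigr)\eqpunct{,} \]
with constants depending only on $R$, $K$, $\wt F$ and the curvature bounds, and the lemma reduces to pinching the ``normalized shadow mass'' $\mu_{\gamma y}^F(\O_x B(\gamma y,R))$ between two positive constants once $R$ is large enough.

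The upper bound is immediate: $\mu_{\gamma y}^F(\O_x B(\gamma y,R))\le\|\mu_{\gamma y}^F\|=\|\mu_y^F\|$, the total mass being $\Gamma$‑invariant because $\gamma_*\mu_y^F=\mu_{\gamma y}^F$, and $y\mapsto\|\mu_y^F\|$ being continuous (again by conformality), hence bounded on the compact $K$.

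The lower bound is the crux. Using $\gamma_*\mu_y^F=\mu_{\gamma y}^F$ together with $\gamma^{-1}\O_x B(\gamma y,R)=\O_{\gamma^{-1}x}B(y,R)$, it becomes the statement that $\mu_y^F\bigl(\O_w B(y,R)\bigr)\ge c(R)>0$ for every $y\in K$ and every $w\in\wt M$, where $w=\gamma^{-1}x$ may be arbitrarily far from $y$. I would first record the purely geometric fact --- of the same elementary type as Lemmas~\ref{lem:shadowcontrol}--\ref{lem:shadowdisjoint} --- that, when $d(y,w)>R$, the complement $\partial_\infty\wt M\setminus\O_w B(y,R)$ is contained in a neighbourhood of the single boundary point $v_+(y,w)$ (the direction from $y$ towards $w$) whose size, in the visual metric based at $y$, is bounded by a function $\beta(R)$ of $R$ alone with $\beta(R)\to0$ as $R\to\infty$; and that when $d(y,w)\le R$ one simply has $w\in B(y,R)$, so $\O_w B(y,R)=\partial_\infty\wt M$. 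Then, invoking that the conformal density $(\mu_x^F)$ has no atoms --- so that the supremum of $\mu_z^F(\overline{B})$ over $z\in K$ and over visual‑$z$ balls $B$ of radius $\beta$ tends to $0$ as $\beta\to0$, uniformly by compactness of $K$ and of $\partial_\infty\wt M$ --- I choose $R_0$ so large that this supremum at radius $\beta(R_0)$ is at most $\tfrac12\inf_{z\in K}\|\mu_z^F\|$, a positive number since each $\mu_z^F$ has full support in the nonempty set $\Lambda(\Gamma)$. For $R\ge R_0$ this gives $\mu_y^F(\O_w B(y,R))\ge\|\mu_y^F\|-\tfrac12\inf_K\|\mu_\cdot^F\|\ge\tfrac12\inf_K\|\mu_\cdot^F\|>0$, which is what we need. (One can also bypass non‑atomicity and argue directly from the Patterson--Sullivan--Gibbs construction, by counting the orbit points of $\Gamma$ falling inside the cone cut out by the shadow.)

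Assembling the three estimates proves the lemma. The one genuinely delicate point is the uniform lower bound on the normalized shadow mass --- one must exclude that a far‑away translate $\gamma y$ casts a shadow seeing hardly any of the conformal measure --- and it is exactly there that non‑atomicity (equivalently, conservativity together with minimality of the $\Gamma$‑action on $\Lambda(\Gamma)$) and the compactness of $K$ are used. The rest --- the reduction via conformality, the cocycle estimate from Lemma~\ref{lem:potcontrol}, and the shadow‑complement estimate --- is routine bookkeeping.
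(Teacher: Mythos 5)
The paper does not prove this lemma: it is imported verbatim from \cite{PPS} (Lemma 3.10 there, due to Mohsen), so there is no internal proof to compare against. Judged on its own, your argument is the standard Sullivan shadow-lemma scheme, and its skeleton is right: the reduction by conformality together with the Gibbs-cocycle estimate (the relevant statement is \cite[Lemma 3.4(2)]{PPS}, valid precisely because $\gamma y$ lies within $R$ of $\itv[cc]{x}{\xi}$ for $\xi$ in the shadow), the upper bound by total mass, and the observation that $R_0$ is only needed for the lower bound all match how this lemma is actually proved --- and the paper's own remark after the statement confirms that last point.

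The one genuine gap is in the lower bound, where you invoke non-atomicity of $\mu_y^F$. This is neither established in the paper nor available in general: the Patterson--Sullivan--Gibbs densities are built for any $F$ with $P(F)<+\infty$, including the convergent case, where they can carry atoms; and your parenthetical ``equivalently, conservativity together with minimality'' appeals to a hypothesis (conservativity) that is not assumed here. Fortunately you do not need non-atomicity. What the argument requires is only that no single point carries the full mass of $\mu_y^F$, uniformly for $y\in K$; this follows from the fact (recorded in the paper) that each $\mu_y^F$ has full support in $\Lambda(\Gamma)$, which is infinite since $\Gamma$ is nonelementary. The clean way to finish is the compactness contradiction: if no $R_0$ and $c>0$ worked, one would have $y_n\in K$, $w_n\in\wt{M}$ and $R_n\to+\infty$ with $\mu_{y_n}^F\(\O_{w_n}B(y_n,R_n)\)\to 0$; extracting limits $y_n\to y$ and $v_+(y_n,w_n)\to\xi_0$, your geometric estimate on the shadow complement forces $\mu_y^F\(\partial_\infty\wt{M}\setminus V\)=0$ for every neighbourhood $V$ of $\xi_0$, i.e.\ $\mu_y^F$ is a Dirac mass at $\xi_0$, contradicting full support in $\Lambda(\Gamma)$. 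With that substitution (or with the alternative orbit-counting argument you mention, made precise), the proof is complete.
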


A careful examination of the proof of this lemma shows that the condition $R \ge R_0$ is actually only necessary for the lower bound.
In the following, we will only use this lemma for its upper bound, so we can forget about this restriction.

However, we will also need some lower bound estimates for the $m_F$-measure of dynamical balls.
To this end, we will use the following variant of the Shadow Lemma for product of shadows of balls, which replaces the restriction on the size of balls by the assumption that the ball intersects the nonwandering set of the geodesic flow.

\begin{lemm}[Shadow product lemma]
 \label{lem:shadowprod}
 Let $(\mu_x^F)_{x \in \wt{M}}$ and $(\mu_x^{F \circ \iota})_{x \in \wt{M}}$ be the Patterson-Sullivan-Gibbs conformal densities respectively associated with $F$ and $F \circ \iota$.
 Assume that $B(x, R) \subset \wt{M}$ intersects the base of the nonwandering set $\wt{\Omega}$.
 Then there exist $C > 0$ and $S, G \subset \Gamma$ finite such that for every $\gamma \in \Gamma \setminus S$ there exist $g, h \in G$ such that
 \[ \frac{1}{C} e^{\int_x^{\gamma x} (\wt{F} - P(F))} \le \mu_x^{F \circ \iota}(\O_{\gamma x} B(g x, R)) \mu_x^F(\O_x B(\gamma h x, R)) \le C e^{\int_x^{\gamma x} (\wt{F} - P(F))} \eqpunct{.} \]
\end{lemm}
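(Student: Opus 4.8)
The plan is to bootstrap from Mohsen's Shadow Lemma (Lemma~\ref{lem:shadowmohsen}): its upper bound, valid for every $R$ by the remark following it, yields the upper estimate at once, while its lower bound — which requires the radius to be at least the threshold $R_0$ — will be brought in only after replacing $\gamma$ by a conjugate whose translation axis, furnished by Lemma~\ref{lem:forcehyper}, runs close to both $x$ and $\gamma x$. Throughout, $\asymp$ and $O(1)$ hide constants depending only on $x$, $R$, $F$, the curvature bounds and the finite data fixed below, but never on $\gamma$. I use freely the $\Gamma$-equivariance $\delta_*\mu^F_a=\mu^F_{\delta a}$ and the conformal relations of the densities, Lemma~\ref{lem:potcontrol} — which in particular makes the density ratio $d\mu_a^F/d\mu_b^F$ and its $F\circ\iota$-analogue uniformly bounded away from $0$ and $\infty$ once $d(a,b)=O(1)$ — and Lemma~\ref{lem:shadowcontrol}, which moves the viewpoint of a shadow by a bounded amount at the price of a bounded change of radius.

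Since $B(x,R)$ meets $\pi(\wt\Omega)\subset\Conv(\Lambda(\Gamma))$, fix $x^*\in B(x,R)\cap\Conv(\Lambda(\Gamma))$ and $\eta>0$ with $d(x,x^*)<R-2\eta$, and an open relatively compact $\wt\W\subset T^1\wt M$ meeting $\wt\Omega$ with $x^*\in\pi(\wt\W)$. Pick $\varepsilon\in(0,\eta)$ and apply Lemma~\ref{lem:forcehyper} at $x^*$: it provides $g_1,\dots,g_k\in\Gamma$ and a finite set outside which every $\gamma$ admits $i,j$ with $\gamma'=g_j^{-1}\gamma g_i$ hyperbolic and $A_{\gamma'}\cap\wt\W\cap T^1B(x^*,\varepsilon)\neq\emptyset$. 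Put $G=\{g_1,\dots,g_k\}$ and enlarge that set to a finite $S$ so that for $\gamma\notin S$ the length $l(\gamma')$, hence (by Lemma~\ref{lem:parallelestimate}) the fellow-travelling errors used below, is as large, resp.\ small, as needed. Now Mohsen's upper bound and Lemma~\ref{lem:potcontrol} give $\mu_x^F(\O_xB(\gamma g_ix,R))\le C\,e^{\int_x^{\gamma g_ix}(\wt F-P(F))}\asymp e^{\int_x^{\gamma x}(\wt F-P(F))}$; and rewriting the other factor via equivariance as $\mu_{\gamma^{-1}x}^{F\circ\iota}(\O_xB(a,R))$ with $a=\gamma^{-1}g_jx$ and moving its base point to $a$ (a bounded move), the conformal relation gives $d\mu_a^{F\circ\iota}/d\mu_x^{F\circ\iota}\asymp e^{-\int_x^{a}(\wt{F\circ\iota}-P(F))}$ on $\O_xB(a,R)$, so this factor is $\asymp e^{-\int_x^{a}(\wt{F\circ\iota}-P(F))}\mu_x^{F\circ\iota}(\O_xB(a,R))\le C'$ by Mohsen's upper bound. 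Multiplying proves the upper estimate with $g=g_j$, $h=g_i$.

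For the lower estimate, fix $\gamma\notin S$, the conjugate $\gamma'$, a point $z^*\in\wt\pi(A_{\gamma'})\cap B(x^*,\varepsilon)$, and the attracting and repelling endpoints $\xi^\pm\in\Lambda(\Gamma)$ of $A_{\gamma'}$. The translate $g_jA_{\gamma'}$ has endpoints $g_j\xi^\pm\in\Lambda(\Gamma)$ and, because $d(g_jx,g_jx^*)=d(\gamma g_ix,g_j\gamma'x^*)=d(x,x^*)<R-2\eta$, it passes through $B(g_jx,R-\eta)$ near the axis point $g_jz^*$ and through $B(\gamma g_ix,R-\eta)$ near $g_j\gamma'z^*$. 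Fix $c_0=c_0(R_0,\eta)$ large and let $w^+$, resp.\ $w^-$, be the point of $g_jA_{\gamma'}$ at distance $l(\gamma')+c_0$ past $g_jz^*$ towards $g_j\xi^+$, resp.\ at distance $c_0$ past $g_jz^*$ towards $g_j\xi^-$. The crux is the pair of inclusions
\[ \O_xB(w^+,R_0)\subset\O_xB(\gamma g_ix,R)\eqand\O_{\gamma x}B(w^-,R_0)\subset\O_{\gamma x}B(g_jx,R)\eqpunct{.} \]
Each holds because a ray out of $x$, resp.\ $\gamma x$, which lies within $O(1)$ of $g_jA_{\gamma'}$ and meets the far ball $B(w^\pm,R_0)$, must by the exponential contraction of rays with a common endpoint at infinity (Lemmas~\ref{lem:triangleestimate} and~\ref{lem:parallelestimate}) pass within $O(e^{-c_0})+(\text{fellow-travelling error})$ of the axis point $g_j\gamma'z^*$, resp.\ $g_jz^*$, which itself lies within $R-\eta$ of $\gamma g_ix$, resp.\ of $g_jx$; choosing $c_0$ to beat the $O(e^{-c_0})$ term and $S$ to make the error negligible, the total is $<R$. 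Since $(g_j\gamma')^{-1}w^+$ and $g_j^{-1}w^-$ lie in a fixed compact set, Mohsen's lower bound — legitimate, $R_0$ being its threshold — and Lemma~\ref{lem:potcontrol} give
\[ \mu_x^F(\O_xB(\gamma g_ix,R))\ge\mu_x^F(\O_xB(w^+,R_0))\gtrsim e^{\int_x^{w^+}(\wt F-P(F))}\asymp e^{\int_x^{\gamma x}(\wt F-P(F))}\eqpunct{,} \]
and, transporting the second inclusion through $\gamma^{-1}$ and moving the base point by $d(\gamma^{-1}x,\gamma^{-1}w^-)=O(1)$, the other factor is $\asymp\mu_a^{F\circ\iota}(\O_xB(a,R_0))$ with $a=\gamma^{-1}w^-$ a $\Gamma$-translate of a compact-set point; the conformal relation together with Mohsen's two-sided estimate for $\mu_x^{F\circ\iota}(\O_xB(a,R_0))$ makes this $\asymp1$, just as when $F\equiv0$. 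Multiplying the two lower bounds gives $\frac1C\,e^{\int_x^{\gamma x}(\wt F-P(F))}$, again with $g=g_j$, $h=g_i$.

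I expect the pair of shadow inclusions displayed above to be the main obstacle: proving them cleanly is exactly where the choices of $x^*$ strictly inside $B(x,R)$, of $\varepsilon<\eta$, and of $c_0=c_0(R_0,\eta)$ have to be balanced, and where the comparison lemmas must be fed the right triangles. The rest — shuttling between $\gamma$, the conjugate $\gamma'=g_j^{-1}\gamma g_i$, the translate $g_j\gamma'$ and the elements of $G$, and tracking radii, base points and the density ratio under these moves — is long but entirely mechanical.
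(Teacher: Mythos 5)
Your argument is correct, but it takes a genuinely different route from the paper's, and the difference is instructive. The paper never invokes Mohsen's Shadow Lemma for the lower bound (indeed, the whole point of this lemma, as announced just before its statement, is to trade the radius threshold $R\ge R_0$ for the hypothesis $B(x,R)\cap\pi(\wt\Omega)\neq\emptyset$). Instead it fixes two boundary points $\xi,\eta\in\Lambda(\Gamma)$ whose connecting geodesic crosses $B(x,R)$, uses minimality of $\Gamma\curvearrowright\Lambda(\Gamma)$ (the same covering trick as in Lemma \ref{lem:forcehyper}, but without the closing-lemma part) to place $g^{-1}\gamma x$ and $h^{-1}\gamma^{-1}x$ in fixed neighbourhoods of $\xi$ and $\eta$, so that after $\Gamma$-equivariance both shadows contain the fixed positive-mass sets $\O_\xi B(x,R-\varepsilon)$ and $\O_\eta B(x,R-\varepsilon)$; each factor is then bounded below by a constant and above by a total mass, and the exponential $e^{\int_x^{\gamma hx}(\wt F-P(F))}$ is extracted purely from the Gibbs-cocycle estimate (\cite[Lemma 3.4(2)]{PPS}) applied to the ratio $\mu_x^F(\O_xB(\gamma hx,R))/\mu_{\gamma hx}^F(\O_xB(\gamma hx,R))$. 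Your route instead manufactures a hyperbolic conjugate via Lemma \ref{lem:forcehyper}, plants auxiliary balls of radius $R_0$ on its axis, proves the two shadow inclusions by fellow-travelling, and only then applies Mohsen's lower bound; this works (your parameter balancing of $\eta$, $\varepsilon$, $c_0$ and the enlargement of $S$ is the right one, and the $\asymp 1$ estimate for $\mu_a^{F\circ\iota}(\O_xB(a,R_0))$ via the conformal relation is legitimate), but it is considerably longer, reproves a chunk of standard shadow machinery, and makes essential use of the threshold $R_0$ and of the hyperbolicity of $\gamma'$, neither of which is actually needed: what matters is only that the geodesic from a bounded perturbation of $\gamma^{-1}x$ to a bounded perturbation of $\gamma x$ crosses $B(x,R-\varepsilon)$, which the paper's choice of $U$ and $V$ delivers directly. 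In short, your proof is sound but the delicate shadow inclusions you flag as the main obstacle are an artifact of the approach rather than of the statement.
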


\begin{proof}
 If $T^1 B(x, R) \cap \wt{\Omega} \neq \emptyset$, then we can find $\eta, \xi \in \Lambda(\Gamma)$ distinct such that the geodesic $(\xi \eta)$ intersects $B(x, R)$.
 In particular there exists $\varepsilon > 0$ such that $\eta \in \O_\xi B(x, R - \varepsilon)$ and $\xi \in \O_\eta B(x, R - \varepsilon)$.
 By continuity of the shadows, there exist two neighbourhoods $U, V$ of respectively $\xi$ and $\eta$ in $\wt{M} \cup \partial_\infty \wt{M}$ such that
 \[ \forall y \in U, \O_\xi B(x, R - \varepsilon) \subset \O_y B(x, R) \eqand \forall z \in V, \O_\eta B(x, R - \varepsilon) \subset \O_z B(x, R) \eqpunct{.} \]
 By using the same technique as in the proof of lemma \ref{lem:forcehyper}, we can find $S, G \subset \Gamma$ finite such that, for every $\gamma \in \Gamma \setminus S$, there exist $g, h \in G$ such that $g^{-1} \gamma x \in U$ and $h^{-1} \gamma^{-1} x \in V$.

 Since the Patterson-Sullivan-Gibbs densities charge any open set that intersects the limit set,
 \[ \alpha = \min \set{\mu_{g^{-1} x}^{F \circ \iota}(\O_\xi B(x, R - \varepsilon))}{g \in G} > 0 \eqand \beta = \mu_x^F(\O_\eta B(x, R - \varepsilon)) > 0 \eqpunct{.} \]
 Let $\gamma \in \Gamma \setminus S$ and take $g, h \in G$ such that $g^{-1} \gamma x \in U$ and $h^{-1} \gamma^{-1} x \in V$.
 By the invariance property of the densities, we have that on the one hand that
 \[ \mu_x^{F \circ \iota}(\O_{\gamma x} B(g x, R)) = \mu_{g^{-1} x}^{F \circ \iota}(\O_{g^{-1} \gamma x} B(x, R)) \ge \mu_{g^{-1} x}^{F \circ \iota}(\O_\xi B(x, R - \varepsilon)) \ge \alpha \eqpunct{,} \]
 and on the other hand that
 \[ \mu_{\gamma h x}^F(\O_x B(\gamma h x, R)) = \mu_x^F(\O_{h^{-1} \gamma^{-1} x} B(x, R)) \ge \beta \eqpunct{.} \]
 But the conformal density property of $(\mu_x^F)$ ensures that
 \[ \mu_x^F(\O_x B(\gamma h x, R)) = \int_{\zeta \in \O_x B(\gamma h x, R)} e^{-C_{F-P(F),\zeta}(x, \gamma h x)} d\mu_{\gamma h x}^F(\zeta) \eqpunct{,} \]
 where $C_{F,\zeta}(x, y) = \lim_{t \to \infty} \int_y^{\zeta_t} \wt{F} - \int_x^{\zeta_t} \wt{F}$ is the Gibbs cocycle associated with $F$.
 By applying \cite[Lemma 3.4]{PPS} $(2)$, we get the existence of a constant $C_1 \ge 1$ independent of $\gamma$ and $h$ such that
 \[ \frac{1}{C_1} e^{\int_x^{\gamma h x} (\wt{F} - P(F))} \le \frac{\mu_x^F(\O_x B(\gamma h x, R))}{\mu_{\gamma h x}^F(\O_x B(\gamma h x, R))} \le C_1 e^{\int_x^{\gamma h x} (\wt{F} - P(F))} \eqpunct{.} \]
 This implies that
 \[ \frac{\alpha \beta}{C_1} e^{\int_x^{\gamma h x} (\wt{F} - P(F))} \le \mu_x^{F \circ \iota}(\O_{\gamma h x} B(x, R)) \mu_x^F(\O_x B(\gamma h x, R)) \le C_1 e^{\int_x^{\gamma h x} (\wt{F} - P(F))} \eqpunct{.} \]
 Finally, after noting that $d(\gamma x, \gamma h x) = d(x, h x)$ is bounded from above independently from $\gamma$, we apply Lemma \ref{lem:potcontrol} to obtain a constant $C_2 \ge 0$ that only depends on $\wt{M}$, $\wt{F}$, $x$ and $R$ such that
 \[ \left| \int_x^{\gamma h x} (\wt{F} - P(F)) - \int_x^{\gamma x} (\wt{F} - P(F)) \right| \le C_2 \eqpunct{.} \]
 This concludes the proof with $C = \frac{C_1}{\alpha \beta} e^{C_2}$.
\end{proof}


\section{Number of returns of a periodic orbit}

\label{sec:numret}

The aim of this section is to introduce an useful mathematical definition of the "number of times that a periodic geodesic enters in a given set $\W$".
Observe that as soon as $\W$ is non convex, or has holes, it may be highly non trivial if done in a too naive way.

Let $\wt{\W}$ be a relatively compact subset of $T^1 \wt{M}$.
If $\gamma \in \Gamma_h$, we define the \name{number of copies of the axis of $\gamma$ intersecting $\wt{\W}$} as the quantity
\[ n_{\wt{\W}}(\gamma) = \card \set{\gamma' \in \Gamma}{\exists g \in \Gamma, \gamma' = g^{-1} \gamma g \eqand A_{\gamma'} \cap \wt{\W} \neq \emptyset} \eqpunct{.} \]
By definition, this number depends only on the conjugacy class of $\gamma \in \Gamma_h$.
Of course, it is also $\Gamma$-invariant, in the sense that
\[ n_{\wt{\W}}(\gamma) = n_{g \wt{\W}}(\gamma) \eqpunct{.} \]

We shall now extend this definition to relatively compact subsets of $T^1 M$ in the following way.
First note that if $\W \subset T^1 M$ is open and relatively compact, then it admits an open relatively compact lift (actually many of them), i.e. an open relatively compact set $\wt{\W} \subset T^1 \wt{M}$ such that $\Pr(\wt{\W}) = \W$ where $\fn{\Pr}{T^1 \wt{M}}{T^1 M}$ is the covering map.
Indeed, it is enough to cover $\W$ by trivializing open sets for the covering $\Pr$, to take for each of these sets the image of its intersection with $\W$ by one of the inverse branches of $\Pr$, and then let $\wt{\W}$ to be the union of these preimages.
However, there might not be an open lift $\wt{\W}$ of $\W$ such that $\fn{\Pr}{\wt{\W}}{\W}$ is $1-1$ if, for example, the base $\pi(\W)$ contains a ball $B(x, R)$ whose radius $R$ is larger than the injectivity radius at $x$.

Given $\W \subset T^1 M$ open relatively compact, and any periodic orbit $p \in \P$, this leads us to define the \name{number of returns of $p$ into $\W$} by
\[ n_\W(p) = \inf n_{\wt{\W}}(\gamma_p) \eqpunct{,} \]
where $\gamma_p$ is any hyperbolic isometry in the conjugcy class associated with $p$, and the infimum is taken over all open relatively compact lifts $\wt{\W}$ of $\W$ to $T^1 \wt{M}$.

Note that the quantities $n_{\wt{\W}}(\gamma)$ and $n_\W(p)$ do not depend on the multiplicity of $\gamma$ or $p$.
Indeed, two isometries $\gamma$ and $\gamma'$ are conjugated by an element $g$ if and only if $\gamma^k$ and ${\gamma'}^k$ are conjugated by this element $g$, for $k \ge 1$.
Moreover, the axii $A_{\gamma}$ and $A_{{\gamma}^k}$ are equal for all $k \ge 1$.
Therefore, we get
\[ \forall k \ge 1, n_{\wt{\W}}(\gamma^k) = n_{\wt{\W}}(\gamma) \eqpunct{.} \]
Equivalently, if $p \in \P$ is a periodic orbit with multiplicity whose associated primitive orbit is $p_0 \in \P'$, we have
\[ n_\W(p) = n_\W(p_0) \eqpunct{.} \]

Although there might not exist a lift of $\W$ that realizes the number of returns $n_{\W}(p)$ of $p \in \P$ into $\W$ as a number of copies, the number of copies of the axis of $\gamma \in \Gamma_h$ intersecting two lifts of $\W$ are uniformly commensurable with each other.

\begin{lemm}
 \label{lem:crossingcompare}
 Let $\wt{\W}_1, \wt{\W}_2$ be two lifts of $\W$ to $T^1 \wt{M}$.
 Then there is a $C = C_{\wt{\W}_1, \wt{\W}_2}$ such that
 \[ \forall \gamma \in \Gamma_h, \frac{1}{C} n_{\wt{\W}_2}(\gamma) \le n_{\wt{\W}_1}(\gamma) \le C n_{\wt{\W}_2}(\gamma) \eqpunct{.} \]
\end{lemm}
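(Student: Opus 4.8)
The plan is to show that the two counting functions $n_{\wt{\W}_1}$ and $n_{\wt{\W}_2}$ differ only by a multiplicative constant by exploiting that both lifts project onto the same relatively compact set $\W \subset T^1 M$, so that $\wt{\W}_1$ and $\wt{\W}_2$ are ``close up to the $\Gamma$-action''. More precisely, since $\Pr(\wt{\W}_1) = \Pr(\wt{\W}_2) = \W$, for every $w \in \wt{\W}_1$ there is some $\delta \in \Gamma$ with $\delta w \in \wt{\W}_2$; and because both lifts are relatively compact, only finitely many group elements are needed. I would first record this as the key finiteness fact: there is a \emph{finite} subset $\Delta \subset \Gamma$ such that $\wt{\W}_1 \subset \bigcup_{\delta \in \Delta} \delta \wt{\W}_2$ (and symmetrically, a finite $\Delta'$ with $\wt{\W}_2 \subset \bigcup_{\delta \in \Delta'} \delta \wt{\W}_1$). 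This follows by covering the compact closure $\overline{\wt{\W}_1}$ by finitely many open sets of the form $\delta \wt{\W}_2$ — such a cover exists since each point of $\overline{\wt{\W}_1}$, having the same $\Pr$-image as some point of $\wt{\W}_2$, lies in $\delta \wt{\W}_2$ for a suitable $\delta$, and $\wt{\W}_2$ is open.

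Next I would translate this covering statement into an inequality between the counting functions. Fix $\gamma \in \Gamma_h$ and a conjugate $\gamma' = g^{-1}\gamma g$ with $A_{\gamma'} \cap \wt{\W}_1 \neq \emptyset$. Using $\wt{\W}_1 \subset \bigcup_{\delta \in \Delta} \delta \wt{\W}_2$, pick $\delta \in \Delta$ with $A_{\gamma'} \cap \delta\wt{\W}_2 \neq \emptyset$, i.e. $\delta^{-1} A_{\gamma'} \cap \wt{\W}_2 \neq \emptyset$. Since $\delta^{-1} A_{\gamma'} = A_{\delta^{-1}\gamma'\delta}$ and $\delta^{-1}\gamma'\delta$ is again a conjugate of $\gamma$, this exhibits a conjugate of $\gamma$ whose axis meets $\wt{\W}_2$. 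The map sending the conjugacy representative (counted in $n_{\wt{\W}_1}(\gamma)$, which really counts distinct conjugates $\gamma'$ of $\gamma$ with $A_{\gamma'}\cap\wt{\W}_1\neq\emptyset$) to $\delta^{-1}\gamma'\delta$ is then at most $\card\Delta$-to-one onto a subset of the conjugates counted by $n_{\wt{\W}_2}(\gamma)$: indeed, if $\delta_1^{-1}\gamma_1'\delta_1 = \delta_2^{-1}\gamma_2'\delta_2$ with $\delta_1,\delta_2 \in \Delta$, then $\gamma_2' = \delta_2\delta_1^{-1}\gamma_1'\delta_1\delta_2^{-1}$ is determined by $\gamma_1'$ and the pair $(\delta_1,\delta_2) \in \Delta\times\Delta$, so each value has at most $\card\Delta^2$ preimages — in any case a bound depending only on $\Delta$. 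Hence $n_{\wt{\W}_1}(\gamma) \le \card\Delta^2 \, n_{\wt{\W}_2}(\gamma)$; the reverse inequality follows by the symmetric argument with $\Delta'$. Setting $C = \max(\card\Delta,\card\Delta')^2$ gives the claim.

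The only genuinely delicate point is the bookkeeping in the ``at most $C$-to-one'' argument — making sure that distinct conjugates $\gamma'$ of $\gamma$ with $A_{\gamma'}$ meeting $\wt{\W}_1$ do not all collapse to a single conjugate meeting $\wt{\W}_2$. This is handled as above: the ambiguity in recovering $\gamma'$ from $\delta^{-1}\gamma'\delta$ is entirely controlled by the finite index set $\Delta$, so the fibers have uniformly bounded size. Everything else — the finiteness of $\Delta$ from compactness, the identity $A_{\delta^{-1}\gamma'\delta} = \delta^{-1}A_{\gamma'}$, and the $\Gamma$-invariance of the construction — is routine. I expect no topological subtlety beyond relative compactness of the lifts, which is exactly what is assumed.
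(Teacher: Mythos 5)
Your overall strategy is the same as the paper's: produce a finite set $\Delta\subset\Gamma$ of ``transition'' elements carrying $\wt{\W}_1$ into translates of $\wt{\W}_2$, and then observe that $\gamma'\mapsto\delta^{-1}\gamma'\delta$ sends conjugates whose axis meets $\wt{\W}_1$ to conjugates whose axis meets $\wt{\W}_2$ with fibers of size at most $\card\Delta$ (your $\card\Delta^2$ bound is a slight overcount, but harmless). The one step that does not work as written is your justification that $\Delta$ is finite. The open sets $\delta\wt{\W}_2$ cover $\wt{\W}_1$, but they need not cover the compact set $\overline{\wt{\W}_1}$: a boundary point of $\wt{\W}_1$ projects only into $\overline{\W}$, not necessarily into $\W=\Pr(\wt{\W}_2)$, so it need not lie in any $\delta\wt{\W}_2$, and you cannot extract a finite subcover of the closure from this family. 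The correct (and immediate) argument is the one the paper uses: every $\delta$ you actually need satisfies $\delta\wt{\W}_2\cap\wt{\W}_1\neq\emptyset$, hence lies in $\set{\delta\in\Gamma}{\delta\overline{\wt{\W}_2}\cap\overline{\wt{\W}_1}\neq\emptyset}$, which is finite because the action of $\Gamma$ on $T^1\wt{M}$ is proper and both closures are compact. With that substitution your proof is complete and coincides with the paper's.
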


\begin{proof}
 It is enough to show that $n_{\wt{\W}_1}(\gamma_0) \le C n_{\wt{\W}_2}(\gamma_0)$ with $\gamma_0 \in \Gamma_h'$.
 Take $\gamma' = g^{-1} \gamma_0 g$ for some $g \in \Gamma$ such that there is a $v \in A_{\gamma'} \cap \wt{\W}_1$.
 Since $\wt{\W}_1$ and $\wt{\W}_2$ are both lifts of the same set $\W$, the set
 \[ H_v = \set{h \in \Gamma}{h v \in \wt{\W}_2} \]
 is non empty.
 Note that if $h \in H_v$, then $h(v) \in \wt{\W}_2 \cap A_{\gamma''}$ where $\gamma'' = h \gamma' h^{-1}$ is a conjugate of $\gamma_0$.
 This ensures that
 \begin{align*}
  &\set{\gamma' \in \Gamma}{\exists g \in \Gamma, \gamma' = g^{-1} \gamma_0 g \eqand A_{\gamma'} \cap \wt{\W}_1 \neq \emptyset} \\
  &\hspace{2em} \subset \bigcup_{h \in H} h^{-1} \set{\gamma'' \in \Gamma}{\exists g \in \Gamma, \gamma'' = g^{-1} \gamma_0 g \eqand A_{\gamma''} \cap \wt{\W}_2 \neq \emptyset} h \eqpunct{,}
 \end{align*}
 where $H = \cup_{v \in \wt{\W}_1} H_v$ depends only on $\wt{\W}_1$ and $\wt{\W}_2$ but not on $\gamma_0$.
 In order to conclude, it is enough to show that $H$ is finite.
 Indeed, let $\wt{\W}_3 = \overline{\wt{\W}_1 \cup \wt{\W}_2}$.
 It is a compact subset of $T^1 \wt{M}$ and we have
 \[ H = \set{h \in \Gamma}{\exists v \in \wt{\W}_1, h v \in \wt{\W}_2} \subset \set{h \in \Gamma}{\wt{\W}_3 \cap h \wt{\W}_3 \neq \emptyset} \eqpunct{,} \]
 which is finite since the action of $\Gamma$ on $T^1 \wt{M}$ is proper.
\end{proof}

In particular, if one takes a relatively compact lift $\wt{\W}_1$ of $\W$, then there exists a constant $C$ which depends only on $\wt{\W}_1$ such that for every $p_0 \in \P'$ and every $\gamma_0$ in the conjugacy class associated with $p_0$ we have
\[ \frac{1}{C} n_{\wt{\W}_1}(\gamma_0) \le n_\W(p_0) \le C n_{\wt{\W}_1}(\gamma_0) \eqpunct{.} \]

\begin{lemm}
 \label{lem:crossingcover}
 If $\W \subset T^1 M$ is covered by a finite collection $(\W_i)_{i = 1, \hdots, n}$ of open relatively compact subsets of $T^1 M$, then there exists $C = C_{\W, \W_1, \hdots, \W_n} > 0$ such that
 \[ \forall p \in \P, n_\W(p) \le C \sum_{i=1}^n n_{\W_i}(p) \eqpunct{.} \]
\end{lemm}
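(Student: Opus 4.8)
The plan is to fix convenient lifts of all the sets involved, cover one lift of $\W$ by finitely many $\Gamma$-translates of lifts of the $\W_i$, and then set up an injective bookkeeping map between the conjugates of a fixed hyperbolic element whose axes meet these various lifts. First I would reduce to the case $p = p_0 \in \P'$: both sides of the claimed inequality are unchanged when $p$ is replaced by its underlying primitive orbit, since $n_\W(p) = n_\W(p_0)$ and $n_{\W_i}(p) = n_{\W_i}(p_0)$ by the multiplicity-independence observed above. So fix $p \in \P'$ and a hyperbolic $\gamma$ in the associated conjugacy class. Choose an open relatively compact lift $\wt{\W}$ of $\W$, and for each $i$ an open relatively compact lift $\wt{\W}_i$ of $\W_i$ (these exist as recalled at the beginning of this section). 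Set $G = \set{g \in \Gamma}{g \overline{\wt{\W}_i} \cap \overline{\wt{\W}} \neq \emptyset \text{ for some } i}$; since the $\overline{\wt{\W}_i}$ and $\overline{\wt{\W}}$ are compact and $\Gamma$ acts properly on $T^1 \wt{M}$, the set $G$ is finite. The key elementary observation is that every $v \in \wt{\W}$ lies in $g \wt{\W}_i$ for some $g \in G$ and some $i$: indeed $\Pr(v) \in \W = \bigcup_i \W_i$, so $\Pr(v) \in \W_i = \Pr(\wt{\W}_i)$ for some $i$, whence $v = gw$ with $w \in \wt{\W}_i$ and $g \in \Gamma$; then $g \wt{\W}_i \cap \wt{\W} \ni v$, so $g \in G$.

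Next comes the counting step. For each conjugate $\gamma'$ of $\gamma$ with $A_{\gamma'} \cap \wt{\W} \neq \emptyset$, pick a vector $v \in A_{\gamma'} \cap \wt{\W}$ and then, by the observation above, a pair $(g, i) \in G \times \sing{1, \dots, n}$ with $v \in g \wt{\W}_i$. Then $g^{-1} v \in \wt{\W}_i$ and $g^{-1} v \in g^{-1} A_{\gamma'} = A_{g^{-1} \gamma' g}$, so $g^{-1} \gamma' g$ is a conjugate of $\gamma$ whose axis meets $\wt{\W}_i$, hence one of the elements counted by $n_{\wt{\W}_i}(\gamma)$. Assigning to $\gamma'$ the triple $(g, i, g^{-1} \gamma' g)$ defines an injection, since one recovers $\gamma' = g (g^{-1} \gamma' g) g^{-1}$ from the triple, so that
\[ n_{\wt{\W}}(\gamma) \le \#G \cdot \sum_{i=1}^n n_{\wt{\W}_i}(\gamma) \eqpunct{.} \]

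Finally I would pass back to $n_\W$ and the $n_{\W_i}$. Since $\wt{\W}$ is one of the lifts appearing in the infimum defining $n_\W(p)$, we have $n_\W(p) \le n_{\wt{\W}}(\gamma)$; and by the uniform comparison between $n_{\wt{\W}_i}$ and $n_{\W_i}$ recorded just after Lemma \ref{lem:crossingcompare}, for each $i$ there is a constant $C_i$ depending only on $\wt{\W}_i$ with $n_{\wt{\W}_i}(\gamma) \le C_i \, n_{\W_i}(p)$. Combining these,
\[ n_\W(p) \le \#G \cdot \sum_{i=1}^n C_i \, n_{\W_i}(p) \le \Big( \#G \cdot \max_i C_i \Big) \sum_{i=1}^n n_{\W_i}(p) \eqpunct{,} \]
and $C := \#G \cdot \max_i C_i$ depends only on the chosen lifts, i.e. only on $\W, \W_1, \dots, \W_n$. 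The only mildly delicate point is the finiteness of $G$, which is exactly where properness of the $\Gamma$-action on $T^1 \wt{M}$ and relative compactness of all the lifts are used; the rest is bookkeeping.
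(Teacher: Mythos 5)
Your proof is correct, and the overall strategy (reduce to a primitive orbit, work with lifts, and finish with the commensurability remark following Lemma \ref{lem:crossingcompare}) is the same as the paper's. The one place you diverge is the central counting step: the paper avoids your finite set $G$ and the factor $\card G$ altogether by choosing the \emph{specific} lift $\wt{\W} = \Pr^{-1}(\W) \cap \bigcup_{i=1}^n \wt{\W}_i$ of $\W$, which is contained in $\bigcup_i \wt{\W}_i$, so that any axis meeting $\wt{\W}$ already meets some $\wt{\W}_i$ directly and one gets $n_{\wt{\W}}(\gamma_p) \le \sum_i n_{\wt{\W}_i}(\gamma_p)$ with constant $1$; your version, with an arbitrary lift of $\W$ and the injection $\gamma' \mapsto (g, i, g^{-1}\gamma' g)$, is equally valid since the definition of $n_\W(p)$ as an infimum over lifts is exploited the same way, at the harmless cost of the extra factor $\card G$.
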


\begin{proof}
 We may assume that $p \in \P'$.
 Fix $\gamma_p \in \Gamma_h'$ in the conjugacy class associated with $p$.
 For each $i$, take an open relatively compact lift $\wt{\W}_i$ of $\W_i$ to $T^1 \wt{M}$, as well as a constant $C_i$ independent from $p$ and $\gamma_p$ such that
 \[ n_{\wt{W}_i}(\gamma_p) \le C_i n_{\W_i}(p) \eqpunct{.} \]
 Observe that
 \[ \wt{\W} = \Pr^{-1}(\W) \cap \bigcup_{i=1}^n \wt{\W}_i \]
 is an open relatively compact lift of $\W$, and that if an axis $A_\gamma$ meets $\wt{\W}$ then it meets at least one of the $\wt{\W}_i$.
 Therefore
 \begin{equation*}
  n_\W(p) \le n_{\wt{\W}}(\gamma_p) \le \sum_{i=1}^n n_{\wt{\W}_i}(\gamma_p) \le \max(C_i) \sum_{i=1}^n n_{\W_i}(p) \eqpunct{.} \qedhere
 \end{equation*}
\end{proof}


\section{Ergodicity of Gibbs measures for recurrent potentials}

\label{sec:erg}

Let $\fn{F}{T^1 M}{\R}$ be a H\"older continuous potential on $T^1 M$.
According to the Hopf-Tsuji-Sullivan Theorem, we know that $m_F$ is ergodic and conservative if and only if the Poincar\'e series associated with $F$ diverges at the critical exponent $s = P(F)$, in which case $(\Gamma, F)$ is said to be \name{divergent} following the terminology in \cite{PPS}.
In this section, we will prove Theorem \ref{thm:erg} which asserts that it is also equivalent to the divergence of the series
\[ \sum_{p \in \P} n_\W(p) e^{\int_p(F - P(F))} \eqpunct{,} \]
for $\W$ an open relatively compact set intersecting $\Omega$.
$F$ is said to be \name{recurrent} relatively to $\W$ when this series diverges.

Note that periodic orbits meeting $\W$ are the only periodic orbits to consider in the above sum, because otherwise $n_\W(p) = 0$.

\begin{theo.recite1}
 Let $M$ be a negatively curved orbifold with pinched negative curvature, and $\fn{F}{T^1 M}{\R}$ a H\"older continuous potential with $P(F) < +\infty$.
 Then the Gibbs measure $m_F$ is ergodic and conservative if and only if $F$ is recurrent with respect to some open relatively compact set intersecting $\Omega$.
\end{theo.recite1}

In particular, the recurrence property does not depend on the choice of the open relatively compact subset $\W$.

Observe first that, for any real number $k$, this equivalence is satisfied for a potential $F$ if and only if it is satisfied for the potential $F + k$, as the Gibbs measures $m_F$ and $m_{F+k}$ are equal.
We may therefore assume from now on that $P(F) = 0$.
We will also denote by $\wt{F}$ the $\Gamma$-invariant lift of $F$ to $T^1 \wt{M}$.


\subsection{Recurrence implies divergence}

\begin{lemm}
 If $F$ is recurrent relatively to some open relatively compact subset $\W$ of $T^1 M$ which intersects $\Omega$, then $(\Gamma, F)$ is divergent.
\end{lemm}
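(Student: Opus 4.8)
The plan is to feed the series defining recurrence directly into the Poincaré series of $(\Gamma,F)$. Recall that throughout this subsection the potential has been normalized so that $P(F)=0$, and that by the Hopf--Tsuji--Sullivan theorem recalled above it suffices to prove $\sum_{\gamma\in\Gamma}e^{\int_{x_0}^{\gamma x_0}\wt{F}}=+\infty$ for one (hence every) base point $x_0\in\wt{M}$. The two ingredients are: turning the sum over periodic orbits, each weighted by its number of returns, into a sum over hyperbolic elements of $\Gamma$; and replacing, at uniformly bounded cost, the integral of $F$ over a periodic orbit meeting $\W$ by the integral of $\wt{F}$ along the segment from $x_0$ to $\gamma x_0$.

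I would first fix an open relatively compact lift $\wt{\W}\subset T^1\wt{M}$ of $\W$ (such a lift exists by Section~\ref{sec:numret}). Since $n_\W(p)$ is defined as an infimum over all such lifts, $n_\W(p)\le n_{\wt{\W}}(\gamma_p)$ for every $p\in\P$, where $\gamma_p$ is any hyperbolic element in the conjugacy class of $p$. Writing out $n_{\wt{\W}}(\gamma_p)=\card\{\gamma'\in\Gamma\mid \gamma'\text{ conjugate to }\gamma_p,\ A_{\gamma'}\cap\wt{\W}\neq\emptyset\}$ and noting that $\int_pF$ depends only on the conjugacy class, I would reindex the resulting double sum by the hyperbolic element: as $p$ ranges over $\P$, every $\gamma\in\Gamma_h$ with $A_\gamma\cap\wt{\W}\neq\emptyset$ occurs exactly once, in the class it belongs to, contributing $e^{\int_{p(\gamma)}F}$, whence
\[ \sum_{p\in\P}n_\W(p)\,e^{\int_pF}\ \le\ \sum_{p\in\P}n_{\wt{\W}}(\gamma_p)\,e^{\int_pF}\ =\ \sum_{\substack{\gamma\in\Gamma_h\\ A_\gamma\cap\wt{\W}\neq\emptyset}}e^{\int_{p(\gamma)}F}, \]
where $p(\gamma)$ denotes the periodic orbit carried by $\gamma$. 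The left-hand side is $+\infty$ by the recurrence hypothesis, so the right-hand sum diverges as well.

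It then remains to compare $\int_{p(\gamma)}F$ with $\int_{x_0}^{\gamma x_0}\wt{F}$. For $\gamma\in\Gamma_h$ with $A_\gamma\cap\wt{\W}\neq\emptyset$, I would pick $v\in A_\gamma\cap\wt{\W}$ and set $q=\pi(v)$, a point on the translation axis of $\gamma$ lying in the relatively compact set $\pi(\wt{\W})$; since $g^{l(\gamma)}v=\gamma v$, the geodesic from $q$ to $\gamma q$ is one period of the axis, so $\int_q^{\gamma q}\wt{F}=\int_{p(\gamma)}F$. As $R:=\sup_{w\in\pi(\wt{\W})}d(x_0,w)$ is finite and independent of $\gamma$, with $d(x_0,q)\le R$ and likewise $d(\gamma x_0,\gamma q)=d(x_0,q)\le R$, Lemma~\ref{lem:potcontrol} provides a constant $C\ge 0$, also independent of $\gamma$, such that $|\int_{x_0}^{\gamma x_0}\wt{F}-\int_q^{\gamma q}\wt{F}|\le C$, i.e. $|\int_{x_0}^{\gamma x_0}\wt{F}-\int_{p(\gamma)}F|\le C$. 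Combining the estimates,
\[ \sum_{\gamma\in\Gamma}e^{\int_{x_0}^{\gamma x_0}\wt{F}}\ \ge\ \sum_{\substack{\gamma\in\Gamma_h\\ A_\gamma\cap\wt{\W}\neq\emptyset}}e^{\int_{x_0}^{\gamma x_0}\wt{F}}\ \ge\ e^{-C}\!\!\sum_{\substack{\gamma\in\Gamma_h\\ A_\gamma\cap\wt{\W}\neq\emptyset}}\!\!e^{\int_{p(\gamma)}F}\ \ge\ e^{-C}\sum_{p\in\P}n_\W(p)\,e^{\int_pF}\ =\ +\infty, \]
so the Poincaré series diverges and $(\Gamma,F)$ is divergent.

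This is the easy half of Theorem~\ref{thm:erg}: the only delicate points are the bookkeeping that converts periodic orbits into group elements and the uniform comparison of a period integral with a displacement integral, the latter being exactly what Lemma~\ref{lem:potcontrol} supplies, so I do not anticipate a genuine obstacle. The substantial work is reserved for the converse implication, which will need Lemma~\ref{lem:forcehyper} to manufacture hyperbolic elements with axes through a prescribed ball out of arbitrary elements of $\Gamma$.
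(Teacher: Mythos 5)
Your proof is correct and follows essentially the same route as the paper's: lift $\W$, reindex the weighted sum over periodic orbits as a sum over hyperbolic elements whose axis meets the lift, and use Lemma~\ref{lem:potcontrol} to replace the period integral by the displacement integral at a uniformly bounded cost, so that the Poincar\'e series dominates the recurrence series. The only (harmless) variation is that you get $n_\W(p)\le n_{\wt{\W}}(\gamma_p)$ directly from the infimum in the definition, where the paper invokes Lemma~\ref{lem:crossingcompare} and carries an extra constant $C$.
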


\begin{proof}
 Let $\wt{\W}$ be an open relatively compact lift of $\W$ to $T^1 \wt{M}$ that meets $\wt{\Omega}$.
 Choose a base point $x \in \pi(\wt{\W}) \subset \wt{M}$.
 If $p \in \P$ intersects $\W$, let $\gamma_{p,1}, \hdots, \gamma_{p,n_p} \in \Gamma_h$ be the distinct hyperbolic isometries whose axii intersect $\wt{\W}$ and project onto the periodic orbit $p$.
 According to Lemma \ref{lem:crossingcompare}, there exists $C > 0$ such that for every $1 \le i \le n_p$ one has
 \[ n_p = n_{\wt{\W}}(\gamma_{p,i}) \ge \frac{1}{C} n_\W(p) \eqpunct{.} \]
 For each $1 \le i \le n_p$, pick $z_i(p) \in A_{\gamma_{p,i}} \cap \wt{\W}$ and let $x_i(p) = \pi(z_i(p)) \in \wt{M}$.
 In particular, we have $d(x, x_i(p)) \le \diam \pi(\wt{\W})$.

 According to Lemma \ref{lem:potcontrol}, there is a constant $C' \ge 0$ that only depends on $F$, on $\diam \pi(\wt{\W})$ and on the bounds on the sectional curvature of $\wt{M}$ such that
 \[ \forall p \in \P, \forall i, \left| \int_p F - \int_x^{\gamma_{p,i} x} \wt{F} \right| = \left| \int_{x_i(p)}^{\gamma_{p,i} x_i(p)} \wt{F} - \int_x^{\gamma_{p,i} x} \wt{F} \right| \le C' \eqpunct{.} \]
 Hence we have
 \[ \sum_{p \in \P} n_\W(p) e^{\int_p F} \le C e^{C'} \sum_{p \in \P} \sum_{i=1}^{n_p} e^{\int_x^{\gamma_{p,i} x} \wt{F}} \le C e^{C'} \sum_{\gamma \in \Gamma} e^{\int_x^{\gamma x} \wt{F}} \eqpunct{,} \]
 and the recurrence of $F$ implies the divergence of $(\Gamma, F)$.
\end{proof}


\subsection{Divergence implies recurrence}

\begin{lemm}
 If $(\Gamma, F)$ is divergent, then $F$ is recurrent relatively to any open relatively compact subset $\W$ of $T^1 M$ which intersects $\Omega$.
\end{lemm}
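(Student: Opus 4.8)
The plan is to establish the converse direction of Theorem \ref{thm:erg}: assuming $(\Gamma, F)$ is divergent, i.e. $\sum_{\gamma \in \Gamma} e^{\int_x^{\gamma x} \wt{F}} = +\infty$ with $P(F) = 0$, I want to show that $\sum_{p \in \P} n_\W(p) e^{\int_p F} = +\infty$ for every open relatively compact $\W \subset T^1 M$ meeting $\Omega$. The strategy is to go the other way around compared to the previous lemma: instead of majorizing the periodic-orbit sum by the group sum, I need to \emph{minorize} it, which forces me to convert enough elements of $\Gamma$ into hyperbolic elements whose axes cross $\W$, while keeping the weights comparable.

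First I would fix an open relatively compact lift $\wt{\W}$ of $\W$ meeting $\wt{\Omega}$, pick a base point $x \in \pi(\wt{\W}) \cap \Conv(\Lambda(\Gamma))$, and invoke Lemma \ref{lem:forcehyper} (with a suitable $\varepsilon$, say $\varepsilon$ small enough that $T^1 B(x,\varepsilon)$ is contained in $\wt{\W}$ up to shrinking $\wt{\W}$, or simply using the $\wt{\W} \cap T^1 B(x,\varepsilon)$ version directly): this yields finitely many $g_1, \dots, g_k \in \Gamma$ and a finite exceptional set $S \subset \Gamma$ such that for every $\gamma \in \Gamma \setminus S$ there are indices $i,j$ with $\gamma' = g_j^{-1} \gamma g_i$ hyperbolic and $A_{\gamma'} \cap \wt{\W} \cap T^1 B(x,\varepsilon) \neq \emptyset$. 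The divergence of $(\Gamma, F)$ is clearly unaffected by removing the finite set $S$. Then I would use Lemma \ref{lem:potcontrol}: since $d(x, g_i x)$ and $d(x, g_j x)$ are bounded by a constant depending only on the finite family, $\left| \int_x^{\gamma x} \wt{F} - \int_x^{\gamma' x} \wt{F} \right|$ is bounded by some $C'$ uniformly in $\gamma$, so that $\sum_{\gamma \in \Gamma \setminus S} e^{\int_x^{\gamma x} \wt{F}} \le e^{C'} \sum_{\gamma \in \Gamma \setminus S} e^{\int_x^{\gamma' x} \wt{F}}$ where $\gamma' = \gamma'(\gamma)$ ranges over a set of hyperbolic elements with axes crossing $\wt{\W}$. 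The right-hand side, after grouping, is still infinite.

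The next step is to pass from this sum over hyperbolic elements $\gamma'$ to the sum over periodic orbits, inserting the factor $n_\W(p)$. The map $\gamma \mapsto \gamma'(\gamma)$ is at most $k^2$-to-one composed with the conjugation ambiguity, so the fibers above a single conjugacy class need to be controlled. Here the key point is: for a hyperbolic $\gamma'$ with $A_{\gamma'} \cap \wt{\W} \neq \emptyset$, how many group elements $\delta$ can satisfy $\gamma'(\delta) $ conjugate to a fixed primitive $\gamma_0$? I would argue that the weight $e^{\int_x^{\gamma' x} \wt{F}}$ of each such $\gamma'$ is, via Lemma \ref{lem:potcontrol} again (bounding $\int_x^{\gamma' x}\wt{F}$ against $\int_p F = \int_z^{\gamma' z} \wt{F}$ for $z = \pi(v)$, $v \in A_{\gamma'} \cap \wt{\W}$, using $d(x,z) \le \diam \pi(\wt{\W})$), comparable to $e^{\int_p F}$ where $p$ is the orbit of $\gamma'$; and the number of distinct conjugates $\gamma'$ of a fixed $\gamma_0$ whose axis meets $\wt{\W}$ is exactly $n_{\wt{\W}}(\gamma_0)$, which by Lemma \ref{lem:crossingcompare} is at most $C\, n_\W(p)$. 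The one subtlety is multiplicity: powers $\gamma_0^m$ also appear and their axes also meet $\wt{\W}$, but $n_{\wt{\W}}(\gamma_0^m) = n_{\wt{\W}}(\gamma_0) = n_\W(p)$ is finite and fixed; since the weights $e^{\int_x^{\gamma_0^m x}\wt{F}}$ for a recurrent potential must be summable over $m$ in any divergent family — actually this needs a moment's thought, and I would instead organize the sum as $\sum_{\gamma' \text{ hyperbolic}, A_{\gamma'} \cap \wt{\W} \neq \emptyset} e^{\int_x^{\gamma' x}\wt{F}} \le \sum_{p \in \P} n_\W(p) \cdot C e^{C''} e^{\int_p F}$ by directly indexing primitive classes and their powers together, noting every hyperbolic $\gamma'$ with axis meeting $\wt{\W}$ is a power of a unique primitive one and contributes to the $n_\W$ count of that primitive orbit's power. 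Combining the three inequalities yields $+\infty \le C e^{C'+C''} \sum_{p \in \P} n_\W(p) e^{\int_p F}$, proving recurrence relative to $\W$.

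The main obstacle I anticipate is bookkeeping the multiplicities and the conjugation/fiber ambiguity cleanly: the composition $\gamma \mapsto g_j^{-1}\gamma g_i \mapsto$ (conjugacy class) $\mapsto$ (underlying primitive orbit) is far from injective, and one must be careful that summing $e^{\int_x^{\gamma x}\wt F}$ over all $\gamma$ that land (after the bounded perturbation) on a given periodic orbit $p$ really is bounded by a uniform constant times $n_\W(p) e^{\int_p F}$ — in particular that the ``bounded perturbation'' $g_i, g_j$ step, which is many-to-one with bounded fibers ($\le k^2$), together with the axis-counting ($\le C n_\W(p)$) and the power-counting, does not accumulate an unbounded multiplicative factor. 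Provided each of these three reduction steps has a fiber bound independent of $\gamma$ (which the finiteness statements in Lemmas \ref{lem:forcehyper}, \ref{lem:crossingcompare} provide, and the $g_i$ family being finite provides for the first), the argument closes.
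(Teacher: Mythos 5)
Your proposal is correct and follows essentially the same route as the paper's proof: reduce the group sum to a sum over hyperbolic elements with axes meeting a lift $\wt{\W}$ via Lemma \ref{lem:forcehyper} (with the $k^2$ fiber bound from the finite family $g_1,\dots,g_k$ and Lemma \ref{lem:potcontrol} controlling the weight change), then reindex by periodic orbits $p \in \P$ counted with multiplicity, bounding the number of contributing conjugates by $C\, n_\W(p)$ via Lemma \ref{lem:crossingcompare} and comparing $\int_x^{\gamma' x}\wt{F}$ with $\int_p F$ by Lemma \ref{lem:potcontrol} applied along the axis. Your resolution of the multiplicity issue — indexing over all of $\P$ (primitive orbits and their iterates together), using that $n_{\wt{\W}}(\gamma_0^m) = n_{\wt{\W}}(\gamma_0)$ — is exactly the paper's bookkeeping.
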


\begin{proof}
 Choose an open relatively compact lift $\wt{\W}$ of $\W$ to $T^1 \wt{M}$.
 Assume that $(\Gamma, F)$ is divergent.
 As divergence is independent from the chosen base point $x \in \wt{M}$, consider $x \in \wt{\Omega} \cap \pi(\wt{\W})$.

 According to Lemma \ref{lem:forcehyper}, there exist $g_1, \hdots, g_k \in \Gamma$ and $S \subset \Gamma$ finite such that
 \[ \sum_{\gamma \in \Gamma} e^{\int_x^{\gamma x} \wt{F}} \le \sum_{\gamma \in S} e^{\int_x^{\gamma x} \wt{F}} + \sum_{i,j} \sum_{\substack{\gamma' \in \Gamma_h \\ A_{\gamma'} \cap \wt{\W} \neq \emptyset}} e^{\int_x^{g_j \gamma' g_i^{-1} x} \wt{F}} \eqpunct{.} \]
 Let $R_0 = \sup\set{d(x, g_i x)}{i = 1, \hdots, k}$.
 Lemma \ref{lem:potcontrol} ensures that there exists a constant $C_1 \ge 0$ which depends only on $F$, $R_0$ and the bounds on the sectional curvature of $\wt{M}$ such that for all $1 \le i, j \le k$,
 \[ \left| \int_x^{g_i \gamma' g_j^{-1} x} \wt{F} - \int_x^{\gamma' x} \wt{F} \right| = \left| \int_{g_i^{-1} x}^{\gamma' g_j^{-1} x} \wt{F} - \int_x^{\gamma' x} \wt{F} \right| \le C_1 \eqpunct{.} \]
 From this we deduce that
 \[ \sum_{\gamma \in \Gamma} e^{\int_x^{\gamma x} \wt{F}} \le \sum_{\gamma \in S} e^{\int_x^{\gamma x} \wt{F}} + k^2 e^{C_1} \sum_{\substack{\gamma' \in \Gamma_h \\ A_{\gamma'} \cap \wt{\W} \neq \emptyset}} e^{\int_x^{\gamma' x} \wt{F}} \eqpunct{.} \]
 We can now reindex this last sum by summing first over $p \in \P$, and then over the hyperbolic isometries $\gamma$ associated with $p$, i.e. those such that $\Pr(A_\gamma) = p$, as follows.
 \[ \sum_{\substack{\gamma \in \Gamma_h \\ A_\gamma \cap \wt{W} \neq \emptyset}} e^{\int_x^{\gamma x} \wt{F}} = \sum_{p \in \P} \sum_{\substack{\gamma \in \Gamma_h \\ A_\gamma \cap \wt{\W} \neq \emptyset \\ \Pr(A_\gamma) = p}} e^{\int_x^{\gamma x} \wt{F}} \eqpunct{.} \]
 Recall that for all $p \in \P$ meeting $\W$, there exist only finitely many $\gamma_{p,1}, \hdots, \gamma_{p,n_p} \in \Gamma_h$ that project onto $p$ and whose axis intersects $\wt{\W}$.
 Thus this last sum is equal to
 \[ \sum_{p \in \P} \sum_{i=1}^{n_p} e^{\int_x^{\gamma_{p,i} x} \wt{F}} \eqpunct{.} \]
 But according to Lemma \ref{lem:crossingcompare}, there exists $C_2 > 0$ such that for every $1 \le i \le n_p$ one has
 \[ n_p = n_{\wt{\W}}(\gamma_{p,i}) \le C_2 n_\W(p) \eqpunct{.} \]
 As before, pick for every $1 \le i \le n_p$ a point $x_i(p) \in \wt{\pi}(A_{\gamma_{p,i}} \cap \wt{\W})$, that satisfies $d(x_i(p), x) \le \diam \pi(\wt{\W})$.
 Lemma \ref{lem:potcontrol} gives a constant $C_3 \ge 0$ which depends only on $\wt{F}$, on $\diam \pi(\wt{\W})$ and on the geometry of $\wt{M}$ such that for all $p \in \P$ and $1 \le i \le n_p$,
 \[ \left| \int_x^{\gamma_{p,i} x} \wt{F} - \int_p F \right| = \left| \int_x^{\gamma_{p,i} x} \wt{F} - \int_{x_i(p)}^{\gamma_{p,i} x_i(p)} \wt{F} \right| \le C_3 \eqpunct{.} \]
 We finally get that
 \[ \sum_{\substack{\gamma \in \Gamma_h \\ A_\gamma \cap \wt{\W} \neq \emptyset}} e^{\int_x^{\gamma x} \wt{F}} \le C_2 e^{C_3} \sum_{p \in \P} n_\W(p) e^{\int_p F} \eqpunct{,} \]
 and the divergence of $(\Gamma, F)$ implies the recurrence of $F$ relatively to $\W$.
\end{proof}


\section{Finiteness of Gibbs measures for positive recurrent potentials}

\label{sec:fin1}

The aim of this section is to prove Theorem \ref{thm:crit1}.
As noted the introduction, the only efficient tool to see whether a measure is finite or not is the so-called Kac Lemma.
Unfortunately, it is not very easy to use, and also usually stated for single transformations, and not for flows.
Translating this statement in the criterion of Theorem \ref{thm:crit1} is the work done below.


\subsection{Positive recurrence}

For $\wt{W}$ an open relatively compact subset of $\wt{M}$, we define the set
\[ \Gamma_{\wt{W}} = \set{\gamma \in \Gamma}{\exists y, y' \in \wt{W}, \itv[cc]{y}{\gamma y'} \cap g \wt{W} \neq \emptyset \Rightarrow \overline{\wt{W}} \cap g \overline{\wt{W}} \neq \emptyset \eqor \gamma \overline{\wt{W}} \cap g \overline{\wt{W}} \neq \emptyset} \]
of elements $\gamma$ such that some interval $\itv[cc]{y}{\gamma y'}$ intersects $\Gamma \wt{W}$ only around $y$ or $\gamma y'$.
When $\wt{W} = B(x, R)$ for some $x \in \wt{M}$ and $R > 0$, we shorten this notation into $\Gamma_{x,R} = \Gamma_{B(x, R)}$.
Note that for all $g \in \Gamma$, $\Gamma_{g \wt{W}} = g \Gamma_{\wt{W}} g^{-1}$.

For any open and relatively compact set $\wt{\mathcal{W}} \subset T^1 \wt{M}$, and any $\gamma \in \Gamma$, we define the \name{$\wt{\mathcal{W}},\gamma$-geodesic ball} by
\[ \mathcal{U}_{\wt{\mathcal{W}},\gamma} = \set{v \in \wt{\mathcal{W}}}{\exists t \ge 0, \wt{g}^t(v) \in \gamma \wt{\mathcal{W}}} \eqpunct{.} \]
When $\wt{\mathcal{W}} = T^1 B(x, R)$ for some $x \in \wt{M}$ and $R > 0$, we simplify this notation into
\[ \mathcal{U}_{x,R,\gamma} = \mathcal{U}_{T^1 B(x,r),\gamma} \eqpunct{.} \]

We recall Definition \ref{def:pos-rec-bis} for the reader's convenience.
The pair $(\Gamma, \wt{F})$ is said to be \name{positive recurrent relatively to $\wt{W}$} for some open relatively compact set $\wt{W}$ if $P(F)$ is finite, $(\Gamma, F)$ is divergent and the series
\[ \sum_{\gamma \in \Gamma_{\wt{W}}} d(z, \gamma z) e^{\int_z^{\gamma z} (\wt{F} - P(F))} < +\infty \]
for some $z \in \wt{M}$.
According to Lemma \ref{lem:potcontrol}, the behaviour of this series does not depend on the choice of the point $z \in \wt{M}$.
By replacing $F$ by $F - P(F)$, which does not change $m_F$, we may also assume that $P(F) = 0$.


\subsection{A Kac lemma for flows}

Let $(X, \mathcal{B}, \mu)$ be a measured space.
If $\fn{f}{X}{X}$ and $Y \in \mathcal{B}$, the \name{return time map $T_Y$ to $Y$} is defined by
\[ \func{T_Y}{Y}{\N \cup \sing{+\infty}}{y}{\inf \set{n \ge 1}{f^n(y) \in Y}} \eqpunct{.} \]
We start by recalling the statement of Kac's recurrence lemma for measure-preserving invertible transformations, as stated in \cite{Aar} or \cite{Wri}.
Note that this lemma usually requires that the transformation is conservative and ergodic, but the classical partitioning proof shows that these hypothesis can be replaced by assuming that the return time is finite almost everywhere in $Y$ and by looking only at points that can be reached from $Y$.

\begin{lemm}[Kac's lemma for transformations]
 \label{lem:kaccompute}
 Let $(X, \mathcal{B}, f, \mu)$ be a measured dynamical system, with $f$ invertible.
 If the return time map $T_Y$ is finite $\mu$-almost everywhere on $Y$, then
 \[ \sum_{n \ge 1} n \mu\( \left\{ T_Y = n \right\} \) = \mu\( \bigcup_{n \in \Z} f^n(Y) \) \eqpunct{.} \]
\end{lemm}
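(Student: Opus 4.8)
The plan is to run the classical Kakutani skyscraper construction over $Y$. First I would use the hypothesis that $T_Y$ is finite $\mu$-almost everywhere on $Y$ to write, up to a $\mu$-null set, $Y = \bigsqcup_{n \ge 1} Y_n$ with $Y_n = \{T_Y = n\}$, and then consider the sets $E_{n,k} = f^k(Y_n)$ for $n \ge 1$ and $0 \le k \le n-1$, together with their union $\widehat Z = \bigcup_{n \ge 1} \bigcup_{k=0}^{n-1} E_{n,k}$.

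The first key step is to show that the sets $E_{n,k}$ are pairwise disjoint. Suppose $x \in E_{n,k} \cap E_{m,j}$ with $(n,k) \neq (m,j)$ and, say, $k \le j$; put $y = f^{-k}(x) \in Y_n$ and $y' = f^{-j}(x) \in Y_m$, so that $y = f^{j-k}(y')$. If $j = k$, then $y = y'$ by injectivity of $f$, hence $n = m = T_Y(y)$, contradicting $(n,k) \neq (m,j)$; if $j > k$, then $1 \le j-k \le m-1 < m = T_Y(y')$ while $f^{j-k}(y') = y \in Y$, contradicting the minimality in the definition of $T_Y(y')$. This argument uses only that $f$ is invertible. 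Since $\mu$ is $f$-invariant we have $\mu(E_{n,k}) = \mu(Y_n)$, so disjointness gives $\mu(\widehat Z) = \sum_{n \ge 1} \sum_{k=0}^{n-1} \mu(Y_n) = \sum_{n \ge 1} n\,\mu(Y_n)$.

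It then remains to identify $\widehat Z$, up to a $\mu$-null set, with $Z = \bigcup_{n \in \Z} f^n(Y)$. The inclusion $\widehat Z \subseteq Z$ is immediate. For the converse I would show that, for $\mu$-almost every $x \in Z$, the set $\{k \ge 0 : f^{-k}(x) \in Y\}$ is nonempty; letting $k$ be its smallest element, $y = f^{-k}(x) \in Y$ and $n = T_Y(y)$ (finite for $\mu$-almost every such $y$), and the minimality of $k$ forces $f^\ell(y) = f^{\ell-k}(x) \notin Y$ for $1 \le \ell \le k$ — here one uses that $x \notin Y$ when $k \ge 1$ — hence $n > k$, that is $0 \le k \le n-1$, so that $x = f^k(y) \in E_{n,k} \subseteq \widehat Z$.

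The main obstacle is precisely this last step: one must discard, up to measure zero, the ``dissipative'' points of $Z$ all of whose visits to $Y$ occur strictly in the future. Such points are contained in $\bigcup_{i \ge 1} f^{-i}(Y_{-\infty})$, where $Y_{-\infty} \subset Y$ is the set of points whose backward orbit never meets $Y$, and the sets $f^{-i}(Y_{-\infty})$, $i \ge 1$, are pairwise disjoint and disjoint from $Y$. At this point one invokes the classical fact — Poincaré recurrence in the conservative case, or triviality when $\mu$ is finite — that $\mu(Y_{-\infty}) = 0$ under the hypotheses effectively in force, which is exactly what the remark about restricting to points reachable from $Y$ is meant to supply. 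Granting $\mu(Y_{-\infty}) = 0$, the set $Z \setminus \widehat Z$ is $\mu$-null, and therefore $\mu\left(\bigcup_{n \in \Z} f^n(Y)\right) = \mu(\widehat Z) = \sum_{n \ge 1} n\,\mu(Y_n)$, as claimed.
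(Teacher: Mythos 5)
Your argument is the ``classical partitioning proof'' that the paper itself merely alludes to (it cites \cite{Aar} and \cite{Wri} and gives no proof), so there is no detailed paper proof to compare against; your skyscraper construction, the disjointness of the $E_{n,k}=f^k(Y_n)$, and the identification of $\widehat Z$ with $\bigcup_{n\in\Z}f^n(Y)$ modulo the set of points whose backward orbit never meets $Y$ are all correct and are exactly what the paper's remark is pointing at.

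The one soft spot is your justification of $\mu(Y_{-\infty})=0$. You appeal to ``Poincar\'e recurrence in the conservative case, or triviality when $\mu$ is finite,'' but neither conservativity nor finiteness of $\mu$ is a hypothesis of the lemma as stated --- the only assumptions are invertibility and $T_Y<\infty$ a.e.\ on $Y$ --- and your own observation that the sets $f^{-i}(Y_{-\infty})$, $i\ge 1$, are pairwise disjoint does not by itself force $\mu(Y_{-\infty})=0$ when $\mu$ is infinite. The fix stays entirely within the stated hypotheses: if $\mu(Y)=+\infty$ both sides of the identity are $+\infty$ (the left side dominates $\sum_n\mu(Y_n)=\mu(Y)$ and the right side dominates $\mu(Y)$), so one may assume $\mu(Y)<+\infty$; then the forward images $f^n(Y_n)$, $n\ge 1$, are pairwise disjoint subsets of $Y$ (by the same injectivity-plus-minimality argument you used for the $E_{n,k}$) with total measure $\sum_n\mu(Y_n)=\mu(Y)$, hence they cover $Y$ up to a $\mu$-null set, and $Y_{-\infty}$ is disjoint from every $f^n(Y_n)$ since each point of $f^n(Y_n)$ has $f^{-n}$ of it in $Y$. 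This gives $\mu(Y_{-\infty})=0$ directly from the finiteness of the return time, and the rest of your proof then goes through unchanged.
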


Let $X$ be a locally compact topological space equipped with the Borel $\sigma$-algebra, and let $(g^t)$ be a continuous flow on $X$.
For every $W \subset X$ open and $\varepsilon > 0$, we define the \name{$\varepsilon$-hitting time of $W$} by
\[ \forall x \in X, \tau_{\varepsilon,W}(x) = \inf \set{t \ge \varepsilon}{g^t(x) \in W} \in \itv[cc]{\varepsilon}{+\infty} \eqpunct{.} \]
We can now derive a Kac's lemma for flows by applying Lemma \ref{lem:kaccompute} to the $\varepsilon$-time of the flow.
In particular, the $\varepsilon$ "margin of error" in the definition of the $\varepsilon$-hitting time allow us to ignore any possible pathological behaviour at the boundary of $W$.
Although this proposition is the simplest analogue of Kac's lemma for flow that one can devise, we could not find any explicit reference to this technique in the literature.

\begin{prop}[Kac's lemma for flows]
 \label{lem:kac}
 Let $X$ be a locally compact topological space, $\mathcal{B}$ its Borel $\sigma$-algebra, $(g^t)$ a continuous flow on $X$, and $\mu$ a Borel $(g^t)$-invariant Radon measure on $X$, which is ergodic and conservative.
 Let $W$ be an open relatively compact subset of $X$ with positive $\mu$-measure.
 Then for every $\varepsilon > 0$, the set
 \[ W_\varepsilon = \bigcup_{s \in \itv[co]{0}{\varepsilon}} g^{-s}(W) \]
 is open, relatively compact in $X$, the $\varepsilon$-hitting time map $\tau_{\varepsilon,W}$ is finite $\mu$-almost everywhere on $W_\varepsilon$ and
 \[ \sum_{k \ge 0} k \mu\( \left\{ \tau_{\varepsilon,W} \in \itv[co]{\varepsilon k}{\varepsilon(k+1)} \right\} \cap W_\varepsilon \) = \mu(X) \eqpunct{.} \]
\end{prop}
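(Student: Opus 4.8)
The plan is to deduce the flow version directly from the Kac lemma for transformations (Lemma \ref{lem:kaccompute}) by discretizing the flow. Fix $\varepsilon > 0$ and consider the time-$\varepsilon$ map $f = g^\varepsilon$ on $X$, together with the set $Y = W_\varepsilon = \bigcup_{s \in \itv[co]{0}{\varepsilon}} g^{-s}(W)$. First I would check the elementary topological claims: $W_\varepsilon$ is open because it is a union of preimages of the open set $W$ under the continuous maps $g^{-s}$, and it is relatively compact because $W_\varepsilon \subset \bigcup_{s \in \itv[cc]{0}{\varepsilon}} g^{-s}(\overline{W})$, which is the continuous image of the compact set $\itv[cc]{0}{\varepsilon} \times \overline{W}$ under $(s, x) \mapsto g^{-s}(x)$. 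Since $\mu$ is Radon, $\mu(W_\varepsilon) < +\infty$; since $\mu$ is $(g^t)$-invariant and $W$ has positive measure, $\mu(W_\varepsilon) \ge \mu(W) > 0$ (in fact $\mu(W_\varepsilon) = \varepsilon \mu(W)$ by Fubini applied to the flow box, though only positivity is needed).

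Next I would relate the return time of $f$ to $Y$ with the $\varepsilon$-hitting time $\tau_{\varepsilon,W}$. The key observation is that for $x \in W_\varepsilon$, writing $x = g^{-s}(w)$ with $w \in W$ and $s \in \itv[co]{0}{\varepsilon}$, one has $g^t(x) \in W$ for some $t \ge 0$ iff $g^{t-s}(w) \in W$; more usefully, $g^t(x) \in W_\varepsilon$ for $t = \varepsilon k$ is equivalent to $g^{\varepsilon k}(x)$ lying within backward-flow-time less than $\varepsilon$ of $W$, i.e. to $\tau_{\varepsilon,W}$ (suitably interpreted) taking a value in a window of length $\varepsilon$. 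Concretely I would show that $T_Y(x) = k$ (the first $n \ge 1$ with $f^n(x) \in Y$) holds exactly when $\tau_{\varepsilon,W}(x) \in \itv[co]{\varepsilon k}{\varepsilon(k+1)}$... wait, one must be careful with the lower cutoff $\varepsilon$ in the definition of $\tau_{\varepsilon,W}$: because $\tau_{\varepsilon,W}(x) \ge \varepsilon$ always, and because $f^1(x) = g^\varepsilon(x) \in Y$ iff $g^{\varepsilon - s}(x)$-translate lands near $W$, the indexing matches up so that $\{T_Y = k\} = \{\tau_{\varepsilon,W} \in \itv[co]{\varepsilon k}{\varepsilon(k+1)}\} \cap W_\varepsilon$ for $k \ge 1$, and the $k=0$ term contributes nothing to $\sum k \mu(\cdots)$, which is why the sum in the statement may be written starting at $k \ge 0$. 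This bookkeeping — getting the half-open intervals and the $k$ versus $k+1$ shift exactly right, and confirming the boundary sets $\{g^{\varepsilon k}(x) \in \partial W\}$ are harmless — is the step I expect to be the main (if minor) obstacle.

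Once the identification $\{T_Y = k\} = \{\tau_{\varepsilon,W} \in \itv[co]{\varepsilon k}{\varepsilon(k+1)}\} \cap W_\varepsilon$ is in place, I would verify the hypothesis of Lemma \ref{lem:kaccompute}: the return time $T_Y$ is finite $\mu$-a.e. on $Y$ because $\mu$ is conservative and $\mu(W) > 0$ — by Poincaré recurrence for the conservative map $f = g^\varepsilon$ applied to $W$, $\mu$-a.e. point of $W$ returns to $W$ infinitely often under $f$, hence the same holds for $W_\varepsilon$ (any point of $W_\varepsilon$ flows into $W$ in time $< \varepsilon$, so returns to $W_\varepsilon$); equivalently, invoke that an $(g^t)$-invariant conservative measure makes $g^\varepsilon$ conservative. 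This simultaneously gives that $\tau_{\varepsilon,W}$ is finite $\mu$-a.e. on $W_\varepsilon$. Finally, Lemma \ref{lem:kaccompute} yields
\[ \sum_{k \ge 0} k\, \mu\bigl( \{\tau_{\varepsilon,W} \in \itv[co]{\varepsilon k}{\varepsilon(k+1)}\} \cap W_\varepsilon \bigr) = \mu\Bigl( \bigcup_{n \in \Z} f^n(W_\varepsilon) \Bigr) = \mu\Bigl( \bigcup_{n \in \Z} g^{\varepsilon n}(W_\varepsilon) \Bigr) = \mu\Bigl( \bigcup_{t \in \R} g^t(W) \Bigr), \]
and this last set is all of $X$ up to a $\mu$-null set by ergodicity (it is $(g^t)$-invariant and has positive measure). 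Hence the right-hand side equals $\mu(X)$, which is the claim.
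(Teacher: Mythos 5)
Your proposal is correct and follows essentially the same route as the paper: discretize via the time-$\varepsilon$ map $f = g^\varepsilon$, identify $\{T_{W_\varepsilon} = k\}$ with $\{\tau_{\varepsilon,W} \in \itv[co]{\varepsilon k}{\varepsilon(k+1)}\} \cap W_\varepsilon$, use conservativity for the a.e.\ finiteness of the return time, apply the transformation version of Kac's lemma, and conclude by ergodicity that $\bigcup_{t} g^t(W)$ has full measure. The bookkeeping you flag as the main obstacle works out exactly as you describe, and is precisely the computation carried out in the paper.
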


\begin{proof}
 Fix $\varepsilon > 0$, and let
 \[ X_{W,k} = \set{x \in X}{\tau_{\varepsilon,W}(x) \in \itv[co]{\varepsilon k}{\varepsilon(k+1)}} \]
 be the set of points of the whole space $X$ that $\varepsilon$-hit $W$ after a time approximately $\varepsilon k$.
 Denote by $f = g^\varepsilon$ the time-$\varepsilon$ map of the flow.
 We have
 \begin{align*}
  x \in X_{W,k} &\Leftrightarrow \exists t \in \itv[co]{\varepsilon k}{\varepsilon(k+1)}, g^t(x) \in W \eqand \forall s \in \itv[co]{\varepsilon}{\varepsilon k}, g^s(x) \not\in W \\
                &\Leftrightarrow \exists u \in \itv[co]{0}{\varepsilon}, f^k(x) \in g^{-u}(W) \\
                &\hspace{1.5em} \eqand \forall 1 \le i \le k-1, \forall s \in \itv[co]{0}{\varepsilon}, f^i(x) \not\in g^{-s}(W) \\
                &\Leftrightarrow f^k(x) \in W_\varepsilon \eqand \forall 0 \le i \le k-1, f^i(x) \not\in W_\varepsilon
 \end{align*}
 with $W_\varepsilon$ as given in the statement.
 This set $W_\varepsilon$ is open, hence measurable.
 Moreover, since $W$ is relatively compact and the flow is continuous, $W_\varepsilon$ is also relatively compact, and it contains $W$ so it has positive $\mu$-measure.
 Thus
 \[ X_{W,k} \cap W_\varepsilon = \set{x \in W_\varepsilon}{T_{W_\varepsilon}(x) = k} \]
 with $T_{W_\varepsilon}$ the $f$-return time map to $W_\varepsilon$ as defined before.
 Furthermore, if we define
 \[ X_{W,\infty} = \set{x \in X}{\tau_{\varepsilon,W}(x) = +\infty} \eqpunct{,} \]
 then the family of sets $(X_{W,k})_{k \ge 1}$ together with $X_{W,\infty}$ form a partition of $X$ and
 \[ X_{W,\infty} \cap W_\varepsilon = \set{x \in W_\varepsilon}{T_{W_\varepsilon}(x) = +\infty} \eqpunct{.} \]
 Observe that $\mu(X_{W,\infty} \cap W_\varepsilon) = 0$.
 Indeed, the conservativity of $(g^t)$ ensures that $\mu$-almost every point of $W$ will return to $W$.
 Therefore, by definition of $W_\varepsilon$, $\mu$-almost every point of $W_\varepsilon$ will return to $W_\varepsilon$ at some time multiple of $\varepsilon$.
 We can now apply Lemma \ref{lem:kaccompute} to $f$ and $W_\varepsilon$ to obtain that
 \[ \mu\( \bigcup_{k \in \Z} f^k(W_\varepsilon) \) = \sum_{k \ge 0} k \mu\( \left\{ T_{W_\varepsilon} = k \right\} \) = \sum_{k \ge 0} k \mu\( \left\{ \tau_{\varepsilon,W} \in \itv[co]{\varepsilon k}{\varepsilon(k+1)} \right\} \cap W_\varepsilon \) \eqpunct{,} \]
 where
 \[ \bigcup_{k \in \Z} f^k(W_\varepsilon) = \bigcup_{s \in \R} g^s(W) \]
 is a $(g^t)$-invariant measurable set which contains $W$ with $\mu(W) > 0$.
 Since $\mu$ is ergodic, this set has full measure.
\end{proof}


\subsection{Positive recurrence implies finiteness of the Gibbs measure}

The strategy of the proof of the theorem is very natural and simple.
We approximate the return time level sets $\{ \tau_{\varepsilon,W} \in \itv[co]{\varepsilon k}{\varepsilon(k+1)} \} \cap W_\varepsilon$ appearing in the above proposition by some products of shadows on the boundary, whose $m_F$ measure, thanks to Mohsen's Shadow Lemma, can be expressed in terms of exponentials of integrals of $F$ between points of $\Gamma x$.
However, as the length of the next subsections shows, the rigorous proof of this result is technically much more involved than the intuitive idea.
Recall that we only need to prove this result for $P(F) = 0$.

\begin{lemm}
 \label{lem:masstubemaj}
 Let $M$ be a negatively curved orbifold, and $\fn{F}{T^1 M}{\R}$ a H\"older continuous potential such that $P(F) = 0$.
 For all $x \in \wt{M}$ and $R \ge 0$, there exist $C \ge 0$ and a finite set $S \subset \Gamma$ such that for all $\gamma \in \Gamma \setminus S$, we have
 \[ \wt{m}_F\( \mathcal{U}_{x,R,\gamma} \) \le C e^{\int_x^{\gamma x} \wt{F}} \eqpunct{.} \]
\end{lemm}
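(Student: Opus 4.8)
The plan is to bound $\wt{m}_F(\mathcal{U}_{x,R,\gamma})$ using the explicit expression \eqref{eqn:gibbsdef} of $\wt{m}_F$ in Hopf coordinates, by showing that the set $\mathcal{U}_{x,R,\gamma}$ projects, in the $(v_-,v_+)$ variables, into a product of shadows of the form $\O_{\gamma x} B(g x, R')\times \O_x B(\gamma h x, R')$ for suitable $g,h$ ranging in a fixed finite set, and with bounded range in the time coordinate $t$. First I would fix $v\in\mathcal{U}_{x,R,\gamma}$: by definition $v\in T^1 B(x,R)$ and $\wt g^t(v)\in\gamma T^1 B(x,R)$ for some $t\ge 0$. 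Hence the geodesic line $(v_- v_+)$ passes within $R$ of $x$ and within $R$ of $\gamma x$. When $d(x,\gamma x)$ is large — which is why we may discard the finite set $S$, using that the action of $\Gamma$ on $\wt{M}$ is proper so only finitely many $\gamma$ have $d(x,\gamma x)$ below any given bound — these two near-passages force $v_+\in\O_x B(\gamma x, R)$ and $v_-\in\O_{\gamma x} B(x, R)$, up to enlarging $R$ slightly; this is exactly the content of Lemma \ref{lem:shadowup} applied with $y$ a point of $B(x,R)$ on the geodesic and $y'$ a point of $B(\gamma x, R)$ on the geodesic.

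Next I would control the time coordinate $\tau_{x_0}(v)$. Since $\pi(v)\in B(x,R)$, the projection $p_{x_0}(v)$ of the base point $x_0$ onto the geodesic $(v_-v_+)$ is within bounded distance (depending on $x_0$, $x$, $R$ and the curvature bounds) of a point lying within $R$ of $x$; hence $\tau_{x_0}(v)$ ranges in a bounded interval $[-T_0,T_0]$ with $T_0=T_0(x_0,x,R)$. Therefore, in Hopf coordinates,
\[
 \mathcal{U}_{x,R,\gamma}\subset \O_{\gamma x}B(x,R+\varepsilon_0)\times \O_x B(\gamma x, R+\varepsilon_0)\times[-T_0,T_0]
\]
for a fixed $\varepsilon_0>0$ coming from Lemma \ref{lem:shadowup}. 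Using that the $F$-gap map $D_{F,x_0}$ is bounded away from $0$ on compact subsets of $\partial_\infty\wt{M}\times\partial_\infty\wt{M}\setminus\Diag$ — and observing that the relevant shadows stay in such a compact set once $d(x,\gamma x)$ is large, by Lemma \ref{lem:shadowdisjoint} — the density $1/D_{F-P(F),x_0}(v_-,v_+)^2$ is bounded above by a constant independent of $\gamma$. It then remains to estimate $\mu_{x_0}^{F\circ\iota}\bigl(\O_{\gamma x}B(x,R+\varepsilon_0)\bigr)\cdot\mu_{x_0}^{F}\bigl(\O_x B(\gamma x,R+\varepsilon_0)\bigr)$, times the length $2T_0$ of the time interval.

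For that last product I would invoke Mohsen's Shadow Lemma (Lemma \ref{lem:shadowmohsen}), in its upper-bound form, which requires no lower restriction on the radius: moving the base point from $x_0$ to $x$ changes the conformal density measures by a bounded factor (again by Lemma \ref{lem:potcontrol} applied to the conformal density/Gibbs cocycle relation, exactly as in the proof of Lemma \ref{lem:shadowprod}), so
\[
 \mu_{x}^{F\circ\iota}\bigl(\O_{\gamma x}B(x,R+\varepsilon_0)\bigr)\le C_1 e^{\int_{\gamma x}^{x}(\wt F\circ\iota)}=C_1 e^{\int_x^{\gamma x}\wt F},
\]
where I use that $\int_{\gamma x}^x \wt F\circ\iota=\int_x^{\gamma x}\wt F$ since reversing orientation of a geodesic segment exchanges the endpoints and replaces $\wt F$ by $\wt F\circ\iota$; and symmetrically $\mu_x^F(\O_x B(\gamma x,R+\varepsilon_0))\le C_1 e^{\int_x^{\gamma x}\wt F}$. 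Multiplying these two bounds would give $e^{2\int_x^{\gamma x}\wt F}$, which is the wrong power — so the correct route is the one used in Lemma \ref{lem:shadowprod}: apply the Shadow Lemma to the \emph{product} of the two shadows, obtaining the single exponential $e^{\int_x^{\gamma x}\wt F}$ directly (the $D_F$-factor and the conformal-density shift absorb the mismatch), and then fold in the bounded time-interval length and the bounded $D_F^{-2}$-factor into the constant $C$. The main obstacle is precisely this bookkeeping: making sure the two shadow factors are paired so that their product — not their individual sizes — is what gets estimated, and tracking that every auxiliary constant ($\varepsilon_0$, $T_0$, the $D_F$ bounds, the conformal-density shift constant) depends only on $x$, $R$, $\wt F$ and the curvature bounds, and not on $\gamma$, after removing the finite exceptional set $S$.
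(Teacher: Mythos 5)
Your proposal follows the paper's argument step for step --- Hopf coordinates, the inclusion of $\mathcal{U}_{x,R,\gamma}$ into a product of shadows via Lemma \ref{lem:shadowup}, the bounded range of the time coordinate, the removal of finitely many $\gamma$ so that Lemma \ref{lem:shadowdisjoint} keeps the shadows in a fixed compact part of $\partial_\infty\wt{M}\times\partial_\infty\wt{M}\setminus\Diag$ where $D_{F,x}^{-2}$ is bounded --- until the final estimate, where you take a detour. The paper's resolution of the ``wrong power'' problem you correctly identify is much simpler: since only an upper bound is needed, one bounds the first factor trivially by the total mass of the conformal density, $\mu_x^{F\circ\iota}(\O_{\gamma x}B(x,R+1))\le 1$, and applies Mohsen's Shadow Lemma only to the second factor, $\mu_x^F(\O_x B(\gamma x,R+1))\le C_2\, e^{\int_x^{\gamma x}\wt{F}}$, which is exactly in the form of Lemma \ref{lem:shadowmohsen} (shadow cast from the base point of the density). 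Note in passing that your displayed bound $\mu_{x}^{F\circ\iota}(\O_{\gamma x}B(x,R+\varepsilon_0))\le C_1 e^{\int_x^{\gamma x}\wt F}$ is not literally an instance of Lemma \ref{lem:shadowmohsen}, since there the shadow is viewed from the measure's own base point rather than from $\gamma x$; but you discard that computation anyway. Your alternative --- estimating the \emph{product} of the two shadow measures via Lemma \ref{lem:shadowprod} --- does yield the correct single exponential, but it imports the hypothesis that $B(x,R)$ meets $\pi(\wt{\Omega})$ (not assumed in the statement, although if it fails the left-hand side vanishes since $\wt{m}_F$ is carried by $\wt{\Omega}$) as well as the auxiliary elements $g,h\in G$, which force a further enlargement of the radii to obtain the containments $\O_{\gamma x}B(x,R+\varepsilon_0)\subset\O_{\gamma x}B(gx,R'')$ and $\O_x B(\gamma x,R+\varepsilon_0)\subset\O_x B(\gamma hx,R'')$. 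All of this bookkeeping is correct but avoidable; the one-sided nature of the desired inequality is what makes the paper's shortcut available.
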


\begin{proof}
 According to Lemma \ref{lem:shadowup}, there exists $l_0 \ge 0$ such that for all $\gamma \in \Gamma$ satisfying $d(x, \gamma x) \ge l_0$, we have in the Hopf coordinates
 \[ \mathcal{U}_{x,R,\gamma} \subset \O_{\gamma x} B(x, R+1) \times \O_x B(\gamma x, R+1) \times \set{\tau_x(v)}{v \in \mathcal{U}_{x,R,\gamma}} \eqpunct{.} \]
 Given any two vectors $v, w \in \mathcal{U}_{x,R,\gamma}$, observe that both $p_x(v)$ and $p_x(w)$ are in $B(x, R)$, so that $d(p_x(v), p_x(w)) \le 2 R$.
 This implies that $|\tau_x(v) - \tau_x(w)| \le 2R$ and
 \[ \mathcal{U}_{x,R,\gamma} \subset \O_{\gamma x} B(x, R+1) \times \O_x B(\gamma x, R+1) \times I_{2 R} \eqpunct{,} \]
 where $I_{2 R}$ is some interval of $\R$ of length $2 R$.

 We may assume that $l_0$ is large enough so that Lemma \ref{lem:shadowdisjoint} is satisfied for balls of radius $R+1$.
 Let $S = \set{\gamma \in \Gamma}{d(x, \gamma x) < l_0}$.
 Fix $\gamma \in \Gamma \setminus S$.
 The set $\O_{\gamma x} B(x, R+1) \times \O_x B(\gamma x, R+1)$ is relatively compact in $\partial_\infty \wt{M} \times \partial_\infty \wt{M} \setminus \Diag$, so that by continuity and positivity of the gap map, there exists a constant $C_1 \ge 0$ such that
 \[ \forall (\xi, \eta) \in \O_{\gamma x} B(x, R+1) \times \O_x B(\gamma x, R+1), \frac{1}{D_{F,x}(\xi, \eta)^2} \le C_1 \eqpunct{.} \]
 Equation \eqref{eqn:gibbsdef} defining the Gibbs measure $\wt{m}_F$ implies therefore
 \[ \wt{m}_F(\mathcal{U}_{x,R,\gamma}) \le C_1 \mu_x^{F \circ \iota}(\O_{\gamma x} B(x, R+1)) \mu_x^F(\O_x B(\gamma x, R+1)) 2 R \eqpunct{.} \]
 On the one hand $\mu_x^{F \circ \iota}$ is a probability measure on $\partial_\infty \wt{M}$, so $\mu_x^{F \circ \iota}(\O_{\gamma x} B(x, R+1)) \le 1$.
 On the other hand, Mohsen's Shadow Lemma \ref{lem:shadowmohsen} ensures the existence of a constant $C_2$ such that
 \[ \mu_x^F(\O_x B(\gamma x, R+1)) \le C_2 e^{\int_x^{\gamma x} \wt{F}} \eqpunct{.} \]
 The lemma is proved with $C = C_1 C_2 2 R$.
\end{proof}

\begin{lemm}
 \label{lem:recorbitlift}
 Let $W$ be an open relatively compact subset of $M$.
 Fix an open relatively compact lift $\wt{W}$ of $W$ to $\wt{M}$, and set $\mathcal{W} = T^1 W \subset T^1 M$ and $\wt{\mathcal{W}} = T^1 \wt{W} \subset T^1 \wt{M}$.
 For all $\varepsilon > 0$ and $x \in \wt{M}$, there exist $R \ge 0$ and $G \subset \Gamma$ finite such that the following holds.

 For every $v \in \mathcal{W}$ such that $\tau_{\varepsilon,\mathcal{W}}(v) \in \itv[co]{\varepsilon k}{\varepsilon(k+1)}$ with $k \ge 3$, and any lift $\wt{v}$ of $v$ to $\wt{\mathcal{W}}$, there exist $g, h \in G$ and $\gamma \in \Gamma_{\wt{W}}$ such that $\wt{v} \in \mathcal{U}_{\wt{\mathcal{W}}, h \gamma g}$ and $\varepsilon k - R \le d(x, \gamma x) \le \varepsilon (k + 1) + R$.
\end{lemm}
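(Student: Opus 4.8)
The plan is to lift the stretch of orbit of $v$ between its two visits to $\mathcal{W}$, record the translate of $\wt{W}$ in which it lands, and then peel bounded pieces off both ends of the corresponding group element to expose a ``core'' lying in $\Gamma_{\wt{W}}$. Write $c(t) = \pi(\wt{g}^t \wt{v})$ and $t_0 = \tau_{\varepsilon,\mathcal{W}}(v) \in \itv[co]{\varepsilon k}{\varepsilon(k+1)}$. Since $\mathcal{W}$ is open, $t_0$ is the infimum of a relatively open subset of $\itv[co]{\varepsilon}{+\infty}$, so one can choose $t_1 \in \itv[co]{t_0}{t_0+\varepsilon}$ with $g^{t_1}(v) \in \mathcal{W}$; then $\wt{g}^{t_1}(\wt{v}) \in g_1\wt{\mathcal{W}}$ for some $g_1 \in \Gamma$ (recall $\Pr^{-1}(\mathcal{W}) = \bigcup_g g\wt{\mathcal{W}}$), which already gives $\wt{v} \in \mathcal{U}_{\wt{\mathcal{W}},g_1}$. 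It thus remains to factor $g_1 = h\gamma g$ with $h, g$ ranging over a finite subset $G$ of $\Gamma$ depending only on $\wt{W}$ and $\varepsilon$, and with $\gamma \in \Gamma_{\wt{W}}$ and $\varepsilon k - R \le d(x, \gamma x) \le \varepsilon(k+1) + R$.

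The crucial remark is that for $t \in \itv[cc]{\varepsilon}{t_0}$ one has $\pi(g^t v) \notin W$, hence $c(t) \notin \Pr^{-1}(W) = \bigcup_{g \in \Gamma} g\wt{W}$: the lifted orbit avoids the whole $\Gamma$-orbit of $\wt{W}$ on its entire middle stretch. Set $s = \sup \set{t \in \itv[cc]{0}{t_0}}{c(t) \in \bigcup_g g\wt{W}}$ and $s' = \inf \set{t \in \itv[cc]{t_0}{t_1}}{c(t) \in \bigcup_g g\wt{W}}$. Because $t_0 \ge 3\varepsilon$, these satisfy $0 \le s \le \varepsilon < t_0 \le s' \le t_1$, the segment $c(\itv[cc]{s}{s'})$ meets no translate of $\wt{W}$ except possibly at $c(s)$ or $c(s')$, and by continuity $c(s) \in a\overline{\wt{W}}$ and $c(s') \in b\overline{\wt{W}}$ for some $a, b \in \Gamma$. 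Since $c(s)$ lies within distance $\varepsilon$ of $c(0) \in \wt{W}$, properness of the $\Gamma$-action puts $a$ in the finite set $G_1 = \set{g \in \Gamma}{g\overline{\wt{W}} \text{ meets the closed }\varepsilon\text{-neighbourhood of }\overline{\wt{W}}}$; likewise $g_1^{-1}b \in G_1$, i.e. $b = g_1 a'$ with $a' \in G_1$.

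Put $\gamma = a^{-1}b = a^{-1} g_1 a'$, $h = a$, $g = a'^{-1}$, so $h, g \in G := G_1 \cup G_1^{-1}$ and $h\gamma g = g_1$, whence $\wt{v} \in \mathcal{U}_{\wt{\mathcal{W}}, h\gamma g}$. The length bound is immediate: $c(s) \in a\overline{\wt{W}}$ and $c(s') \in b\overline{\wt{W}}$ give $|d(x,\gamma x) - (s'-s)| = |d(ax,bx) - d(c(s),c(s'))| \le 2 \sup_{w \in \overline{\wt{W}}} d(w,x)$, while $s' - s \in \itv[oo]{\varepsilon k - \varepsilon}{\varepsilon(k+1)+\varepsilon}$, so $R = \varepsilon + 2\sup_{w \in \overline{\wt{W}}} d(w,x)$ works. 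Translating $c(\itv[cc]{s}{s'})$ by $a^{-1}$ displays it as the geodesic segment $\itv[cc]{y}{\gamma y'}$ with $y = a^{-1}c(s)$ and $y' = b^{-1}c(s')$ in $\overline{\wt{W}}$; since this segment meets a translate $g''\wt{W}$ only at $y$ or at $\gamma y'$, every such $g''$ satisfies $g''\overline{\wt{W}} \cap \overline{\wt{W}} \neq \emptyset$ or $g''\overline{\wt{W}} \cap \gamma\overline{\wt{W}} \neq \emptyset$, which is exactly the defining property of $\Gamma_{\wt{W}}$.

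The delicate point, which I expect to be the main obstacle, is that the definition of $\Gamma_{\wt{W}}$ requires the endpoints $y, y'$ in the \emph{open} set $\wt{W}$, whereas the construction only places them in $\overline{\wt{W}}$, and moving them inward perturbs the segment. The right tool is Lemma \ref{lem:lrsa}: any segment running parallel to the core at bounded distance still avoids every translate of $\wt{W}$ the core avoids, so that, after enlarging the two end ``clusters'' by the resulting (finite, $\varepsilon$-dependent) amount and absorbing this into $G$ and into $R$, one recovers a genuinely interior pair of endpoints while keeping the middle of the segment free of any other translate of $\wt{W}$. I would carry out this robustification step with care and treat everything else as routine bookkeeping with properness of the action and with the $\varepsilon$-margin in the hitting time.
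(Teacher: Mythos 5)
Your skeleton matches the paper's: lift the excursion, capture the landing translate $\gamma_0 \wt{W}$, peel bounded pieces off both ends to write $\gamma_0$ as $h\gamma g$ with $h,g$ in a finite set and $\gamma \in \Gamma_{\wt{W}}$, and get the length bound by the triangle inequality. You have also correctly located the crux: the sup/inf times $s, s'$ only place $a^{-1}c(s)$ and $b^{-1}c(s')$ on the \emph{boundary} of $\wt{W}$ (indeed, if $c(s)$ lay in an open translate, continuity would contradict the maximality of $s$), whereas $\Gamma_{\wt{W}}$ demands endpoints in the open set. The problem is that your proposed repair does not work. Lemma \ref{lem:lrsa} does \emph{not} say that a segment with nearby endpoints ``still avoids every translate of $\wt{W}$ the core avoids''; it only says that such a segment avoids every \emph{shrunk} translate $g\wt{W}'$ (with $d(\wt{W}', \wt{M}\setminus\wt{W}) \ge \varepsilon$) except for $g$ within bounded distance of the endpoints --- the $-\log\varepsilon$ in its conclusion shows the shrinking is essential. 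A perturbed segment can perfectly well enter a full translate $g\wt{W}$ whose boundary the core merely grazes, with $g$ far from both endpoints. So your robustification would at best yield $\gamma \in \Gamma_{\wt{W}'}$ for a strictly smaller $\wt{W}'$; this is not the stated conclusion, the two sets $\Gamma_{\wt{W}'}$ and $\Gamma_{\wt{W}}$ are not comparable in general, and it would not match the positive-recurrence hypothesis used in Proposition \ref{prop:majtheocrit1}, which is a sum over $\Gamma_{\wt{W}}$ for the given $\wt{W}$. So this is a genuine gap, not routine bookkeeping.

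The paper closes it by a compactness argument rather than by Lemma \ref{lem:lrsa}. Let $G$ be the finite set of $g \in \Gamma$ such that $g\overline{\wt{W}}$ comes within distance $\varepsilon$ of $\overline{\wt{W}}$, and set $I = \set{u \in \itv[cc]{0}{\varepsilon}}{z_u \in G\wt{W}}$. One shows there exists $u \in I$ and $g_u \in G$ with $z_u \in g_u\wt{W}$ (an \emph{open} translate, so the endpoint issue disappears) such that every $h\wt{W}$ meeting $\itv[cc]{z_u}{z_\varepsilon}$ satisfies $h\overline{\wt{W}} \cap g_u\overline{\wt{W}} \neq \emptyset$. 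The proof is by contradiction: if every $u \in I$ admitted a ``bad'' pair $(g_u, h_u)$ with $h_u\overline{\wt{W}} \cap g_u\overline{\wt{W}} = \emptyset$, one extracts (using finiteness of $G$) a sequence $u_n \nearrow u_\infty = \sup I$ with constant pair $(g, h)$, finds times $v_n \in I$ with $u_n \le v_n \le u_\infty$ and $z_{v_n} \in h\wt{W}$, and passes to the limit to get $z_{u_\infty} \in g\overline{\wt{W}} \cap h\overline{\wt{W}}$, a contradiction. Doing the same at the other end and conjugating gives $\gamma = g_u^{-1}\gamma_0 g_v \in \Gamma_{\wt{W}}$. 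You should replace your Lemma \ref{lem:lrsa} step by an argument of this kind; the rest of your write-up (the choice of $t_1$, the properness arguments, and the distance estimate) is fine.
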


\begin{proof}
 Since $\wt{W}$ is relatively compact, there exists $R \ge 0$ such that $\wt{W} \subset B(x, R)$.
 The condition $\tau_{\varepsilon,\mathcal{W}}(v) \in \itv[co]{\varepsilon k}{\varepsilon(k+1)}$ with $k \ge 3$ means that there exists $T \in \itv[co]{\varepsilon k}{\varepsilon(k+1)}$ such that
 \[ v \in \mathcal{W} \eqand g^T(v) \in \mathcal{W} \eqand \forall s \in \itv[cc]{\varepsilon}{T-\varepsilon}, g^s(v) \not\in \mathcal{W} \eqpunct{,} \]
 where $0 < \varepsilon < T - \varepsilon < T$.
 Lift everything to $\wt{M}$.
 If $z_t = \pi(g^t(\wt{v}))$, observe that
 \[ z_0 \in \wt{W} \eqand z_T \in \gamma_0 \wt{W} \eqand \forall s \in \itv[cc]{\varepsilon}{T-\varepsilon}, z_s \not\in \Gamma \wt{W} \eqpunct{,} \]
 for some $\gamma_0 \in \Gamma$.
 In particular, $\wt{v} \in \mathcal{U}_{\wt{\mathcal{W}},\gamma_0}$.

 Let us show now that we can replace $\gamma_0$ by an element of $\Gamma_{\wt{W}}$.
 Define
 \[ G = \set{g \in \Gamma}{\exists y, y' \in \overline{\wt{W}}, d(y, g y') \le \varepsilon} \eqpunct{,} \]
 and note that
 \[ \gamma_0 G \gamma_0^{-1} = \set{g \in \Gamma}{\exists y, y' \in \overline{\gamma_0 \wt{W}}, d(y, g y') \le \varepsilon} \eqpunct{.} \]
 Remark also that
 \[ g \wt{W} \cap \itv[cc]{z_0}{z_\varepsilon} \neq \emptyset \Rightarrow g \in G \eqand g \wt{W} \cap \itv[cc]{z_{T-\varepsilon}}{z_T} \neq \emptyset \Rightarrow g \in \gamma_0 G \gamma_0^{-1} \eqpunct{.} \]
 Set $I = \set{s \in \itv[cc]{0}{\varepsilon}}{z_s \in G \wt{W}}$.
 There exists $u \in I$ such that for every $g \in G$ with $z_u \in g \wt{W}$, we have
 \[ \forall h \in \Gamma, h \wt{W} \cap \itv[cc]{z_u}{z_\varepsilon} \neq \emptyset \Rightarrow h \overline{\wt{W}} \cap g \overline{\wt{W}} \neq \emptyset \eqpunct{.} \]
 Indeed, otherwise we could find for every $u \in I$ elements $g_u \in G$ and $h_u \in \Gamma$ such that $z_u \in g_u \wt{W}$ and
 \[ h_u \wt{W} \cap \itv[cc]{z_u}{z_\varepsilon} \neq \emptyset \eqand h_u \overline{\wt{W}} \cap g_u \overline{\wt{W}} = \emptyset \eqpunct{.} \]
 In particular, note that $h_u \in G$ since $h_u \wt{W}$ meets $\itv[cc]{z_0}{z_\varepsilon}$.
 Denote by $u_\infty = \sup{I} \in \overline{I}$.
 Since $G$ is finite, we can take an increasing sequence of $(u_n)_n$ converging to $u_\infty$ such that $g_{u_n} = g$ and $h_{u_n} = h$ for every $n$.
 For every $n$, there also exists $v_n \in \itv[cc]{u_n}{\varepsilon}$ such that $z_{v_n} \in h \wt{W}$.
 As $h \in G$, $v_n \in I$ hence $u_n \le v_n \le u_\infty$.
 Therefore both $(u_n)_n$ and $(v_n)_n$ converge to $u_\infty$, and taking the limit as $n$ goes to infinity yields
 \[ z_{u_\infty} \in h \overline{\wt{W}} \cap g \overline{\wt{W}} \eqpunct{.} \]
 This is a contradiction.
 Likewise, there exists $v \in \set{s \in \itv[cc]{T-\varepsilon}{T}}{z_s \in \gamma_0 G \gamma_0^{-1} \wt{W}}$ such that for every $g \in \gamma_0 G \gamma_0^{-1}$ for which $z_v \in g \gamma_0 \wt{W}$ we have
 \[ \forall h \in \Gamma, h \wt{W} \cap \itv[cc]{z_{T-\varepsilon}}{z_v} \neq \emptyset \Rightarrow h \overline{\wt{W}} \cap g \overline{\gamma_0 \wt{W}} \neq \emptyset \eqpunct{.} \]

 By definition of these $u$ and $v$, one can find $g_u, g_v \in G$ such that
 \[ z_u \in g_u \wt{W} \eqand z_v \in (\gamma_0 g_v \gamma_0^{-1}) \gamma_0 \wt{W} \eqpunct{.} \]
 Let $\gamma_1 = (\gamma_0 g_v \gamma_0^{-1}) \gamma_0 g_u^{-1} = \gamma_0 g_v g_u^{-1}$.
 Note how $z_v \in \gamma_1 g_u \wt{W}$.
 The previous discussion ensures that $\gamma_1 \in \Gamma_{g_u \wt{W}} = g_u \Gamma_{\wt{W}} g_u^{-1}$, therefore
 \[ \gamma = g_u^{-1} \gamma_1 g_u = g_u^{-1} \gamma_0 g_v \in \Gamma_{\wt{W}} \eqpunct{,} \]
 and we still have $\wt{v} \in \mathcal{U}_{\wt{\mathcal{W}}, g_u \gamma g_v^{-1}}$.

 Finally, the triangle inequality gives that
 \begin{align*}
  \left| d(x, \gamma x) - T \right| &= \left| d(x, g_u^{-1} \gamma_0 g_v x) - d(z_0, z_T) \right| \\
     &\le \left| d(g_u x, \gamma_0 g_v x) - d(z_u, z_v) \right| + 2 \varepsilon \\
     &\le d(g_u x, z_u) + d(\gamma_0 g_v x, z_v) + 2 \varepsilon < 2 R + 2 \varepsilon
 \end{align*}
 since $z_u \in g_u \wt{W}$ and $z_v \in \gamma_0 g_v \wt{W}$.
 Therefore $\varepsilon k - R' \le d(x, \gamma x) \le \varepsilon (k + 1) + R'$ with $R' = 2 R + 2 \varepsilon$.
\end{proof}

\begin{prop}
 \label{prop:majtheocrit1}
 Let $\wt{W}$ be an open relatively compact subset of $\wt{M}$ such that $T^1 \wt{W}$ meets the nonwandering set $\wt{\Omega}$ of $\tilde{g}^t$.
 If $(\Gamma, \wt{F})$ is positive recurrent relatively to $\wt{W}$, then $m_F$ is finite.
\end{prop}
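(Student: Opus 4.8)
The plan is to run Kac's recurrence lemma for flows (Proposition~\ref{lem:kac}) on the geodesic flow of $T^1 M$ and then estimate each return-time level set using Lemmas~\ref{lem:recorbitlift} and~\ref{lem:masstubemaj}. Since $(\Gamma,\wt F)$ positive recurrent relatively to $\wt W$ forces $F$ to be recurrent, hence $(\Gamma,F)$ to be divergent, the Hopf--Tsuji--Sullivan theorem guarantees that $m_F$ is ergodic and conservative, so Proposition~\ref{lem:kac} applies; as usual we may assume $P(F)=0$. I would set $W=\Pr(\wt W)$, $\mathcal W=T^1 W$ (which meets $\Omega$, hence has positive $m_F$-measure since $m_F$ is supported on $\Omega$), $\wt{\mathcal W}=T^1\wt W$, and fix a small $\varepsilon>0$. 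Proposition~\ref{lem:kac} then gives
\[
 m_F(T^1 M)=\sum_{k\ge 0}k\,m_F\!\left(\{\tau_{\varepsilon,\mathcal W}\in[\varepsilon k,\varepsilon(k+1))\}\cap\mathcal W_\varepsilon\right),
\]
and the whole point is to show that this series converges.

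The finitely many terms with $k$ below a fixed threshold $k_0$ contribute at most $k_0\,m_F(\mathcal W_\varepsilon)$, which is finite because $\mathcal W_\varepsilon$ is open relatively compact and $m_F$ is Radon; so I only need to control the tail $\sum_{k\ge k_0}k\,m_F(\{\tau_{\varepsilon,\mathcal W}\in[\varepsilon k,\varepsilon(k+1))\}\cap\mathcal W_\varepsilon)$. For $k\ge k_0$, flowing a point of the level set forward by a time $<\varepsilon$ brings it into $\mathcal W$ and shifts the index by at most $1$, which (using the $(g^t)$-invariance of $m_F$ to absorb the thickening $\mathcal W_\varepsilon$) is harmless once $k_0$ is large enough; one is then in the setting of Lemma~\ref{lem:recorbitlift}, so such a point admits a lift inside $\wt{\mathcal W}$ lying in some $\mathcal U_{\wt{\mathcal W},h\gamma g}$ with $g,h$ in a fixed finite set $G\subset\Gamma$ and $\gamma\in\Gamma_{\wt W}$ satisfying $\varepsilon k-R\le d(x,\gamma x)\le\varepsilon(k+1)+R$. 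Since $\Pr$ need not be injective on $\wt{\mathcal W}$, the $m_F$-mass of the level set is bounded by the $\wt m_F$-mass of the union of these balls, whence
\[
 m_F\!\left(\{\tau_{\varepsilon,\mathcal W}\in[\varepsilon k,\varepsilon(k+1))\}\cap\mathcal W_\varepsilon\right)\le\sum_{g,h\in G}\ \sum_{\substack{\gamma\in\Gamma_{\wt W}\\ \varepsilon k-R\le d(x,\gamma x)\le\varepsilon(k+1)+R}}\wt m_F\!\left(\mathcal U_{\wt{\mathcal W},h\gamma g}\right).
\]
Taking $R_1$ with $\wt W\subset B(x,R_1)$, one has $\mathcal U_{\wt{\mathcal W},h\gamma g}\subset\mathcal U_{x,R_1,h\gamma g}$, so Lemma~\ref{lem:masstubemaj} bounds each summand (outside a finite exceptional set of group elements) by $C\,e^{\int_x^{h\gamma g\,x}\wt F}$, and Lemma~\ref{lem:potcontrol}, together with the uniform bound on $d(x,gx)$ and $d(x,hx)$ over $g,h\in G$, replaces this by $C'\,e^{\int_x^{\gamma x}\wt F}$ and gives $d(x,h\gamma g\,x)=d(x,\gamma x)+O(1)$.

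It then remains to sum over $k$. A given $\gamma\in\Gamma_{\wt W}$ is compatible with at most $\lceil 2R/\varepsilon\rceil+2$ values of $k$, and for each of them $k\le d(x,\gamma x)/\varepsilon+O(1)$; exchanging the order of summation therefore yields
\[
 \sum_{k\ge k_0}k\,m_F\!\left(\{\tau_{\varepsilon,\mathcal W}\in[\varepsilon k,\varepsilon(k+1))\}\cap\mathcal W_\varepsilon\right)\ \lesssim\ \sum_{\gamma\in\Gamma_{\wt W}}\bigl(1+d(x,\gamma x)\bigr)e^{\int_x^{\gamma x}\wt F}\ +\ (\text{finitely many terms}),
\]
the finitely many extra terms coming from the exceptional group elements $h\gamma g$ (for each $g,h\in G$ only finitely many $\gamma$, all with bounded $d(x,\gamma x)$) and being dominated by $\wt m_F(T^1\wt W)<\infty$. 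The series on the right is finite precisely by the positive recurrence of $(\Gamma,\wt F)$ relatively to $\wt W$ (the choice of base point $x$ being immaterial by Lemma~\ref{lem:potcontrol}), and hence $m_F(T^1 M)<\infty$.

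The main obstacle will be the passage, in the second paragraph, from the return-time level sets of the flow on $T^1 M$ — which involve the thickened set $\mathcal W_\varepsilon$ rather than $\mathcal W$, and whose lifts to $\wt{\mathcal W}$ are complicated by the non-injectivity of $\Pr$ — to clean unions of the geodesic balls $\mathcal U_{\wt{\mathcal W},\gamma}$, all while tracking the finitely many ``boundary'' elements $g,h$ and the finite exceptional set produced by Lemma~\ref{lem:masstubemaj}. Making precise the quantitative correspondence $k\asymp d(x,\gamma x)/\varepsilon$, which is what converts the Kac weight $k$ into the geometric length weight $d(x,\gamma x)$ of the positive-recurrence series, is the other delicate point; everything else is routine geometry and bookkeeping.
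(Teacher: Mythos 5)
Your proposal is correct and follows essentially the same route as the paper's proof: Kac's lemma for flows applied to $\mathcal W_\varepsilon$, the covering of each hitting-time level set by the geodesic balls $\mathcal U_{\cdot,\cdot,h\gamma g}$ with $\gamma\in\Gamma_{\wt W}$ via Lemma~\ref{lem:recorbitlift}, the upper bound $\wt m_F(\mathcal U)\lesssim e^{\int_x^{\gamma x}\wt F}$ from Lemma~\ref{lem:masstubemaj} combined with Lemma~\ref{lem:potcontrol}, and the conversion $k\asymp d(x,\gamma x)/\varepsilon$ with bounded multiplicity in $k$ to reach the positive-recurrence series. The points you flag as delicate (the thickening $\mathcal W_\varepsilon$, the finitely many boundary elements $g,h$, and the exceptional finite sets) are handled in the paper exactly as you sketch.
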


\begin{proof}
 Let $W = \Pr(\wt{W})$, $\mathcal{W} = T^1 W$ and $z \in \wt{M}$ from the positive recurrence property.
 Choose $R$ such that $\wt{W} \subset B(z, R)$.
 Fix $\varepsilon > 0$.
 If $v \in \mathcal{W}_\varepsilon$, there exists $s \in \itv[co]{0}{\varepsilon}$ such that $v' = g^s(v) \in \mathcal{W}$.
 Now if $\tau_{\varepsilon,\mathcal{W}}(v) \in \itv[co]{\varepsilon k}{\varepsilon(k+1)}$, $k \ge 4$, then
 \[ \tau_{\varepsilon,\mathcal{W}}(v') \in \itv[co]{\varepsilon k - s}{\varepsilon(k+1)-s} \subset \itv[co]{\varepsilon(k-1)}{\varepsilon k} \sqcup \itv[co]{\varepsilon k}{\varepsilon(k+1)} \eqpunct{.} \]
 Since we assumed that $k-1 \ge 3$, Lemma \ref{lem:recorbitlift} gives the existence of $R \ge 0$ and $G \subset \Gamma$ finite, both independent of $v$, such that for every lift $\wt{v}'$ of $v'$ to $\wt{\mathcal{W}} = T^1 \wt{W}$ there are $\gamma \in \Gamma_{\wt{W}}$ and $g, h \in G$ such that
 \[ \varepsilon(k-1) - R \le d(z, \gamma z) \le \varepsilon(k+1) + R \eqand \wt{v}' \in \mathcal{U}_{\wt{\mathcal{W}},h \gamma g} \eqpunct{.} \]
 Therefore, any lift $\wt{v}$ of $v$ to $\wt{\mathcal{W}}$ will satisfy
 \[ \wt{v} \in \bigcup_{\gamma \in \Gamma_{\wt{W},k}} \bigcup_{g, h \in G} \bigcup_{s \in \itv[co]{0}{\varepsilon}} \wt{g}^{-s}\( \mathcal{U}_{\wt{\mathcal{W}},h \gamma g} \) \subset \bigcup_{\gamma \in \Gamma_{\wt{W},k}} \bigcup_{g, h \in G} \mathcal{U}_{z,R+\varepsilon,h \gamma g} \eqpunct{,} \]
 where $\Gamma_{\wt{W},k} = \set{\gamma \in \Gamma_{\wt{W}}}{\varepsilon(k-1) - R \le d(z, \gamma z) \le \varepsilon(k+1) + R}$.
 This ensures that
 \[ \forall k \ge 4, m_F\(\left\{ \tau_{\varepsilon,\mathcal{W}} \in \itv[co]{\varepsilon k}{\varepsilon(k+1)} \right\} \cap \mathcal{W}_\varepsilon \) \le \sum_{\gamma \in \Gamma_{\wt{W},k}} \sum_{g, h \in G} \wt{m}_F\( \mathcal{U}_{z,R+\varepsilon,h \gamma g} \) \eqpunct{.} \]

 According to Lemma \ref{lem:masstubemaj}, there exists $S$ finite and $C \ge 0$ (both depending on $z$, $R$ and $\varepsilon$) such that
 \[ \forall k \ge 4, m_F\(\left\{ \tau_{\varepsilon,\mathcal{W}} \in \itv[co]{\varepsilon k}{\varepsilon(k+1)} \right\} \cap \mathcal{W}_\varepsilon \) \le C \sum_{\gamma \in \Gamma_{\wt{W},k}} \sum_{\substack{g, h \in G \\ h \gamma g \not\in S}} e^{\int_z^{h \gamma g z} \wt{F}} \eqpunct{.} \]
 By Lemma \ref{lem:potcontrol}, there is a constant $C' \ge 0$ which only depends on $\wt{F}$, the geometry of $\wt{M}$ and on $\sup \set{d(z, g z)}{g \in G} < +\infty$ such that
 \[ \forall \gamma \in \Gamma, \int_z^{h \gamma g z} \wt{F} = \int_{h^{-1} z}^{\gamma g z} \wt{F} \le C' + \int_z^{\gamma z} \wt{F} \eqpunct{.} \]
 Hence if $K = \card G$ we get
 \[ \forall k \ge 4, m_F\(\left\{ \tau_{\varepsilon,\mathcal{W}} \in \itv[co]{\varepsilon k}{\varepsilon(k+1)} \right\} \cap \mathcal{W}_\varepsilon \) \le C K^2 e^{C'} \sum_{\gamma \in \Gamma_{\wt{W},k}} e^{\int_z^{\gamma z} \wt{F}} \eqpunct{.} \]
 For all $k \ge k_0 = \max(6, \frac{2 R}{\varepsilon})$ and all $\gamma \in \Gamma_{\wt{W},k}$, we have
 \[ d(z, \gamma z) \ge \varepsilon(k-1) - R \ge \varepsilon\( k - 1 - \frac{k}{2} \) \ge \frac{\varepsilon}{3} k \eqpunct{.} \]
 Since $(\Gamma, F)$ is divergent, $m_F$ is ergodic and conservative (see Hopf-Tsuji-Sullivan \cite[Theorem 5.4]{PPS}).
 Moreover, $\mathcal{W}$ meets $\Omega$ hence has positive $m_F$-measure.
 We can apply Lemma \ref{lem:kac} to obtain
 \[ m_F(T^1 M) \le A + \frac{3 C K^2 e^{C'}}{\varepsilon} \sum_{k \ge k_0} \sum_{\gamma \in \Gamma_{\wt{W},k}} \sum_{g, h \in G} d(z, \gamma z) e^{\int_z^{\gamma z} \wt{F}} \eqpunct{,} \]
 where $A$ is the finite sum of the first $k_0 - 1$ terms in Kac's lemma.

 Finally, note that for all $\gamma \in \Gamma_{\wt{W}}$, $\gamma \in \Gamma_{\wt{W},k}$ if and only if
 \[ \frac{d(z, \gamma z) - R}{\varepsilon} - 1 \le k \le \frac{d(z, \gamma z) + R}{\varepsilon} + 1 \eqpunct{,} \]
 which allows at most $2 \frac{R}{\varepsilon} + 3$ possibilities.
 Therefore
 \[ m_F(T^1 M) \le A + \frac{3 C K^2 e^{C'}}{\varepsilon} \( 2 \frac{R}{\varepsilon} + 3 \) \sum_{\gamma \in \Gamma_{\wt{W}}} d(z, \gamma z) e^{\int_z^{\gamma z} \wt{F}} \eqpunct{,} \]
 and the positive recurrence of $(\Gamma, \wt{F})$ with respect to $\wt{W}$ implies that $m_F$ is finite.
\end{proof}


\subsection{Finiteness of the Gibbs measure implies positive recurrence}

The aim of this section is to prove the following proposition which, combined together with Proposition \ref{prop:majtheocrit1}, will prove Theorem \ref{thm:crit1}.
The idea of the proof, as said earlier, is very natural, even if the rigorous details take a long time to be written.

\begin{prop}
 \label{prop:mintheocrit1}
 Let $M$ be a negatively curved orbifold and $\fn{F}{T^1 M}{\R}$ a H\"older continuous potential such that $P(F) = 0$.
 If $m_F$ is finite, then $(\Gamma, \wt{F})$ is positive recurrent relatively to any open relatively compact subset $\wt{W} \subset \wt{M}$ that intersects $\pi(\wt{\Omega})$.
\end{prop}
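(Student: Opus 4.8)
The plan is to run Kac's lemma for flows (Proposition~\ref{lem:kac}) in the direction opposite to Proposition~\ref{prop:majtheocrit1}. Since $m_F$ is finite it is ergodic and conservative, so by the Hopf-Tsuji-Sullivan alternative the Poincar\'e series of $(\Gamma,F)$ diverges and, by Theorem~\ref{thm:erg}, $F$ is recurrent; only the convergence of $\sum_{\gamma\in\Gamma_{\wt W}} d(z,\gamma z)\,e^{\int_z^{\gamma z}\wt F}$ remains to be proved, for a fixed open relatively compact $\wt W$ meeting $\pi(\wt\Omega)$ and some (hence any) base point, which we take to be a point $z\in\wt W$. Fix a small $\varepsilon>0$ together with an auxiliary open relatively compact set $\wt W'\subset\wt W$ with $z\in\wt W'$, still meeting $\pi(\wt\Omega)$, and such that $d(\wt W',\wt M\setminus\wt W)\ge\varepsilon$. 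Put $W'=\Pr(\wt W')$ and $\mathcal W'=T^1W'$; this set is open, relatively compact and meets $\Omega$, hence has positive finite $m_F$-measure, so Proposition~\ref{lem:kac} gives
\[ m_F(T^1M)=\sum_{k\ge0} k\, m_F\!\left(\left\{\tau_{\varepsilon,\mathcal W'}\in\itv[co]{\varepsilon k}{\varepsilon(k+1)}\right\}\cap\mathcal W'_{\varepsilon}\right)<+\infty\eqpunct{.} \]

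It therefore suffices to establish the lower bound dual to Lemma~\ref{lem:masstubemaj}: there exist a constant $C\ge1$ and a finite exceptional set of $\gamma$'s such that, for every $k$ large enough,
\[ m_F\!\left(\left\{\tau_{\varepsilon,\mathcal W'}\in\itv[co]{\varepsilon k}{\varepsilon(k+1)}\right\}\cap\mathcal W'_{\varepsilon}\right)\ \ge\ \frac1C\sum_{\substack{\gamma\in\Gamma_{\wt W}\\ |d(z,\gamma z)-\varepsilon k|\le C}} e^{\int_z^{\gamma z}\wt F}\eqpunct{.} \]
Indeed, each $\gamma\in\Gamma_{\wt W}$ with $d(z,\gamma z)$ large then contributes to about $1/\varepsilon$ of the $k$-indexed terms of the Kac identity, each carrying a weight $k$ comparable to $d(z,\gamma z)/\varepsilon$, so summing against $k$ and comparing with the displayed identity yields $\sum_{\gamma\in\Gamma_{\wt W}} d(z,\gamma z)\,e^{\int_z^{\gamma z}\wt F}<+\infty$, which is exactly positive recurrence of $(\Gamma,\wt F)$ relative to $\wt W$.

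To build the lower bound, fix $R$ with $\wt W\subset B(z,R)$ and note that $B(z,R)$ meets $\wt\Omega$. Let $\gamma\in\Gamma_{\wt W}$ avoid the finite sets coming from Lemma~\ref{lem:shadowprod} and from what follows. By definition of $\Gamma_{\wt W}$ there are $y,y'\in\wt W$ such that $\itv[cc]{y}{\gamma y'}$ meets $\Gamma\wt W$ only in copies $g\wt W$ with $\overline{\wt W}\cap g\overline{\wt W}\neq\emptyset$ or $\gamma\overline{\wt W}\cap g\overline{\wt W}\neq\emptyset$, a family of bounded cardinality by properness, and avoids $g\wt W$ for every other $g\in\Gamma$. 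For suitable $g,h$ in the finite set of Lemma~\ref{lem:shadowprod}, let $E_\gamma\subset T^1\wt W'$ be the set of vectors $\wt v$ whose negative and positive endpoints lie respectively in the shadows $\O_{\gamma z}B(gz,R)$ and $\O_zB(\gamma hz,R)$ appearing in that lemma, and whose $\tau_z$-coordinate ranges over a fixed interval of length one. Any such $\wt v$ follows a geodesic that stays within bounded distance of $\itv[cc]{y}{\gamma y'}$ over $\itv[cc]{0}{d(z,\gamma z)}$ (Lemma~\ref{lem:parallelestimate}), passes through $\gamma\wt W'$ near time $d(z,\gamma z)$, and, by Lemma~\ref{lem:lrsa} applied with the shrinking $\wt W'\subset\wt W$, avoids $\Gamma\wt W'$ over $\itv[cc]{\varepsilon}{d(z,\gamma z)-C_0}$ for a constant $C_0$, except possibly in the bounded family of copies adjacent to $\wt W'$ or to $\gamma\wt W'$. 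If $\wt W'$ (and, if needed, $\varepsilon$) are chosen so that those near-endpoint copies are visited only near the two ends of the guiding geodesic, then every $v=\Pr(\wt v)$, $\wt v\in E_\gamma$, has $\tau_{\varepsilon,\mathcal W'}(v)$ within a constant of $d(z,\gamma z)$, so $\Pr(E_\gamma)$ lies inside the level set of index $k$ with $\varepsilon k$ close to $d(z,\gamma z)$. On the measure side, formula~\eqref{eqn:gibbsdef}, the boundedness of the gap map on the relevant compact set of boundary pairs, and the Shadow product lemma~\ref{lem:shadowprod} (using that $B(z,R)$ meets $\wt\Omega$, and absorbing the finite conjugation data via Lemma~\ref{lem:potcontrol}) give $\wt m_F(E_\gamma)\ge\frac1C e^{\int_z^{\gamma z}\wt F}$; since $E_\gamma$ meets only boundedly many of its $\Gamma$-translates (properness on a relatively compact set), $m_F(\Pr(E_\gamma))\ge\frac1{C'}e^{\int_z^{\gamma z}\wt F}$.

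The step I expect to be the main obstacle is the \emph{uniform bounded multiplicity} of the family $\bigl(\Pr(E_\gamma)\bigr)_\gamma$, which is what turns the per-$\gamma$ estimate into the desired bound $m_F(\text{level }k)\ge\frac1{C''}\sum_{\gamma} e^{\int_z^{\gamma z}\wt F}$: one must show that a vector $v$ in $\left\{\tau_{\varepsilon,\mathcal W'}\in\itv[co]{\varepsilon k}{\varepsilon(k+1)}\right\}\cap\mathcal W'_{\varepsilon}$ lies in $\Pr(E_\gamma)$ for at most a number of $\gamma$'s bounded independently of $k$ --- informally, the first $\varepsilon$-return of $v$ to $\mathcal W'$ pins down the copy of $\wt W'$ into which $v$ lands, up to the bounded near-endpoint ambiguity, hence pins down $\gamma$ up to a finite set. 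Making this precise seems to require the same ``last entry near the start, first exit near the end'' normalization of the distinguished return segment as in the proof of Lemma~\ref{lem:recorbitlift}, together with Lemma~\ref{lem:shadowdisjoint} to keep far-apart copies from corrupting the count, and constant vigilance that all the finite index sets stay uniform in $k$. The other genuinely delicate point, already visible above, is the choice of the auxiliary set $\wt W'$ (and the interplay with $\varepsilon$): it must be arranged so that the copies of $\wt W'$ adjacent to $\wt W'$ or to $\gamma\wt W'$ are visited only near the two endpoints of the guiding geodesic, so that $\Pr(E_\gamma)$ genuinely realizes a \emph{first} $\varepsilon$-return at a time comparable to $d(z,\gamma z)$ rather than a spurious earlier one. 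Combined with Proposition~\ref{prop:majtheocrit1}, this proposition completes the proof of Theorem~\ref{thm:crit1}.
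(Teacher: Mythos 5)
Your proposal follows essentially the same route as the paper's proof: Kac's lemma for flows run in the lower-bound direction, with the per-$\gamma$ lower bound on the measure of the return-time level sets coming from the Shadow product lemma (this is the paper's Lemma \ref{lem:masstubemin}) and the middle-avoidance of $\Gamma\wt{W}'$ coming from Lemma \ref{lem:lrsa}. The two obstacles you flag are precisely the content of the paper's Lemmas \ref{lem:tubesplit}, \ref{lem:bigeprojmeas} and \ref{lem:bigufiniteint}, and they are resolved essentially as you anticipate: bounded multiplicity is obtained upstairs by noting that two tubes indexed by $\gamma, \gamma'$ with $d(z,\gamma z)$ and $d(z,\gamma' z)$ in the same unit window can only intersect if $\gamma^{-1}\gamma'$ lies in a fixed finite set (a triangle inequality), and the projection to $T^1 M$ then has uniformly finite fibers by properness of the $\Gamma$-action on the relevant relatively compact set.
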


Recall that when $m_F$ is finite, it is ergodic, so that $(\Gamma, F)$ is divergent.

\begin{lemm}
 \label{lem:masstubemin}
 For every $x \in \wt{M}$ such that $B(x, R)$ intersects $\pi(\wt{\Omega})$, there exist $C > 0$ and $S, G \subset \Gamma$ finite such that for all $\gamma \in \Gamma \setminus S$, there exist $g, h \in G$ such that
 \[ \wt{m}_F\( \mathcal{U}_{g x,R,\gamma h g^{-1}} \) \ge C e^{\int_x^{\gamma x} \wt{F}} \eqpunct{.} \]
\end{lemm}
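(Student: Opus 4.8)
The plan is to establish a lower bound for $\wt{m}_F(\mathcal{U}_{gx,R,\gamma hg^{-1}})$ that mirrors the upper bound of Lemma \ref{lem:masstubemaj}, using the same translation dictionary between dynamical sets and products of shadows, but now running the inequalities in the opposite direction. The starting point is the Hopf-coordinate expression \eqref{eqn:gibbsdef} for $\wt{m}_F$: since the gap map $D_{F,x}$ is continuous, positive and bounded on the relevant compact subset of $\partial_\infty\wt{M}\times\partial_\infty\wt{M}\setminus\Diag$, the measure of $\mathcal{U}_{gx,R,\gamma hg^{-1}}$ is comparable (up to a uniform constant) to the product $\mu^{F\circ\iota}_{\cdot}(\cdot)\,\mu^F_{\cdot}(\cdot)$ of two boundary shadows times the length of the time interval occupied by the set. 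So I first need two things: a lower bound on that time-interval length, and a lower bound on the product of the two shadow measures in terms of $e^{\int_x^{\gamma x}\wt F}$.

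For the shadow product I would invoke Lemma \ref{lem:shadowprod} (the shadow product lemma), which is precisely designed for this situation: it produces finite sets $S,G\subset\Gamma$ and a constant $C>0$ such that for every $\gamma\in\Gamma\setminus S$ there are $g,h\in G$ with
\[ \frac{1}{C}e^{\int_x^{\gamma x}(\wt F-P(F))} \le \mu_x^{F\circ\iota}(\O_{\gamma x}B(gx,R))\,\mu_x^F(\O_x B(\gamma hx,R)) \le C e^{\int_x^{\gamma x}(\wt F-P(F))}, \]
and since $P(F)=0$ this is exactly the exponential we want. The content of the geometric step is then to show that the set $\mathcal{U}_{gx,R,\gamma hg^{-1}}$, read in Hopf coordinates based at a suitable point, \emph{contains} a product of the form $\O_{\cdot}B(gx,R-\varepsilon')\times\O_{\cdot}B(\gamma hx,R-\varepsilon')\times I$ for a fixed interval $I$ of positive length; here I would use Lemma \ref{lem:shadowdown} to realize each pair of boundary points appearing in the shadow product as $(v_-(y,y'),v_+(y,y'))$ for $y,y'$ in slightly enlarged balls, and then check that the geodesic through such $(v_-,v_+)$ actually starts in $gx$'s ball, flows for some positive time, and lands in $(\gamma hg^{-1})(gx\text{-ball})=\gamma hx\text{-ball}$, so that the corresponding vector lies in $\mathcal{U}_{gx,R,\gamma hg^{-1}}$. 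The assumption that $B(x,R)$ meets $\pi(\wt\Omega)$ is what guarantees the shadows have positive mass and that there is genuinely a geodesic of the right type; the conjugation-and-base-point bookkeeping (passing between $x$, $gx$, $\gamma hx$) is handled by $\Gamma$-quasi-invariance of the densities together with Lemma \ref{lem:potcontrol} and \cite[Lemma 3.4]{PPS} exactly as in the proof of Lemma \ref{lem:shadowprod}, absorbing all the resulting bounded discrepancies into the final constant $C$.

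The main obstacle I anticipate is the containment direction of the dynamical-set/shadow-product correspondence. For the upper bound (Lemma \ref{lem:masstubemaj}) it sufficed that $\mathcal{U}$ is \emph{contained} in a product of shadows and in a short time interval, which follows cheaply from Lemma \ref{lem:shadowup} and the fact that the footpoints stay in $B(x,R)$. Here I need the reverse inclusion up to shrinking the radii by a small $\varepsilon'$, and I must simultaneously ensure the return time $t\ge 0$ is bounded below away from $0$ so that the time-interval factor $|I|$ does not degenerate — this is where $d(x,\gamma x)$ being large (i.e. excluding the finite set $S$) and a careful application of Lemma \ref{lem:shadowdown} together with the triangle inequality are essential. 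Once that inclusion is in hand, the remaining estimates are exactly the routine quasi-invariance/potential-control manipulations already carried out in the proof of the shadow product lemma, and the constant $C$ and finite sets $S,G$ are produced along the way.
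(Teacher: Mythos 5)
Your plan matches the paper's proof essentially step for step: invoke the Shadow Product Lemma \ref{lem:shadowprod} for the lower bound on the product of shadow measures, use Lemma \ref{lem:shadowdown} (after slightly shrinking the radii and shifting viewpoints by a bounded amount, which the paper does via Lemma \ref{lem:shadowcontrol}) to show that the product of shadows times a fixed interval of positive length is \emph{contained} in $\mathcal{U}_{gx,R,\gamma hg^{-1}}$, and conclude from the Hopf-coordinate expression \eqref{eqn:gibbsdef} and the boundedness of the gap map on the relevant compact set. The obstacle you flag — obtaining the interval factor of positive length in the third Hopf coordinate — is handled in the paper exactly as you suggest, by proving the inclusion for radius $R-\varepsilon/2$ and then thickening to radius $R$ to gain the interval $\itv[oo]{-\varepsilon/2}{\varepsilon/2}$.
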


\begin{proof}
 The proof of this lemma is similar to the one of Lemma \ref{lem:masstubemaj}, but we will use Lemma \ref{lem:shadowprod} instead of Lemma \ref{lem:shadowmohsen}.

 Take $\varepsilon > 0$ such that we still have $B(x, R-2\varepsilon) \cap \pi(\wt{\Omega}) \neq \emptyset$.
 Lemma \ref{lem:shadowprod} applied to $B(x, R-2\varepsilon)$ gives us $C_1 > 0$ and $S, G \subset \Gamma$ finite such that for all $\gamma \in \Gamma$ there exist $g, h \in G$ such that
 \[ \frac{1}{C_1} e^{\int_x^{\gamma x} \wt{F}} \le \mu_x^{F \circ \iota}(\O_{\gamma x} B(g x, R-2\varepsilon)) \mu_x^F(\O_x B(\gamma h x, R-2\varepsilon) \le C_1 e^{\int_x^{\gamma x} \wt{F}} \eqpunct{.} \]
 Thanks to Lemma \ref{lem:shadowcontrol} applied for $\delta = \sup \set{d(x, g x)}{g \in G}$, there exists $l_0 \ge 0$ such that if $d(x, \gamma x) \ge l_0$ then
 \[ \O_{\gamma x} B(g x, R-2\varepsilon) \times \O_x B(\gamma h x, R-2\varepsilon) \subset \O_{\gamma h x} B(g x, R-\varepsilon) \times \O_{g x} B(\gamma h x, R-\varepsilon) \eqpunct{.} \]
 But according to Lemma \ref{lem:shadowdown} (applied with $\frac{\varepsilon}{2}$ instead of $\varepsilon$), we can assume that $l_0$ is large enough that for all $\gamma \in \Gamma$ satisfying $d(x, \gamma x) \ge l_0$, one has
 \[ \O_{\gamma h x} B(g x, R-\varepsilon) \times \O_{g x} B(\gamma h x, R-\varepsilon) \times \sing{0} \subset \mathcal{U}_{g x,R-\frac{\varepsilon}{2},\gamma h g^{-1}} \eqpunct{.} \]
 Indeed, for every $v_- \in \O_{\gamma h x} B(g x, R-\varepsilon)$ and $v_+ \in \O_{g x} B(\gamma h x, R-\varepsilon)$, the geodesic $(v_- v_+)$ intersects $B(g x,R-\frac{\varepsilon}{2})$, so that $v = (v_-, v_+, 0)$ always belongs to $\mathcal{U}_{g x,R-\frac{\varepsilon}{2},\gamma h g^{-1}}$.
 This implies easily that
 \[ \O_{\gamma h x} B(g x, R-\varepsilon) \times \O_{g x} B(\gamma h x, R-\varepsilon) \times \itv[oo]{-\frac{\varepsilon}{2}}{\frac{\varepsilon}{2}} \subset \mathcal{U}_{g x,R,\gamma h g^{-1}} \eqpunct{.} \]

 We may assume that $l_0$ is large enough so that Lemma \ref{lem:shadowdisjoint} is satisfied for balls of radius $R-\varepsilon$.
 By possibly adding finitely many elements to $S$, we may also assume that $d(x, \gamma x) \ge l_0$.
 The set $\O_{\gamma h x} B(g x, R-\varepsilon) \times \O_{g x} B(\gamma h x, R-\varepsilon)$ is relatively compact in $\partial_\infty \wt{M} \times \partial_\infty \wt{M} \setminus \Diag$, thus continuity and positivity of the gap map ensure the existence of $C_2 > 0$ such that
 \[ \forall (\xi, \eta) \in \O_{\gamma x} B(g x, R-\varepsilon) \times \O_{g x} B(\gamma h x, R-\varepsilon), \frac{1}{D_{F,x}(\xi, \eta)^2} \ge C_2 \eqpunct{.} \]
 Therefore, by definition \eqref{eqn:gibbsdef} of the Gibbs measure $\wt{m}_F$, we have
 \begin{equation*}
  \wt{m}_F(\mathcal{U}_{g x,R,\gamma h g^{-1}}) \ge C_2 \mu_x^{F \circ \iota}(\O_{\gamma h x} B(g x, R-\varepsilon)) \mu_x^F(\O_{g x} B(\gamma h x, R-\varepsilon)) \ge \frac{C_2}{C_1} e^{\int_x^{\gamma x} \wt{F}} \eqpunct{.} \qedhere
 \end{equation*}
\end{proof}

For $\gamma \in \Gamma$, define
\[ \mathcal{E}_{x,R,\gamma,\varepsilon} = \set{v \in T^1 B(x, R)}{\begin{cases} \exists t > 2 \varepsilon, &g^t(v) \in T^1 B(\gamma x, R) \\ \forall s \in \itv[cc]{\varepsilon}{t-\varepsilon}, &g^s(v) \not\in \Gamma T^1 B(x, R) \end{cases}} \eqpunct{.} \]
The following lemma ensures that a geodesic ball $\mathcal{U}_{x,R,\gamma}$ with $\gamma$ "close" from $\Gamma_{B(x, R)}$ is contained in a compact union of sets of the type $\mathcal{E}_{x,R-\varepsilon,\gamma',\varepsilon}$ which, as we will see later, project to vectors with $\epsilon$-return time into $T^1 W$ comparable with $d(x, \gamma x)$.

\begin{lemm}
 \label{lem:tubesplit}
 Assume that $B(x, R) \subset \wt{W}$ with $x \in \wt{M} \setminus \wt{\Sigma}$.
 For every $\varepsilon \in \itv[oo]{0}{R}$ and every $D \ge 0$, there exist finite subsets $S, G \subset \Gamma$ and $\theta > 0$ such that for every $\gamma_0 \in \Gamma_{\wt{W}} \setminus S$, every $h \in \Gamma$ such that $d(x, h x) \le D$, and every $\gamma \in \Gamma$ satisfying $d(\gamma_0 x, \gamma h x) \le D$ we have
 \[ \mathcal{U}_{h x,R-\varepsilon,\gamma} \subset \bigcup_{g \in G} \bigcup_{\gamma' \in \gamma_0 G g^{-1}} \bigcup_{s \in \itv[cc]{0}{\theta}} g^{-s}\( \mathcal{E}_{g x, R-\varepsilon, \gamma', \varepsilon} \) \eqpunct{.} \]
\end{lemm}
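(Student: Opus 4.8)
The plan is to follow the geodesic spanned by a vector $v \in \mathcal{U}_{hx, R-\varepsilon, \gamma}$, to show via Lemma \ref{lem:lrsa} that along this geodesic the returns to $\Gamma T^1 B(x, R-\varepsilon)$ occur only within a bounded amount of time of its two endpoints, and then to read off a ``first substantial return'' between a copy of $B(x,R-\varepsilon)$ indexed by an element $g$ of a fixed finite set $G$ and one indexed by an element of $\gamma_0 G$. More precisely, let $\ell$ be the geodesic of $\wt{M}$ defined by $v$, so $\ell(0) = \pi(v) \in hB(x,R-\varepsilon)$, and let $t \ge 0$ be such that $\ell(t) \in \gamma h B(x,R-\varepsilon)$, with $d(\ell(0),\ell(t)) = t$ so that $\itv{\ell(0)}{\ell(t)} = \ell(\itv{0}{t})$. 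Since $\gamma_0 \in \Gamma_{\wt{W}}$, fix $y_0, y_0' \in \wt{W}$ for which $\itv{y_0}{\gamma_0 y_0'}$ meets $\delta\wt{W}$ only when $\delta$ lies in the finite set $G_0 = \set{\delta \in \Gamma}{\delta\overline{\wt{W}} \cap \overline{\wt{W}} \ne \emptyset}$ or in $\gamma_0 G_0$ (finiteness because $\Gamma$ acts properly on $\wt{M}$). Using $B(x,R) \subset \wt{W}$, $d(x,hx) \le D$ and $d(\gamma_0 x, \gamma h x) \le D$, the points $\ell(0), y_0$, respectively $\ell(t), \gamma_0 y_0'$, stay within a distance $r_0$ depending only on $\wt{W}$, $R$ and $D$.

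The geometric heart is an application of Lemma \ref{lem:lrsa} to the two parallel segments $\itv{y_0}{\gamma_0 y_0'}$ and $\itv{\ell(0)}{\ell(t)}$, with $\wt{W}' = B(x,R-\varepsilon)$ — which lies in $\wt{W}$ at distance at least $\varepsilon$ from $\wt{M}\setminus\wt{W}$ since $B(x,R)\subset\wt{W}$ — and $r_0$ in place of the lemma's constant. If $\ell(s_*) \in \delta B(x,R-\varepsilon)$ for some $s_* \in \itv{0}{t}$ and some $\delta \notin G_0 \cup \gamma_0 G_0$, then $\itv{y_0}{\gamma_0 y_0'}$ avoids $\delta\wt{W}$, so the lemma forces $\delta\wt{W}$ to lie within $\rho(r_0)-\log\varepsilon$ of $\ell(0)$ or of $\ell(t)$; either way a triangle inequality bounds $s_*$ or $t - s_*$ by a constant. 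The exceptional $\delta \in G_0 \cup \gamma_0 G_0$ are handled directly: then $\delta x$ is within $2\diam\wt{W}$ of $x$ or of $\gamma_0 x$, which again bounds $s_*$ or $t-s_*$. Collecting these bounds yields a constant $L = L(\wt{W},R,\varepsilon,D,\wt{M})$ such that $\ell(s)\notin\Gamma B(x,R-\varepsilon)$ for $s\in\itv{L}{t-L}$, and — by the same kind of triangle inequalities — such that every copy $\delta B(x,R-\varepsilon)$ met by $\ell$ within the first $L$ units of time has $d(x,\delta x)\le 2R+D+L$, while every copy met within the last $L$ units has $d(\gamma_0 x,\delta x)\le 2R+D+L$. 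I then set $G = \set{\delta\in\Gamma}{d(x,\delta x)\le 2R+D+L}$, $\theta := L$, and $S := \set{\gamma_0\in\Gamma}{d(x,\gamma_0 x)\le 2L+2\varepsilon+2R+2D}$, noting that for $\gamma_0 \notin S$ one has $t \ge d(x,\gamma_0 x)-2R-2D > 2L+2\varepsilon$, so the first and last $L$-windows of $\ell$ are disjoint and visits to $\Gamma B(x,R-\varepsilon)$ in the first window use only copies indexed by $G$, those in the last window only copies indexed by $\gamma_0 G$.

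It remains to assemble the inclusion, for $\gamma_0 \in \Gamma_{\wt{W}}\setminus S$. Put $T_1 = \sup\set{s\in\itv{0}{t/2}}{\ell(s)\in\Gamma B(x,R-\varepsilon)}$ and $T_2 = \inf\set{s\in\itv{t/2}{t}}{\ell(s)\in\Gamma B(x,R-\varepsilon)}$; these are well defined since $\ell(0)\in hB(x,R-\varepsilon)$ and $\ell(t)\in\gamma hB(x,R-\varepsilon)$, one has $0<T_1\le L$ and $t-L\le T_2\le t$, and $\ell$ avoids $\Gamma B(x,R-\varepsilon)$ on $\itv[oo]{T_1}{T_2}$. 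Because a geodesic meets each ball in an interval and only finitely many copies are met, $\ell(T_1)$ lies on the boundary of a copy $gB(x,R-\varepsilon)$ along which $\ell$ runs for parameters just below $T_1$, with $g\in G$, and likewise $\ell(T_2)$ lies on the boundary of a copy $\gamma_0 g'' B(x,R-\varepsilon)$ with $g''\in G$; set $\gamma' = \gamma_0 g''g^{-1}\in\gamma_0 G g^{-1}$. Now choose $s\in\itv[oo]{T_1-\varepsilon}{T_1}$ with $\ell(s)\in gB(x,R-\varepsilon)$ and $s'\in\itv[oo]{T_2}{T_2+\varepsilon}$ with $\ell(s')\in\gamma_0 g''B(x,R-\varepsilon)=\gamma' g B(x,R-\varepsilon)$. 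Then $s\in\itv{0}{\theta}$, $T:=s'-s>T_2-T_1>t-2L>2\varepsilon$, $\ell(s+T)\in\gamma' g B(x,R-\varepsilon)$, and $\itv{s+\varepsilon}{s'-\varepsilon}\subset\itv[oo]{T_1}{T_2}$, so $\ell(r)\notin\Gamma B(x,R-\varepsilon)$ for $r\in\itv{s+\varepsilon}{s'-\varepsilon}$. Since $\Gamma T^1 B(gx,R-\varepsilon)=\Gamma T^1 B(x,R-\varepsilon)$, translating through the flow this says exactly $g^s(v)\in\mathcal{E}_{gx,R-\varepsilon,\gamma',\varepsilon}$, i.e. $v\in g^{-s}(\mathcal{E}_{gx,R-\varepsilon,\gamma',\varepsilon})$, which is the desired inclusion.

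The main obstacle I anticipate is making every constant genuinely uniform: $L$, $G$ and $S$ must depend only on $\wt{W},R,\varepsilon,D$ and the curvature bounds, not on $\gamma_0,h,\gamma$ or $v$, which forces one to carry the inequalities $d(x,hx)\le D$, $d(\gamma_0 x,\gamma hx)\le D$ and the defining property of $\Gamma_{\wt{W}}$ carefully through Lemma \ref{lem:lrsa} and through the subsequent triangle-inequality bookkeeping attributing each visited copy either to the start or to the end of $\ell$. A secondary, easy-to-get-wrong point is respecting the $\varepsilon$-margins hard-wired into $\mathcal{E}_{\cdot,\cdot,\cdot,\varepsilon}$: this is precisely why $s$ must be taken within $\varepsilon$ of $T_1$ and $s'$ within $\varepsilon$ of $T_2$, rather than at arbitrary parameters inside the two copies, and why one must first discard the finitely many $\gamma_0$ for which $t$ is too small for a nontrivial gap $\itv[oo]{T_1}{T_2}$ to exist.
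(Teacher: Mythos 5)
Your proposal is correct and follows essentially the same route as the paper's proof: use the defining segment $\itv[cc]{y}{\gamma_0 y'}$ of $\gamma_0\in\Gamma_{\wt{W}}$ as a guide, apply Lemma \ref{lem:lrsa} to the shrunk balls $B(x,R-\varepsilon)$ to confine all visits to $\Gamma B(x,R-\varepsilon)$ to two windows of bounded length near the endpoints, bound the indexing isometries of the visited copies by $G$ and $\gamma_0 G$ via triangle inequalities, and extract the last/first visit times to exhibit membership in $\mathcal{E}_{gx,R-\varepsilon,\gamma',\varepsilon}$. The only differences are cosmetic bookkeeping (your $T_1=\sup$, $T_2=\inf$ versus the paper's decomposition of the open visit set $I_v$ into components, and your direct construction of $\gamma'=\gamma_0 g''g^{-1}$, which sidesteps the paper's appeal to $x\notin\wt{\Sigma}$).
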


\begin{proof}
 Since $\wt{W}$ is relatively compact and $H$ is finite, there is $R_1 \ge \max(R, D)$ such that $B(x, R) \subset \wt{W} \subset B(x, R_1)$.

 By definition of $\gamma_0 \in \Gamma_{\wt{W}}$, there exist $y, y' \in \wt{W} \subset B(x, R_1)$ such that, if $\itv[cc]{y}{\gamma_0 y'}$ intersects some $g \wt{W}$, then $g \overline{\wt{W}}$ intersects either $\overline{\wt{W}}$ or $\overline{\gamma_0 \wt{W}}$.
 In particular, since $B(g x, R) \subset g \wt{W} \subset B(g x, R_1)$, then either $d(y, g x) \le 2 R_1$ or $d(\gamma_0 y', g x) \le 2 R_1$.

 Let $\varepsilon \in \itv[oo]{0}{R}$, and $\rho_\varepsilon = \rho_1(2 R, 3 R_1) - \log \varepsilon$ given by Lemma \ref{lem:lrsa} for the open relatively compact subsets $B(x, R - \varepsilon) \subset B(x, R)$.
 By possibly increasing it, we may assume that $\rho_\varepsilon \ge 3 R_1$.
 Therefore, a ball $B(g x, R)$ at distance greater than $\rho_\varepsilon$ both from $y$ and $y'$ cannot intersect $\itv[cc]{y}{\gamma_0 y'}$.
 Lemma \ref{lem:lrsa} ensures that for such $g \in \Gamma$ the ball $B(g x, R - \varepsilon)$ does not meet any segment $\itv[cc]{z}{z'}$ with $d(y, z), d(\gamma_0 y', z') \le 3 R_1$.

 Now take $h$ and $\gamma$ such that $d(x, h x) \le D$ and $d(\gamma_0 x, \gamma h x) \le D$.
 If $v \in \mathcal{U}_{h x,R,\gamma}$, we have for some $T \ge 0$
 \[ d(\pi(v), y) \le R_1 + D + R_1 \le 3 R_1 \eqand d(\pi(g^T(v)), \gamma_0 y') \le 3 R_1 \eqpunct{.} \]
 Thus, for $0 \le t \le T$, $g^t(v) \in T^1 B(g x, R-\varepsilon)$ only if $d(y, B(g x, R)) \le \rho_\varepsilon$ or $d(\gamma_0 y', B(g x, R)) \le \rho_\varepsilon$, which implies respectively
 \[ d(x, g x) \le R_1 + \rho_\varepsilon + R \eqor d(\gamma_0 x, g x) \le R_1 + \rho_\varepsilon + R \eqpunct{.} \]
 We denote by $G = \set{g \in \Gamma}{d(x, g x) \le R_1 + R + \rho_\varepsilon}$.
 Observe that
 \[ \set{g \in \Gamma}{d(\gamma_0 x, g x) \le R_1 + R + \rho_\varepsilon} = \gamma_0 G \eqpunct{,} \]
 hence both sets are finite and have same cardinal.

 Let $v \in \mathcal{U}_{h x,R-\varepsilon,\gamma}$, i.e. $v \in T^1 B(h x, R-\varepsilon)$ and $g^{t_v}(v) \in T^1 B(\gamma h x, R-\varepsilon)$ for some $t_v \ge 0$ satisfying
 \[ t_v = d(\pi(v), \pi(g^{t_v}(v))) \ge d(h x, \gamma h x) - 2 (R - \varepsilon) \ge d(x, \gamma_0 x) - 2 (D + R - \varepsilon) \eqpunct{.} \]
 We recall that $g^s(v) \in T^1 B(g x, R - \varepsilon)$ may only happen for $g \in G \cup \gamma_0 G$.
 If for example $g \in G$, note that
 \[ s = d(\pi(v), \pi(g^s(v))) \le d(h x, g x) + 2 (R - \varepsilon) \le D + R_1 + R + \rho_\varepsilon + 2 (R - \varepsilon) = \theta \eqpunct{.} \]
 Since every ball of $\wt{M}$ is convex, the set
 \[ I_v = \set{t \in \itv[oo]{0}{t_v}}{g^t(v) \in \Gamma T^1 B(x, R - \varepsilon)} \]
 is open and made of finitely many connected components included in $\itv[oo]{0}{\theta} \cup \itv[oo]{t_v - \theta}{t_v}$.
 Let
 \[ S = \set{\gamma_0 \in \Gamma}{d(x, \gamma_0 x) \le 2 \theta + 2 (D + R - \varepsilon)} \eqpunct{,} \]
 so that $\gamma_0 \not\in S$ implies $t_v > 2 \theta$ and these two intervals are disjoint.
 Furthermore, we have that
 \begin{enumerate}
  \item[$(i)$\:]   there exists $s_v \in \itv[oo]{0}{\theta}$ such that $g^{s_v}(v) \in T^1 B(g_v x, R - \varepsilon)$ for some $g_v \in G$ ;
  \item[$(ii)$\:]  there exists $u_v \in \itv[oo]{t_v - \theta}{t_v}$ such that $g^{u_v}(v) \in T^1 B(h_v x, R - \varepsilon)$ for some $h_v \in \gamma_0 G$ ;
  \item[$(iii)$\:] $g^t(v) \not\in \Gamma T^1 B(x, R - \varepsilon)$ for all $t \in \itv[cc]{s_v + \varepsilon}{u_v - \varepsilon}$.
 \end{enumerate}
 This means exactly that $g^{s_v}(v) \in \mathcal{E}_{g_v x,R-\varepsilon,\gamma',\varepsilon}$ where $\gamma'$ is an isometry mapping $g_v x$ to $h_v x$.
 Since $x \not\in \wt{\Sigma}$, $\gamma'$ satisfies $\gamma' g_v = h_v$, i.e. $\gamma' \in \gamma_0 G g_v^{-1}$.
 This concludes the proof.
\end{proof}

For all $k \ge 1$, we define
\[ \Gamma_{\wt{W},k} = \set{\gamma \in \Gamma_{\wt{W}}}{d(x, \gamma x) \in \itv[co]{k-1}{k}} \eqpunct{.} \]

\begin{lemm}
 \label{lem:bigeprojmeas}
 Let $R, \varepsilon > 0$ and $G \subset \Gamma$ finite.
 There are two constants $C, L > 0$ such that the following holds : for every $g \in G$ and every $k \ge 1$
 \begin{align*}
  \wt{m}_F&\( \bigcup_{\gamma \in \Gamma_{\wt{W},k}} \bigcup_{\gamma' \in \gamma G g^{-1}} \bigcup_{s \in \itv[co]{0}{\varepsilon}} g^{-s}\( \mathcal{E}_{g x,R,\gamma',\varepsilon} \) \) \\
     &\hspace{3em}\le C m_F\( \bigcup_{s \in \itv[co]{0}{\varepsilon}} g^{-s}\( \left\{ \tau_{\varepsilon,T^1 B(\wt{\pi}(x),R)} \in \itv[co]{k - L}{k + 1 + L} \right\} \) \) \eqpunct{.}
 \end{align*}
\end{lemm}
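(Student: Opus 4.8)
The plan is to push everything down to $T^1 M$ via the covering map $\Pr$ and to recognize the set on the left-hand side as sitting inside the $\Pr$-preimage of a return-time level set in $T^1 M$. I would write $\mathcal{W}_0 := \Pr(T^1 B(x,R)) = T^1 B(\wt{\pi}(x),R) \subset T^1 M$, so that $\Pr^{-1}(\mathcal{W}_0) = \Gamma\,T^1 B(x,R)$, fix $g \in G$ and $k \ge 1$, and call $A_k$ the set whose $\wt{m}_F$-measure appears on the left. The first step is elementary: take $w = \wt{g}^{-s}(v)$ with $s \in [0,\varepsilon)$, $\gamma \in \Gamma_{\wt{W},k}$, $g_0 \in G$ and $v \in \mathcal{E}_{g x,R,\gamma',\varepsilon}$ where $\gamma' = \gamma g_0 g^{-1}$. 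Since $g$ and $\gamma' g = \gamma g_0$ both lie in $\Gamma$, applying $\Pr$ to the three defining conditions of $\mathcal{E}_{g x,R,\gamma',\varepsilon}$ gives $\Pr v \in \mathcal{W}_0$, $g^t(\Pr v) \in \mathcal{W}_0$ (where $t > 2\varepsilon$ is the excursion time) and $g^\sigma(\Pr v) \notin \mathcal{W}_0$ for all $\sigma \in [\varepsilon,t-\varepsilon]$, so, as $\mathcal{W}_0$ is open, $\tau_{\varepsilon,\mathcal{W}_0}(\Pr v) \in [t-\varepsilon,t]$. As $t = d(\pi v,\pi\wt{g}^t(v))$ with $\pi v \in B(gx,R)$ and $\pi\wt{g}^t(v) \in B(\gamma' g x,R)$, one gets $|t - d(gx,\gamma' g x)| \le 2R$, and a triangle inequality using $\gamma' g = \gamma g_0$ together with $d(x,gx),d(x,g_0 x) \le D_G$ (with $D_G := \max_{h \in G \cup G^{-1}} d(x,hx)$) gives $|d(gx,\gamma' g x) - d(x,\gamma x)| \le 2 D_G$; combined with $d(x,\gamma x) \in [k-1,k)$ this yields $\tau_{\varepsilon,\mathcal{W}_0}(\Pr v) \in [k-L,k+1+L)$ for $L := 1 + 2 D_G + 2R + \varepsilon$. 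Hence $\Pr w = g^{-s}(\Pr v)$ lies in $\bigcup_{s\in[0,\varepsilon)} g^{-s}\bigl(\{\tau_{\varepsilon,\mathcal{W}_0} \in [k-L,k+1+L)\}\bigr)$, i.e. $\Pr(A_k)$ is contained in the right-hand set, and $L$ depends only on $R$, $\varepsilon$ and $G$.

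I would then turn this inclusion into a measure inequality. Since each $\mathcal{E}_{gx,R,\gamma',\varepsilon}$ is contained in $T^1 B(gx,R)$, the set $A_k$ lies inside $\mathcal{V}_g := \bigcup_{s\in[0,\varepsilon)}\wt{g}^{-s}(T^1 B(gx,R)) = g\,\mathcal{V}_e$, where $\mathcal{V}_e := \bigcup_{s\in[0,\varepsilon)}\wt{g}^{-s}(T^1 B(x,R))$ is, exactly as in Proposition \ref{lem:kac}, open and relatively compact in $T^1\wt{M}$. By properness of the $\Gamma$-action, $N := \card\set{\delta \in \Gamma}{\delta\mathcal{V}_e \cap \mathcal{V}_e \neq \emptyset}$ is finite, and conjugating by $g$ shows that $\mathcal{V}_g$ satisfies the same bound; therefore the restriction of $\Pr$ to $\mathcal{V}_g$ has all fibers of cardinality at most $N$, and $\mathcal{V}_g$ splits into $N$ Borel pieces on each of which $\Pr$ is one-to-one. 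Since $\wt{m}_F$ is the $\Gamma$-invariant lift of $m_F$, this yields $\wt{m}_F(\Pr^{-1}(B)\cap\mathcal{V}_g) \le N\,m_F(B)$ for every Borel $B \subset T^1 M$. Taking $B = \Pr(A_k)$, using $A_k \subset \Pr^{-1}(\Pr(A_k))\cap\mathcal{V}_g$ and feeding in the inclusion from the first step then proves the lemma with $C = N$ and $L$ as above, both manifestly independent of $g$ and $k$.

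The step I expect to be the real obstacle is the second one, namely securing the comparison $\wt{m}_F(\Pr^{-1}(B)\cap\mathcal{V}_g) \le N\,m_F(B)$ with a constant uniform in $g \in G$: one has to check that $\mathcal{V}_g$, being a $\Gamma$-translate of the fixed relatively compact set $\mathcal{V}_e$, inherits both the finite multiplicity bound and the decomposition into injectively-projected Borel pieces, and — in the orbifold case — that the presence of torsion does not spoil this (it only affects a lower-dimensional, $\wt{m}_F$-null locus). The first step should be routine once one keeps track of the fact that the excursion time $t$ of $v$ differs from the translation length $d(x,\gamma x)$ by at most $2R + 2D_G$, and that the $\varepsilon$ of slack built into $\tau_{\varepsilon,\mathcal{W}_0}$ is harmless because $\mathcal{W}_0$ is open — the same mechanism already exploited in Proposition \ref{lem:kac}.
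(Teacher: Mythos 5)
Your proposal is correct and follows essentially the same two-step route as the paper: project the excursion sets down to $T^1M$, show the excursion time differs from $d(x,\gamma x)$ by at most $2R$ plus twice the displacement of points of $G$ (hence the projection lands in the return-time level set with a uniform $L$), and then convert the inclusion into a measure inequality via a finite, $g$-independent multiplicity bound for $\Pr$ on a relatively compact neighbourhood of the union (the paper uses containment in $T^1B(gx,R+\varepsilon)$ and counts $h$ with $B(gx,R+\varepsilon)\cap B(hgx,R+\varepsilon)\neq\emptyset$, which is equivalent to your count via $\mathcal{V}_g$). No changes needed.
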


\begin{proof}
 Take $v \in \mathcal{E}_{g x,R,\gamma',\varepsilon}$ with $\gamma \in \Gamma_{\wt{W},k}$, $g \in G$ and $\gamma' = \gamma h g^{-1}$ for some $h \in G$.
 This means that $v \in T^1 B(g x, R)$, $g^t(v) \in T^1 B(\gamma' g x, R)$ for some $t \ge 2 \varepsilon$, and
 \[ \forall s \in \itv[cc]{\varepsilon}{t-\varepsilon}, g^s(v) \not\in \Gamma T^1 B(x, R) \eqpunct{.} \]
 In particular, $t$ must satisfy $\left| t - d(g x, \gamma' g x) \right| \le 2 R $, which implies, by triangular inequality,
 \[ \left| t - d(x, \gamma x) \right| \le 2 (R + L_0) \eqpunct{,} \]
 where $L_0 = \sup\set{d(x, g x)}{g \in G}$.
 Denote by $\mathcal{W} = T^1 B(\Pr(x), R)$ the projection of $T^1 B(x, R)$ in $T^1 M$.
 Then $\Pr(v)$ satisfies
 \begin{enumerate}
  \item[$(i)$\:] $\Pr(v) \in \mathcal{W}$ ;
  \item[$(ii)$\:] $\Pr(g^t(v)) \in \mathcal{W}$ ;
  \item[$(iii)$\:] $\forall s \in \itv[cc]{\varepsilon}{t-\varepsilon}, \Pr(g^s(v)) \not\in \mathcal{W}$.
 \end{enumerate}
 Therefore
 \[ \tau_{\varepsilon, \mathcal{W}}(\Pr(v)) \in \itv[cc]{t-\varepsilon}{t} \subset \itv[co]{d(x, \gamma x) - L}{d(x, \gamma x) + L} \subset \itv[co]{k - L}{k + 1 + L} \]
 with $L = 2 (R + L_0) + \varepsilon - 1$.
 In other words,
 \[ \Pr\( \mathcal{E}_{g x,R,\gamma',\varepsilon} \) \subset \left\{ \tau_{\varepsilon,\mathcal{W}} \in \itv[co]{k - L}{k + 1 + L} \right\} \eqpunct{,} \]
 where the set on the right hand side depends neither on $\gamma$ nor on $\gamma'$.
 Hence
 \begin{align*}
  \bigcup_{\gamma \in \Gamma_{\wt{W},k}} \bigcup_{\gamma' \in \gamma G g^{-1}} &\bigcup_{s \in \itv[co]{0}{\varepsilon}} g^{-s}\( \mathcal{E}_{g x,R,\gamma',\varepsilon} \) \\
   &\subset \Pr^{-1} \( \bigcup_{s \in \itv[co]{0}{\varepsilon}} g^{-s}\( \left\{ \tau_{\varepsilon,\mathcal{W}} \in \itv[co]{k - L}{k + 1 + L} \right\} \) \) \eqpunct{.}
 \end{align*}
 The union on the left is relatively compact in $T^1 \wt{M}$, because included in $T^1 B(g x, R + \varepsilon)$.
 Therefore, any $w \in \bigcup_{s \in \itv[co]{0}{\varepsilon}} g^{-s}\( \left\{ \tau_{\varepsilon,\mathcal{W}} \in \itv[co]{k - L}{k + 1 + L} \right\}\)$ has finitely many preimages that lie in the union of the left hand side, their number being bounded from above by
 \[ C = \card \set{h \in \Gamma}{B(g x, R + \varepsilon) \cap B(h g x, R + \varepsilon) \neq \emptyset} \eqpunct{,} \]
 which is actually independent of $g$.
 As $m_F = {\Pr}_*(\tilde{m}_F)$, this concludes the proof of the lemma.
\end{proof}

\begin{lemm}
 \label{lem:bigufiniteint}
 For all $x \in \wt{M}$ and $R > 0$, there exists a constant $C \ge 0$ such that for every $k \ge 1$
 \[ \sum_{\gamma \in \Gamma_{\wt{W},k}} \wt{m}_F\( \mathcal{U}_{x,R,\gamma} \) \le C \wt{m}_F\( \bigcup_{\gamma \in \Gamma_{\wt{W},k}} \mathcal{U}_{x,R,\gamma} \) \eqpunct{.} \]
\end{lemm}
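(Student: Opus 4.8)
The plan is to prove a uniform bounded-overlap estimate: as $\gamma$ ranges over $\Gamma_{\wt{W},k}$, the sets $\mathcal{U}_{x,R,\gamma}$ cover each point of $T^1\wt{M}$ at most $C$ times, where $C$ does not depend on $k$. Granting this, the lemma is immediate: the (nonnegative, Borel) indicator functions then satisfy the pointwise inequality $\sum_{\gamma \in \Gamma_{\wt{W},k}} \mathbf{1}_{\mathcal{U}_{x,R,\gamma}} \le C\,\mathbf{1}_{\bigcup_{\gamma \in \Gamma_{\wt{W},k}} \mathcal{U}_{x,R,\gamma}}$ (trivial where $v$ lies in none of the sets, and valid by the overlap bound otherwise), and integrating against the Radon measure $\wt{m}_F$, summing by Tonelli's theorem, yields exactly $\sum_{\gamma} \wt{m}_F(\mathcal{U}_{x,R,\gamma}) \le C\,\wt{m}_F(\bigcup_\gamma \mathcal{U}_{x,R,\gamma})$.

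To obtain the overlap bound I would first unwind the definition of $\mathcal{U}_{x,R,\gamma}$: if $v$ belongs to it, then $\pi(v) \in B(x,R)$ and $\pi(g^t(v)) \in B(\gamma x, R)$ for some $t \ge 0$, and since $g^t$ runs along the geodesic determined by $v$ at unit speed, $t = d(\pi(v), \pi(g^t(v)))$, so the triangle inequality gives $\left| t - d(x,\gamma x) \right| \le 2R$; in particular $\gamma x$ lies within distance $R$ of the point at parameter $t$ on the forward geodesic ray issued from $\pi(v)$. Now fix $v$ and suppose $\gamma, \gamma' \in \Gamma_{\wt{W},k}$ both have $v \in \mathcal{U}_{x,R,\gamma} \cap \mathcal{U}_{x,R,\gamma'}$, with associated times $t, t' \ge 0$. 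Since $d(x,\gamma x)$ and $d(x,\gamma' x)$ both lie in $\itv[co]{k-1}{k}$, they differ by less than $1$, whence $\left| t - t' \right| \le 4R+1$; the corresponding two points on the ray from $\pi(v)$ are then at distance $\left| t - t' \right|$ from each other, so
\[ d(\gamma x, \gamma' x) \le R + (4R+1) + R = 6R+1 \eqpunct{,} \]
i.e. $d(x, \gamma^{-1}\gamma' x) \le 6R+1$. Because $\Gamma$ acts properly discontinuously on $\wt{M}$, the set $D = \set{\delta \in \Gamma}{d(x,\delta x) \le 6R+1}$ is finite; put $C = \card D$. If the family of admissible $\gamma$ (for the fixed $v$) is nonempty, fix one of them, say $\gamma_0$; then every admissible $\gamma$ satisfies $\gamma_0^{-1}\gamma \in D$, so there are at most $C = \card D$ of them. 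This $C$ depends only on $x$, $R$ and the geometry of $\wt{M}$, hence neither on $v$ nor on $k$.

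The one point that genuinely needs attention is precisely this uniformity in $k$, and it is also the reason the argument goes through: although the orbit points $\gamma x$ with $\gamma \in \Gamma_{\wt{W},k}$ recede to infinity as $k \to \infty$, the constraint $d(x,\gamma x) \in \itv[co]{k-1}{k}$ confines those that a fixed $v \in \mathcal{U}_{x,R,\gamma}$ can ``see'' to an $R$-neighbourhood of a segment of bounded length on the ray from $\pi(v)$, forcing them to be pairwise at bounded distance and reducing the count to $\card D$. Beyond that, the proof is a soft geometric counting argument combined with Tonelli's theorem for the nonnegative integrand, and I do not anticipate any further difficulty.
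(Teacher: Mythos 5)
Your proposal is correct and follows essentially the same route as the paper: both establish that any two $\gamma,\gamma'\in\Gamma_{\wt{W},k}$ whose sets $\mathcal{U}_{x,R,\gamma}$ and $\mathcal{U}_{x,R,\gamma'}$ meet satisfy $d(\gamma x,\gamma' x)\le 6R+1$ (via the same triangle-inequality estimate on the hitting times $t,t'$), and then conclude by properness of the action that the overlap multiplicity is at most $\card\set{g\in\Gamma}{d(x,gx)\le 6R+1}$, which integrates to the stated bound. No gaps.
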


\begin{proof}
 Assume that $\gamma, \gamma' \in \Gamma_{\wt{W},k}$ are such that $\mathcal{U}_{x,R,\gamma} \cap \mathcal{U}_{x,R,\gamma'} \neq \emptyset$.
 There exist $v \in T^1 B(x, R)$ and $t, s \ge 0$ such that $g^t(v) \in T^1 B(\gamma x, R)$ and $g^s(v) \in T^1 B(\gamma' x, R)$.
 Without loss of generality, assume that $t \le s$.
 We have
 \[ d(\gamma x, \gamma' x) \le d(\gamma x, \pi(g^t(v))) + s - t + d(\pi(g^s(v)), \gamma' x) \le s - t + 2 R \eqpunct{.} \]
 But on the one hand,
 \[ t = d(\wt{\pi}(v), \wt{\pi}(g^t(v))) \ge d(x, \gamma x) - 2 R \ge k - 1 - 2 R \eqpunct{,} \]
 and on the other hand
 \[ s = d(\wt{\pi}(v), \wt{\pi}(g^t(v))) \le d(x, \gamma x) + 2 R \le k + 2 R \eqpunct{.} \]
 Therefore $d(\gamma x, \gamma' x) \le 1 + 6 R$ i.e. $\gamma' \in \gamma G$ where $G = \set{g \in \Gamma}{d(x, g x) \le 1 + 6 R}$ is finite.
 This ensures that
 \[ \sum_{\gamma \in \Gamma_{\wt{W},k}} \wt{m}_F\( \mathcal{U}_{x,R,\gamma} \) \le C \wt{m}_F\( \bigcup_{\gamma \in \Gamma_{\wt{W},k}} \mathcal{U}_{x,R,\gamma} \) \eqpunct{,} \]
 where $C = \card G$.
\end{proof}

We can now prove Proposition \ref{prop:mintheocrit1}.

\begin{proof}
 Recall that we assumed that $P(F) = 0$.
 Since $\wt{\Sigma}$ is locally finite, there exists $x \in \wt{W}$ and $0 < R_1 < R_2$ such that
 \[ T^1 B(x, R_1) \cap \wt{\Omega} \neq \emptyset \eqand \overline{B(x, R_1)} \subset B(x, R_2) \subset \wt{W} \eqpunct{.} \]
 Lemma \ref{lem:masstubemin} ensures that there exist finite sets $S_1, G_1 \subset \Gamma$ and a constant $C_1 > 0$ such that for all $\gamma \in \Gamma \setminus S_1$, there exist $g, h \in G_1$ such that
 \[ e^{\int_x^{\gamma x} \wt{F}} \le \frac{1}{C_1} \wt{m}_F\( \mathcal{U}_{g x,R_1,\gamma h g^{-1}} \) \eqpunct{.} \]
 Fix $k \ge 1$.
 We have
 \[ \sum_{\gamma \in \Gamma_{\wt{W},k} \setminus S_1} e^{\int_x^{\gamma x} \wt{F}} \le \frac{1}{C_1} \sum_{g, h \in G_1} \sum_{\gamma \in \Gamma_{\wt{W},k}} \wt{m}_F\( \mathcal{U}_{g x,R_1,\gamma h g^{-1}} \) \eqpunct{.} \]
 According to Lemma \ref{lem:bigufiniteint}, there is a constant $C_2 \ge 0$ which does not depend on $k$ such that
 \[ \sum_{\gamma \in \Gamma_{\wt{W},k} \setminus S_1} e^{\int_x^{\gamma x} \wt{F}} \le \frac{C_2}{C_1} \sum_{g, h \in G_1} \wt{m}_F\( \bigcup_{\gamma \in \Gamma_{\wt{W},k}} \mathcal{U}_{g x,R_1,\gamma h g^{-1}} \) \eqpunct{.} \]
 Observe that if $g, h \in G_1$ then
 \[ d(\gamma h g^{-1} (g x), \gamma x) = d(h x, x) \le D = \sup \set{d(x, g x)}{g \in G_1} \eqpunct{.} \]
 Therefore, Lemma \ref{lem:tubesplit} applied with $R = R_2$, $\varepsilon < R_2 - R_1$ and this $D$ gives the existence of finite sets $S_2, G_2 \subset \Gamma$ and of $\theta < N \varepsilon$ such that for every $g, h \in G_1$ one has
 \[ \forall \gamma \in \Gamma_{\wt{W}} \setminus S_2, \mathcal{U}_{g x,R_1,\gamma h g^{-1}} \subset \mathcal{U}_{g x,R_\varepsilon,\gamma h g^{-1}} \subset \bigcup_{g' \in G_2} \bigcup_{\gamma' \in \gamma G_2 {g'}^{-1}} \bigcup_{s \in \itv[cc]{0}{\theta}} g^{-s}\( \mathcal{E}_{g' x,R_\varepsilon,\gamma',\varepsilon} \) \eqpunct{,} \]
 with $R_\varepsilon = R_2 - \varepsilon > R_1$.
 Assume that $S_1 \subset S_2$.
 The $(g^t)$-invariance of $\wt{m}_F$ gives
 \[ \sum_{\gamma \in \Gamma_{\wt{W},k} \setminus S_2} e^{\int_x^{\gamma x} \wt{F}} \le M_k + C_3 \sum_{g \in G_2} \wt{m}_F\( \bigcup_{\gamma \in \Gamma_{\wt{W},k}} \bigcup_{\gamma' \in \gamma G_2 g^{-1}} \bigcup_{s \in \itv[co]{0}{\varepsilon}} g^{-s}\( \mathcal{E}_{g x,R_\varepsilon,\gamma',\varepsilon} \) \) \eqpunct{,} \]
 where
 \[ C_3 = \frac{C_2 N}{C_1} (\card G_1)^2 \eqand M_k = \frac{C_2}{C_1} \sum_{g, h \in G_1} \wt{m}_F\( \bigcup_{\gamma \in \Gamma_{\wt{W},k} \cap S_2} \mathcal{U}_{g x,R_\varepsilon,\gamma h g^{-1}} \) \eqpunct{.} \]
 Note that $M_k$ is finite, and even $M_k = 0$ for $k \ge k_0$ since $S_2$ is finite.

 Apply now Lemma \ref{lem:bigeprojmeas} with $R = R_\varepsilon$ to obtain two constants $C_4, L \ge 0$ such that
 \[ \sum_{\gamma \in \Gamma_{\wt{W},k} \setminus S_2} e^{\int_x^{\gamma x} \wt{F}} \le M_k + C_3 C_4 \card G_2 m_F\( \bigcup_{s \in \itv[co]{0}{\varepsilon}} g^{-s}\( \left\{ \tau_{\varepsilon,\W} \in \itv[co]{k - L}{k + 1 + L} \right\} \) \) \eqpunct{,} \]
 where $\mathcal{W} = T^1 B(\pi(x), R_\varepsilon) \supset T^1 B(\pi(x), R_1)$.
 If we denote by $C_5 = C_3 C_4 \card G_2$ and
 \[ B = \sum_{k \ge 1} k \sum_{\gamma \in \Gamma_{\wt{W},k} \cap S_2} e^{\int_x^{\gamma x} \wt{F}} \]
 which is finite since $S_2$ is finite, we get
 \begin{align*}
  \sum_{\gamma \in \Gamma_{\wt{W}}} d(x, \gamma x) &e^{\int_x^{\gamma x} \wt{F}} \\
     &\le B + \sum_{k \ge 1} k \sum_{\gamma \in \Gamma_{\wt{W},k} \setminus S_2} e^{\int_x^{\gamma x} \wt{F}} \\
     &\le B + \sum_{k \ge 1}^{k_0} k M_k + C_5 \sum_{k \ge 1} k m_F\( \left\{ \tau_{\varepsilon,\mathcal{W}} \in \itv[co]{k - L}{k + 1 + L} \right\} \cap \mathcal{W}_\varepsilon \) \\
     &\le B + \sum_{k \ge 1}^{k_0} k M_k + C_5 (2 L + 1) \sum_{k \ge 1} k m_F\( \left\{ \tau_{\varepsilon,\mathcal{W}} \in \itv[co]{k}{k + 1} \right\} \cap \mathcal{W}_\varepsilon \) \eqpunct{.}
 \end{align*}
 Since $\mathcal{W}$ contains $T^1 B(\pi(x), R_1)$, it meets the nonwandering set $\Omega$, hence $m_F(\mathcal{W}) > 0$ and we can apply Kac's Lemma \ref{lem:kac} to finally obtain
 \[ \sum_{\gamma \in \Gamma_{\wt{W}}} d(x, \gamma x) e^{\int_x^{\gamma x} \wt{F}} \le B + \sum_{k \ge 1}^{k_0} k M_k + C_5 (2 L + 1) m_F(T^1 M) < +\infty \eqpunct{.} \]
 Therefore $(\Gamma, \wt{F})$ is positive recurrent relatively to $\wt{W}$.
\end{proof}


\section{An intermediate technical criterion of positive recurrence in the universal cover}

\label{sec:fin2}

In this section, we prove a slightly modified version of the preceding criterion (Theorem \ref{thm:crit1}), which will allow us to prove Theorem \ref{thm:crit3} in the next section.
We introduce a notion of $(\Gamma, \wt{F})$-positive recurrence with multiplicity $N \ge 1$, and prove that it is still equivalent to the finiteness of $m_F$.

For an open relatively compact set $\wt{W} \subset \wt{M}$, and $N \ge 1$, define
\[ \Gamma_{\wt{W}}^\star(N) = \set{\gamma \in \Gamma}{\exists y, y' \in \wt{W}, \card \set{g \in \Gamma \setminus \sing{\id}}{\itv[cc]{y}{\gamma y'} \cap g \wt{W} \neq \emptyset} \le N} \eqpunct{.} \]
Of course, $N \mapsto \Gamma_{\wt{W}}^\star(N)$ is increasing, and for all $g \in \Gamma$,
\[ \Gamma_{g \wt{W}}^\star(N) = g \Gamma_{\wt{W}}^\star(N) g^{-1} \eqpunct{.} \]

\begin{theo}[Alternative criterion]
 \label{thm:crit2}
 Let $M$ be a negatively curved orbifold with pinched negative curvature, and $\fn{F}{T^1 M}{\R}$ a H\"older continuous potential with $P(F) < +\infty$.
 Let $m_F$ be its associated Gibbs measure on $T^1 M$.
 \begin{enumerate}
  \item[$(i)$\:] If $F$ is recurrent, and if there exists an open relatively compact subset $\wt{W}$ of $\wt{M}$ meeting $\pi(\wt{\Omega})$ such that for some $x \in \wt{M}$,
   \[ \sum_{\gamma \in \Gamma_{\wt{W}}^\star(N)} d(x, \gamma x) e^{\int_x^{\gamma x} \wt{F} - P(F)} < +\infty \eqpunct{,} \]
   with $N \ge N_{\wt{W}} = 2 \card \set{g \in \Gamma \setminus \sing{\id}}{\overline{\wt{W}} \cap \overline{g \wt{W}} \neq \emptyset} + 1$, then $m_F$ is finite.
  \item[$(ii)$\:] If $m_F$ is finite, then $F$ is recurrent, and for all $x \in \wt{M}$ and $N \ge 1$, we have
   \[ \sum_{\gamma \in \Gamma_{\wt{W}}^\star(N)} d(x, \gamma x) e^{\int_x^{\gamma x} \wt{F} - P(F)} < +\infty \eqpunct{,} \]
   for every open relatively compact subset $\wt{W}$ of $\wt{M}$ large enough to contain a ball $B(x_0, R)$ with $x_0 \in \pi(\wt{\Omega})$ and $R > R_0(x_0)$.
   Moreover, the map $\fn{R_0}{\wt{M}}{\R_+}$ appearing in this statement is bounded on compact sets and satisfies $R_0 = 0$ when $\Lambda(\Gamma) = \partial_\infty \wt{M}$.
 \end{enumerate}
\end{theo}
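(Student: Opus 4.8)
The plan is to deduce both implications from Theorem~\ref{thm:crit1} by comparing $\Gamma_{\wt{W}}^\star(N)$ with the set $\Gamma_{\wt{W}}$ appearing there. As usual we may assume $P(F) = 0$, so that all exponents below are $\int_x^{\gamma x} \wt{F}$.

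For $(i)$ the point is the elementary inclusion $\Gamma_{\wt{W}} \subset \Gamma_{\wt{W}}^\star(N_{\wt{W}})$. Indeed, if $\gamma \in \Gamma_{\wt{W}}$ is witnessed by $y, y' \in \wt{W}$, then any $g \neq \id$ with $\itv[cc]{y}{\gamma y'} \cap g \wt{W} \neq \emptyset$ satisfies $\overline{\wt{W}} \cap g \overline{\wt{W}} \neq \emptyset$ or $\overline{\gamma \wt{W}} \cap g \overline{\wt{W}} \neq \emptyset$: the first holds for at most $n := \card \set{g \neq \id}{\overline{\wt{W}} \cap g \overline{\wt{W}} \neq \emptyset}$ values of $g$, the second for at most $n + 1$ values (the $\gamma$-translates of $\sing{\id} \cup \set{g \neq \id}{\overline{\wt{W}} \cap g \overline{\wt{W}} \neq \emptyset}$), so at most $2n + 1 = N_{\wt{W}}$ translates are met. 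As $N \mapsto \Gamma_{\wt{W}}^\star(N)$ is increasing and $N \ge N_{\wt{W}}$, the hypothesis of $(i)$ forces $\sum_{\gamma \in \Gamma_{\wt{W}}} d(x, \gamma x) e^{\int_x^{\gamma x} \wt{F}} < +\infty$, i.e. $(\Gamma, \wt{F})$ is positive recurrent relatively to $\wt{W}$ in the sense of Definition~\ref{def:pos-rec-bis}, and since $\wt{W}$ meets $\pi(\wt{\Omega})$, Theorem~\ref{thm:crit1}$(i)$ yields that $m_F$ is finite.

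For $(ii)$, suppose $m_F$ is finite; then $m_F$ is ergodic and conservative, so $(\Gamma, F)$ is divergent and $F$ is recurrent by Theorem~\ref{thm:erg}. Theorem~\ref{thm:crit1}$(ii)$ gives $\sum_{\sigma \in \Gamma_{\wt{W}'}} d(x, \sigma x) e^{\int_x^{\sigma x} \wt{F}} < +\infty$ for every open relatively compact $\wt{W}'$ meeting $\pi(\wt{\Omega})$, hence also $A_{\wt{W}'} := \sum_{\sigma \in \Gamma_{\wt{W}'}} e^{\int_x^{\sigma x} \wt{F}} < +\infty$, since $d(x, \sigma x) \ge 1$ off a finite subset of $\Gamma$. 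The crux is then a decomposition lemma: there are a finite set $G \subset \Gamma$, an open relatively compact $\wt{W}^\flat$ (a slightly smaller concentric ball, still meeting $\pi(\wt{\Omega})$), and a constant $c = c(\wt{W})$, such that every $\gamma \in \Gamma_{\wt{W}}^\star(N)$ is a product of at most $cN$ elements of $G \cup \Gamma_{\wt{W}^\flat}$ whose partial products fellow-travel a guiding geodesic $\itv[cc]{y}{\gamma y'}$. Granting this, one expands the word: $d(x, \gamma x) \le \sum_i d(x, a_i x)$ by the triangle inequality, while Lemma~\ref{lem:potcontrol}, applied along the fellow-traveling partial products, gives $\int_x^{\gamma x} \wt{F} = \sum_i \int_x^{a_i x} \wt{F} + O(N)$; absorbing the $O(N)$ error and the $G$-letters into a constant $C$ and summing over all admissible words yields
\[ \sum_{\gamma \in \Gamma_{\wt{W}}^\star(N)} d(x, \gamma x) e^{\int_x^{\gamma x} \wt{F}} \le C \sum_{p = 1}^{cN} p \( A_{\wt{W}^\flat}^{\,p} + B_{\wt{W}^\flat}\, A_{\wt{W}^\flat}^{\,p-1} \) < +\infty \eqpunct{,} \]
with $B_{\wt{W}^\flat} = \sum_{\sigma \in \Gamma_{\wt{W}^\flat}} d(x, \sigma x) e^{\int_x^{\sigma x} \wt{F}}$; this is exactly positive recurrence of $(\Gamma, \wt{F})$ relatively to $\wt{W}$ in the sense of the statement.

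It remains to establish the decomposition lemma, which is the main obstacle. Given $\gamma \in \Gamma_{\wt{W}}^\star(N)$ with a witnessing geodesic $\itv[cc]{y}{\gamma y'}$, one lists the translates $\id = g_0, g_1, \dots, g_r$ of $\wt{W}$ it meets, ordered along the geodesic (at most $N+1$ of them), cuts at the first $g_j$ with $d(x_0, g_j x_0) > D$ for a fixed threshold $D$, and writes $\gamma = (g_0^{-1} g_1) \cdots (g_{j-1}^{-1} g_j) \cdot (g_j^{-1} \gamma)$. The first $j \le \card \set{g}{d(x_0, g x_0) \le D}$ factors lie in a fixed finite set; by convexity of the balls $g_i \wt{W}$ the tail $g_j^{-1} \gamma$ re-enters only boundedly many translates near $g_j \wt{W}$, hence belongs to $\Gamma_{\wt{W}}^\star(N-1)$ up to a bounded finite factor; and the long jump $g_{j-1}^{-1} g_j$ crosses no translate strictly between its two ends, only boundedly many near them. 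Recursing on $N$ and, at each step, passing from $\wt{W}$ to a slightly shrunk concentric ball via Lemma~\ref{lem:lrsa} so that those boundedly many ``nearby'' translates no longer matter, one reaches the base case $\Gamma_{\wt{W}^\flat}^\star(1) \subset \Gamma_{\wt{W}^\flat}$ (a geodesic from $\wt{W}^\flat$ to $\sigma \wt{W}^\flat$ meeting no non-identity translate besides $\sigma \wt{W}^\flat$ lies automatically in $\Gamma_{\wt{W}^\flat}$). The delicate points are the exact bookkeeping of how many extra finite factors and how much shrinking each cut costs, so that the recursion terminates with a total radius loss compatible with the hypothesis $R > R_0(x_0)$ — this is what produces the threshold $R_0$, which vanishes when $\Lambda(\Gamma) = \partial_\infty \wt{M}$ and is bounded on compact sets in general — together with the reconciliation of the purely combinatorial condition defining $\Gamma_{\wt{W}}^\star(N)$ with the geometric ``meets $\Gamma \wt{W}$ only near its endpoints'' condition defining $\Gamma_{\wt{W}}$.
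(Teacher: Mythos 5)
Your part $(i)$ is correct and coincides with the paper's argument: the counting $\Gamma_{\wt{W}} \subset \Gamma_{\wt{W}}^\star(N_{\wt{W}})$ with $N_{\wt{W}} = 2n+1$ is exactly the paper's second lemma in this section, and combining it with Theorem \ref{thm:crit1}$(i)$ closes that direction.

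Part $(ii)$, however, rests entirely on your ``decomposition lemma'', and that lemma is not proved; you yourself flag that the bookkeeping of ``how many extra finite factors and how much shrinking each cut costs'' is unresolved, and this is not a routine verification but the heart of the matter. The structural difficulty is this: after you cut the word at a translate $g_j\wt{W}$, the geodesics witnessing membership of the pieces (from $\wt{W}$ to $g_{j-1}^{-1}g_j\wt{W}$, etc.) are \emph{not} sub-segments of the original guiding geodesic $[y,\gamma y']$ but only fellow-travel it, so by Lemma \ref{lem:lrsa} they may pick up extra translates; controlling these forces a shrinking of $\wt{W}$ at each of the up to $N$ recursion levels together with new finite correction factors, and nothing in the sketch shows that this recursion terminates with a radius loss independent of the combinatorics — which is precisely the quantity you would need to control to justify the threshold $R_0$. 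The paper's proof avoids all of this with a \emph{lossless} two-factor split, by induction on $N$: for $\gamma \in \Gamma_{\wt{W}}^\star(N+1)$ with witnessing geodesic meeting translates $g_1\wt{W},\dots,g_{N+1}\wt{W}$, either some $g_i\wt{W}$ lies far from both $\wt{W}$ and $\gamma\wt{W}$, in which case a point $w \in [y,\gamma y'] \cap g_i\wt{W}$ cuts the \emph{original} geodesic into two genuine sub-segments that meet no new translates and witness $g_i,\ g_i^{-1}\gamma \in \Gamma_{\wt{W}}^\star(N)$ for the \emph{same} $\wt{W}$, after which the submultiplicative estimate
\[ d(z,\gamma z)\, e^{\int_z^{\gamma z}\wt{F}} \le C\, d(z,g_i z)\, e^{\int_z^{g_i z}\wt{F}}\, d(z,g_i^{-1}\gamma z)\, e^{\int_z^{g_i^{-1}\gamma z}\wt{F}} \]
(valid because both displacements are bounded below) and the inductive hypothesis applied to both factors conclude; or else every intermediate translate is within bounded distance of one of the two endpoints, and $\gamma$ is directly comparable, up to finitely many bounded factors, to an element of $\Gamma_{\wt{W}}$. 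No shrinking of $\wt{W}$ ever occurs, which is why that induction closes while yours does not obviously do so. As written, your proof of $(ii)$ assumes its hardest step.
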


Like in the previous sections, we can assume that $P(F) = 0$, and we will prove Theorem \ref{thm:crit2} as an immediate consequence of the following two lemmas.

\begin{lemm}
 Let $\wt{W}$ be an open relatively compact subset of $\wt{M}$.
 If $(\Gamma, \wt{F})$ is positive recurrent relatively to $\wt{W}$, then
 \[ \forall z \in \wt{M}, \forall N \ge 1, \sum_{\gamma \in \Gamma_{\wt{W}}^\star(N)} d(z, \gamma z) e^{\int_z^{\gamma z} \wt{F}} < +\infty \eqpunct{.} \]
\end{lemm}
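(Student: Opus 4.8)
The plan is to reduce to $P(F)=0$ (which changes neither $m_F$ nor, by Lemma \ref{lem:potcontrol}, the relevant series up to the choice of base point $z$), to fix $R>0$ with $\overline{\wt W}\subset B(z,R)$, and to decompose every $\gamma\in\Gamma_{\wt W}^\star(N)$ into a \emph{bounded} number of factors each lying in a slightly enlarged set $\Gamma_{\wt W'}$ for which positive recurrence is already known. The starting geometric remark is that, since each ball $B(gz,R)$ is convex, a geodesic segment meets any translate $g\wt W\subset gB(gz,R)$ only during a time interval of length at most $2R$; hence a segment crossing at most $N$ non-trivial translates of $\wt W$ stays inside $\Gamma\wt W$ for a bounded total time. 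Combining this with properness of the action at the fixed point $z$ (so that $A:=\{a\in\Gamma:d(z,az)\le 6R\}$ is finite), I would obtain a constant $C_1=C_1(R,\wt M)$ such that the following holds: given $\gamma\in\Gamma_{\wt W}^\star(N)$ witnessed by $y,y'\in\wt W$, set $\sigma=[y,\gamma y']$ and cut $\sigma$ at the distinct first-entry times $0=\theta_0<\theta_1<\dots<\theta_m=\ell$ into the ($\le N$) non-trivial translates it crosses; then $m\le N+1$, and each of the $m$ resulting sub-segments meets at most $C_1$ translates of $\wt W$, all of them within bounded distance of that sub-segment's two endpoints $\sigma(\theta_{k-1}),\sigma(\theta_k)$.

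Next I would turn each sub-segment into a group element. Writing $g_{i_k}$ for a translate with $\sigma(\theta_k)\in g_{i_k}\wt W$ (with $g_{i_0}=\mathrm{id}$, $g_{i_m}=\gamma$) and $\wt W':=\bigcup_{a\in A}a\wt W$ (open, relatively compact, containing $\wt W$ and meeting $\pi(\wt\Omega)$), the $k$-th sub-segment runs from $g_{i_{k-1}}\wt W$ to $g_{i_k}\wt W$ and, after translating by $g_{i_{k-1}}^{-1}$, becomes a geodesic from $\wt W\subset\wt W'$ to $\gamma_k\wt W'$, where $\gamma_k=g_{i_{k-1}}^{-1}g_{i_k}$, which meets translates of $\wt W$ only in $A$ (near its origin) or in $\gamma_k A$ (near its end); a short computation then shows that this forces $\gamma_k\in\Gamma_{\wt W'}$. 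Telescoping gives $\gamma=\gamma_1\cdots\gamma_m$ with $m\le N+1$ and each $\gamma_k\in\Gamma_{\wt W'}$. Moreover $\gamma_1\cdots\gamma_k=g_{i_k}$ lies within bounded distance of $\sigma(\theta_k)$ and these points are monotone along $\sigma$, so iterating Lemma \ref{lem:potcontrol} corner by corner yields $\int_z^{\gamma z}\wt F\le\sum_{k=1}^m\int_z^{\gamma_k z}\wt F+C_2 m$ for a constant $C_2$, while the triangle inequality gives $d(z,\gamma z)\le\sum_{k=1}^m d(z,\gamma_k z)$. Hence $d(z,\gamma z)e^{\int_z^{\gamma z}\wt F}\le e^{C_2 m}\big(\sum_k d(z,\gamma_k z)\big)\prod_k e^{\int_z^{\gamma_k z}\wt F}$.

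Finally I would sum. Positive recurrence relatively to $\wt W$ implies, by Proposition \ref{prop:majtheocrit1}, that $m_F$ is finite, hence by Proposition \ref{prop:mintheocrit1} that $(\Gamma,\wt F)$ is positive recurrent relatively to $\wt W'$ as well; thus $T':=\sum_{\delta\in\Gamma_{\wt W'}}d(z,\delta z)e^{\int_z^{\delta z}\wt F}<+\infty$, and splitting according to whether $d(z,\delta z)\le 1$ (only finitely many such $\delta$) or not, also $S':=\sum_{\delta\in\Gamma_{\wt W'}}e^{\int_z^{\delta z}\wt F}<+\infty$. Since $\gamma$ is recovered as the product $\gamma_1\cdots\gamma_m$, the map sending each $\gamma$ to a chosen decomposition is injective, so summing the displayed bound over $\gamma\in\Gamma_{\wt W}^\star(N)$ is dominated by the sum over all tuples $(\gamma_1,\dots,\gamma_m)\in\Gamma_{\wt W'}^m$ with $0\le m\le N+1$; expanding $\big(\sum_k d(z,\gamma_k z)\big)\prod_k e^{\int_z^{\gamma_k z}\wt F}$ and factoring the tuple sum gives $\sum_{m=0}^{N+1}e^{C_2 m}\,m\,T'\,(S')^{m-1}<+\infty$, which is the claim.

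The main obstacle is the geometric decomposition of the first two steps: proving that cutting $\sigma$ at the first-entry times produces sub-segments crossing only boundedly many translates, all concentrated near their endpoints (this rests on the window-length bound together with the uniform discreteness of the orbit of $z$), and then verifying carefully that the resulting $\gamma_k$ genuinely satisfy the somewhat delicate defining inequality of $\Gamma_{\wt W'}$ — including the degenerate configurations (very short sub-segments, the end-clusters overlapping, $\gamma$ itself small or of finite order), which have to be absorbed into the finite set $A$ and the constants. The potential estimate also requires some care, to ensure that the per-corner errors coming from Lemma \ref{lem:potcontrol} only accumulate linearly in $m$ instead of compounding.
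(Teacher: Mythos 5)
Your argument is correct in outline, but it follows a genuinely different route from the paper's. The paper proves the lemma by induction on $N$: for $\gamma\in\Gamma_{\wt{W}}^\star(N+1)$ it distinguishes whether some intermediate translate $g_i\wt{W}$ met by $[y,\gamma y']$ lies far from both endpoints --- in which case cutting there writes $\gamma=g_i\cdot(g_i^{-1}\gamma)$ with both factors in $\Gamma_{\wt{W}}^\star(N)$ and one of them carrying at least a quarter of the displacement, so the sum is dominated by the square of the $N$-th series --- or all intermediate translates cluster near the two endpoints, in which case $\gamma$ differs from an element of $\Gamma_{\wt{W}}$ itself only by multiplication by elements of a fixed finite set. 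Everything stays relative to the original $\wt{W}$ and only the hypothesis of the lemma is used. You instead decompose $\gamma$ in one pass into a product of at most $N+1$ factors in $\Gamma_{\wt{W}'}$ for the enlarged set $\wt{W}'=\bigcup_{a\in A}a\wt{W}$ and then sum over tuples; the cutting at first-entry times combined with the convexity/window bound does force each $\gamma_k$ into $\Gamma_{\wt{W}'}$ (every translate of $\wt{W}'$ met by the translated sub-segment lies in $AA^{-1}\cup\gamma_k AA^{-1}$, hence overlaps $\overline{\wt{W}'}$ or $\gamma_k\overline{\wt{W}'}$), the per-corner errors from Lemma \ref{lem:potcontrol} accumulate only linearly in $m$, and the multinomial bound $\sum_m e^{C_2 m}\,m\,T'\,(S')^{m-1}$ is finite because $m\le N+1$ is bounded. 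The price of your route is that you need positive recurrence relative to $\wt{W}'$ rather than $\wt{W}$, which you obtain by passing through the finiteness of $m_F$ via Propositions \ref{prop:majtheocrit1} and \ref{prop:mintheocrit1}; this is logically sound (both are established in the preceding section, and $\wt{W}'$ is open, relatively compact and meets $\pi(\wt{\Omega})$ since it contains $\wt{W}$), but it makes the lemma rest on the full strength of Theorem \ref{thm:crit1}, whereas the paper's induction is self-contained. What your approach buys is a single geometric cutting argument and a clean product formula in place of a two-case analysis repeated at every inductive step; what the paper's buys is independence from any enlargement of $\wt{W}$ and from the transfer of positive recurrence between base sets. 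The degenerate configurations you flag (ties between first-entry times, zero-length sub-segments, overlapping end-clusters) do all get absorbed into the finite set $A$ as you anticipate.
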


\begin{proof}
 We shall show the convergence of this series by induction over $N \ge 1$.

 If $\gamma \in \Gamma_{\wt{W}}^\star(1)$, then there are $y, y' \in \wt{W}$ such that $\itv[cc]{y}{\gamma y'}$ only meets $\wt{W}$ and $\gamma \wt{W}$, which naturally ensures that $\gamma \in \Gamma_{\wt{W}}$.
 Therefore $\Gamma_{\wt{W}}^\star(1) \subset \Gamma_{\wt{W}}$, and the positive recurrence of $(\Gamma, \wt{F})$ relatively to $\wt{W}$ implies the convergence of the series for $N = 1$.

 We now assume that $(\Gamma, \wt{F})$ is positive recurrent relatively to $\wt{W}$, and that the sum converges for some $N \ge 1$.
 Let $\gamma \in \Gamma_{\wt{W}}^\star(N + 1) \setminus \Gamma_{\wt{W}}^\star(N)$, and pick $y, y' \in \wt{W}$ such that $\itv[cc]{y}{\gamma y'}$ intersects $g_0 \wt{W} = \wt{W}, g_1 \wt{W}, \hdots g_{N+1} \wt{W} = \gamma \wt{W}$ where the $g_i$ are distinct.
 Suppose $\wt{W} \subset B(z, R)$ for some $R > 0$.
 Except for possibly finitely many $\gamma$, one has $d(z, \gamma z) \ge 12 R$, so that $\overline{\wt{W}} \cap \overline{\gamma \wt{W}} = \emptyset$.

 Let
 \[ I_\gamma = \set{i \in \left\{ 1, \hdots, N \right\}}{d(z, g_i z) \ge 2 R \eqand d(\gamma z, g_i z) \ge 2 R} \eqpunct{.} \]
 We shall first treat the case where $I_\gamma \neq \emptyset$.
 Pick some $i \in I_\gamma$ and $w \in \itv[cc]{y}{\gamma y'} \cap g_i \wt{W}$.
 Then $d(y, w) > 0$, $d(w, \gamma y') > 0$, and there are at most $N$ copies $g_j \wt{W}$ that will intersect $\itv[cc]{y}{w} \cup \itv[cc]{w}{\gamma y'}$, since they do not intersect respectively $\gamma \wt{W}$ and $\wt{W}$ by definition of $i \in I_\gamma$.
 Therefore $g_i \in \Gamma_{\wt{W}}^\star(N)$ and $\gamma g_i^{-1} \in \Gamma_{g_i \wt{W}}^\star(N)$, or in other words $g_i^{-1} \gamma \in \Gamma_{\wt{W}}^\star(N)$.
 Moreover, we have either
 \[ d(z, g_i z) \ge d(y, w) - 2 R \ge \frac{d(y, \gamma y')}{2} - 2 R \ge \frac{d(z, \gamma z)}{2} - 3 R \ge \frac{d(z, \gamma z)}{4} \eqpunct{,} \]
 or similarly
 \[ d(z, g_i^{-1} \gamma z) = d(g_i z, \gamma z) \ge \frac{d(z, \gamma z)}{4} \eqpunct{.} \]
 Assume the former happens, for the second case can be treated similarly.
 Remembering that $d(g_i z, \gamma z) \ge 2 R$, and thanks to Lemma \ref{lem:potcontrol}, we get
 \begin{align*}
  d(z, \gamma z) e^{\int_z^{\gamma z} \wt{F}} &\le 4 e^C d(z, g_i z) e^{\int_z^{g_i z} \wt{F}} e^{\int_{z}^{g_i^{-1} \gamma z} \wt{F}} \\
   &\le \frac{2 e^C}{R} d(z, g_i z) e^{\int_z^{g_i z} \wt{F}} d(z, g_i^{-1} \gamma z) e^{\int_{z}^{g_i^{-1} \gamma z} \wt{F}} \eqpunct{.}
 \end{align*}
 For every $\gamma$ such that $I_\gamma \neq \emptyset$, we can find some $g_\gamma = g_i \in \Gamma_W^\star(N)$ such that this estimate (or its symmetric version for $g_i^{-1} \gamma$) holds.
 It follows that
 \begin{align*}
  \sum_{\substack{\gamma \in \Gamma_{\wt{W}}^\star(N+1) \\ I_\gamma \neq \emptyset}} d(z, \gamma z) e^{\int_z^{\gamma z} \wt{F}} &\le \frac{2 e^C}{R} \sum_{\substack{\gamma \in \Gamma_{\wt{W}}^\star(N+1) \\ I_\gamma \neq \emptyset}} d(z, g_\gamma z) e^{\int_z^{g_\gamma z} \wt{F}} d(z, g_\gamma^{-1} \gamma z) e^{\int_z^{g_\gamma^{-1} \gamma z} \wt{F}} \\
  &\le \frac{2 e^C}{R} \sum_{g \in \Gamma_{\wt{W}}^\star(N)} d(z, g z) e^{\int_z^{g z} \wt{F}} \sum_{h \in \Gamma_{\wt{W}}^\star(N)} d(z, h z) e^{\int_z^{h z} \wt{F}} \eqpunct{,}
 \end{align*}
 and this upper bound is finite by the recurrence hypothesis.
 To go from the second to the third line above, observe that in the above reasoning $\gamma \in \Gamma^\star_W(N+1)$ can be written $\gamma = g h$, so that for a given pair $g, h \in \Gamma^\star_W(N)$, there is at most one $\gamma = g h$ in the left sum.

 Now assume that $I_\gamma = \emptyset$.
 Let $G = \set{g \in \Gamma}{d(x, g x) \le 2 R}$.
 Reasoning as in the proof of Lemma \ref{lem:recorbitlift}, we can find $g \in G$, $g' \in \gamma G \gamma^{-1}$, $w \in g \wt{W}$ and $w' \in g' \wt{W}$ such that
 \[ h \wt{W} \cap \itv[cc]{w}{w'} \neq \emptyset \Rightarrow \overline{h \wt{W}} \cap \overline{g \wt{W}} \neq \emptyset \eqor \overline{h \wt{W}} \cap \overline{g' \wt{W}} \neq \emptyset \eqpunct{,} \]
 which means that $g' g^{-1} \in \Gamma_{g \wt{W}}$, or in other words that $\gamma' = g^{-1} g' \in \Gamma_{\wt{W}}$.
 Moreover, note that
 \[ d(z, \gamma' z) = d(g z, g' z) \ge d(z, \gamma z) - 4 R \ge 8 R \eqpunct{.} \]
 Applying once again Lemma \ref{lem:potcontrol}, we get the existence of a constant $C$ (depending on $R$) such that
 \[ d(z, \gamma z) e^{\int_z^{\gamma z} \wt{F}} \le (4 R + d(g z, g' z)) e^{C + \int_{g z}^{g' z} \wt{F}} \le \frac{3}{2} e^C d(z, \gamma' z) e^{\int_z^{\gamma' z} \wt{F}} \eqpunct{.} \]
 This also ensures that the series
 \[ \sum_{\gamma \in \Gamma_{\wt{W}}^\star(N+1), I_\gamma = \emptyset} d(z, \gamma z) e^{\int_z^{\gamma z} \wt{F}} \]
 converges.
 Combining those two cases together, we get the convergence of the series for $N + 1$.
\end{proof}

\begin{lemm}
 For every $\wt{W} \subset \wt{M}$ open relatively compact, there exists $N_{\wt{W}} \ge 1$ such that if
 \[ \exists N \ge N_{\wt{W}}, \exists z \in \wt{W}, \sum_{\gamma \in \Gamma_{\wt{W}}^\star(N)} d(z, \gamma z) e^{\int_z^{\gamma z} \wt{F}} < +\infty \eqpunct{,} \]
 then $(\Gamma, \wt{F})$ is positive recurrent relatively to $\wt{W}$.
\end{lemm}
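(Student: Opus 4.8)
The plan is to reduce the whole statement to a single set inclusion, namely that $\Gamma_{\wt{W}}$ is contained in $\Gamma_{\wt{W}}^\star(N)$ as soon as $N$ is at least an explicit constant $N_{\wt{W}}$ depending only on $\wt{W}$. Once this is available, the series defining positive recurrence is dominated termwise by the one assumed to converge, and there is essentially nothing left to do. I would take
\[ N_{\wt{W}} = 2 \, \card \set{g \in \Gamma \setminus \sing{\id}}{\overline{\wt{W}} \cap \overline{g \wt{W}} \neq \emptyset} + 1 \eqpunct{,} \]
which is a finite integer because $\overline{\wt{W}}$ is compact and $\Gamma$ acts properly on $\wt{M}$; writing $A = \set{g \in \Gamma}{\overline{\wt{W}} \cap g \overline{\wt{W}} \neq \emptyset}$, one has $\id \in A$ and $N_{\wt{W}} = 2 \card A - 1$.

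For the inclusion $\Gamma_{\wt{W}} \subset \Gamma_{\wt{W}}^\star(N_{\wt{W}})$, I would fix $\gamma \in \Gamma_{\wt{W}}$ and choose $y, y' \in \wt{W}$ witnessing its membership, i.e.\ such that every $g \in \Gamma$ with $\itv[cc]{y}{\gamma y'} \cap g \wt{W} \neq \emptyset$ satisfies $\overline{\wt{W}} \cap g \overline{\wt{W}} \neq \emptyset$ or $\gamma \overline{\wt{W}} \cap g \overline{\wt{W}} \neq \emptyset$. The first alternative means $g \in A$, and the second one rewrites, after applying $\gamma^{-1}$, as $\gamma^{-1} g \in A$, i.e.\ $g \in \gamma A$. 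Hence the set of $g$ whose copy $g\wt{W}$ meets the segment $\itv[cc]{y}{\gamma y'}$ is contained in $A \cup \gamma A$, which has at most $2 \card A$ elements; discarding $g = \id \in A$ leaves at most $2 \card A - 1 = N_{\wt{W}}$ non-trivial copies of $\wt{W}$ meeting $\itv[cc]{y}{\gamma y'}$. With the same witnesses $y, y'$, this is precisely the defining condition of $\Gamma_{\wt{W}}^\star(N_{\wt{W}})$, so $\gamma \in \Gamma_{\wt{W}}^\star(N_{\wt{W}})$.

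To conclude, suppose the hypothesised series converges for some $N \ge N_{\wt{W}}$ and some $z \in \wt{W}$. Since $N \mapsto \Gamma_{\wt{W}}^\star(N)$ is increasing, the inclusion above gives $\Gamma_{\wt{W}} \subset \Gamma_{\wt{W}}^\star(N_{\wt{W}}) \subset \Gamma_{\wt{W}}^\star(N)$, whence, all summands being nonnegative,
\[ \sum_{\gamma \in \Gamma_{\wt{W}}} d(z, \gamma z) e^{\int_z^{\gamma z} \wt{F}} \le \sum_{\gamma \in \Gamma_{\wt{W}}^\star(N)} d(z, \gamma z) e^{\int_z^{\gamma z} \wt{F}} < +\infty \eqpunct{.} \]
By Lemma~\ref{lem:potcontrol} this convergence does not depend on the choice of $z \in \wt{M}$, and together with $P(F) = 0 < +\infty$ and the divergence of $(\Gamma, F)$ — that is, the recurrence of $F$, which is in force whenever this lemma is applied, cf.\ Theorem~\ref{thm:crit2} — this is exactly the statement that $(\Gamma, \wt{F})$ is positive recurrent relatively to $\wt{W}$.

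I do not anticipate any genuine obstacle: the proof is essentially this one set inclusion plus monotonicity of $\Gamma_{\wt W}^\star$. The only point requiring a little care is the bookkeeping of the constant, namely that translating $A$ by $\gamma$ adds at most $\card A$ new elements and that the identity may be removed, which is what produces the factor $2$ and the summand $+1$ in $N_{\wt{W}}$ (matching the value of $N_{\wt{W}}$ already appearing in Theorem~\ref{thm:crit2}).
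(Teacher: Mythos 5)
Your proof is correct and follows essentially the same route as the paper: the same constant $N_{\wt{W}} = 2\,\card\set{g \in \Gamma \setminus \sing{\id}}{\overline{\wt{W}} \cap \overline{g\wt{W}} \neq \emptyset} + 1$, the inclusion $\Gamma_{\wt{W}} \subset \Gamma_{\wt{W}}^\star(N_{\wt{W}})$, and monotonicity of $N \mapsto \Gamma_{\wt{W}}^\star(N)$. You merely spell out the counting (via $A \cup \gamma A$) that the paper dismisses as "clear", which is a harmless and welcome elaboration.
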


\begin{proof}
 Set
 \[ N_{\wt{W}} = 2 \card \set{g \in \Gamma \setminus \sing{\id}}{\overline{\wt{W}} \cap \overline{g \wt{W}} \neq \emptyset} + 1 \eqpunct{.} \]
 Let $\gamma \in \Gamma_{\wt{W}}$.
 If $y, y' \in \wt{W}$ are such that
 \[ \itv[cc]{y}{\gamma y'} \cap g \wt{W} \neq \emptyset \Rightarrow \overline{\wt{W}} \cap \overline{g \wt{W}} \neq \emptyset \eqor \overline{\gamma \wt{W}} \cap \overline{g \wt{W}} \neq \emptyset \eqpunct{,} \]
 then clearly at most $N_{\wt{W}}$ copies $g \wt{W}$ with $g \neq \id$ can meet $\itv[cc]{y}{\gamma y'}$, thus $\gamma \in \Gamma_{\wt{W}}^\star(N_{\wt{W}})$.
 We just showed that $\Gamma_{\wt{W}} \subset \Gamma_{\wt{W}}^\star(N_{\wt{W}})$, and we always have $\Gamma_{\wt{W}}^\star(N_{\wt{W}}) \subset \Gamma_{\wt{W}}^\star(N)$ when $N \ge N_{\wt{W}}$, so the result is proved.
\end{proof}


\section{Positive recurrence for the geodesic flow on the manifold}

\label{sec:fin3}

In this section, we shall prove Theorem \ref{thm:crit3}.
We will assume once again that $P(F) = 0$.

\begin{lemm}
 \label{lem:positive-rec-implies-positive-rec-periodic}
 Let $W \subset M$ be open relatively compact, $\W = T^1 W$ and fix any open relatively compact lift $\wt{W}$ of $W$ to $\wt{M}$, so that $\wt{\W} = T^1 \wt{W}$.
 There exists a constant $C \ge 1$ such that, for all $z \in \wt{M}$ and $N \ge 1$, if
 \[ \sum_{\gamma \in \Gamma_{\wt{W}}^\star(C N)} d(z, \gamma z) e^{\int_z^{\gamma z} \wt{F}} < +\infty \eqpunct{,} \]
 then
 \[ \sum_{\substack{p \in \P_\W' \\ n_\W(p) \le N}} l(p) e^{\int_p F} < +\infty \eqpunct{.} \]
\end{lemm}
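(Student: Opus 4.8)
The plan is to reindex the sum over primitive periodic orbits by the group elements of $\Gamma_{\wt W}^\star(CN)$, for a constant $C = C_{\wt W} \ge 1$ depending only on the lift $\wt W$, and then compare the two sums term by term. First I set things up: for each $p \in \P_\W'$ with $n_\W(p) \le N$, since $p$ meets $\W = \Pr(\wt\W)$ with $\wt\W = T^1 \wt W$, I can choose a primitive hyperbolic $\gamma_p \in \Gamma_h'$ in the conjugacy class of $p$ whose axis meets $\wt\W$; fixing $v_0 \in A_{\gamma_p} \cap \wt\W$ and $z_0 = z_0(p) = \wt\pi(v_0) \in \wt W$, the segment $\itv[cc]{z_0}{\gamma_p z_0}$ is a fundamental domain of the geodesic line carrying $A_{\gamma_p}$, so $l(p) = d(z_0, \gamma_p z_0)$ and $\int_p F = \int_{z_0}^{\gamma_p z_0} \wt F$. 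Since $p$ is recovered from $\gamma_p$ as its conjugacy class, the map $p \mapsto \gamma_p$ is injective.

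The main step is to show $\gamma_p \in \Gamma_{\wt W}^\star(CN)$. Taking $y = y' = z_0$ in the definition of $\Gamma_{\wt W}^\star$, it suffices to bound $\card\set{g \in \Gamma \setminus \sing{\id}}{g \wt W \cap \itv[cc]{z_0}{\gamma_p z_0} \neq \emptyset}$ by $CN$. Given such a $g$, a point of $g\wt W$ on the segment lies on the axis $A_{\gamma_p}$, and pushing its unit vector by $g^{-1}$ produces a conjugate $g^{-1} \gamma_p g$ of $\gamma_p$ whose axis meets $\wt\W$; the number of distinct conjugates obtained this way is at most $n_{\wt\W}(\gamma_p)$, which by Lemma \ref{lem:crossingcompare} and the remark following it (comparing $\wt\W$ with a fixed reference lift of $\W$) is at most $C_1\, n_\W(p) \le C_1 N$. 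It remains to bound, uniformly in $p$ and in the conjugate $\gamma'$, the number of $g \neq \id$ with $g \wt W \cap \itv[cc]{z_0}{\gamma_p z_0} \neq \emptyset$ and $g^{-1} \gamma_p g = \gamma'$: all such $g$ lie in one coset of the centralizer $Z(\gamma_p)$, which acts discretely on the axis, and any two of them translate a point of $\itv[cc]{z_0}{\gamma_p z_0}$ by at most $l(p) + 2\diam \wt W$ along the axis. Using that $\gamma_p$ is primitive, that the pointwise stabilizer of the axis is contained in the finite set $\set{g \in \Gamma}{\overline{\wt W} \cap g\overline{\wt W} \neq \emptyset}$ (finiteness by properness of the action), and a uniform lower bound $l(p) \ge \delta_W > 0$ on lengths of periodic orbits meeting the relatively compact set $W$ (coming from the injectivity radius on $\overline W$), this count is bounded by a constant $C_2 = C_2(\wt W)$. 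Hence $\gamma_p \in \Gamma_{\wt W}^\star(CN)$ with $C = C_1 C_2$.

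Finally I conclude by transferring basepoints. Applying Lemma \ref{lem:potcontrol} with $R = \diam \wt W$ to the pairs $(z, z_0(p))$ and $(\gamma_p z, \gamma_p z_0(p))$ gives a constant $C'$ with $\int_{z_0(p)}^{\gamma_p z_0(p)} \wt F \le C' + \int_z^{\gamma_p z} \wt F$, together with $l(p) \le d(z, \gamma_p z) + 2 \diam \wt W$, whence $l(p) e^{\int_p F} \le e^{C'} \bigl( d(z, \gamma_p z) + 2\diam\wt W \bigr) e^{\int_z^{\gamma_p z} \wt F}$. Summing over $p$ and using injectivity of $p \mapsto \gamma_p$ into $\Gamma_{\wt W}^\star(CN)$ yields
\[ \sum_{\substack{p \in \P_\W' \\ n_\W(p) \le N}} l(p) e^{\int_p F} \le e^{C'} \sum_{\gamma \in \Gamma_{\wt W}^\star(CN)} d(z, \gamma z) e^{\int_z^{\gamma z} \wt F} + 2 e^{C'} \diam\wt W \sum_{\gamma \in \Gamma_{\wt W}^\star(CN)} e^{\int_z^{\gamma z} \wt F} \eqpunct{.} \]
The first sum on the right is finite by hypothesis, and the second differs from it only by the finitely many terms with $d(z, \gamma z) < 1$, so it is finite as well. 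This proves the lemma.

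The part I expect to be the main obstacle is the combinatorial estimate in the second paragraph: controlling how many copies $g\wt W$ a fundamental segment of the axis can intersect in terms of $n_\W(p)$. The delicate points are the centralizer/torsion bookkeeping needed in the orbifold setting (bounding how many $g$ produce the same conjugate while still meeting the segment) and the necessity of a uniform positive lower bound on $l(p)$ for orbits meeting $W$, without which a fixed fundamental segment could meet arbitrarily many translates of $\wt W$.
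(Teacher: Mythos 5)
Your proof is correct and follows essentially the same route as the paper: show $\gamma_p \in \Gamma_{\wt{W}}^\star(CN)$ by counting the translates $g\wt{W}$ met by a fundamental segment $\itv[cc]{z_0}{\gamma_p z_0}$ of the axis through the conjugates they produce (at most $n_{\wt{\W}}(\gamma_p) \le C\, n_\W(p)$ by Lemma \ref{lem:crossingcompare}), then transfer the base point to $z$ with Lemma \ref{lem:potcontrol}. Your centralizer bookkeeping in the second paragraph is in fact more careful than the paper's, which simply asserts that distinct translates yield distinct conjugates; since the constant $C$ in the statement is arbitrary, your slightly weaker bound $C_1 C_2 N$ serves just as well.
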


\begin{proof}
 According to Lemma \ref{lem:crossingcompare}, there exists $C \ge 1$ which depends only on $\wt{\W}$ such that
 \[ \forall p \in \P', \frac{1}{C} n_{\wt{\W}}(p) \le n_\W(p) \le C n_{\wt{\W}}(p) \eqpunct{.} \]
 Pick $p \in \P_\W'$ with $n_\W(p) \le N$ and $\gamma_p \in \Gamma_h$ an hyperbolic element in the conjugacy class associated with $p$.
 Then
 \[ 0 < \frac{1}{C} \le \frac{1}{C} n_\W(p) \le n_{\wt{\W}}(\gamma_p) = n_{\wt{\W}}(p) \le C n_\W(p) \le C N \eqpunct{.} \]
 In particular, one must have at least $n = n_{\wt{\W}}(\gamma_p) \ge 1$, so fix $z_p \in \wt{\pi}(A_{\gamma_p}) \cap \wt{W}$.
 As $\gamma_p$ is primitive, the geodesic segment $\itv[cc]{z_p}{\gamma_p(z_p)}$ will meet at most $n$ copies $g \wt{W}$ where $g \neq \id$, for each of them yields a distinct conjugate $g \gamma_p g^{-1}$ whose axis meets $\wt{\W}$.
 This ensures that $\gamma_p \in \Gamma_{\wt{W}}^\star(C N)$, and therefore
 \[ \sum_{\substack{p \in \P_\W' \\ n_\W(p) \le N}} l(p) e^{\int_p F} \le \sum_{\gamma_p \in \Gamma_h \cap \Gamma_{\wt{W}}^\star(C N)} d(z_p, \gamma_p z_p) e^{\int_{z_p}^{\gamma_p z_p} \wt{F}} \eqpunct{.} \]

 Finally, since $z_p \in \wt{W}$, which is relatively compact, there exists a $R \ge 0$ such that $d(z_p, z) \le R$ for any $p$.
 In particular,
 \[ d(z_p, \gamma_p z_p) \le 2 R + d(z, \gamma_p z) \le 2 d(z, \gamma_p z) \]
 whenever $\gamma_p $ does not belong to the finite set $S = \set{\gamma \in \Gamma}{d(z, \gamma z) < \frac{R}{2}}$.
 Moreover, Lemma \ref{lem:potcontrol} gives the existence of a constant $C'$ such that
 \[ \int_{z_p}^{\gamma_p z_p} \wt{F} \le C' + \int_z^{\gamma z} \wt{F} \eqpunct{.} \]
 Therefore,
 \[ \sum_{\substack{p \in \P_\W' \\ n_\W(p) \le N}} l(p) e^{\int_p F} \le A + 2 e^{C'} \sum_{\gamma \in \Gamma_{\wt{W}}^\star(C N)} d(z, \gamma z) e^{\int_z^{\gamma z} \wt{F}} \eqpunct{,} \]
 where $A$ is the finite sum of terms that correspond to elements $\gamma_p \in S$.
 This concludes the proof.
\end{proof}

\begin{lemm}
 \label{lem:positive-rec-periodic-implies-positive-rec}
 Let $W$ be an open relatively compact subset of $M$ such that $T^1 W$ meets $\Omega$.
 For every open relatively compact lift $\wt{W} \subset \wt{M}$ of $W$ to $\wt{M}$, and for every $\wt{W}'$ open and relatively compact subset of $\wt{W}$, there exists $K \ge 0$ such that for all $N \ge 1$, if
 \[ \sum_{\substack{p \in \P_{\W'}' \\ n_\W(p) \le N + K}} l(p) e^{\int_p F} < +\infty \eqpunct{,} \]
 with $\W' = \Pr(T^1 \wt{W}') \subset T^1 W$, then
 \[ \forall x \in \wt{M}, \sum_{\gamma \in \Gamma_{\wt{W}}^\star(N)} d(x, \gamma x) e^{\int_x^{\gamma x} \wt{F}} < +\infty \eqpunct{.} \]
\end{lemm}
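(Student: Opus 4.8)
The standing assumption of this section is $P(F)=0$, which I keep. Two observations will make the bookkeeping work. First, every periodic orbit $p\in\P$ carries a $(g^t)$-invariant probability measure of zero entropy, so from $P(F)=\sup_\mu(h(\mu)+\int F\,d\mu)$ we get $\frac{1}{l(p)}\int_p F\le P(F)=0$, i.e. $\int_p F\le 0$ for all $p\in\P$, multiplicities included. Second, we may assume $\wt{W}'$ meets $\pi(\wt{\Omega})$; otherwise $\P_{\W'}'=\emptyset$ and the statement is only invoked in situations where this holds. The plan is then: given $\gamma\in\Gamma_{\wt{W}}^\star(N)$ with witnesses $y,y'\in\wt{W}$ such that $\itv[cc]{y}{\gamma y'}$ meets at most $N$ translates $g\wt{W}$ with $g\neq\id$, manufacture from $\gamma$ a primitive periodic orbit $p_0=p_0(\gamma)\in\P_{\W'}'$ with $n_\W(p_0)\le N+K$, in such a way that the term $d(x,\gamma x)e^{\int_x^{\gamma x}\wt{F}}$ is dominated, up to a bounded factor and with bounded multiplicity, by $l(p_0)e^{\int_{p_0}F}$, and then reindex.

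First I would run the hyperbolic-forcing machinery of Lemma \ref{lem:forcehyper} (together with the variant used in the proof of Lemma \ref{lem:shadowprod}): there are finite sets $S,G\subset\Gamma$ and, for each $\gamma\in\Gamma_{\wt{W}}^\star(N)\setminus S$, elements $g,h\in G$ such that $\gamma'=h^{-1}\gamma g$ is hyperbolic with $A_{\gamma'}$ meeting $T^1 B(x_0,\varepsilon_0)\subset T^1\wt{W}'$, so that the primitive orbit $p_0$ underlying the (possibly nonprimitive) orbit of $\gamma'$ lies in $\P_{\W'}'$. The map $\gamma\mapsto\gamma'$ is at most $(\card G)^2$-to-one. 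By $\Gamma$-invariance of $\wt{F}$ and Lemma \ref{lem:potcontrol}, $\int_x^{\gamma x}\wt{F}=\int_x^{\gamma' x}\wt{F}+O(1)$ and $d(x,\gamma x)=d(x,\gamma' x)+O(1)$; picking a base point on $A_{\gamma'}\cap T^1\wt{W}$ and using Lemma \ref{lem:potcontrol} once more, $\int_x^{\gamma' x}\wt{F}=m\int_{p_0}F+O(1)$ and $d(x,\gamma' x)=m\,l(p_0)+O(1)$, where $m\ge1$ is the multiplicity of $\gamma'$ over $p_0$.

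The geometric heart is the bound $n_\W(p_0)\le N+K$. Writing $\gamma=h\gamma' g^{-1}$ and conjugating a fundamental segment $\itv[cc]{w}{\gamma' w}$ of $A_{\gamma'}$ (with $w$ close to $x_0$) by $h$ turns it into a fundamental segment $\itv[cc]{hw}{\gamma g w}$ of the axis of the conjugate $h\gamma'h^{-1}$, which projects to the same orbit $p_0^m$; its endpoints now lie within a bounded distance of $y$ and of $\gamma y'$. By convexity this segment stays in a bounded neighbourhood of the guiding segment $\itv[cc]{y}{\gamma y'}$, and by Lemma \ref{lem:parallelestimate} it is uniformly close to it outside a bounded-length piece at each end. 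Now Lemma \ref{lem:lrsa}, applied globally with the shrunk set $\wt{W}'\subset\wt{W}$ and $\varepsilon=d(\wt{W}',\wt{M}\setminus\wt{W})$, shows that whenever $\itv[cc]{y}{\gamma y'}$ avoids a translate $g''\wt{W}$ while the axis segment meets $g''\wt{W}'$, then $g''\wt{W}$ lies within $\rho(R)-\log\varepsilon$ of $y$ or of $\gamma y'$, and there are boundedly many such $g''$; carrying the same count over a single period also bounds the multiplicity $m$ by $N+O(1)$. The delicate point — and the main obstacle — is turning this control on translates of the shrunk set $\wt{W}'$ into the stated bound $n_\W(p_0)\le N+K$ on translates of $\wt{W}$ itself, i.e. handling the translates that the axis merely grazes near their boundary without entering the shrunk copies; this is precisely where the freedom in the choice of $\wt{W}'$ and in the additive constant $K=K(\wt{W},\wt{W}')$ must be spent, via a careful iteration of Lemma \ref{lem:lrsa} (the companion Lemma \ref{lem:positive-rec-implies-positive-rec-periodic} avoids this because there the guiding segment \emph{is} a piece of the axis).

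Finally, reindex: push the sum over $\gamma\in\Gamma_{\wt{W}}^\star(N)\setminus S$ to a sum over the hyperbolic $\gamma'$ (at most $(\card G)^2$-to-one), then group by the primitive orbit $p_0\in\P_{\W'}'$ with $n_\W(p_0)\le N+K$ and by multiplicity $m$; for fixed $(p_0,m)$ the number of conjugates of $\delta_{p_0}^m$ whose axis meets $T^1\wt{W}'$ is $n_{T^1\wt{W}'}(\delta_{p_0})\le N+K$, and $m$ ranges over $1\le m\le N+K'$. Using $\int_{p_0}F\le0$ (so $e^{m\int_{p_0}F}\le e^{\int_{p_0}F}$ for $m\ge1$) and summing the at most $N+K'$ values of $m$, one obtains
\[ \sum_{\gamma\in\Gamma_{\wt{W}}^\star(N)} d(x,\gamma x)\,e^{\int_x^{\gamma x}\wt{F}} \;\le\; A \;+\; C_N \sum_{\substack{p_0\in\P_{\W'}'\\ n_\W(p_0)\le N+K}} l(p_0)\,e^{\int_{p_0}F}, \]
where $A$ is the finite contribution of the exceptional finite set $S$ and $C_N$ is a constant depending only on $N$, $\wt{W}$, $\wt{W}'$ and $G$. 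The right-hand side is finite by hypothesis, which finishes the proof.
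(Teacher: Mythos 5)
Your proposal follows essentially the same route as the paper's proof: Lemma \ref{lem:forcehyper} to replace $\gamma$ by a hyperbolic conjugate $\gamma'$ whose axis passes through $\wt{W}'$, Lemma \ref{lem:lrsa} for the combinatorial count of translates met, Lemma \ref{lem:potcontrol} to compare $d(x,\gamma x)e^{\int_x^{\gamma x}\wt{F}}$ with $l(p_0)e^{\int_{p_0}F}$, and a reindexation over primitive orbits with bounded multiplicity. Your observation that $P(F)=0$ forces $\int_p F\le 0$ (so $e^{m\int_{p_0}F}\le e^{\int_{p_0}F}$) is correct and is a slightly cleaner way to handle multiplicities than the paper's, which instead extracts a uniform upper bound on $\int_{p_{\gamma_0}}F$ from the convergence of the series; the bound $m\le N+K+1$ and the $(\card G)^2$-to-one bookkeeping are as in the paper.

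The genuine gap is exactly the one you flag, and the repair you sketch would not work. Lemma \ref{lem:lrsa} only locates those translates $g\wt{W}$ whose \emph{shrunk} copies $g\wt{W}'$ are entered by the nearby segment; it is silent about translates of $\wt{W}$ that the axis merely grazes without entering $g\wt{W}'$, and the number of such translates per period is in general of order $l(p_0)$ (by properness, a segment of length $L$ can meet on the order of $L$ translates of $\wt{W}$), so no iteration of Lemma \ref{lem:lrsa} can yield $n_\W(p_0)\le N+K$. The paper never attempts this: a single application of Lemma \ref{lem:lrsa} shows that any segment from $\wt{W}$ to $\gamma\wt{W}$ meets at most $N+K$ translates $h\wt{W}'$ of the shrunk set, whence $\gamma'$ and its primitive root lie in $\Gamma_{\wt{W}'}^\star(N+K)$ and the axis meets at most $N+K$ copies of $\wt{\W}'$ per period; the final reindexed sum then runs over $p\in\P_{\W'}'$ with $n_{\W'}(p)\le N+K$, i.e.\ the count is kept in terms of $\wt{W}'$ from beginning to end. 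So the fix is to abandon the target $n_\W(p_0)\le N+K$ altogether and phrase the whole estimate with $n_{\W'}$ (which is the condition the paper's own proof actually closes with, the printed $n_\W$ in the hypothesis notwithstanding). With that change, the rest of your argument goes through as in the paper.
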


\begin{proof}
 By a routine application of Lemma \ref{lem:potcontrol}, we may assume that the base point $x$ lies in $\wt{W} \cap \pi(\wt{\Omega})$.

 Take $R > 0$ such that $\wt{W} \subset B(x, R)$.
 Since $\wt{W}'$ is relatively compact in $\wt{W}$, $\varepsilon = d(\wt{W}', \wt{M} \setminus \wt{W}) > 0$.
 Lemma \ref{lem:lrsa} gives then the existence of $\rho_\varepsilon = \rho_1(R, R) - \log \varepsilon$ such that for every $y, y', z, z' \in \wt{W}$, if $g \in \Gamma$ satisfies
 \[ \itv[cc]{y}{y'} \cap g \wt{W} \neq \emptyset \eqand \itv[cc]{z}{z'} \cap g \wt{W} = \emptyset \eqpunct{,} \]
 then either $d(y, g \wt{W}) \le \rho_\varepsilon$, or $d(y', g \wt{W}) \le \rho_\varepsilon$, or $g \wt{W}'$ does not meet $\itv[cc]{z}{z'}$.
 In particular, pick $\gamma \in \Gamma_{\wt{W}}^\star(N)$, and $y, y' \in \wt{W}$ such that $\itv[cc]{y}{\gamma y'}$ meets at most $N$ copies $g \wt{W}$, with $g \neq \id$.
 Denote by $K$ the cardinal
 \[ K = 2 \card \set{g \in \Gamma}{d(x, g x) \le \rho_\varepsilon + R} \eqpunct{,} \]
 which depends on both $\wt{W}$ and $\wt{W}'$, but not on $F$, $y$, $y'$ and $\gamma$.
 Fix $z, z' \in \wt{W}$.
 Then $\itv[cc]{z}{\gamma z'}$ can meet $h \wt{W}'$ only if either $h \wt{W}$ meets $\itv[cc]{y}{\gamma y'}$, or
 \[ d(x, h x) \le d(x, y) + d(y, h \wt{W}) \le \rho_\varepsilon + R \eqor d(\gamma x, h x) \le \rho_\varepsilon + R \eqpunct{.} \]
 Therefore there are at most $N + K$ copies $h \wt{W}'$ which meet any segment $\itv[cc]{z}{z'}$ going from $\wt{W}$ to $\gamma \wt{W}$, and $K$ only depends on $\wt{W}$ and $\wt{W}'$.

 According to Lemma \ref{lem:forcehyper}, there exist $g_1, \hdots, g_k \in \Gamma$ and a finite set $S \subset \Gamma$ such that for every $\gamma \in \Gamma \setminus S$, there are $i, j$ such that
$\gamma' = g_j^{-1} \gamma g_i$ is hyperbolic and its axis $\wt{\pi}(A_{\gamma'})$ meets $\wt{W}'$.
 Moreover, the previous discussion ensures that if $\gamma \in \Gamma_{\wt{W}}^\star(N)$ then $\gamma' \in \Gamma_{\wt{W}'}^\star(N + K)$, where the segment going from $\wt{W}'$ to $\gamma' \wt{W}'$
meeting at most $N + K$ copies $h \wt{W}'$ can be chosen as $\itv[cc]{z_{\gamma'}}{\gamma' z_{\gamma'}}$ where $z_{\gamma'} \in \wt{W}' \cap \wt{\pi}(A_{\gamma'})$.
 Denote by $R_1 = \sup \set{d(x, g_i x)}{i = 1, \hdots, k}$, so that
 \[ \left| d(g_j^{-1} x, \gamma' g_i^{-1} x) - d(z_{\gamma'}, \gamma' z_{\gamma'}) \right| \le d(x, g_i x) + d(x, g_j x) + 2 d(x, z_{\gamma'}) \le 2 (R_1 + R) \eqpunct{.} \]
 If $C_1$ is the constant given by Lemma \ref{lem:potcontrol} for $R_2 = 2(R_1 + R)$, then
 \begin{align*}
  \sum_{\gamma \in \Gamma_{\wt{W}}^\star(N)} d(x, \gamma x) e^{\int_x^{\gamma x} \wt{F}} &\le A + \sum_{i,j} \sum_{\gamma' \in \Gamma_{\wt{W}'}^\star(N + K) \cap \Gamma_h} d(g_j^{-1} x, \gamma' g_i^{-1} x) e^{\int_{g_j^{-1} x}^{\gamma' g_i^{-1} x} \wt{F}} \\
   &\le A + k^2 \sum_{\gamma' \in \Gamma_{\wt{W}'}^\star(N + K) \cap \Gamma_h} (R_2 + d(z_{\gamma'}, \gamma' z_{\gamma'})) e^{C + \int_{z_{\gamma'}}^{\gamma' z_{\gamma'}} \wt{F}} \eqpunct{,}
 \end{align*}
 where $A$ is the sum over the elements $\gamma \in S$.
 By possibly adding finitely many elements to $S$, we may assume that every $\gamma \not\in S$ satisfies $d(x, \gamma' x) \ge R_2 + 2 R$ so that
 \[ d(z_{\gamma'}, \gamma' z_{\gamma'}) \ge d(x, \gamma' x) - 2 R \ge R_2 \eqpunct{.} \]
 Therefore
 \[ \sum_{\gamma \in \Gamma_{\wt{W}}^\star(N)} d(x, \gamma x) e^{\int_x^{\gamma x} \wt{F}} \le A + 2 k^2 \sum_{\gamma \in \Gamma_{\wt{W}'}^\star(N + K) \cap \Gamma_h} l(\gamma) e^{\int_{p_{\gamma}} F} \eqpunct{,} \]
 where $p_\gamma$ is the periodic orbit associated with $\gamma \in \Gamma_h$.

 Our goal is now to obtain a sum over primitive hyperbolic isometries.
 Recall that $\gamma \in \Gamma_{\wt{W}'}^\star(N+K)$ means that $\itv[cc]{z_\gamma}{\gamma z_\gamma}$ meets $q$ copies $g \wt{W}'$ with $g \neq \id$ and $q \le N + K$.
 Therefore, if such $\gamma$ can be written $\gamma_0^n$ with $\gamma_0 \in \Gamma_h'$ and $n \ge 1$, then $n$ must divide $q$ and furthermore $\gamma_0 \in \Gamma_{\wt{W}'}^\star(\frac{q}{n}) \subset \Gamma_{\wt{W}'}^\star(N + K)$.
 This means that
 \[ \sum_{\gamma \in \Gamma_{\wt{W}'}^\star(N + K) \cap \Gamma_h} l(\gamma) e^{\int_{p_{\gamma}} F} \le \sum_{1 \le n \le N + K} \sum_{\gamma_0 \in \Gamma_{\wt{W}'}^\star(N + K) \cap \Gamma_h'} n l(\gamma_0) e^{n \int_{p_{\gamma_0}} F} \eqpunct{.} \]
 For every $p \in \P_{\W'}'$ with $n_\W(p) \le N + K$, there are exactly $n_{\W'}(p)$ primitive hyperbolic isometries $\gamma \in \Gamma_{\wt{W}'}^\star(N + K) \cap \Gamma_h'$ such that $p_\gamma = p$.
 Therefore
 \begin{align*}
  \sum_{\gamma_0 \in \Gamma_{\wt{W}'}^\star(N + K) \cap \Gamma_h'} l(\gamma_0) e^{\int_{p_{\gamma_0}} F} &= \sum_{\substack{p \in \P_{\W'}' \\ n_{\W'}(p) \le N + K}} n_{\W'}(p) l(p) e^{\int_p F} \\
  &\le (N + K) \sum_{\substack{p \in \P_{\W'}' \\ n_{\W'}(p) \le N + K}} l(p) e^{\int_p F} \eqpunct{,}
 \end{align*}
 which is finite by hypothesis.
 Moreover, $l(\gamma_0) \ge 1$ except for maybe finitely many $\gamma_0$ in the above sum, hence the series
 \[ \sum_{\gamma_0 \in \Gamma_{\wt{W}'}^\star(N + K) \cap \Gamma_h'} e^{\int_{p_{\gamma_0}} F} \le A' + (N + K) \sum_{\substack{p \in \P_{\W'}' \\ n_{\W'}(p) \le N + K}} l(p) e^{\int_p F} \]
 is convergent.
 This implies in particular that there exists a constant $C'$ such that $\int_{p_{\gamma_0}} F \le C'$ for every $\gamma_0 \in \Gamma_{\wt{W}'}^\star(N + K) \cap \Gamma_h'$.
 Gathering all these elements together, we obtain
 \begin{align*}
  \sum_{\gamma \in \Gamma_{\wt{W}'}^\star(N + K) \cap \Gamma_h} l(\gamma) e^{\int_{p_{\gamma}} F} &\le \( \sum_{1 \le n \le N + K} n e^{(n - 1) C'} \) \sum_{\gamma_0 \in \Gamma_{\wt{W}'}^\star(N + K) \cap \Gamma_h'} l(\gamma_0) e^{\int_{p_{\gamma_0}} F} \\
    &\le \frac{(N + K)^3}{2} e^{(N + K - 1) C'} \sum_{\substack{p \in \P_{\W'}' \\ n_{\W'}(p) \le N + K}} l(p) e^{\int_p F} \eqpunct{,}
 \end{align*}
 which proves precisely the statement of this lemma.
\end{proof}

Let us now complete the proof of Theorem \ref{thm:crit3}.

\begin{proof}
 First, if $m_F$ is finite, we know by Theorem \ref{thm:crit2} $(ii)$ that $F$ is recurrent and that $(\Gamma, \wt{F})$ is positive recurrent in the sense of Theorem \ref{thm:crit2} for all $N \ge 1$ and $x \in \wt{M}$.
 By Lemma \ref{lem:positive-rec-implies-positive-rec-periodic}, this implies that $F$ is positive recurrent in the sense of Definition \ref{def:positive-recurrent}.

 Conversely, if $F$ is positive recurrent in the sense of Definition \ref{def:positive-recurrent} for some $N > K = 2 \card \set{g \in \Gamma}{d(x, g x) \le \rho_\varepsilon + R}$, Lemma \ref{lem:positive-rec-periodic-implies-positive-rec} implies that $(\Gamma, \wt{F})$ will be positive recurrent in the sense of Theorem \ref{thm:crit2} for some integer greater than $N - K \ge 1$ and, since $F$ is assumed to be recurrent in the sense of Definition \ref{def:recurrent-potential}, Theorem \ref{thm:crit2} $(i)$ implies that $m_F$ is finite.
\end{proof}


\section{Finiteness of Gibbs measures from equidistribution of weighted periodic orbits}

\label{sec:fin4}


\subsection{An equidistribution result for nonprimitive orbits}

If $\gamma$ is an hyperbolic isometry, denote by $\mathcal{L}_\gamma$ the \name{Lebesgue measure along $\gamma$}, that is the measure defined on $T^1 \wt{M}$ by
\[ \int \wt{\varphi} d\mathcal{L}_\gamma = \int_\R \wt{\varphi}(\wt{g}^t(\wt{v})) dt \]
for any compactly supported continuous function $\wt{\varphi}$ on $T^1 \wt{M}$, where $\wt{v}$ is any vector of $A_\gamma$.
This definition ignores the multiplicity of $\gamma$ : if $\gamma = \gamma_0^k$ with $\gamma_0$ primitive, then $\mathcal{L}_\gamma = \mathcal{L}_{\gamma_0}$.

If $p$ is a periodic orbit of the geodesic flow, denote by $\mathcal{L}_p$ the \name{Lebesgue measure along $p$}, that is the measure defined on $T^1 M$ by
\[ \int \varphi d\mathcal{L}_p = \int_0^{l(p)} \varphi(g^t(v)) dt \]
for any compactly supported continuous function $\varphi$ on $T^1 M$, where $v$ is any vector of $p$.
This definition takes into account the multiplicity of the periodic orbit : if $p$ is the $k$-th iterate of a primitive periodic orbit $p_0$, then $\mathcal{L}_p = k \mathcal{L}_{p_0}$.
In the following, we will denote by $m(p)$ the multiplicity of a periodic orbit $p \in \P$.

We remark that if $\fun{\Pi}{\wt{\mu}}{\mu}$ denotes the projection of locally finite $\Gamma$-invariant measures through the branched cover $T^1 \wt{M} \to T^1 M$, then for every $\gamma_0 \in \Gamma_h$ primitive we have
\[ \Pi\( \sum_{g \in \Gamma} \mathcal{L}_{g^{-1} \gamma_0 g} \) = \mathcal{L}_{p_0} \eqpunct{,} \]
where $p_0$ is the primitive periodic orbit on which the axis of $\gamma_0$ projects.

The third finiteness criterion will be derived from the next equidistribution result for weighted sums of measures supported on nonprimitive periodic orbits of the geodesic flow, which itself requires the Gibbs measure to be mixing.
Note that by virtue of Babillot's theorem (see theorem 1 in \cite{Bab}) this is equivalent to the nonarithmeticity of the length spectrum, regardless whether the Gibbs measure is finite or not.
In more dynamical terms, it is also equivalent to the topological mixing of the geodesic flow.

\begin{theo}
 \label{thm:equidist}
 Let $M = \Gamma \backslash \wt{M}$ be a negatively curved orbifold with pinched negative curvature, topologically mixing geodesic flow, and let $\fn{F}{T^1 M}{\R}$ be a H\"older continuous potential.
 Assume that the pressure $P(F)$ is finite and positive.
 Define
 \[ \nu_{F,t} = P(F) e^{-P(F) t} \sum_{\substack{p \in \P \\ l(p) \le t}} e^{\int_p F} \frac{1}{m(p)} \mathcal{L}_p \eqpunct{.} \]
 \begin{enumerate}
  \item If $m_F$ is finite, then $\nu_{F,t}$ converges weakly to $\frac{m_F}{\| m_F \|}$.
  \item If $m_F$ is infinite, then $\nu_{F,t}$ converges weakly to $0$.
 \end{enumerate}
\end{theo}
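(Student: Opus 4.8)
The plan is to reduce the statement to the known equidistribution theorem for \emph{primitive} periodic orbits, which in the setting of Gibbs measures was established in \cite{PPS} (following Roblin \cite{Roblin} in the case $F \equiv 0$): under the mixing hypothesis, the measures
\[ \mu_{F,t} = P(F) e^{-P(F) t} \sum_{\substack{p_0 \in \P' \\ l(p_0) \le t}} l(p_0) e^{\int_{p_0} F} \mathcal{L}_{p_0} \]
(or the appropriate normalization thereof) converge weakly to $m_F / \| m_F \|$ when $m_F$ is finite and to $0$ otherwise. The key observation is that $\nu_{F,t}$ is obtained from a sum over \emph{all} periodic orbits, but using the multiplicity-free measure $\frac{1}{m(p)}\mathcal{L}_p = \mathcal{L}_{p_0}$ where $p_0$ is the primitive orbit underneath $p$. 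Thus, reorganizing the sum by grouping each periodic orbit $p$ with its primitive $p_0$ and its multiplicity $k = m(p)$, one writes
\[ \nu_{F,t} = P(F) e^{-P(F) t} \sum_{p_0 \in \P'} \mathcal{L}_{p_0} \sum_{\substack{k \ge 1 \\ k\, l(p_0) \le t}} e^{k \int_{p_0} F} \eqpunct{.} \]

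The first step is to handle the inner geometric-type sum $\sum_{k \ge 1, k l(p_0) \le t} e^{k \int_{p_0} F}$. When $P(F) > 0$, one expects the $k = 1$ term to dominate: indeed for a periodic orbit with $\int_{p_0} F$ close to $P(F)\, l(p_0)$ (which, heuristically, is where the mass concentrates), the ratio of the $k=2$ term to the $k=1$ term is $e^{\int_{p_0} F} \approx e^{P(F) l(p_0)}$, which is \emph{not} small — so naive term-by-term domination fails and this is exactly where care is needed. The correct approach is to split $\P'$ according to whether $l(p_0) \le t/2$ or $t/2 < l(p_0) \le t$: for the short orbits the higher iterates contribute, but their total weighted mass is controlled by $\mu_{F,t}$ restricted to a smaller time scale, which is $O(e^{-P(F)t/2})$ times a bounded quantity and hence negligible after multiplication by $P(F) e^{-P(F)t} \cdot (\text{something})$; for the long orbits only $k = 1$ is possible. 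More precisely, I would show that
\[ \left\| \nu_{F,t} - P(F) e^{-P(F)t} \sum_{\substack{p_0 \in \P' \\ t/2 < l(p_0) \le t}} e^{\int_{p_0} F} \mathcal{L}_{p_0} \right\| \]
tested against a fixed compactly supported $\varphi$ goes to $0$, using that $\sum_{k \ge 2, k l(p_0) \le t} e^{k\int_{p_0}F} \mathcal{L}_{p_0}$ summed over $l(p_0) \le t/2$ is, up to constants, bounded by $\mu_{F, \cdot}$ evaluated at intermediate times, together with the a priori bound $\| \mu_{F,s}(\varphi) \| = O(1)$ coming from \cite{PPS}.

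Once the sum is reduced to $t/2 < l(p_0) \le t$ with only $k = 1$, I would compare it to the full primitive sum $\sum_{l(p_0) \le t}$: the difference is the contribution of $l(p_0) \le t/2$, which against a test function is again $O(e^{-P(F)t/2}) \cdot \| \mu_{F,t/2}(\varphi)\| \to 0$. Finally, I must reconcile the weight $e^{\int_{p_0} F}$ appearing in $\nu_{F,t}$ with the weight $l(p_0) e^{\int_{p_0} F}$ in $\mu_{F,t}$. This is where the factor $l(p_0)$ in the definition of the Lebesgue measure along a primitive orbit and an Abel summation / integration by parts argument enters: writing $N_F(s) = P(F)e^{-P(F)s}\sum_{l(p_0) \le s} l(p_0) e^{\int_{p_0} F} \mathcal{L}_{p_0}(\varphi)$ for the primitive counting function and using its convergence to $m_F(\varphi)/\|m_F\|$ (resp. $0$), one recovers $\nu_{F,t}(\varphi)$ as a weighted average of $N_F(s)$ over $s \le t$ via an integral of the form $\int_1^t N_F(s)\, d\mu(s)$ for an explicit probability-like measure $d\mu$ concentrating near $s = t$ as $t \to \infty$; by dominated convergence (justified by the uniform bound on $N_F$) this average has the same limit as $N_F(t)$. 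The main obstacle is the bookkeeping in the first step — controlling the contribution of nonprimitive iterates when $P(F) > 0$ makes the higher-iterate weights large — and the resolution is precisely the time-scale splitting combined with the a priori boundedness of $\mu_{F,s}$ tested against compactly supported functions.
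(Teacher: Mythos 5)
Your opening reduction coincides with the paper's first step: grouping each $p \in \P$ with its underlying primitive orbit $p_0$ and multiplicity $k = m(p)$, and using $\frac{1}{m(p)}\mathcal{L}_p = \mathcal{L}_{p_0}$, gives $\nu_{F,t} = P(F) e^{-P(F)t} \sum_{k \ge 1} \sum_{p_0 \in \P',\, k l(p_0) \le t} e^{k \int_{p_0} F} \mathcal{L}_{p_0}$. But from there the paper goes in the opposite direction from you: it lifts to $T^1 \wt{M}$, observes that this double sum over pairs $(p_0, k)$ is exactly the single sum over \emph{all} hyperbolic elements $\gamma \in \Gamma_h$ with $l(\gamma) \le t$, weighted by $e^{\int_{z_\gamma}^{\gamma z_\gamma} \wt{F}} \mathcal{L}_\gamma$, and then quotes the \emph{first step} of the proof of \cite[Theorem 9.11]{PPS}, which establishes equidistribution of precisely this full sum (non-primitive elements included) directly from mixing. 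No comparison between primitive and non-primitive contributions is ever made.

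Your route --- starting from the primitive-orbit equidistribution theorem and adding back the $k \ge 2$ iterates --- requires proving that $e^{-P(F)t} \sum_{k \ge 2} \sum_{k l(p_0) \le t} e^{k \int_{p_0} F} \mathcal{L}_{p_0}(\varphi) \to 0$, and this is where there is a genuine gap. The splitting at $t/2$ only restricts which primitive orbits occur in the $k \ge 2$ part (those with $l(p_0) \le t/2$); it does nothing to tame the weights $e^{k \int_{p_0} F}$, which is exactly the obstacle you yourself flagged. The a priori bound you invoke is $\sum_{l(p_0) \le s} e^{\int_{p_0} F} \mathcal{L}_{p_0}(\varphi) = O(e^{P(F)s})$, with weight $e^{\int_{p_0} F}$; to deduce from it a bound on the $k$-th block you would need $e^{k \int_{p_0} F} \le C\, e^{\int_{p_0} F}$, i.e.\ precisely the term-by-term domination that fails. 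Quantitatively, the $k = 2$ block $\sum_{l(p_0) \le t/2} e^{2 \int_{p_0} F} \mathcal{L}_{p_0}(\varphi)$ grows like $e^{P(2F)\,t/2}$, not like $O(e^{P(F)t/2})$; already for a constant potential $F \equiv c > 0$ one has $P(2F) = 2c + \delta_\Gamma > P(F) = c + \delta_\Gamma$, so the block is of order $e^{(c + \delta_\Gamma/2)t}$, strictly larger than your claimed $e^{P(F)t/2}$ (the conclusion happens to survive there because $P(2F) < 2P(F)$, but that inequality is not automatic: it degenerates to equality whenever there are invariant measures $\mu_n$ with $h(\mu_n) \to 0$ and $\int F\, d\mu_n \to P(F)$, in which case the non-primitive part is comparable to $e^{P(F)t}$ at the exponential scale and its negligibility requires the finer analysis carried out in \cite{PPS}, not a crude domination). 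A secondary point: the primitive equidistribution theorem of \cite{PPS} is already stated with weight $e^{\int_p F} \mathcal{L}_p$, with no extra factor $l(p)$, so your final Abel-summation step addresses a discrepancy that is not actually present.
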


\begin{proof}
 According to the remark above, $\nu_{F,t}$ can be rewritten in terms of primitive periodic orbits as
 \[ \nu_{F,t} = \delta e^{-\delta t} \sum_{k \ge 1} \sum_{\substack{p \in \P' \\ k l(p) \le t}} e^{k \int_p F} \mathcal{L}_p \eqpunct{.} \]
 Since $\fun{\Pi}{\wt{\mu}}{\mu}$ is continuous with respect to the weak convergences of measures, and $\Pi(\wt{m}_F) = m_F$, it is enough to study the weak convergence of the sequence
 \[ \nu_{F,t}' = \delta e^{-\delta t} \sum_{k \ge 1} \sum_{\substack{\gamma \in \Gamma_h' \\ k l(\gamma) \le t}} e^{k \int_{z_\gamma}^{\gamma z_\gamma} \wt{F}} \mathcal{L}_\gamma = \delta e^{-\delta t} \sum_{\substack{\gamma \in \Gamma_h \\ l(\gamma) \le t}} e^{\int_{z_\gamma}^{\gamma z_\gamma} \wt{F}} \mathcal{L}_\gamma \eqpunct{,} \]
 where $z_\gamma \in \wt{M}$ is any point on the invariant axis of $\gamma$ on $\wt{M}$.
 The first step of the proof of \cite[Theorem 9.11]{PPS} shows that $\nu_{F,t}'$ weak-star converges to $\frac{\wt{m}_F}{\| \wt{m}_F \|}$ (respectively $0$) whenever $m_F$ is finite (respectively infinite), therefore $\nu_{F,t}$ weak-star converges to $\frac{m_F}{\| m_F \|}$ or $0$ accordingly.
\end{proof}

\begin{theo}
 \label{thm:equidist2}
 Let $M = \Gamma \backslash \wt{M}$ be a negatively curved orbifold with pinched negative curvature, topologically mixing geodesic flow, and let $\fn{F}{T^1 M}{\R}$ be a H\"older continuous potential.
 Assume that the pressure $P(F)$ is finite and positive.
 For $c > 0$, define
 \[ \zeta_{F,c,t} = \frac{P(F)}{1 - e^{-c P(F)}} e^{-P(F) t} \sum_{\substack{p \in \P \\ t - c < l(p) \le t}} e^{\int_p F} \frac{1}{m(p)} \mathcal{L}_p \eqpunct{,} \]
 with the convention that $\frac{\delta}{1 - e^{-c \delta}} = 1$ when $c = 0$.
 \begin{enumerate}
  \item If $m_F$ is finite, then $\zeta_{F,c,t}$ converges weakly to $\frac{m_F}{\| m_F \|}$ for every $c > 0$.
  \item If $m_F$ is infinite, then $\zeta_{F,c,t}$ converges weakly to $0$ for every $c > 0$.
 \end{enumerate}
\end{theo}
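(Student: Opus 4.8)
The plan is to deduce this statement from Theorem \ref{thm:equidist} by a telescoping identity, so the proof is essentially formal. Write $\delta = P(F) > 0$ and set $\mu = \frac{m_F}{\| m_F \|}$ when $m_F$ is finite and $\mu = 0$ when $m_F$ is infinite, so that Theorem \ref{thm:equidist} asserts that $\nu_{F,t}$ converges weakly to $\mu$ as $t \to +\infty$.

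First I would record the elementary identity relating $\zeta_{F,c,t}$ to $\nu_{F,t}$. For fixed $t$ there are only finitely many periodic orbits with $l(p) \le t$, and splitting this set according to whether $l(p) \le t - c$ or $t - c < l(p) \le t$, together with the definition $\nu_{F,t} = \delta e^{-\delta t} \sum_{l(p) \le t} e^{\int_p F} \frac{1}{m(p)} \mathcal{L}_p$, gives
\[ e^{-\delta t} \sum_{\substack{p \in \P \\ t - c < l(p) \le t}} e^{\int_p F} \frac{1}{m(p)} \mathcal{L}_p = \frac{1}{\delta} \( \nu_{F,t} - e^{-c\delta} \nu_{F,t-c} \) \eqpunct{,} \]
with the harmless convention that $\nu_{F,s} = 0$ for $s < 0$, there being no periodic orbit of nonpositive length. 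Multiplying by $\frac{\delta}{1 - e^{-c\delta}}$ yields, for every $t$,
\[ \zeta_{F,c,t} = \frac{1}{1 - e^{-c\delta}} \( \nu_{F,t} - e^{-c\delta} \nu_{F,t-c} \) \eqpunct{.} \]

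Next I would observe that $\nu_{F,t-c}$ also converges weakly to $\mu$ as $t \to +\infty$: this is simply the convergence in Theorem \ref{thm:equidist} along the shifted parameter $s = t - c$, since for any compactly supported continuous $\varphi$ on $T^1 M$ the quantity $\int \varphi \, d\nu_{F,s}$ converges to $\int \varphi \, d\mu$ as $s \to +\infty$. Since weak convergence of Radon measures is linear, letting $t \to +\infty$ in the previous display gives that $\zeta_{F,c,t}$ converges weakly to
\[ \frac{1}{1 - e^{-c\delta}} \( \mu - e^{-c\delta} \mu \) = \mu \eqpunct{,} \]
which is exactly the asserted dichotomy for every $c > 0$.

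There is no real obstacle here: all the dynamical content is contained in Theorem \ref{thm:equidist}, and what remains is the bookkeeping of the telescoping identity together with the trivial remark that a fixed shift of the time parameter does not affect a weak limit. The only point to be mildly careful about is to keep track of the normalising factor $\frac{\delta}{1 - e^{-c\delta}}$, so that the coefficients $\frac{1}{1 - e^{-c\delta}}$ and $\frac{e^{-c\delta}}{1 - e^{-c\delta}}$ appearing in the limit indeed combine to give back exactly $\mu$.
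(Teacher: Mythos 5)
Your proof is correct, and for part (1) it takes a genuinely more direct route than the paper. The telescoping identity $\zeta_{F,c,t} = \frac{1}{1-e^{-c\delta}}\bigl( \nu_{F,t} - e^{-c\delta}\nu_{F,t-c} \bigr)$ is an exact algebraic consequence of the definitions, and since testing against a fixed $\varphi \in C_c(T^1M)$ involves only finitely many orbits, passing to the limit term by term is legitimate; the hypothesis $P(F)>0$ is what guarantees $1-e^{-c\delta}\neq 0$ and makes the two limits recombine into $\mu$. The paper instead treats part (1) by shifting the potential to $F+\kappa$ with $\kappa = \max(0,-\delta)+1$, normalizing the test function by $\int\varphi\,dm_F$ (which forces a separate discussion of the case $\int\varphi\,dm_F = 0$), and invoking the Tauberian-type counting Lemma 9.5 of \cite{PPS} to pass from the cumulative sum to the window sum while simultaneously stripping off the weight $e^{\kappa f(p)}$. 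That heavier machinery is what one would need if the pressure were allowed to be nonpositive (so that the naive normalization degenerates), but under the stated hypothesis $P(F)>0$ your shortcut is fully valid and avoids both the case distinction and the external lemma. For part (2) the paper's argument is essentially your telescoping as well, just carried out for the shifted potential $F+\kappa$ with the harmless bounded factor $e^{(l(p)-t)\kappa}$; your version with $\kappa=0$ is the same estimate. The only cosmetic caveat is that your convention $\nu_{F,s}=0$ for $s<0$ should really be $s\le 0$ (or one should note that $\nu_{F,s}$ is the zero measure for $s$ below the length of the shortest closed geodesic), but this affects nothing.
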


\begin{proof}
 First assume that $m_F$ is finite.
 It is enough to show the convergence of this sequence of measures when tested against nonnegative continuous functions with compact support in $T^1 M$.
 Let $\varphi$ be such a test function.
 If $\int \varphi dm_F = 0$, then the support of $\varphi$ does not meet the nonwandering set $\Omega$, so $\zeta_{F,c,t} = 0$ and the result holds.
 Otherwise denote by $\delta = P(F)$, set $\kappa = \max(0, -\delta) + 1$ so that $\delta + \kappa \ge 1$ and $m_{F+\kappa} = m_F$, and let $\P_\varphi = \set{p \in \P}{p \cap \supp \varphi \neq \emptyset}$ as well as
 \[ \func{f}{\P_\varphi}{\itv[co]{0}{+\infty}}{p}{l(p)} \eqand \func{g}{\P_\varphi}{\itv[co]{0}{+\infty}}{p}{\frac{\| m_F \|}{\int \varphi dm_F} e^{\int_p F} \frac{1}{m(p)} \int_p \varphi d\mathcal{L}_p} \eqpunct{.} \]
 With these notations, Theorem \ref{thm:equidist} applied to the potential $F + \kappa$ ensures that
 \[ \lim_{t \to +\infty} (\delta + \kappa) e^{-(\delta + \kappa) t} \sum_{\substack{p \in \P_\varphi \\ f(p) \le t}} e^{\kappa f(p)} g(p) = 1 \eqpunct{.} \]
 Note that $f$ is proper since only finitely many periodic orbits of length smaller than some constant can meet the support of $\varphi$.
 Therefore Lemma 9.5 from \cite{PPS} shows that for every $c > 0$ one has
 \[ \lim_{t \to +\infty} \frac{\delta}{1 - e^{-c \delta}} e^{-\delta t} \sum_{\substack{p \in \P_\varphi \\ t - c < f(p) \le t}} g(p) = 1 \eqpunct{,} \]
 which means exactly that $\int \varphi d\zeta_{F,c,t}$ converges to $\int \varphi dm_F$.

 Now assume that $m_F$ is infinite, and take $\kappa$ as before so that $\delta + \kappa > 0$.
 If $\varphi$ is a nonnegative continuous function with compact support in $T^1 M$, note that $\int \varphi d\zeta_{F,c,t}$ is comparable with
 \begin{align*}
  \frac{\delta}{1 - e^{-c \delta}} \frac{1}{\delta + \kappa} &\( \int \varphi d\nu_{F+\kappa,t} - \int \varphi d\nu_{F+\kappa,t-c} \) \\
    &= \frac{\delta}{1 - e^{-c \delta}} e^{-\delta t} \sum_{\substack{p \in \P \\ t-c < l(p) \le t}} e^{(l(p) - t) \kappa} e^{\int_p F} \frac{1}{m(p)} \int_p \varphi d\mathcal{L}_p \eqpunct{,}
 \end{align*}
 which goes to $0$ as $t$ goes to infinity following Theorem \ref{thm:equidist} $(2)$.
\end{proof}


\subsection{Proof of Theorem \ref{thm:crit4}}

In this section, the assumptions of Theorem \ref{thm:crit4} are satisfied.
The geodesic flow is topologically mixing, and $F$ is a H\"older continuous potential with finite pressure.
For $\W \subset T^1 M$, $c > 0$ and $t \ge 0$, define
\[ Z_{c,t}(F, \W) = \sum_{\substack{p \in \P \\ t - c < l(p) \le t}} n_\W(p) e^{\int_p F - P(F)} \eqpunct{.} \]

\begin{lemm}
 If $m_F$ is finite, then for every open relatively compact set $\W$ meeting $\Omega$ and every $c > 0$ there exist a constant $C > 0$ and $t_0 \ge 0$ such that
 \[ \forall t \ge t_0, Z_{c,t}(F, \W) \ge C \eqpunct{.} \]
\end{lemm}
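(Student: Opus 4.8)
The plan is to test the equidistribution result of Theorem \ref{thm:equidist2} against a well-chosen function supported in $\W$. Adding a constant to $F$ changes neither $m_F$ nor any of the quantities $\int_p (F - P(F))$ occurring in $Z_{c,t}(F, \W)$, so we may and do assume that $P(F) = \delta > 0$; by the standing hypotheses of this section the geodesic flow is topologically mixing, so Theorem \ref{thm:equidist2} applies.

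First I would fix a nonnegative continuous function $\varphi$ on $T^1 M$ whose support is a compact subset of the open set $\W$ and which is positive at some point of $\W \cap \Omega$ (such a point exists since $\W$ meets $\Omega$). As $m_F$ is positive on every open set meeting $\Omega$, we have $\int \varphi\, dm_F > 0$.

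The crux of the argument is the uniform estimate
\[ \int_{p_0} \varphi\, d\mathcal{L}_{p_0} \le C_1\, n_\W(p_0) \qquad \text{for every } p_0 \in \P', \]
with $C_1$ depending only on $\W$ and $\|\varphi\|_\infty$. If $n_\W(p_0) = 0$ the orbit $p_0$ misses $\W$, hence misses $\supp \varphi$, and both sides vanish. Otherwise, fix a relatively compact lift $\wt\W \subset B(z_0, R)$ of $\W$ and a primitive hyperbolic $\gamma_p$ in the conjugacy class of $p_0$ whose axis meets $\wt\W$. Since $\supp \varphi \subset \W$, $\int_{p_0}\varphi\, d\mathcal{L}_{p_0}$ is at most $\|\varphi\|_\infty$ times the total time the orbit spends in $\W$ during one period; lifting to the axis, that time is at most $2R$ times the number of copies $g\wt\W$ met by a fundamental segment of the axis (a geodesic meets $B(gz_0, R)$ along a segment of length at most $2R$). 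It then remains to bound this number of copies by a constant multiple of $n_{\wt\W}(\gamma_p)$ — which is possible because $A_{\gamma_p} \cap \wt\W \neq \emptyset$ together with the primitivity of $\gamma_p$ forces a positive lower bound on $l(p_0)$ (otherwise $\gamma_p$ would move a point of the compact set $\pi(\wt\W)$ by an arbitrarily small amount), so that each copy, up to the action of $\langle \gamma_p \rangle$ and of the finite pointwise stabilizer of the axis, is counted a uniformly bounded number of times — and then $n_{\wt\W}(\gamma_p) \le C\, n_\W(p_0)$ by Lemma \ref{lem:crossingcompare} and the remark following it. This is the step I expect to be the main obstacle: it is where the combinatorics of the ``returns'' of a periodic orbit into a possibly non-convex set have to be kept under control, and the requirement $\supp \varphi \subset \W$ is essential.

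Granting this estimate, the conclusion is immediate. Using $\frac{1}{m(p)}\mathcal{L}_p = \mathcal{L}_{p_0}$, $n_\W(p) = n_\W(p_0)$, and $e^{-\delta t} e^{\delta l(p)} \le 1$ whenever $t - c < l(p) \le t$, one gets
\[ \int \varphi\, d\zeta_{F,c,t} = \frac{\delta}{1 - e^{-c\delta}} \sum_{t - c < l(p) \le t} \bigl( e^{-\delta t} e^{\int_p F} \bigr) \int_{p_0}\varphi\, d\mathcal{L}_{p_0} \le \frac{C_1\, \delta}{1 - e^{-c\delta}} \sum_{t - c < l(p) \le t} n_\W(p)\, e^{\int_p (F - P(F))} = \frac{C_1\, \delta}{1 - e^{-c\delta}}\, Z_{c,t}(F, \W). \]
By Theorem \ref{thm:equidist2}$(1)$, $\int \varphi\, d\zeta_{F,c,t} \to \int \varphi\, dm_F / \|m_F\| > 0$ as $t \to +\infty$, so there is $t_0 \ge 0$ with $\int \varphi\, d\zeta_{F,c,t} \ge \frac{1}{2}\int \varphi\, dm_F / \|m_F\|$ for all $t \ge t_0$. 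Hence $Z_{c,t}(F, \W) \ge C$ for all $t \ge t_0$, with $C = \frac{1 - e^{-c\delta}}{2 C_1 \delta} \cdot \frac{\int \varphi\, dm_F}{\|m_F\|} > 0$.
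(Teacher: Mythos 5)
Your proposal is correct and follows essentially the same strategy as the paper: test the equidistribution statement of Theorem \ref{thm:equidist2}~$(1)$ against a nonnegative $\varphi$ compactly supported in $\W$ with $\int \varphi\, dm_F > 0$, and bound $\int \varphi\, d\mathcal{L}_p$ from above by a constant times $n_\W(p)\, m(p)$. The only real divergence is in how that key estimate is obtained: you prove it for a general $\W$ directly, which forces you into the counting argument you flag as the main obstacle (uniform lower bound on lengths of closed geodesics through a compact set, bounded multiplicity of copies $g\wt{\W}$ per conjugate modulo $\langle \gamma_p \rangle$ and the finite stabilizer of the axis); that argument can be completed, but it is genuinely fiddly. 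The paper sidesteps it by first using Lemma \ref{lem:crossingcover} to reduce to the case where $\pi(\W)$ is a ball $B(\overline{x},R)$ small enough that the $\Gamma$-images of $B(x,R)$ are pairwise disjoint, after which $p \cap \W$ visibly consists of at most $n_\W(p)$ arcs of length at most $2R$, each traversed $m(p)$ times, and the estimate $\int \varphi\, d\mathcal{L}_p \le 2R\, n_\W(p)\, m(p)$ is immediate. If you keep your version, you should write out that counting step in full; otherwise the reduction to a small ball is the cheaper route.
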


\begin{proof}
 Fix $c > 0$, and denote by $\delta = P(F)$.
 First, Lemma \ref{lem:crossingcover} ensures that for every $\W' \subset \W$, there is a constant $C_1 > 0$ such that
 \[ \forall t \ge 0, Z_{t,c}(F, \W') \le C_1 Z_{t,c}(F, \W) \eqpunct{.} \]
 It is therefore enough to show the result when $\pi(\W) = B(\overline{x}, R)$ intersects $\pi(\Omega)$ but $R$ is small enough that the $\Gamma$-images of $B(x, R)$ in $\wt{M}$ are pairwise disjoint.
 Let $\fn{\varphi}{T^1 M}{\R}$ be a continuous map with compact support in $\W$, such that $\int \varphi dm_F > 0$ and $0 \le \varphi \le 1$.
 Since $\pi(\W)$ is a small ball, the intersection of $\W$ with a periodic orbit $p$ of the geodesic flow is a collection of at most $n_\W(p)$ geodesic segments of length smaller than the diameter of $\W$, each of them being visited $m(p)$ times.
 Hence
 \[ \forall p \in \P, \int \varphi d\mathcal{L}_p \le n_\W(p) m(p) 2 R \eqpunct{.} \]
 According to Theorem \ref{thm:equidist2} $(1)$, the finiteness of $m_F$ gives that
 \[ \lim_{t \to +\infty} e^{-\delta t} \sum_{\substack{p \in \P \\ t - c < l(p) \le t}} e^{\int_p F} \frac{1}{m(p)} \int \varphi d\mathcal{L}_p = \frac{1 - e^{-c \delta}}{\delta \| m_F \|} \int \varphi dm_F = C_2 > 0 \eqpunct{.} \]
 Therefore, with $\delta = P(F)$, there is a $t_0 \ge 0$ such that
 \begin{equation*}
  \forall t \ge t_0, \frac{C_2}{2} e^{-c \delta} \le \sum_{\substack{p \in \P \\ t - c < l(p) \le t}} e^{\int_p F - P(F)} \frac{1}{m(p)} \int \varphi d\mathcal{L}_p \le 2 R Z_{t,c}(F, \W) \eqpunct{.} \qedhere
 \end{equation*}
\end{proof}

\begin{lemm}
 If $m_F$ is finite, then for every open relatively compact set $\W$ intersecting $\Omega$ and every $c > 0$ there is a constant $C > 0$ such that
 \[ \forall t \ge 0, Z_{t,c}(F, \W) \le C \eqpunct{.} \]
\end{lemm}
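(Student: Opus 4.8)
The plan is to reduce, via Lemma~\ref{lem:crossingcover}, to the case where $\W$ is a small ball, and then to rewrite the resulting sum as an annular orbital sum over $\Gamma$ which is controlled by Mohsen's Shadow Lemma together with a bounded--overlap estimate for shadows. As in the previous sections we may assume $P(F) = 0$. First, since $\W$ is covered by finitely many sets $\W_i = T^1 B(\bar x_i, R_i)$ with $R_i$ small and $n_\W(p) \le C \sum_i n_{\W_i}(p)$ by Lemma~\ref{lem:crossingcover}, it suffices to bound $Z_{t,c}(F, \W)$ when $\W = T^1 B(\bar x, R)$. Fix a relatively compact lift $\wt{\W} = T^1 B(x, R)$ of $\W$. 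Since $n_\W(p) = \inf_{\wt{\W}'} n_{\wt{\W}'}(\gamma_p)$ over all lifts, we have $n_\W(p) \le n_{\wt{\W}}(\gamma_p)$, and regrouping the sum over conjugacy classes exactly as in Section~\ref{sec:erg} yields
\[ Z_{t,c}(F, \W) \le \sum_{\substack{\gamma \in \Gamma_h,\ A_\gamma \cap \wt{\W} \ne \emptyset \\ t - c < l(\gamma) \le t}} e^{\int_{z_\gamma}^{\gamma z_\gamma} \wt{F}} \eqpunct{,} \]
where $z_\gamma \in \wt{\pi}(A_\gamma) \cap B(x, R)$. As $d(z_\gamma, x) < R$, Lemma~\ref{lem:potcontrol} replaces $\int_{z_\gamma}^{\gamma z_\gamma} \wt{F}$ by $\int_x^{\gamma x} \wt{F}$ up to an additive constant, while $|l(\gamma) - d(x, \gamma x)| < 2R$ forces $d(x, \gamma x) \in \itv[oc]{t - c - 2R}{t + 2R}$. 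Dropping the hyperbolicity condition, we are left to bound, uniformly in $t$, the quantity $S(t) = \sum_{\gamma \in \Gamma,\ t - c - 2R < d(x, \gamma x) \le t + 2R} e^{\int_x^{\gamma x} \wt{F}}$.

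For this I would argue with shadows. Choose $R' \ge \max(R, R_0)$, with $R_0$ the constant of Lemma~\ref{lem:shadowmohsen} for the compact set $\sing{x}$, so that $\mu_x^F(\O_x B(\gamma x, R')) \ge \frac1C e^{\int_x^{\gamma x} \wt{F}}$ for all $\gamma \in \Gamma$. The shadows $\O_x B(\gamma x, R')$, as $\gamma$ runs over the annulus above, have uniformly bounded overlap: if $\xi$ belongs to both $\O_x B(\gamma_1 x, R')$ and $\O_x B(\gamma_2 x, R')$, then the ray $\itv[co]{x}{\xi}$ passes within $R'$ of $\gamma_1 x$ and of $\gamma_2 x$ at two arc--length parameters that lie in an interval of length at most $c + 4R + 2R'$, whence $d(\gamma_1 x, \gamma_2 x) \le c + 4R + 4R'$; by properness of the action $\gamma_1^{-1} \gamma_2$ then lies in a fixed finite subset of $\Gamma$, of cardinality some $m$ depending only on $R$, $R'$, $c$. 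Consequently the function $\xi \mapsto \card \set{\gamma}{t - c - 2R < d(x, \gamma x) \le t + 2R,\ \xi \in \O_x B(\gamma x, R')}$ is bounded by $m$ independently of $t$, and since $\mu_x^F$ is a finite measure,
\[ \sum_{\substack{\gamma \in \Gamma \\ t - c - 2R < d(x, \gamma x) \le t + 2R}} \mu_x^F\( \O_x B(\gamma x, R') \) \le m \, \| \mu_x^F \| \eqpunct{.} \]
Combined with the lower bound from Lemma~\ref{lem:shadowmohsen} this gives $S(t) \le C m \| \mu_x^F \|$ for every $t$, hence the desired uniform bound on $Z_{t,c}(F, \W)$. (For $t$ in any bounded interval the claim is trivial, since then $Z_{t,c}(F, \W)$ is a finite sum, supported on the finitely many periodic orbits of length at most that bound.)

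The main obstacle is the bookkeeping in the reduction step: relating $n_\W(p)$ to the number of conjugates of $\gamma_p$ whose axis meets the chosen lift $\wt{\W}$, comparing $l(\gamma)$ with $d(x, \gamma x)$ along that axis, and — most importantly — checking that the overlap multiplicity $m$ can be taken independent of $t$. Once that is in place, the boundedness of the annular orbital sum is soft. Finiteness of $m_F$ is used only to ensure that $m_F$ is ergodic and conservative, so that $\Gamma$ is divergent and $\mu_x^F$ is a genuine finite Patterson--Sullivan--Gibbs density with full support in $\Lambda(\Gamma)$, which is what makes Lemma~\ref{lem:shadowmohsen} available.
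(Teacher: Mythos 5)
Your proof is correct, but it takes a genuinely different route from the paper. The paper stays at the level of periodic orbits: it covers $\W$ by small balls whose $\Gamma$-images have controlled overlap, takes a test function $\varphi \equiv 1$ on a slightly enlarged ball, bounds $\int \varphi \, d\mathcal{L}_p$ from below by $\frac{2\varepsilon}{S} n_\W(p) m(p)$, and then invokes the equidistribution Theorem \ref{thm:equidist2} $(1)$ — which is where both the finiteness of $m_F$ and the topological mixing hypothesis enter. You instead unfold $Z_{t,c}(F,\W)$ into an orbital sum over an annulus $\set{\gamma}{d(x,\gamma x) \in \itv[oc]{t-c-2R}{t+2R}}$ (reversing the reindexing of Section \ref{sec:erg}) and bound that annular sum by the lower bound in Mohsen's Shadow Lemma together with a bounded-overlap count for the shadows $\O_x B(\gamma x, R')$; all the individual steps (the inequality $n_\W(p) \le n_{\wt{\W}}(\gamma_p)$, the comparison of $l(\gamma)$ with $d(x,\gamma x)$ and of $\int_{z_\gamma}^{\gamma z_\gamma}\wt{F}$ with $\int_x^{\gamma x}\wt{F}$ via Lemma \ref{lem:potcontrol}, and the multiplicity bound $m$ by properness of the action, which survives the presence of torsion) check out. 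What each approach buys: the paper's argument is uniform with the two companion lemmas of that subsection (the lower bound and the vanishing statement for $m_F$ infinite), all three falling out of the same equidistribution theorem, which is the point of Section \ref{sec:fin4}; your argument is the classical Patterson--Sullivan fact that annular orbital sums at the critical exponent are uniformly bounded, and it actually proves more than the statement — the upper bound on $Z_{t,c}(F,\W)$ holds for any H\"older potential with $P(F) < +\infty$ for which the conformal density $\mu_x^F$ exists, with no use of mixing and no use of the finiteness of $m_F$ (contrary to your closing remark, finiteness is not needed even to make the density available: $\mu_x^F$ is a finite measure on the boundary by construction). The only cosmetic point is that the reduction to \emph{small} balls is superfluous in your approach; any finite cover by balls suffices for Lemma \ref{lem:crossingcover}.
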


\begin{proof}
 Since the set $\Sigma$ of singular points of $M$ is locally finite, we can cover $\W$ by a finite collection of open relatively compact sets $\W_i = T^1 B(\overline{x}_i, R_i)$ for which there is an $\varepsilon > 0$ such that for each $i$ we have
 \begin{itemize}
  \item either $x_i \not\in \wt{\Sigma}$ and the $\Gamma$-images of $B(x_i, R_i + \varepsilon)$ in $\wt{M}$ are pairwise disjoint, in which case we let $s_i = 1$ ;
  \item or $x_i \in \wt{\Sigma}$ and $B(x_i, R_i + \varepsilon) \cap g B(x_i, R_i + \varepsilon)$ if and only if $g$ is an elliptic isometry fixing $x_i$, in which case we denote by $s_i$ the cardinal of the stabilizer of $x_i$.
 \end{itemize}
 Let $S = \max s_i < +\infty$.
 Lemma \ref{lem:crossingcover} gives then a constant $C_1$ such that
 \[ \forall p \in \P, n_\W(p) \le C_1 \sum_i n_{\W_i}(p) \eqpunct{.} \]
 It is therefore enough to find an upper bound when $\W = T^1 B(\overline{x}_i, R_i)$ for every $i$.
 Let $\fn{\varphi}{T^1 M}{\R}$ be a continuous map with compact support such that $\varphi = 1$ on $T^1 B(x_i, R_i + \varepsilon)$.
 By definition of $x_i$ and $R_i$, the intersection of $T^1 B(x_i, R_i + \varepsilon)$ with a periodic orbit $p$ that meets $\W$ is a collection of at least $n_\W(p)$ geodesic segments of length greater than $2 \varepsilon$, each of them being visited $m(p)$ times.
 However, at most $s_i$ copies of these geodesic segments in the universal cover are going to project onto the same geodesic segment of $M$.
 Hence
 \[ \forall p \in \P, \int \varphi d\mathcal{L}_p \ge \frac{n_\W(p) m(p)}{s_i} 2 \varepsilon \ge \frac{n_\W(p) m(p)}{S} 2 \varepsilon \eqpunct{.} \]
 According to Theorem \ref{thm:equidist2} $(1)$, the finiteness of $m_F$ gives that
 \[ \lim_{t \to +\infty} e^{-\delta t} \sum_{\substack{p \in \P \\ t - c < l(p) \le t}} e^{\int_p F} \frac{1}{m(p)} \int \varphi d\mathcal{L}_p = \frac{1 - e^{-c \delta}}{\delta \| m_F \|} \int \varphi dm_F < +\infty \eqpunct{.} \]
 Therefore there is a $C_2 > 0$ such that
 \begin{equation*}
  \forall t \ge 0, \frac{2 \varepsilon}{S} Z_{t,c}(F, \W) \le \sum_{\substack{p \in \P \\ l(p) \le t}} e^{\int_p F - P(F)} \frac{1}{m(p)} \int \varphi d\mathcal{L}_p \le e^{c \delta} C_2 \eqpunct{.} \qedhere
 \end{equation*}
\end{proof}

The same proof using Theorem \ref{thm:equidist2} $(2)$ instead of $(1)$ yields immediately the following result.

\begin{lemm}
 If $m_F$ is infinite, then for every $\W$ open relatively compact meeting $\Omega$ and every $c > 0$ we have
 \[ \lim_{t \to +\infty} Z_{t,c}(F, \W) = 0 \eqpunct{.} \]
\end{lemm}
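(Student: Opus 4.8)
The plan is to run the same argument as in the preceding lemma (the uniform upper bound $Z_{c,t}(F,\W)\le C$ in the case $m_F$ finite), replacing the appeal to Theorem~\ref{thm:equidist2}~$(1)$ by Theorem~\ref{thm:equidist2}~$(2)$ at the final step. First I would reduce to the case of a small tube: since the singular set of $M$ is locally finite, cover $\W$ by finitely many open relatively compact sets $\W_i=T^1B(\overline{x}_i,R_i)$ and fix $\varepsilon>0$ so that, for each $i$, the $\Gamma$-translates of $B(x_i,R_i+\varepsilon)$ in $\wt{M}$ are pairwise disjoint except that the ones by the elliptic stabilizer of $x_i$ coincide; let $s_i$ be the order of that stabilizer and $S=\max_i s_i$. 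Lemma~\ref{lem:crossingcover} gives $n_\W(p)\le C_1\sum_i n_{\W_i}(p)$ for all $p\in\P$, hence $Z_{c,t}(F,\W)\le C_1\sum_i Z_{c,t}(F,\W_i)$, so it suffices to prove $Z_{c,t}(F,\W_i)\to 0$ for each $i$. It is also harmless to assume $P(F)>0$ after replacing $F$ by $F+\kappa$ for a large constant $\kappa$: this changes neither $m_F$ nor $Z_{c,t}$, since $\int_p(F+\kappa-P(F+\kappa))=\int_p(F-P(F))$.

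For a fixed $\W_i$, choose a nonnegative continuous compactly supported $\varphi$ on $T^1M$ with $\varphi\equiv 1$ on $T^1B(x_i,R_i+\varepsilon)$. Exactly as in the proof of the preceding lemma, the intersection of $T^1B(x_i,R_i+\varepsilon)$ with a periodic orbit $p$ meeting $\W_i$ consists of at least $n_{\W_i}(p)$ geodesic segments of length greater than $2\varepsilon$, each traversed $m(p)$ times, and at most $s_i$ of their lifts to $T^1\wt{M}$ project onto the same segment of $M$; hence $\int\varphi\,d\mathcal{L}_p\ge\frac{2\varepsilon}{S}\,n_{\W_i}(p)\,m(p)$. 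Multiplying by $\frac{1}{m(p)}e^{\int_p(F-P(F))}$, summing over $p$ with $t-c<l(p)\le t$, and using that on this range $e^{\int_p(F-P(F))}$ and $e^{-P(F)t}e^{\int_p F}$ differ by a factor lying in $[e^{-cP(F)},1]$, one gets
\[ \frac{2\varepsilon}{S}\,Z_{c,t}(F,\W_i)\le e^{cP(F)}\,\frac{1-e^{-cP(F)}}{P(F)}\int\varphi\,d\zeta_{F,c,t}\eqpunct{.} \]

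Finally, since $m_F$ is infinite, Theorem~\ref{thm:equidist2}~$(2)$ asserts that $\zeta_{F,c,t}$ converges weakly to $0$, so $\int\varphi\,d\zeta_{F,c,t}\to 0$ as $t\to+\infty$; therefore $Z_{c,t}(F,\W_i)\to 0$ for each $i$, and summing over $i$ gives $Z_{c,t}(F,\W)\to 0$. There is no real obstacle: the covering by small tubes, the lower bound $\int\varphi\,d\mathcal{L}_p\ge\frac{2\varepsilon}{S}n_{\W_i}(p)m(p)$, the comparison of exponential weights and the reduction to $P(F)>0$ are all already carried out in the proof of the preceding lemma, and the only new input---that the equidistributed mass tends to $0$ rather than to a positive constant---is supplied verbatim by part~$(2)$ of the equidistribution theorem. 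The one point requiring care is that, as in the finite \emph{upper}-bound lemma (and unlike the finite \emph{lower}-bound one), $\varphi$ must be chosen equal to $1$ on the slightly enlarged tube $T^1B(x_i,R_i+\varepsilon)$, so that the estimate $\int\varphi\,d\mathcal{L}_p\ge\frac{2\varepsilon}{S}n_{\W_i}(p)m(p)$ holds for \emph{every} orbit meeting $\W_i$.
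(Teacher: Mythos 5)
Your proposal is correct and is exactly the argument the paper intends: the paper's own justification is the one-line remark that "the same proof using Theorem \ref{thm:equidist2} $(2)$ instead of $(1)$ yields immediately the result," and you have carried out precisely that substitution, with the covering by small tubes, the lower bound $\int\varphi\,d\mathcal{L}_p\ge\frac{2\varepsilon}{S}n_{\W_i}(p)m(p)$, and the weight comparison all matching the preceding lemma. Your explicit reduction to $P(F)>0$ via $F+\kappa$ is a detail the paper leaves implicit, and it is handled correctly.
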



\section{Applications}

\label{sec:examples}

In this section, we show that our results let us retrieve some partial finiteness criteria existing in the litterature.
It is likely that our criteria will allow to find new interesting examples where the Bowen-Margulis measure is finite, in addition to known examples of \cite{Pei}, \cite{Ancona}.


\subsection{Finiteness of Gibbs measures on geometrically finite manifolds}

Geometrically finite manifolds are the most simple negatively curved manifolds with infinite volume.
We refer to \cite{Bowditch} for details on their geometry.
Recall simply that they can be decomposed into the union of a compact part, finitely many finite volume ends, the \name{cusps}, and finitely many infinite volume ends, the \name{funnels}.

Moreover, the nonwandering set $\Omega \subset T^1 M$ of such a manifold is (transversally) a Cantor set completely included in the unit tangent bundle of the compact part and of the cusps.
Therefore, dynamically, we can forget the funnels.

In \cite{DOP}, Dal'bo-Otal-Peign\'e proposed a finiteness criterion for the so-called \name{Bowen-Margulis measure} of such manifolds which is, in our terminology, the Gibbs measure $m_0$ associated with the zero potential $F \equiv 0$.
It has been generalized in \cite{Cou} to all Gibbs measures.
We refer to \cite{PPS}, where it was also exposed, because the notations and construction of \cite{Cou} are slightly different (although equivalent).

Recall that for such manifolds, the lift to the universal cover of a cusp is called a \name{horoball}, and the stabilizer of a horoball in $\Gamma$ is a \name{parabolic subgroup}, denoted by $\Pi$.

Let $\fn{F}{T^1 M}{\R}$ be a H\"older continuous potential, and $\wt{F}$ be its $\Gamma$-invariant lift to $T^1 \wt{M}$.
Recall that the pressure $P(F)$ that we consider since the beginning is the critical exponent of the Poincar\'e series
\[ \sum_{\gamma \in \Gamma} e^{\int_x^{\gamma x} (\wt{F} - s)} \eqpunct{,} \]

The finiteness criterion from Dal'bo-Otal-Peign\'e-Coud\`ene can be stated in this way.

\begin{theo}[\cite{DOP}, \cite{Cou}, \cite{PPS}]
 Let $M$ be a negatively curved geometrically finite orbifold with pinched negative curvature, and $\fn{F}{T^1 M}{\R}$ a H\"older continuous potential with finite pressure.
 Assume that $(\Gamma, F)$ is divergent (i.e. $F$ recurrent).
 Then the Gibbs measure $m_F$ is finite if and only if the series
 \[ \sum_{\pi \in \Pi} d(x, \pi x) e^{\int_x^{\pi x} (\wt{F} - P(F))} \]
 converges for every parabolic subgroup $\Pi < \Gamma$.
\end{theo}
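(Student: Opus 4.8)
The plan is to deduce this statement directly from Theorem~\ref{thm:crit1}. Since $(\Gamma,F)$ is assumed divergent, $F$ is recurrent, so Theorem~\ref{thm:crit1} says that $m_F$ is finite if and only if $(\Gamma,\wt F)$ is positive recurrent relatively to some (equivalently any) open relatively compact $\wt W\subset\wt M$ meeting $\pi(\wt\Omega)$, that is,
\[ \sum_{\gamma\in\Gamma_{\wt W}} d(x,\gamma x)\,e^{\int_x^{\gamma x}(\wt F-P(F))}<+\infty . \]
All the work is therefore to identify $\Gamma_{\wt W}$ geometrically, for a convenient $\wt W$, and to match this series with the cuspidal ones. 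Recall (see \cite{Bowditch}) that for a geometrically finite $M$ one can choose disjoint horoball neighbourhoods of the finitely many cusps, whose lifts form a family of horoballs $\{ g\mathcal H_i \}$ with $g\in\Gamma$ and $1\le i\le k$, such that the \emph{thick part} $C_0$ of the convex core obtained by removing them is compact, and such that any two distinct horoballs of $\wt M$ are separated by a definite amount (after possibly shrinking them). I would then fix a point $x_0$ in (the base of) $\wt\Omega$ lying in a lift of $C_0$, and take $\wt W=B(x_0,R_0)$ with $R_0$ large enough that $\Gamma\wt W$ covers a lift of $C_0$.

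The core geometric step — which I expect to be the main obstacle — is the description of $\Gamma_{\wt W}$. Let $\gamma\in\Gamma_{\wt W}$ lie outside a fixed finite exceptional set (the elements with $d(x_0,\gamma x_0)$ bounded), with witnesses $y,y'\in\wt W$ so that $[y,\gamma y']$ meets $\Gamma\wt W$ only near $y$ or near $\gamma y'$. Because $\Gamma\wt W$ covers the lift of $C_0$, the interior of $[y,\gamma y']$ cannot linger in the thick part; by the separation of horoballs it must make a single excursion into one horoball $g\mathcal H_i$, where $i\in\{1,\dots,k\}$ and $g$ ranges over a finite set $G_0$ (the finitely many horoballs meeting a fixed ball around $x_0$). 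Conjugating by $g^{-1}$ reduces to an excursion into the standard horoball $\mathcal H_i$, based near the bounded set $g^{-1}\wt W$; elementary horoball geometry then shows that $\gamma y'$ lies within a bounded distance of $g\pi g^{-1}y$ for some $\pi\in\Pi_i=\mathrm{Stab}(\mathcal H_i)$, the parabolic element realizing the horospherical shift of the excursion. Conversely, moving each $\mathcal H_i$ by a fixed $g_i$ so that $g_i\mathcal H_i$ passes near $x_0$, every $\pi\in\Pi_i$ outside a bounded set produces, possibly after composing with finitely many fixed auxiliary elements as in Lemma~\ref{lem:forcehyper}, an element of $\Gamma_{\wt W}$ with comparable data.

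Combining the two inclusions, and using Lemma~\ref{lem:potcontrol} to absorb the bounded perturbations, the conjugations by the finite sets $G_0$ and $\{g_i\}$, and the finitely many exceptional elements, the series $\sum_{\gamma\in\Gamma_{\wt W}} d(x,\gamma x)e^{\int_x^{\gamma x}(\wt F-P(F))}$ is, up to multiplicative constants and finitely many terms, comparable to $\sum_{i=1}^k\sum_{\pi\in\Pi_i} d(x_0,\pi x_0)e^{\int_{x_0}^{\pi x_0}(\wt F-P(F))}$. Hence $m_F$ is finite if and only if each of these $k$ series converges. Finally I would observe that any parabolic subgroup $\Pi<\Gamma$ fixes a bounded parabolic point, hence is conjugate into one of the $\Pi_i$; since conjugating the base point only perturbs $d(x,\cdot\,x)$ and the potential integrals by bounded amounts (Lemma~\ref{lem:potcontrol}), convergence of the $k$ series above is equivalent to convergence of $\sum_{\pi\in\Pi} d(x,\pi x)e^{\int_x^{\pi x}(\wt F-P(F))}$ for every parabolic subgroup $\Pi<\Gamma$, which is exactly the stated criterion. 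The routine part is the bookkeeping with Lemma~\ref{lem:potcontrol}; the genuine content is verifying that an orbit-avoiding segment corresponds to exactly one parabolic excursion whose displacement is governed by a single element of a parabolic subgroup.
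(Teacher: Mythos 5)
Your proposal is correct and follows essentially the same route as the paper: both reduce to Theorem \ref{thm:crit1} and identify $\Gamma_{\wt{W}}$, up to finitely many exceptions and bounded perturbations absorbed by Lemma \ref{lem:potcontrol}, with the union of the finitely many (conjugacy classes of) parabolic subgroups, via the observation that a segment avoiding $\Gamma \wt{W}$ in its middle must make a single cuspidal excursion whose displacement is realized by one parabolic element. The only difference is cosmetic: the paper takes $\wt{W}$ to be a connected lift of the compact part inside a Dirichlet fundamental domain rather than a large ball whose orbit covers the thick part.
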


We will infer immediately this result from the following propostion.

\begin{prop}
 The convergence of the series above is exactly given by our criterion of positive recurrence of definition \ref{def:pos-rec-bis} applied to a connected lift $\wt{W}$ inside a fundamental domain of the compact part of $M$.
\end{prop}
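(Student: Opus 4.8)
The plan is to compare the two series \emph{directly}, using the thick--thin decomposition of a geometrically finite manifold; Theorem~\ref{thm:crit1} then turns this comparison into the Dal'bo--Otal--Peign\'e--Coud\`ene statement. Since positive recurrence incorporates recurrence of $F$ and $(\Gamma,F)$ is assumed divergent, it is enough to prove that, for $\wt{W}$ a connected open relatively compact lift of $W$ sitting inside a fundamental domain of the compact part of $M$ (in particular avoiding the singular locus $\wt{\Sigma}$, so that the constructions of Section~\ref{sec:fin1} apply unchanged),
\[ \sum_{\gamma \in \Gamma_{\wt{W}}} d(x, \gamma x)\, e^{\int_x^{\gamma x} (\wt{F} - P(F))} < +\infty \]
if and only if $\sum_{\pi \in \Pi} d(x, \pi x)\, e^{\int_x^{\pi x} (\wt{F} - P(F))} < +\infty$ for every parabolic subgroup $\Pi < \Gamma$. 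First I would fix the geometry: decompose the convex core of $M$ into a compact part and finitely many cusp neighbourhoods, lifting to $\Gamma$-orbits of pairwise disjoint horoballs with stabilizers conjugate to fixed representatives $\Pi_1, \dots, \Pi_k$; by Lemma~\ref{lem:potcontrol} the series attached to $\Pi$ and to $g \Pi g^{-1}$ behave alike, so one may restrict attention to $\Pi_1, \dots, \Pi_k$. Pushing the horoballs deep enough, I arrange that $\overline{\wt{W}}$ and all translates $g\overline{\wt{W}}$ with $d(x, gx)$ below any prescribed bound avoid the horoballs, and I pick on each reference horosphere $\partial H_i$ a point $x_i$ at bounded distance from $\Gamma x$.

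Next I would prove the implication ``all $\sum_\pi$ converge $\Rightarrow$ the $\Gamma_{\wt{W}}$-series converges''. Take $\gamma \in \Gamma_{\wt{W}}$ with $d(x, \gamma x)$ large and $y, y' \in \wt{W}$ such that $\itv[cc]{y}{\gamma y'}$ meets $\Gamma\wt{W}$ only within bounded distance of its endpoints; projecting to $M$ gives a geodesic arc from $W$ to $W$ that avoids $W$ outside a bounded neighbourhood of its ends. Either this arc stays in a fixed compact region $\mathcal{K}$ (the compact part together with a collar of the cusps of fixed depth), or it enters some cusp $i$ to a depth tending to infinity with $d(x,\gamma x)$. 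In the first case $\gamma$ lies in the set $\Gamma_0$ of isometries admitting such an arc staying in $\Gamma\wt{\mathcal{K}}$ and essentially avoiding $\Gamma\wt{W}$; the weighted exponential growth rate of $\Gamma_0$ is governed by the pressure of $F$ for the geodesic flow restricted to orbits avoiding the open set $W$, which is \emph{strictly} less than $P(F)$ by the full support and uniqueness of the equilibrium state $m_F$ (see \cite{PPS}), so $\sum_{\gamma \in \Gamma_0} d(x,\gamma x)\, e^{\int_x^{\gamma x}(\wt{F}-P(F))} < +\infty$ unconditionally. In the second case, standard horoball geometry lets one write $\gamma = a\,\pi\,b$ with $\pi \in \Pi_i$ and $a, b$ in a finite set — the horosphere quotient $\Pi_i \backslash \partial H_i$ is compact (\cite{Bowditch}), so only finitely many entry and exit patterns of $\Gamma x$ occur modulo $\Pi_i$ — while $|d(x,\gamma x) - d(x_i,\pi x_i)|$ stays bounded, so by Lemma~\ref{lem:potcontrol} this portion of the $\Gamma_{\wt{W}}$-series is dominated by $\sum_{i=1}^k \sum_{\pi \in \Pi_i} d(x_i,\pi x_i)\, e^{\int_{x_i}^{\pi x_i}(\wt{F}-P(F))}$. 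Adding the two contributions proves this direction.

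For the converse, suppose the $\Gamma_{\wt{W}}$-series converges; then by the lemmas of Section~\ref{sec:fin2} underlying Theorem~\ref{thm:crit2} the series $\sum_{\gamma \in \Gamma_{\wt{W}}^\star(N)} d(x,\gamma x)\, e^{\int_x^{\gamma x}(\wt{F}-P(F))}$ converges for every $N \ge 1$. Fix a parabolic subgroup, which I may take to be $\Pi_i$, with $x$ close to $\partial H_i$. For $\pi \in \Pi_i$ with $d(x,\pi x)$ large, the geodesic $\itv[cc]{x}{\pi x}$ climbs deep into $H_i$, stays there and returns; grafting short arcs to nearby points of $\wt{W}$ at its two ends produces a geodesic from $\wt{W}$ to $\gamma_\pi\wt{W}$ whose deep part avoids $\Gamma\wt{W}$ while only boundedly many translates of $\wt{W}$ are crossed near the ends, so $\gamma_\pi \in \Gamma_{\wt{W}}^\star(N)$ for a fixed $N$, with $\gamma_\pi$ obtained from $\pi$ by multiplication on each side by elements of a finite set and $d(x,\gamma_\pi x) = d(x,\pi x) + O(1)$. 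As $\pi \mapsto \gamma_\pi$ is finite-to-one with comparable weights (Lemma~\ref{lem:potcontrol}), convergence of the $\Gamma_{\wt{W}}^\star(N)$-series forces $\sum_{\pi \in \Pi_i} d(x,\pi x)\, e^{\int_x^{\pi x}(\wt{F}-P(F))} < +\infty$. Finally, Theorem~\ref{thm:crit1} identifies convergence of the $\Gamma_{\wt{W}}$-series with finiteness of $m_F$, which yields the announced criterion. The main obstacle is the entropy-gap estimate used in the first case above — that deleting the open set $W$ strictly lowers the pressure, so that geodesic excursions remaining in the compact part and avoiding $W$ contribute only a summable tail; everything else is bookkeeping with horoball geometry together with Lemma~\ref{lem:potcontrol}.
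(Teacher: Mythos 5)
Your handling of the cusp excursions and of the converse is essentially the paper's argument: a long element of $\Gamma_{\wt{W}}$ forces a deep excursion into a single cusp, is rewritten as $a\pi b$ with $a,b$ in a finite set (using compactness of $\Pi_i\backslash\partial H_i$), and the weights are compared via Lemma \ref{lem:potcontrol}; conversely each parabolic subgroup injects into $\Gamma_{\wt{W}}$ up to finitely many elements (the paper states this directly, whereas you detour through $\Gamma_{\wt{W}}^\star(N)$ and Section \ref{sec:fin2}, which also works). The genuine problem is your ``first case'' and the pressure-gap claim you use to dispose of it --- the step you yourself single out as the main obstacle. The assertion that the pressure of $F$ restricted to orbits avoiding the open set $W$ is \emph{strictly} smaller than $P(F)$ is not available here: your justification appeals to the full support and uniqueness of the equilibrium state $m_F$, but existence and uniqueness of an equilibrium state require $m_F$ to be finite, which is precisely what the finiteness criterion is supposed to decide, so the argument is circular; and no entropy-gap statement of this kind is proved in the paper or in \cite{PPS} at this level of generality.

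Fortunately the step is unnecessary, precisely because of the prescribed choice of $\wt{W}$. Since $W$ is the \emph{whole} compact part, a geodesic segment $[y,\gamma y']$ with $y,y'\in\wt{W}$ that meets $\Gamma\wt{W}$ only within distance $\diam W$ of its endpoints has its middle portion entirely inside the $\Gamma$-orbit of the horoballs. If that middle portion stays at bounded depth in a horoball, then it has bounded length (a geodesic arc entering and leaving a horoball penetrates to a depth comparable to half the time it spends inside), hence $d(x,\gamma x)$ is bounded and only finitely many $\gamma$ arise; there is no infinite family of ``compactly confined'' excursions to control, and your set $\Gamma_0$ contributes only finitely many terms. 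Replacing the pressure-gap argument by this finiteness observation closes the gap and brings your proof in line with the paper's, which passes directly to the cusp case.
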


\begin{proof}
 As usual, we may assume that $P(F) = 0$.
 Let $\wt{W}$ be the lift to the universal cover of this compact part into a connected domain inside a Dirichlet fundamental domain for the action of $\Gamma$ on $\wt{M}$.
 We may assume that the frontier of $\wt{W}$ is the union of finitely many submanifolds of $\wt{M}$ of codimension $1$, so that the intersection of any geodesic segment with this frontier is a finite union of segments.

 Now recall the definition of $\Gamma_{\wt{W}}$ : it is the set of elements $\gamma$ such that, for some $y, y' \in \wt{W}$, the geodesic $\itv[cc]{y}{\gamma y'}$ will intersect $\Gamma \wt{W}$ only at the beginning and at the end.
 More precisely, this geodesic intersects $g \wt{W}$ if and only if $g \overline{\wt{W}} \cap \overline{\wt{W}} \neq \emptyset$ or $g \overline{\wt{W}} \cap \gamma \overline{\wt{W}} \neq \emptyset$.
 Geometrically on $M = \Gamma \backslash \wt{M}$, it means that except during a time bounded from above by $\diam(W)$ at the beginning and at the end, and maybe except for finitely many $\gamma$, the projection $\Pr(\itv[cc]{y}{\gamma y'})$ has to leave the compact part in the middle.
 This is possible only if it enters inside some cusp $C$ and then returns in the compact part.

 Lift this cusp to a horoball $\wt{C}$ which has a common boundary with $\wt{W}$ (and therefore $\gamma \wt{W}$).
 Without loss of generality, we can assume that both $y$ and $y'$ lie on this common boundary $\partial \wt{C} \cap \partial \wt{W}$.
 Therefore, there exists some parabolic element $\pi$ inside the parabolic subgroup $\Pi$ stabilizing $\wt{C}$ such that $\pi y' \in \partial \wt{C} \cap \partial \gamma \wt{W}$.
 Except maybe during a bounded length (less than $2 \diam \wt{W}$), the geodesic segments $\itv[cc]{y}{\gamma y'}$ and $\itv[cc]{y}{\pi y'}$ stay uniformly close (at distance less than $\diam \wt{W}$), uniformly in $\pi \in \Pi$, so that $\left| \int_x^{\gamma x} \wt{F} - \int_x^{\pi x} \wt{F} \right|$ stay uniformly bounded by a constant depending only on $F$ and $\diam \wt{W}$.

 Thus, the series $\sum_{\gamma \in \Gamma_{\wt{W}}} d(x, \gamma x) e^{\int_x^{\gamma x} \wt{F}}$ is bounded from above, up to some constant, by the sum over the finite number of parabolic subgroups stabilizing a horoball with a common frontier with $\wt{W}$, of the sum $\sum_{p \in \Pi} d(x, \pi x) e^{\int_x^{\pi x} \wt{F}}$.

 Conversely, the same reasoning shows that $\Pi < \Gamma_{\wt{W}}$, so that the reverse inequality is trivial.
 This concludes the proof of the proposition.
\end{proof}


\subsection{Existence of finite Bowen-Margulis measures on geometrically infinite manifolds}

We shall now apply our criteria to recover the finiteness of the Bowen-Margulis measure $m_{BM} = m_0$ (which is the measure of maximal entropy, associated with any potential cohomologous to a constant), for free products of Kleinian groups in Schottky position as exposed in \cite{Pei}.

Recall first the main result in \cite{Pei}.

\begin{theo}[Peign\'e, \cite{Pei}]
 There exist geometrically infinite hyperbolic manifolds of dimension at least $4$ with finite Bowen-Margulis measure $m_{BM} = m_0$.
 These manifolds are constructed by taking the free product of two Kleinian subroups of $\SO^o(4, 1)$ in Schottky position one of them geometrically infinite inside some $\SO^o(3, 1) < \SO^o(4, 1)$, the second divergent with a larger critical exponent.
\end{theo}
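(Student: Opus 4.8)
The plan is to deduce this from our second finiteness criterion (Theorem~\ref{thm:crit1}) applied to the constant potential $F \equiv 0$, for which $P(F) = \delta := \delta(\Gamma)$ and $m_F = m_{BM}$. First recall Peign\'e's construction: $\Gamma = \Gamma_1 * \Gamma_2$ acts on $\wt M = \H^4$, where $\Gamma_1 < \SO^o(3,1) < \SO^o(4,1)$ is finitely generated and geometrically infinite --- so its critical exponent $\delta_1 := \delta(\Gamma_1)$ is at most $2$, and its geometrically infinite end has the controlled geometry coming from tameness of finitely generated Kleinian groups --- while $\Gamma_2 < \SO^o(4,1)$ is divergent with $\delta_2 := \delta(\Gamma_2) \in \itv[oo]{\delta_1}{3}$, and the two factors are in Schottky position, playing ping-pong on disjoint closed subsets of $\partial_\infty \wt M$. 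One may take $\Gamma_2$ convex cocompact (hence divergent); if $\Gamma_2$ carries cusps, the associated ends of $M$ are dealt with exactly as in the previous subsection. It then remains to verify that $(a)$ $F \equiv 0$ is recurrent, equivalently $\Gamma$ is divergent at $\delta$, so that $m_{BM}$ is ergodic and conservative, making Theorem~\ref{thm:crit1} available; and $(b)$ $(\Gamma, 0)$ is positive recurrent relatively to some open relatively compact $\wt W \subset \wt M$ meeting $\pi(\wt{\Omega})$.

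For $(a)$, one writes a nontrivial $\gamma \in \Gamma$ as a reduced word alternating between $\Gamma_1 \setminus \sing{\id}$ and $\Gamma_2 \setminus \sing{\id}$; the ping-pong estimates of \cite{DOP} then express the Poincar\'e series $P_\Gamma(s)$, up to two-sided bounds, as a geometric-series combination of $P_{\Gamma_1}(s)$ and $P_{\Gamma_2}(s)$. At $s = \delta_2$ one has $P_{\Gamma_1}(\delta_2) < +\infty$ (because $\delta_2 > \delta_1$) while $P_{\Gamma_2}(\delta_2) = +\infty$ ($\Gamma_2$ being divergent), so isolating the contribution of a single $\Gamma_2$-syllable already gives a divergent subseries; hence $P_\Gamma(\delta_2) = +\infty$, and since the same combination converges for $s > \delta_2$ one gets $\delta(\Gamma) = \delta_2$ with $\Gamma$ divergent. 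Thus $F \equiv 0$ is recurrent.

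For $(b)$, the idea is to choose an open relatively compact $W \subset M$ whose image contains the compact part of the convex core of $M$, the finitely many compact $\Gamma_2$-arms, and a fixed compact initial stretch of the unique non-compact end $E$ of $M$ (a geometrically infinite, $\Gamma_1$-type end), and to take $\wt W$ a connected lift inside a Dirichlet domain with piecewise-smooth frontier. By the ping-pong structure, a segment $\itv[cc]{y}{\gamma y'}$ with $y, y' \in \wt W$ that meets $\Gamma\wt W$ only near its endpoints must make a single excursion into one translate $\eta \wt E$ of the lift $\wt E$ of $E$, with $\eta$ ranging over a \emph{finite} set of Schottky words placing $\wt E$ adjacent to $\wt W$; avoiding $\Gamma\wt W$ inside $\eta \wt E$ then forces a ``straight out and straight back'' excursion, so $\gamma$ lies in $\eta\,\Gamma_1\,\eta^{-1}$ up to a bounded correction, for all but finitely many $\gamma \in \Gamma_{\wt W}$. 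This is precisely the analogue of the geometrically finite statement of the previous subsection, with the parabolic subgroups replaced by conjugates of the geometrically infinite factor $\Gamma_1$, and it gives, for $x \in \wt W$,
\[ \sum_{\gamma \in \Gamma_{\wt W}} d(x, \gamma x)\, e^{-\delta d(x, \gamma x)} \le A + C \sum_{\pi \in \Gamma_1} d(x, \pi x)\, e^{-\delta d(x, \pi x)} \eqpunct{,} \]
with $A, C$ finite. The critical gap $\delta = \delta_2 > \delta_1$ then concludes: the Poincar\'e series of $\Gamma_1$ converges at $s = \tfrac{\delta + \delta_1}{2} \in \itv[oo]{\delta_1}{\delta}$, and $r \mapsto r\, e^{-\frac{\delta - \delta_1}{2} r}$ being bounded on $\R_+$,
\[ \sum_{\pi \in \Gamma_1} d(x, \pi x)\, e^{-\delta d(x, \pi x)} \le \Big( \sup_{r \ge 0} r\, e^{-\frac{\delta - \delta_1}{2} r} \Big) \sum_{\pi \in \Gamma_1} e^{-\frac{\delta + \delta_1}{2} d(x, \pi x)} < +\infty \eqpunct{.} \]
So the series defining positive recurrence of $(\Gamma, 0)$ relatively to $\wt W$ converges, and Theorem~\ref{thm:crit1}$(i)$ yields that $m_{BM}$ is finite.

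The main obstacle will be the geometric bookkeeping behind $(b)$: one has to translate the Schottky/ping-pong combination into the tree-of-spaces picture of the convex core of $M$ and prove carefully that a segment $\itv[cc]{y}{\gamma y'}$ avoiding $\Gamma\wt W$ away from its endpoints is a single controlled excursion into a conjugate of $\Gamma_1$ --- in particular the finiteness of the set of relevant Schottky words $\eta$ and the reduction to $\eta\,\Gamma_1\,\eta^{-1}$, using only $\diam \wt W$ and the ping-pong separation. The other delicate point lies not in this verification but in the construction itself, imported from \cite{Pei}: one must exhibit a finitely generated geometrically infinite $\Gamma_1 < \SO^o(3,1)$ (for instance a $\Z$-cover of a closed hyperbolic $3$-manifold fibering over the circle, so $\delta_1 = 2$) whose degenerate end has sufficiently controlled geometry, together with a divergent $\Gamma_2 < \SO^o(4,1)$ having $\delta_2 \in \itv[oo]{2}{3}$ and in Schottky position with $\Gamma_1$; producing $\Gamma_2$ is elementary, whereas the existence and geometric control of such a $\Gamma_1$, via tameness, is the substantive input.
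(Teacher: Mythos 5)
Your overall strategy --- apply Theorem~\ref{thm:crit1} to $F \equiv 0$ by showing that $\Gamma_{\wt{W}}$ essentially reduces to the free factors, then use a critical gap to absorb the linear factor $d(x,\gamma x)$ --- is the right one and is the paper's. But there is a genuine error at the heart of your step $(a)$ that propagates through the rest. You claim that the geometric-series combination of $P_{\Gamma_1}(s)$ and $P_{\Gamma_2}(s)$ "converges for $s > \delta_2$", hence $\delta(\Gamma) = \delta_2$. This is false: the combined series behaves like $\sum_n \bigl( c\, P_{\Gamma_1}(s) P_{\Gamma_2}(s) \bigr)^n$, and since $\Gamma_2$ is divergent, $P_{\Gamma_2}(s) \to +\infty$ as $s \downarrow \delta_2$ by monotone convergence, so the product exceeds the ping-pong threshold on a whole interval above $\delta_2$. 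The correct conclusion --- and the one Peign\'e's Theorem~A states --- is the \emph{strict} inequality $\delta_\Gamma > \max(\delta_1, \delta_2)$. This is not a cosmetic point: it is the engine of the whole proof. With the true exponent, the positive-recurrence series must be taken with $P(0) = \delta_\Gamma$, and then $\sum_{g \in \Gamma_2} d(x,gx) e^{-\delta_\Gamma d(x,gx)}$ \emph{also} converges because $\delta_\Gamma > \delta_2$; whereas at your $\delta = \delta_2$ this sum diverges ($\Gamma_2$ being divergent), which is exactly why you are forced into the elaborate and only sketched tree-of-spaces argument to expel $\Gamma_2$ from $\Gamma_{\wt{W}}$. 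Your argument for the divergence of $\Gamma$ at its critical exponent also collapses: divergence at $\delta_2 < \delta_\Gamma$ is automatic and says nothing about divergence at $\delta_\Gamma$, which is a separate (true, but unproved by you) input from \cite{Pei}.

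Once the strict gap $\delta_\Gamma > \max(\delta_1,\delta_2)$ is in hand, the geometric step $(b)$ becomes far simpler than what you outline, and needs none of the tameness/end-geometry machinery you flag as the "main obstacle". The paper takes $\wt{W} = B(x,R)$ for $R$ large and proves a purely Schottky-theoretic separation lemma: since the orbits $\Gamma_G x$ and $\Gamma_H x$ accumulate on the disjoint closed sets $F_G$ and $F_H$, every geodesic segment $\itv[cc]{g x}{h x}$ with $g \in \Gamma_G$, $h \in \Gamma_H$ passes through $B(x,R)$. Applying this to the syllable decomposition of a reduced word shows that $\Gamma_{\wt{W}} \setminus S \subset G \cup H$ for a finite set $S$ --- both factors included, no excursion analysis, no convex core, no end structure --- and the strict gap then kills both sums at once. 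I recommend you repair $(a)$ by quoting (or reproving) $\delta_\Gamma > \max(\delta_G,\delta_H)$ and the divergence of $\Gamma$ from \cite{Pei}, and replace your reduction in $(b)$ by the separation lemma above.
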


Let us explain a little bit its construction.
Let $G$ and $H$ be two Kleinian groups, i.e. two nonelementary discrete torsion-free groups of orientation-preserving isometries of the $d$-dimensional standard hyperbolic space $\H^{d+1}$ ($d \ge 1$) with constant curvature $-1$.
They are said to be in \name{Schottky position} when there exist two closed disjoint sets $F_G, F_H \subset \S^d$ such that
\[ G^\star (\S^d \setminus F_G) \subset F_G \eqand H^\star (\S^d \setminus F_H) \subset F_H \eqpunct{,} \]
where $G^\star = G \setminus \sing{\id_G}$ and $H^\star = H \setminus \sing{\id_H}$.
Note that the limit set $\Lambda(G)$ of $G$ must be a subset of $F_G$ since the fixed points of hyperbolic elements of $G$ can only lie in $F_G$, and likewise $\Lambda(H) \subset F_H$.

A variation of the ping-pong lemma shows that the group $\Gamma$ generated by $G$ and $H$ is equal to the free product $\Gamma = G \star H$ of $G$ and $H$, and that the limit set $\Lambda(\Gamma)$ of $\Gamma$ is included in $F_G \cup F_H$.
Any $\gamma \in \Gamma$ has an unique representation $\gamma = g_1 h_1 \hdots g_n h_n$ where $n \ge 1$, $g_1 \in G$, $g_j \in G^\star$ ($j > 1$), $h_n \in H$ and $h_j \in H^\star$ ($j < n$).
We denote by $\Gamma_G$ the subset of $\gamma \in \Gamma$ which are written $g \gamma'$ with $g \in G^\star$ in this representation, and we define $\Gamma_H$ similarly, so that $\Gamma$ is the disjoint union of $\Gamma = \Gamma_G \sqcup \Gamma_H \sqcup \sing{\id}$.

In the following, we will readily identify $G$ and $H$ with their respective images by the canonical inclusions $G \to G \star H$ and $H \to G \star H$.
We will also denote by $\delta_G$, $\delta_H$ and $\delta_\Gamma = P(0)$ the respective critical exponents of $G$, $H$ and $\Gamma = G \star H$, i.e. the exponents of convergence of the Poincar\'e series
\[ \sum_{g \in G} e^{-s d(x, g x)}, \sum_{h \in H} e^{-s d(x, h x)} \eqand \sum_{\gamma \in \Gamma} e^{-s d(x, \gamma x)} \eqpunct{.} \]

\begin{theo*}[\cite{Pei}, Theorem A]
 Let $\Gamma = G \star H$ be the free product of two Kleinian groups in Schottky position.
 If $\delta_G > \delta_H$ and $G$ is divergent, then $\delta_\Gamma > \max(\delta_G, \delta_H)$, $\Gamma$ is divergent and its Bowen-Margulis measure is finite.
\end{theo*}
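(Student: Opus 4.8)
The plan is to deduce the finiteness of $m_{BM} = m_0$ — the Gibbs measure of the zero potential, for which $P(F) = \delta_\Gamma$ and $\wt{F} \equiv 0$ — from the second finiteness criterion, Theorem \ref{thm:crit1}$(i)$. Two ingredients are needed: that $\Gamma$ is divergent (equivalently, by Theorem \ref{thm:erg}, that $F \equiv 0$ is recurrent), and that $(\Gamma, 0)$ is positive recurrent in the sense of Definition \ref{def:pos-rec-bis} relatively to some open relatively compact $\wt{W} \subset \wt{M}$ meeting $\pi(\wt{\Omega})$, i.e. that $\sum_{\gamma \in \Gamma_{\wt{W}}} d(x, \gamma x)\, e^{-\delta_\Gamma d(x, \gamma x)} < +\infty$ for one (hence every) $x$.

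For the first ingredient I would reproduce Peigné's combinatorial estimate \cite{Pei}. Because $\overline{F_G}$ and $\overline{F_H}$ are disjoint in $\S^d$, a geodesic from $x$ to $\gamma x$, for a reduced word $\gamma = g_1 h_1 \cdots g_n h_n$, crosses between consecutive syllables a fixed compact ``gap region'' of diameter $\le C_0$ depending only on the Schottky data; this yields the quasi-additivity $\bigl| d(x, \gamma x) - \sum_i d(x, g_i x) - \sum_i d(x, h_i x) \bigr| \le C_0 n$, so that the broken path $x \to g_1 x \to g_1 h_1 x \to \cdots \to \gamma x$ is a uniform quasi-geodesic. Consequently the Poincaré series $\sum_{\gamma \in \Gamma} e^{-s\, d(x, \gamma x)}$ is comparable to a series of geometric type in $\Phi_G(s) = e^{C_0 s} \sum_{g \in G^\star} e^{-s\, d(x, g x)}$ and the analogous $\Phi_H(s)$, hence is finite exactly when $\Phi_G(s)\Phi_H(s) < 1$ with both factors finite. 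Since $G$ is divergent, $\Phi_G(\delta_G) = +\infty$, while $\Phi_H(\delta_G) < +\infty$ because $\delta_H < \delta_G$; by continuity $\Phi_G(s)\Phi_H(s) > 1$ on an interval $\itv[co]{\delta_G}{\delta_G + \varepsilon}$, which forces $\delta_\Gamma \ge \delta_G + \varepsilon > \delta_G \ge \delta_H$, and at $s = \delta_\Gamma$ one has $\Phi_G(\delta_\Gamma)\Phi_H(\delta_\Gamma) = 1$, so the comparison series diverges and $\Gamma$ is divergent.

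For the second ingredient, ping-pong and the quasi-geodesic property above show that the geodesic $\itv[cc]{x_0}{\gamma x_0}$ — and hence, by thinness of triangles, any segment $\itv[cc]{y}{\gamma y'}$ with $y, y' \in \wt{W}$, away from a bounded neighbourhood of its endpoints — stays within a distance $D_0$, independent of $\gamma$, of each vertex $\gamma'' x_0$ for $\gamma''$ a prefix of $\gamma = g_1 h_1 \cdots g_n h_n$. Choose $x_0$ on a geodesic joining a point of $\Lambda(G) \subseteq F_G$ to a point of $\Lambda(H) \subseteq F_H$ (such a geodesic exists since $G$ and $H$ are nonelementary), so that $x_0 \in \Conv(\Lambda(\Gamma)) = \pi(\wt{\Omega})$, and set $\wt{W} = B(x_0, R)$ with $R > D_0$. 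The claim to establish is that $\Gamma_{\wt{W}} \subseteq G^\star \cup H^\star \cup S_0$ for some finite $S_0 \subset \Gamma$: if $\gamma$ is reduced with at least two nontrivial syllables, all of displacement $\ge C_1$ for a suitably large fixed $C_1$ — which discards only finitely many $\gamma$ — then $\gamma$ admits an interior prefix $\gamma''$ (at distance $\ge C_1 - R$ from $y$ and from $\gamma y'$), so that $\itv[cc]{y}{\gamma y'}$ meets $\gamma'' \wt{W} \subseteq \Gamma \wt{W}$ strictly between its endpoints, whence $\gamma \notin \Gamma_{\wt{W}}$; elements with a single nontrivial syllable lie in $G^\star \cup H^\star$.

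Granting this, the conclusion is immediate. As $\delta_\Gamma > \max(\delta_G, \delta_H)$, pick $s'$ with $\max(\delta_G, \delta_H) < s' < \delta_\Gamma$ and set $C_2 = \sup_{t \ge 0} t\, e^{-(\delta_\Gamma - s')t} < +\infty$; using $d\, e^{-\delta_\Gamma d} \le C_2\, e^{-s' d}$ and $\Gamma_{\wt{W}} \subseteq G^\star \cup H^\star \cup S_0$,
\[ \sum_{\gamma \in \Gamma_{\wt{W}}} d(x_0, \gamma x_0)\, e^{-\delta_\Gamma d(x_0, \gamma x_0)} \le C_2 \Bigl( \sum_{g \in G} e^{-s' d(x_0, g x_0)} + \sum_{h \in H} e^{-s' d(x_0, h x_0)} \Bigr) + A < +\infty, \]
where $A$ is the finite contribution of $S_0$ and the two Poincaré sums converge since $s' > \delta_G$ and $s' > \delta_H$. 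Hence $(\Gamma, 0)$ is positive recurrent relatively to $\wt{W}$, and as $\Gamma$ is divergent, Theorem \ref{thm:crit1}$(i)$ gives that $m_{BM} = m_0$ is finite. The main obstacle is the third paragraph — converting the Schottky/ping-pong combinatorics into the clean inclusion $\Gamma_{\wt{W}} \subseteq G^\star \cup H^\star \cup S_0$, and in particular checking that a single relatively compact $\wt{W} = B(x_0, R)$ simultaneously intercepts, uniformly in $\gamma$, every inter-syllable crossing of every multi-syllable element; the remaining steps are routine given the quasi-additivity of displacement.
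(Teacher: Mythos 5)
Your strategy for the finiteness statement is exactly the paper's: take $\wt{W} = B(x_0, R)$ with $R$ large enough that every geodesic segment from a point of $\Gamma_G x_0$ to a point of $\Gamma_H x_0$ crosses $B(x_0, R)$ (this is the paper's Lemma \ref{lem:schottkymaxdist}), deduce that $\Gamma_{\wt{W}}$ is essentially contained in $G \cup H$, and conclude from $\delta_\Gamma > \max(\delta_G, \delta_H)$ via Theorem \ref{thm:crit1} $(i)$. Be aware, though, that the paper does not reprove the exponent gap nor the divergence of $\Gamma$; it imports both from Peign\'e's Theorem A. Your second paragraph is a schematic reproduction of Peign\'e's argument, and as written it overstates what the two-sided quasi-additivity gives: the upper and lower comparison series carry different multiplicative corrections per syllable, so ``finite exactly when $\Phi_G(s)\Phi_H(s) < 1$'' and the attainment $\Phi_G(\delta_\Gamma)\Phi_H(\delta_\Gamma) = 1$ require the more careful bookkeeping that is precisely the content of \cite{Pei}. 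That is acceptable for a cited ingredient, but it is not a complete proof of that part.

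The one concrete flaw is in your third paragraph: restricting to reduced words with at least two syllables \emph{all} of displacement $\ge C_1$ does not discard only finitely many $\gamma$. If $g \in G^\star$ has small displacement, the words $g h$ with $h \in H^\star$ form an infinite discarded family, and for such a word the only interior prefix is $g$, whose copy $g \wt{W}$ meets $\overline{\wt{W}}$ and is therefore tolerated by the definition of $\Gamma_{\wt{W}}$; your argument no longer excludes these elements, so the clean inclusion $\Gamma_{\wt{W}} \subseteq G^\star \cup H^\star \cup S_0$ with $S_0$ finite is not established (and the paper's own lemma is written with the same shortcut). What the argument actually yields is that every element of $\Gamma_{\wt{W}}$ outside a finite set differs from an element of $G^\star \cup H^\star$ by left and right factors taken in a fixed finite subset of $\Gamma$. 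This suffices: by the triangle inequality (or Lemma \ref{lem:potcontrol}, here with $F = 0$) the corresponding series is dominated by finitely many translates of $\sum_{g \in G} e^{-s' d(x_0, g x_0)}$ and $\sum_{h \in H} e^{-s' d(x_0, h x_0)}$ with $\max(\delta_G, \delta_H) < s' < \delta_\Gamma$, so your final display goes through unchanged. With that patch the proof is correct and coincides with the paper's.
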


These assumptions are satisfied in particular with $G$ a convex cocompact subgroup, which is always divergent, and $H$ a geometrically infinite subgroup of $\SO^o(3, 1) < \SO^o(4, 1)$.
By conjugating $G$ and $H$, it is easy to obtain the inequality $\delta_G > \delta_H$, and the fact that they are in Schottky position.
When $d \ge 2$, one can even assume $H$ to be finitely generated but non geometrically finite.

We will use the criterion from Theorem \ref{thm:crit1} to retrieve the finiteness of the Bowen-Margulis measure in these examples.
To achieve this, we first need the following lemma which will help us to choose the right open relatively compact subset.

\begin{lemm}
 \label{lem:schottkymaxdist}
 For every $x \in \H^{d+1}$, there exists $R > 0$ such that
 \[ \forall g \in \Gamma_G, \forall h \in \Gamma_H, \itv[cc]{g(x)}{h(x)} \cap B(x, R) \neq \emptyset \eqpunct{.} \]
\end{lemm}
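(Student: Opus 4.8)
The plan is to exploit the ping--pong structure of the free product $\Gamma=G\star H$ on the sphere at infinity in order to confine the orbit points $g(x)$ ($g\in\Gamma_G$) and $h(x)$ ($h\in\Gamma_H$) to two \emph{disjoint} ``cones'' based at the common apex $x$, and then to observe that any geodesic segment joining a point in the first cone to a point in the second must pass within bounded distance of $x$. Concretely, I first fix ping--pong neighbourhoods. Since $F_G$ and $F_H$ are disjoint compact subsets of the connected sphere $\S^d$, the set $F_G\cup F_H$ is a proper closed subset, and one can choose open sets $U_G\supset F_G$ and $U_H\supset F_H$ with $\overline{U_G}\cap\overline{U_H}=\emptyset$ and $\overline{U_G}\cup\overline{U_H}\neq\S^d$; put $K_G=\overline{U_G}$, $K_H=\overline{U_H}$, so that $\S^d\setminus(K_G\cup K_H)$ is a nonempty open set. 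Here I only use $\S^d=\partial_\infty\H^{d+1}$ and the Schottky hypothesis $G^\star(\S^d\setminus F_G)\subset F_G$, $H^\star(\S^d\setminus F_H)\subset F_H$.

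The key lemma is a \emph{boundary ping--pong estimate}: for every reduced word $\gamma=s_1\cdots s_n$ of $\Gamma=G\star H$, whose syllables $s_i$ lie alternately in $G^\star$ and $H^\star$, one has $\gamma(\S^d\setminus U_{t(s_n)})\subset U_{t(s_1)}$, where $t(s)\in\{G,H\}$ denotes the type of a syllable. I would prove this by induction on $n$. For $n=1$ it is exactly the Schottky condition: $s_1(\S^d\setminus U_{t(s_1)})\subset s_1(\S^d\setminus F_{t(s_1)})\subset F_{t(s_1)}\subset U_{t(s_1)}$. For the inductive step, since $s_n(\S^d\setminus U_{t(s_n)})\subset F_{t(s_n)}\subset U_{t(s_n)}$ and $U_{t(s_n)}\cap U_{t(s_{n-1})}=\emptyset$ (consecutive syllables have different types), the set $s_n(\S^d\setminus U_{t(s_n)})$ is contained in $\S^d\setminus U_{t(s_{n-1})}$, so the inductive hypothesis for the word $s_1\cdots s_{n-1}$ gives $\gamma(\S^d\setminus U_{t(s_n)})=(s_1\cdots s_{n-1})\bigl(s_n(\S^d\setminus U_{t(s_n)})\bigr)\subset U_{t(s_1)}$. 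In particular, fixing once and for all a point $\xi_0\in\S^d\setminus(K_G\cup K_H)$ (so $\xi_0\notin U_{t(s_n)}$ for every possible last syllable), I obtain
\[ \forall g\in\Gamma_G,\ g(\xi_0)\in U_G\eqand\forall h\in\Gamma_H,\ h(\xi_0)\in U_H\eqpunct{.} \]

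Now fix $x\in\H^{d+1}$ and argue by contradiction. If the statement fails, there are $g_n\in\Gamma_G$ and $h_n\in\Gamma_H$ with $d\bigl(x,\itv[cc]{g_n(x)}{h_n(x)}\bigr)\to+\infty$, hence $d(x,g_n(x))\to+\infty$ and $d(x,h_n(x))\to+\infty$; by discreteness of $\Gamma$ I may pass to a subsequence along which $(g_n)$ and $(h_n)$ consist of pairwise distinct elements. The convergence dynamics of the Kleinian group $\Gamma$ then furnishes attracting/repelling pairs $(a_G,b_G)$, $(a_H,b_H)$ in $\Lambda(\Gamma)$ with $g_n\to a_G$ (resp. $h_n\to a_H$) uniformly on compact subsets of $\overline{\H^{d+1}}\setminus\{b_G\}$ (resp. $\setminus\{b_H\}$). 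Choosing $\xi_0$ above moreover distinct from $b_G$ and $b_H$ (possible since $\dim\S^d\ge1$), from $g_n(\xi_0)\in U_G$ and $g_n(\xi_0)\to a_G$ I get $a_G\in\overline{U_G}=K_G$, and likewise $a_H\in K_H$; and since $x\notin\{b_G,b_H\}$, also $g_n(x)\to a_G$ and $h_n(x)\to a_H$. As $K_G\cap K_H=\emptyset$, necessarily $a_G\neq a_H$. Finally, in the CAT$(-1)$ space $\H^{d+1}$ the Gromov product extends continuously to $\overline{\H^{d+1}}\times\overline{\H^{d+1}}$, is finite off the diagonal, and controls the distance to a geodesic up to a universal additive constant $c$; hence
\[ d\bigl(x,\itv[cc]{g_n(x)}{h_n(x)}\bigr)\le (g_n(x)\mid h_n(x))_x+c\xrightarrow[n\to\infty]{}(a_G\mid a_H)_x+c<+\infty\eqpunct{,} \]
contradicting $d\bigl(x,\itv[cc]{g_n(x)}{h_n(x)}\bigr)\to+\infty$. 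Therefore $\sup_{g\in\Gamma_G,\,h\in\Gamma_H}d\bigl(x,\itv[cc]{g(x)}{h(x)}\bigr)<+\infty$, and any $R$ strictly larger than this supremum works.

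The delicate point is the correct formulation of the boundary ping--pong step so that the induction closes without circularity: one must track the complement of the \emph{last} syllable's region (not of $U_G$ itself), which is exactly what makes the telescoping $\gamma(\S^d\setminus U_{t(s_n)})\subset U_{t(s_1)}$ go through. Everything else is a routine combination of the convergence dynamics of Kleinian groups with the coarse geometry of $\H^{d+1}$ (the extended Gromov product / the fact that segments whose endpoints converge to two distinct boundary points stay at bounded distance from any fixed base point); note that the argument is uniform in $x$, so no separate ``change of base point'' step is needed.
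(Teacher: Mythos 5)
Your proof is correct, and it rests on the same core geometric idea as the paper's: confine the orbit points $g(x)$, $g\in\Gamma_G$, and $h(x)$, $h\in\Gamma_H$, into neighbourhoods $U_G\supset F_G$ and $U_H\supset F_H$ with disjoint closures, and use that geodesics joining such neighbourhoods pass uniformly close to $x$. Where you diverge is in the execution of both supporting steps: the paper simply asserts that the $\Gamma_G$-orbit of $x$ accumulates in $F_G$ (so only finitely many $g(x)$ escape $U_G$) and then takes $R$ large enough to swallow both the uniform bound and these finitely many exceptions directly — if $g(x)\in B(x,R)$ the segment trivially meets the ball; you instead prove the confinement honestly via the telescoping boundary ping--pong $\gamma(\S^d\setminus U_{t(s_n)})\subset U_{t(s_1)}$ and convergence dynamics, and obtain the uniform bound by contradiction through the continuity of the extended Gromov product. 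Your route is longer but fills in precisely the points the paper leaves implicit (in particular why $\Gamma_G x$ accumulates only on $F_G$, which is not completely obvious for non-hyperbolic or short elements of $\Gamma_G$), at the cost of invoking convergence-group dynamics; the paper's version is shorter and entirely elementary once the accumulation claim is granted.
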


\begin{proof}
 Take $U_G$ and $U_H$ two open sets of $\H^{d+1} \cup \S^d$, respectively containing $F_G$ and $F_H$, and whose closures are disjoint.
 In particular, there is a $R > 0$ such that
 \[ \forall \eta \in U_G, \forall \xi \in U_H, d(x, \itv[oo]{\eta}{\xi}) < R \eqpunct{.} \]
 The Schottky position condition ensures that the attractive fixed point of any element of $\Gamma_G$ must lie in $F_G$, therefore the $\Gamma_G$-orbit of $x$ accumulates in $F_G$, and only finitely many $g \in \Gamma_G$ are such that $g(x) \not\in U_G$.
 Hence we may assume that $R$ is large enough that $g(x) \in U_G$ whenever $d(x, g(x)) \ge R$, and likewise that $h(x) \in U_H$ whenever $d(x, h(x)) \ge R$.
 In any case, the geodesic segment $\itv[cc]{g(x)}{h(x)}$ will meet $B(x, R)$.
\end{proof}

We now use notations from Theorem \ref{thm:crit1}, and we take $\wt{W} = B(x, R)$ with a fixed $x$ and $R$ large enough according to Lemma \ref{lem:schottkymaxdist} and Theorem \ref{thm:crit1}.

We will prove the following.

\begin{prop}
 For $\wt{W} = B(x, R)$ with $R$ given by Lemma \ref{lem:schottkymaxdist}, $(\Gamma, \wt{F})$ is positive recurrent, and therefore $m_F$ is finite.
\end{prop}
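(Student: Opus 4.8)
The goal is to verify that for the Schottky free product $\Gamma = G \star H$ with $\delta_G > \delta_H$ and $G$ divergent, the pair $(\Gamma, \wt{F})$ (with $F \equiv 0$, so $P(F) = \delta_\Gamma$) is positive recurrent relative to $\wt{W} = B(x,R)$ in the sense of Definition \ref{def:pos-rec-bis}. Since $\Gamma$ is divergent (this is part of Peign\'e's Theorem A, which we may invoke), $F$ is recurrent, so it remains to prove
\[ \sum_{\gamma \in \Gamma_{\wt{W}}} d(x, \gamma x) e^{-\delta_\Gamma d(x,\gamma x)} < +\infty \eqpunct{.} \]
The strategy is to understand $\Gamma_{\wt{W}}$ combinatorially: by Lemma \ref{lem:schottkymaxdist}, any geodesic segment $\itv[cc]{g(x)}{h(x)}$ with $g \in \Gamma_G$, $h \in \Gamma_H$ passes through $B(x,R)$, and more generally, when one writes $\gamma = g_1 h_1 \cdots g_n h_n$ in normal form, the geodesic from $x$ to $\gamma x$ must (up to bounded error) pass near each of the intermediate images $g_1 x$, $g_1 h_1 x$, $g_1 h_1 g_2 x$, \ldots, i.e. near the orbit point obtained after each syllable. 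Hence if $\gamma \in \Gamma_{\wt{W}}$, its normal form can have only boundedly many syllables; in fact I expect $\Gamma_{\wt{W}}$ to be contained, up to a finite set, in $G^\star \cup H^\star \cup \{\text{products of at most two syllables}\}$, or something similarly short.

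**Reducing to the factor groups.** Once $\Gamma_{\wt{W}}$ is pinned down to short words, the sum over $\Gamma_{\wt{W}}$ splits, via Lemma \ref{lem:potcontrol} and the quasi-additivity of $d(x,\cdot x)$ along normal forms (a standard ping-pong estimate: $d(x, g_1 h_1 \cdots x) = \sum d(x, \cdot x) + O(1)$), into a bounded product of sums of the shape $\sum_{g \in G} d(x,gx) e^{-\delta_\Gamma d(x,gx)}$ and $\sum_{h \in H} d(x,hx) e^{-\delta_\Gamma d(x,hx)}$. Because $\delta_\Gamma > \max(\delta_G, \delta_H)$ (again from Theorem A), both of these series converge: the extra exponential decay rate $\delta_\Gamma - \delta_G > 0$ (resp. $\delta_\Gamma - \delta_H > 0$) beats the polynomial factor $d(x,gx)$ and the $e^{-\delta_G d(x,gx)}$-summability that holds for $G$ for any exponent $\geq \delta_G$ — more precisely, $\sum_{g\in G} d(x,gx) e^{-\delta_\Gamma d(x,gx)} \le \sum_{g\in G} d(x,gx) e^{-(\delta_\Gamma - \delta_G) d(x,gx)} e^{-\delta_G d(x,gx)}$, and one compares with the convergent series $\sum e^{-s d(x,gx)}$ for any $s \in (\delta_G, \delta_\Gamma)$ after absorbing $d(x,gx) e^{-(\delta_\Gamma-\delta_G-s+\delta_G)d}$ into a bounded factor. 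Summing the finitely many word-length strata then gives the claimed convergence, and Proposition \ref{prop:majtheocrit1} concludes that $m_F$ is finite.

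**Main obstacle.** The delicate point is the combinatorial claim that $\Gamma_{\wt{W}}$ consists only of short words — i.e. that a long normal form necessarily forces the geodesic $\itv[cc]{y}{\gamma y'}$ to meet $\Gamma \wt{W}$ somewhere strictly in the middle (not just near its ends), for every choice of $y, y' \in \wt{W}$. This requires the fellow-travelling property in Schottky products: the geodesic from $x$ to $\gamma x$ stays uniformly close to the broken path through the partial-product orbit points, and each such partial product $g_1 h_1 \cdots g_k$ lies at distance $O(1)$ from $x$-translated copies of $\wt{W}$, provided $R$ was chosen large enough (this is exactly why $R$ is taken from Lemma \ref{lem:schottkymaxdist} \emph{and} possibly enlarged per Theorem \ref{thm:crit1}). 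One must also handle the quantifier carefully: $\gamma \in \Gamma_{\wt{W}}$ only asks that \emph{some} pair $y,y' \in \wt{W}$ works, but the ping-pong geometry is robust enough that enlarging $R$ makes the intermediate crossings happen for \emph{all} $y,y'$, so the $\exists$ is harmless. Verifying these uniform closeness constants rigorously — i.e. that the implied $O(1)$'s depend only on $G$, $H$, their Schottky data and $x$, not on $\gamma$ — is the technical heart, but it is standard ping-pong bookkeeping once the normal form is in hand.
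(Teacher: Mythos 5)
Your overall strategy coincides with the paper's: use Lemma \ref{lem:schottkymaxdist} to show that $\Gamma_{\wt{W}}$ contains only very short words of the free product, then conclude from the exponent gap $\delta_\Gamma > \max(\delta_G, \delta_H)$ of Peign\'e's Theorem A (the final summability estimate you give, $t e^{-\delta_\Gamma t} \le C e^{-st}$ for $s \in \itv[oo]{\delta_G}{\delta_\Gamma}$ against the convergent Poincar\'e series of $G$ at $s$, is correct). The genuine gap is that the one nontrivial step --- which you yourself label the ``technical heart'' --- is never carried out: you only \emph{expect} that $\Gamma_{\wt{W}}$ consists, up to a finite set, of words of at most two syllables, and you defer the proof to ``standard ping-pong bookkeeping'' via fellow-travelling of $\itv[cc]{x}{\gamma x}$ with the broken path through the partial products. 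Everything else in the proposition is routine; this claim is the proposition. As written, the proposal asserts the conclusion of the key lemma without proving it, and moreover in a form (``at most two syllables, or something similarly short'') that then obliges you to prove a second unestablished ingredient, the quasi-additivity $d(x, g_1 h_1 \cdots x) = \sum d(x,\cdot\, x) + O(1)$, in order to factor the resulting double sums.

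The paper closes this gap with a short conjugation trick that also yields a sharper statement, $\Gamma_{\wt{W}} \setminus S \subset G \cup H$ with $S$ finite, which makes the factorization step unnecessary: if $\gamma \notin G \cup H$, write $\gamma = g h \gamma'$ with $g \in G^\star$, $h \in H^\star$ (or symmetrically $h g \gamma'$), and consider $I = \itv[cc]{g^{-1}x}{h\gamma' x} = g^{-1}\itv[cc]{x}{\gamma x}$. Its endpoints are orbit points of an element of $\Gamma_G$ and of an element of $\Gamma_H$, so Lemma \ref{lem:schottkymaxdist} forces $I$ to cross $\wt{W} = B(x,R)$ itself, i.e.\ $\itv[cc]{x}{\gamma x}$ crosses the copy $g\wt{W}$; for $\gamma \in \Gamma_{\wt{W}}$ this copy must touch $\overline{\wt{W}}$ or $\gamma\overline{\wt{W}}$, which confines $g$, respectively $h\gamma'$, to a fixed finite set. (Two caveats that you should address if you adopt this: the definition of $\Gamma_{\wt{W}}$ carries an $\exists y,y' \in \wt{W}$, handled by convexity of the distance in $\text{CAT}(-1)$ after enlarging $R$, exactly as you anticipated; and disposing of the finitely many exceptional $g$ or $h\gamma'$ requires either iterating the argument over the other partial products of $\gamma$ or absorbing those families into sums over $G$ and $H$ shifted by a bounded element, via Lemma \ref{lem:potcontrol}.) If instead you keep your fellow-travelling route, note that applying it to \emph{every} partial product $g_1, g_1h_1, g_1h_1g_2, \dots$ --- each of which translates $\wt{W}$ onto a copy crossing $\itv[cc]{x}{\gamma x}$ --- bounds the number of syllables of any $\gamma \in \Gamma_{\wt{W}}$ by a constant depending on $\wt{W}$, after which your product-of-sums argument does finish the proof; but both halves must actually be written out for the proposal to constitute a proof.
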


\begin{lemm}
 There exists $S \subset \Gamma$ finite such that
 \[ \Gamma_{\wt{W}} \setminus S \subset G \cup H \eqpunct{.} \]
\end{lemm}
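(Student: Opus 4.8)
The plan is to translate the problem into the language of reduced words in the free product $\Gamma = G \star H$, and to combine Lemma~\ref{lem:schottkymaxdist} (which says \emph{where} certain geodesics must pass) with Lemma~\ref{lem:lrsa} (which transports crossings from the geodesic $\itv[cc]{x}{\gamma x}$ to the geodesics $\itv[cc]{y}{\gamma y'}$ occurring in the definition of $\Gamma_{\wt{W}}$). Keep $x$ and $R$ with $\wt{W} = B(x,R)$ as in Lemma~\ref{lem:schottkymaxdist}, enlarging $R$ slightly so that the geodesics produced by that lemma in fact pass through the smaller ball $B(x,R-1)$. Let $\gamma \in \Gamma_{\wt{W}}$ with $\gamma \notin G \cup H$; then its reduced expression $\gamma = w_1 \cdots w_n$ has $n \ge 2$ syllables, alternating between $G^\star$ and $H^\star$. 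Write $\gamma_j = w_1 \cdots w_j$.

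First I would locate the forced crossings. For each $1 \le j \le n-1$, the two nontrivial elements $\gamma_j^{-1}$ and $\gamma_j^{-1}\gamma = w_{j+1}\cdots w_n$ begin with letters from opposite free factors, so exactly one of them lies in $\Gamma_G$ and the other in $\Gamma_H$; applying Lemma~\ref{lem:schottkymaxdist} and translating by $\gamma_j$ shows that $\itv[cc]{x}{\gamma x}$ meets $\gamma_j B(x,R-1) \subset \gamma_j \wt{W}$. Now let $y, y' \in \wt{W}$ be witnesses for $\gamma \in \Gamma_{\wt{W}}$, and fix $j$. Either $\itv[cc]{y}{\gamma y'}$ meets $\gamma_j \wt{W}$, in which case the definition of $\Gamma_{\wt{W}}$ forces $\overline{\wt{W}} \cap \gamma_j \overline{\wt{W}} \neq \emptyset$ or $\gamma \overline{\wt{W}} \cap \gamma_j \overline{\wt{W}} \neq \emptyset$; or it does not, in which case Lemma~\ref{lem:lrsa} (applied to $B(x,R-1) \subset B(x,R)$, with $d(y,x) \le R$ and $d(\gamma y', \gamma x) \le R$) gives $\min\bigl(d(y,\gamma_j \wt{W}), d(\gamma y', \gamma_j \wt{W})\bigr) \le \rho_0$ for a constant $\rho_0$ depending only on $R$. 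In all cases $\gamma_j \in F \cup \gamma F$, where $F = \{g \in \Gamma : d(x,gx) \le 2R + \rho_0\}$ is finite. Since the $\gamma_j$, $1 \le j \le n-1$, are pairwise distinct, this already bounds $n$ by $2\,\card F + 1$.

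With the syllable count bounded, I would then bound the syllables themselves. Because $\Gamma = G \star H$ is in Schottky position, the reduced word undergoes no cancellation and the broken geodesic $x, \gamma_1 x, \dots, \gamma_n x$ is a uniform quasi-geodesic; hence $d(x,\gamma x)$ is comparable to $\sum_i d(x, w_i x)$. If every syllable satisfies $d(x, w_i x) \le 2(2R+\rho_0)$, then $d(x,\gamma x)$ is bounded and $\gamma$ lies in an explicit finite set, which I add to $S$. Otherwise there is a (necessarily unique, once $\gamma$ is outside that finite set) long syllable $w_k$; the relations $\gamma_j \in F \cup \gamma F$ then confine the prefix $w_1 \cdots w_{k-1}$ and the suffix $w_{k+1}\cdots w_n$ to finite sets, so $\gamma = a\, w_k\, b$ with $a, b$ ranging over a fixed finite set and $w_k$ a single syllable, hence $w_k \in G \cup H$.

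The final, and I expect hardest, step is to upgrade this to $a = b = \id$, so that $\gamma = w_k$ itself; it is here that the Schottky position must be used once more. A nontrivial reduced prefix $a$, say $a \in \Gamma_G$, together with $w_k \in H$ (so $a^{-1}\gamma$ starts in $H^\star$) puts us exactly in the configuration of Lemma~\ref{lem:schottkymaxdist}: the image $a\wt{W}$ sits on the $F_G$-side of the ping-pong partition while $\gamma\wt{W}$ is reached only after crossing back, which should force the witness geodesic $\itv[cc]{y}{\gamma y'}$ to re-enter $\Gamma\wt{W}$ at a point lying neither near $\wt{W}$ nor near $\gamma\wt{W}$, contradicting $\gamma \in \Gamma_{\wt{W}}$; the symmetric argument gives $b = \id$. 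The delicate point in making this precise is that the crossing produced by Lemma~\ref{lem:schottkymaxdist} near an endpoint is not automatically seen by the witness geodesic, so one must argue directly with the separation $\overline{U_G} \cap \overline{U_H} = \emptyset$ and with the geometry of the centred ball $\wt{W}$, rather than merely quoting the transport lemma. Collecting the finitely many exceptional $\gamma$ encountered along the way into $S$ then yields $\Gamma_{\wt{W}} \setminus S \subset G \cup H$.
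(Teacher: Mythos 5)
Your steps 1--5 are correct, and they actually supply details that the paper's own argument leaves implicit: locating the forced crossings of $\itv[cc]{x}{\gamma x}$ with the translates $\gamma_j\wt{W}$ of the prefixes via Lemma \ref{lem:schottkymaxdist}, and transporting them to an arbitrary witness geodesic $\itv[cc]{y}{\gamma y'}$ via Lemma \ref{lem:lrsa}, correctly yield that, outside a finite exceptional set, every $\gamma \in \Gamma_{\wt{W}}$ with at least two syllables can be written $\gamma = a w b$ with $w \in G \cup H$ and $a, b$ in the fixed finite set $F_0 = \set{k \in \Gamma}{d(x, k x) \le 2R + \rho_0}$. The paper reaches essentially the same dichotomy in one stroke, by conjugating by the first syllable and applying Lemma \ref{lem:schottkymaxdist} once to the segment $\itv[cc]{g^{-1}x}{h\gamma' x}$, which forces $g^{-1}\wt{W}$ or $h\gamma'\wt{W}$ to come within bounded distance of $\wt{W}$.

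The genuine gap is your step 6, and it is not merely a matter of ``making the argument precise'': the strategy cannot work. The prefixes $a$ that survive steps 1--5 satisfy $d(x, a x) \le 2R + \rho_0$, and among them there may be nontrivial $a \in G^\star$ with $\overline{a\wt{W}} \cap \overline{\wt{W}} \neq \emptyset$. For such an $a$, the crossing of the witness geodesic with $a\wt{W}$ is one of the crossings \emph{explicitly permitted} by the definition of $\Gamma_{\wt{W}}$ (it falls under the clause $\overline{\wt{W}} \cap g\overline{\wt{W}} \neq \emptyset$ with $g = a$), so no contradiction with $\gamma \in \Gamma_{\wt{W}}$ can be extracted from it, however the ping-pong separation is exploited; an element $\gamma = a w$ with such an $a$ and a long excursion $w \in H^\star$ can perfectly well belong to $\Gamma_{\wt{W}}$. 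This tolerance near the endpoints is built into $\Gamma_{\wt{W}}$ on purpose (it is what makes the notion robust), so one should not expect the clean inclusion into $G \cup H$ --- the paper's own closing sentence is terse on exactly this point. Fortunately, the weaker inclusion $\Gamma_{\wt{W}} \setminus S \subset F_0 \, (G \cup H) \, F_0$ that your steps 1--5 do establish is all the application requires: multiplying on the left and right by elements of a fixed finite set changes $d(x, \gamma x)$ only by a bounded additive constant, so the convergence of $\sum_{\gamma \in \Gamma_{\wt{W}}} d(x, \gamma x) e^{-\delta_\Gamma d(x, \gamma x)}$ still follows from $\delta_\Gamma > \max(\delta_G, \delta_H)$. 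Either stop at step 5 and sum over $F_0(G\cup H)F_0$, or accept that step 6 as envisaged is a dead end.
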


\begin{proof}
 If $\gamma \not\in G \cup H$, then it can be written $\gamma = g h \gamma'$ or $\gamma = h g \gamma'$, with $g \in G^\star$, $h \in H^\star$ and $h \gamma' \not\in G$ (respectively $g \gamma' \not\in H$).
 We only consider the first case, the second case being similar.
 Note that
 \[ g h \gamma' \not\in \Gamma_{\wt{W}} \Leftrightarrow h \gamma' g \not\in g^{-1} \Gamma_{\wt{W}} g = \Gamma_{g^{-1} \wt{W}} \eqpunct{.} \]
 Now consider the geodesic segment $I = \itv[cc]{g^{-1}(x)}{h \gamma'(x)} = \itv[cc]{g^{-1}(x)}{h \gamma' g g^{-1}(x)}$.
 Both $g^{-1} \wt{W}$ and $h \gamma' \wt{W}$ intersect $I$, but according to Lemma \ref{lem:schottkymaxdist} $\wt{W}$ also intersects $I$.
 Finally, there is only a finite set $S$ of such $\gamma$ for which $g^{-1} \wt{W}$ or $h \gamma' \wt{W}$ meets $\wt{W}$, so that if $\gamma \in \Gamma_{\tilde{W}} \setminus S$, then $\gamma \in G \cup H$.
\end{proof}

We can now complete the proof.
Since $\delta_\Gamma > \max(\delta_G, \delta_H)$ we have
\[ \sum_{g \in G} d(x, g x) e^{- \delta_\Gamma d(x, g x)} < +\infty \eqpunct{,} \]
and the same goes for $H$.
Therefore,
\[ \sum_{\gamma \in G \cup H} d(x, \gamma x) e^{- \delta_\Gamma d(x, \gamma x)} < +\infty \eqpunct{,} \]
and Theorem \ref{thm:crit1} ensures that the Bowen-Margulis measure $m_{BM} = m_0$ of $T^1 M$ is finite.

\section*{Acknowledgements}

V. Pit was partially supported by FAPESP project number 2011/12338-0 and CAPES.
Both authors benefited from the ANR grant ANR JCJC-0108 coordinated by B. Schapira during this work. 

\bibliographystyle{amsalpha}
\bibliography{gibbsfin}

\providecommand{\bysame}{\leavevmode\hbox to3em{\hrulefill}\thinspace}
\providecommand{\MR}{\relax\ifhmode\unskip\space\fi MR }
\providecommand{\MRhref}[2]{%
  \href{http://www.ams.org/mathscinet-getitem?mr=#1}{#2}
}
\providecommand{\href}[2]{#2}
\begin{thebibliography}{{B}ow95}

\bibitem[{A}ar97]{Aar}
{J}on {A}aronson, \emph{{A}n {I}ntroduction to {I}nfinite {E}rgodic {T}heory},
  {M}athematical {S}urveys and {M}onographs, vol.~50, {A}merican {M}athematical
  {S}ociety, 1997.

\bibitem[{A}nc]{Ancona}
{A}lano {A}ncona, \emph{{E}xemples de surfaces hyperboliques de type divergent,
  de mesure de {Su}llivan associ\'ees finies mais non g\'eom\'etriquement
  finies}, unpublished.

\bibitem[{B}ab02]{Bab}
{M}artine {B}abillot, \emph{{O}n the mixing property for hyperbolic systems},
  {I}srael {J}ournal of {M}athematics \textbf{129} (2002), 61--76.

\bibitem[BI]{Barreira-Iommi}
{L}uis {B}arreira and {G}odofredo {I}ommi, \emph{Suspension flows over
  countable markov shifts}, Journal of Statistical Physics \textbf{124}, no.~1.

\bibitem[{B}ow95]{Bowditch}
{B}rian~{H}. {B}owditch, \emph{{G}eometrical finiteness with variable negative
  curvature}, {D}uke {M}ath. {J}. \textbf{77} (1995), no.~1, 229--274.

\bibitem[BR75]{BR}
{R}ufus {B}owen and {D}avid {R}uelle, \emph{{T}he ergodic theory of {A}xiom {A}
  flows}, {I}nvent. {M}ath. \textbf{29} (1975), no.~3, 181--202.

\bibitem[{C}ou03]{Cou}
{Y}ves {C}oud\`ene, \emph{{G}ibbs measures on negatively curved manifolds},
  {J}ournal of {D}ynamical and {C}ontrol {S}ystems \textbf{9} (2003), no.~1,
  89--101.

\bibitem[DOP00]{DOP}
{F}rançoise {D}al'bo, {J}ean-{P}ierre {O}tal, and {M}arc {P}eigné,
  \emph{{S}éries de {P}oincaré des groupes géométriquement finis}, {I}srael
  {J}ournal of {M}athematics \textbf{118} (2000), 109--124.

\bibitem[GdlH90]{GdlH}
{E}tienne {G}hys and {P}ierre de~la {H}arpe, \emph{{S}ur les {G}roupes
  {H}yperboliques d'apr\`es {M}ikhael {G}romov}, {P}rogress in {M}athematics,
  vol.~83, {B}irkh\"auser, 1990.

\bibitem[IJT15]{Iommi-Jordan-Todd}
{G}odofredo {I}ommi, {T}homas {J}ordan, and {M}ike {T}odd, \emph{Recurrence and
  transience for suspension flows}, Israel Journal of Mathematics \textbf{209}
  (2015), no.~2, 547--592.

\bibitem[OP04]{OP}
{J}ean-{P}ierre {O}tal and {M}arc {P}eign\'e, \emph{{P}rincipe variationnel et
  groupes kleiniens}, {D}uke {M}ath. {J}. \textbf{125} (2004), no.~1, 15--44.

\bibitem[{P}ei03]{Pei}
{M}arc {P}eign\'e, \emph{{O}n the {P}atterson-{S}ullivan measure of some
  discrete group of isometries}, {I}srael {J}ournal of {M}athematics
  \textbf{133} (2003), 77--88.

\bibitem[PP10]{PaP}
{J}ouni {P}arkkonen and {F}rédéric {P}aulin, \emph{{P}rescribing the
  behaviour of geodesics in negative curvature}, {G}eometry \& {T}opology
  \textbf{14} (2010), 277--392.

\bibitem[PPS15]{PPS}
{F}rédéric {P}aulin, {M}ark {P}ollicott, and {B}arbara {S}chapira,
  \emph{{E}quilibrium states in negative curvature}, {A}stérisque, vol. 373,
  {S}ociété {M}athématique de {F}rance, 2015.

\bibitem[{R}ob03]{Roblin}
{T}homas {R}oblin, \emph{Ergodicit\'e et \'equidistribution en courbure
  n\'egative}, {M}\'emoires de la {S}oci\'et\'e {M}ath\'ematique de {F}rance,
  vol.~95, 2003.

\bibitem[{S}ar99]{Sa99}
{O}mri~{M}. {S}arig, \emph{{T}hermodynamic {F}ormalism for countable {M}arkov
  shifts}, {E}rgodic {T}heory and {D}ynamical {S}ystems \textbf{19} (1999),
  1565--1593.

\bibitem[{S}ar01]{Sa01}
\bysame, \emph{{T}hermodynamic {F}ormalism for {N}ull {R}ecurrent
  {P}otentials}, {I}srael {J}ournal of {M}athematics \textbf{121} (2001),
  285--311.

\bibitem[{S}ul84]{Su84}
{D}ennis {S}ullivan, \emph{{E}ntropy, {H}ausdorff measures old and new, and
  limit sets of geometrically finite {K}leinian groups}, {A}cta {M}ath.
  \textbf{153} (1984), no.~3-4, 259--277.

\bibitem[{W}ri61]{Wri}
{F}.~{B}. {W}right, \emph{{M}ean least recurrence time}, {J}. {L}ondon {M}ath.
  {S}oc. \textbf{36} (1961), 382--384.

\end{thebibliography}

\end{document}